\newcommand\xleftrightarrow[2][]{%
  \ext@arrow 9999{\longleftrightarrowfill@}{#1}{#2}}
\newcommand\longleftrightarrowfill@{%
  \arrowfill@\leftarrow\relbar\rightarrow}
\theoremstyle{plain}
\newtheorem{theorem}{Theorem}
\newtheorem{definition}[theorem]{Definition}
\newtheorem{lemma}[theorem]{Lemma}
\newtheorem{proposition}[theorem]{Proposition}
\newtheorem{corollary}[theorem]{Corollary}
\newtheorem{assumption}[theorem]{Assumption}
\newtheorem{remark}[theorem]{Remark}
\newcommand\EE{{\mathbb E}}
\newcommand\RR{{\mathbb R}}
\newcommand\ZZ{{\mathbb Z}}
\newcommand\NN{{\mathbb N}}
\newcommand\QQ{{\mathbb Q}}
\newcommand\YY{{\mathbb {Y}}}
\renewcommand\ell{l}
\newcommand\CC{\mathbb{C}}
\newcommand\bA{\mathbf{A}}
\newcommand\bB{\mathbf{B}}
\newcommand\MP{\mathbb{MP}}
\newcounter{mycount}
\numberwithin{equation}{section}
\numberwithin{theorem}{section}
\numberwithin{figure}{section}
\title[Doubly Free-Boundary Macdonald Processes]
{Doubly Free-Boundary Macdonald Processes:
Reflection Identities and Jack Asymptotics}
\thanks{Corresponding author: Zhongyang Li.}
\author[Z. Li]{Zhongyang Li}
\address{Department of Mathematics,
University of Connecticut,
Storrs, Connecticut 06269-3009, USA}
\email{zhongyang.li@uconn.edu}
\urladdr{\url{https://mathzhongyangli.wordpress.com}}
\author[K. Shi]{Kaili Shi}
\address{Department of Mathematics and Statistics,
Boston University,
Boston, Massachusetts, USA}
\email{shkl@bu.edu}
\subjclass[2020]{Primary 60K35; Secondary 82B20, 05E05, 60B20}
\keywords{Macdonald processes, Jack deformation, rail-yard graphs, free boundary, Laplace-test fluctuations, Gaussian fluctuations, limit shape, perfect matchings}
\begin{document}

\begin{abstract}
We introduce a doubly free-boundary Macdonald process on rail-yard
interlacings and develop a reflection calculus for its observables.
Boundary Cauchy--Littlewood identities, combined with Negu\c t
operators, yield exact multipoint contour formulas for arbitrary
\(L/R\) words.  Under the Jack scaling
\[
        q=t^\alpha,\qquad t=e^{-n\beta\epsilon},
\]
and piecewise-periodic data, these formulas imply a
Laplace-transform law of large numbers and a weak slope-measure
limit shape at \(L\)-type columns.

For arbitrary piecewise-periodic \(L/R\) backgrounds and finitely
many \(L\)-type marked columns, under the stated contour, branch,
and normal-convergence hypotheses, the centered height-Laplace
observables converge jointly to a Gaussian vector.  Its covariance
exhibits a boundary--deformation separation: the Jack parameter
and the microscopic rail-yard data enter through the one-point
spectral factors and the normalization, whereas the two-point
interaction is the logarithmic derivative of an annular prime
function generated by the two boundary reflections.  Thus the
deformation changes the spectral map while preserving the annular
image geometry of the Schur specialization.

For \(\beta=1\), in the all-\(L\) sector and under explicit signed
zero--pole and root-localization hypotheses, we characterize regular
liquid and frozen points through the nonreal-root structure of the
characteristic equation and show that nondegenerate regular interfaces lie on the real
double-root locus. The half-space Macdonald-process formulas are recovered in the
continuous degeneration \(v\downarrow0\), which forces the right
boundary partition to be empty.
\end{abstract}

\maketitle

\setcounter{tocdepth}{2}
\tableofcontents

\section{Introduction}

Random tiling and dimer models provide a basic class of exactly solvable probability
measures in which deterministic limit shapes coexist with Gaussian height fluctuations.  In
many Schur and determinantal models, the analysis is ultimately controlled by correlation
kernels and their steepest-descent asymptotics.  Macdonald and Jack deformations usually
remove this determinantal structure.  This paper asks what remains of the limit-shape and
Gaussian-fluctuation theory when a Jack deformation is combined with two free boundaries in
rail-yard geometry.

The answer is that the two free boundaries affect both orders of the asymptotic theory.
At first order they enter the Laplace-transform law of large numbers through reflected
one-point factors and hence modify the deterministic limit shape.  More importantly,
they also change the image structure of the second-order field.  For the height
Laplace-test observables considered here, the limiting covariance is an annular
reflected-image kernel.  This kernel is produced by repeated reflection of Cauchy factors
between the two free boundaries; it is absent in the empty-boundary Schur degeneration and
survives in the Jack scaling limit.

\subsection{The boundary mechanism}

Rail-yard graphs form a flexible class of bipartite graphs whose perfect matchings are
encoded by interlacing sequences of partitions.  In Schur degenerations this encoding leads
to Schur processes and free-boundary Schur processes.  The model studied in this paper is
the corresponding Macdonald-deformed measure on partition sequences, represented
geometrically as doubly free-boundary dimer states on rail-yard graphs.  Except in
Schur-type degenerations, this should not be confused with a local product dimer Gibbs
measure: the rail-yard geometry supplies the interlacing dynamics, while the weights are
Macdonald weights with boundary specializations.

The essential new feature is algebraic reflection at the free boundaries.  Ordinary
full-space Macdonald processes are controlled by skew Cauchy commutations, which interchange
two adjacent bulk skew factors and produce a Cauchy factor.  Free boundaries require an
additional step: skew Littlewood identities sum over the boundary partition and turn an
incoming skew factor into a reflected outgoing one, multiplied by a boundary factor.  Iterating
this reflection between the two boundaries produces the infinite one-point products and the
two-boundary covariance correction.  

This places the paper at the interface of two established theories, while the
asymptotic phenomenon studied here is specific to the doubly free-boundary geometry.
Ahn's work on Macdonald plane partitions~\cite{Ah18} treats the full-space
jack-process side: skew plane partitions, equivalently lozenge tilings or
honeycomb dimers, are analyzed by difference-operator asymptotics and yield
deterministic limit shapes and Gaussian free field fluctuations as \(q,t\to1\).

The boundary side is closer in spirit to the half-space Macdonald-process theory of
Barraquand--Borodin--Corwin~\cite{BBC20}.  In that theory, the ordinary Cauchy
summation identity of full-space Macdonald processes is replaced by Littlewood
summation identities, and new exact observables are needed.  The one-free-boundary
degeneration of the present model belongs to this half-space framework.  Here we
analyze the Jack scaling of the doubly free-boundary formulas themselves.  The two
free boundaries generate repeated reflections, producing infinite one-point factors and
a two-boundary covariance correction which survives in the scaling limit.  Thus the main result is not merely a Macdonald-process analogue of
full-space plane-partition asymptotics: it shows that the annular
reflected-image geometry generated by two free boundaries persists
under the non-determinantal Jack deformation.

A useful point of comparison is the Schur-weighted doubly
free-boundary rail-yard model studied in~\cite{zl23}.  In that setting
the measure is the local edge-product dimer measure, and the repeated
boundary reflections already produce infinite reflected products and
an annular covariance for height-Laplace observables.  The present work
concerns a genuinely Macdonald-deformed, non-determinantal measure.  We
develop the corresponding two-boundary Cauchy identities and
multipoint Negu\c t contour formulas, and analyze their Jack limit under
\[
        q=t^\alpha,\qquad t=e^{-n\beta\epsilon}.
\]
The main structural conclusion is a deformation-stability phenomenon:
the Jack parameter and the microscopic \(L/R\) data modify the
one-point spectral factors and the normalization, whereas the
two-boundary image geometry of the limiting covariance remains
annular.  Thus the present results provide a Macdonald reflection
theory for the two-free-boundary geometry, rather than a formal
specialization of the Schur formulas.

\subsection{Exact formulas}

The first contribution is algebraic.  We prove boundary Cauchy identities for Macdonald
polynomials, including the even and conjugate-even boundary specializations used by the free
boundaries.  These identities are combined with Negu\c t operators to obtain exact contour
formulas for exponential height-Laplace observables.  In the present non-determinantal Jack
setting, these contour formulas play the role that correlation kernels play in Schur models.
They are also the point at which the two-boundary mechanism becomes visible: after
normalization, the integrands contain both infinite one-point reflected products and a
non-trivial two-point free-boundary correction.

The formulas are stated for the general \(L/R\) rail-yard process.  They contain the empty-
and one-free-boundary cases as limits.  The half-space formulas arise by taking the
appropriate one-boundary limit of the doubly free-boundary partition and contour identities;
in the contour formulas this limit is taken together with the associated reflected products,
pole separation, and branch choices.

\subsection{Law of large numbers and weak limit shape}

The second contribution is asymptotic.  We work in the exact Jack scaling
\[
        q=t^\alpha,\qquad t=e^{-n\beta\epsilon},
\]
with piecewise periodic rail-yard data.  Under the analytic admissibility and branch
hypotheses stated below, the one-point contour formula gives a Laplace-transform law of
large numbers at \(L\)-type marked columns.  The limiting transform is naturally written in
terms of
\[
        S_\chi(w):=\mathcal G_\chi(w)\prod_{r\ge1}\mathcal F_{u,v,r}(w),
\]
where all non-integer powers are taken with the compatible branch system specified in the
main assumptions.

The deterministic limit shape is formulated in the natural Macdonald coordinate.  We set
\[
        x=e^{-n\beta\kappa},
\]
so that the integer moments of the pushed-forward slope measure are exactly the moments
provided by the Negu\c t contour formula.  With this choice of variable, the transform asymptotics become moment asymptotics for the
pushed-forward slope measures.  An exponential moment bound gives compact moment
determinacy, so these moment asymptotics determine a unique weak limit.  This yields the
deterministic limit shape in the weak slope-measure sense, without requiring analytic
continuation of the Laplace exponent.

\subsection{Annular Gaussian fluctuations}

The main probabilistic result is the fluctuation theorem.  For arbitrary piecewise-periodic \(L/R\) backgrounds, the centered
height-Laplace observables at finitely many \(L\)-type marked columns,
written in the charge-centered \((q,t)\)-column coordinate, converge
in finite-dimensional distributions to a Gaussian vector under the
stated contour, branch, and normal-convergence hypotheses.  The limiting
covariance is not the pullback of the standard upper-half-plane image covariance.  Instead,
the two free boundaries produce an annular logarithmic-derivative kernel.

In the notation used below, the \(L\)-chart covariance kernel is
\[
        \mathsf K_{LL}(z,w)
        =
        \partial_z\partial_w
        \log\frac{\Theta_{\mathfrak q}(z/w)}{\Theta_{\mathfrak q}(u^2zw)},
        \qquad \mathfrak q=(uv)^2.
\]
Equivalently, in nested contour coordinates this is the bulk double-pole term together with
the annular image term and the two reflected-boundary image terms.  The free-boundary
correction is not a lower-order error: it is produced by mixed reflections between the two
boundaries and survives as \(\epsilon\to0\).  Thus the second-order effect of the two free
boundaries is genuinely macroscopic.

The term ``annular Gaussian free field'' is used here in this precise Laplace-test sense.  We
prove convergence of the \(L\)-chart height-Laplace observables and identify their covariance
with the annular prime-function logarithmic-derivative kernel.   

\subsection{Frozen boundary}

The frozen-boundary analysis is an application of the limit-shape theorem and is kept
separate from the Gaussian result.  For the root-count classification we restrict to \(\beta=1\).  In this case
the characteristic equation is
\[
   S_\chi(w)=e^{-n\kappa}.
\]
Under Assumptions~\ref{ap64}--\ref{ap65} and the
nonexceptionality conditions of Theorem~\ref{p412}, regular liquid
points correspond to a unique nonreal conjugate pair, while regular
frozen points correspond to the all-real-root regime.
 Consequently nondegenerate regular interfaces lie on the real double-root locus
\[
\begin{cases}
\mathcal G_\chi(w)\displaystyle\prod_{r\ge1}\mathcal F_{u,v,r}(w)
        = e^{-n\kappa},\\[2mm]
\displaystyle
\frac{d}{dw}\log\!\left(\mathcal G_\chi(w)\prod_{r\ge1}\mathcal F_{u,v,r}(w)\right)=0.
\end{cases}
\]

The conditional hypotheses are kept explicit.
Assumptions~\ref{ap64}--\ref{ap65} provide a concrete separated
all-\(L\) regime for the finite signed zero--pole order, while
Lemmas~\ref{lem:local-radial-noescape} and
\ref{lem:local-polefree-noescape} supply the local pole-free
no-escape input under endpoint nonexceptionality.

In contrast with the mixed-\(L/R\) Schur analysis
of~\cite{zl23}, the regular root classification proved here is
currently restricted to the all-\(L\) sector.  For mixed \(L/R\) Jack
data, the \(R\)-type spectral factors carry the generally non-integral
exponent \(\alpha\), so the finite truncations are branch-dependent
rather than rational and the Schur polynomial root-count argument does
not apply directly.

\subsection{Organization}

Section~\ref{sect:mr} defines the Macdonald-deformed doubly free-boundary rail-yard measure,
introduces the height observables, and states the main theorems.  Section~\ref{sect:pf}
proves the partition-function formula by boundary reflection and commutation identities for
Macdonald polynomials.  Section~\ref{sect:lthf} derives the contour formula for the expectation
of the height-Laplace transform using Negu\c t operators.  Section~\ref{sect:as} performs the
Jack asymptotic analysis of the one-point and multi-point formulas; the infinite reflected
products produce the functions \(\mathcal F_{u,v,r}\), and the two-boundary part produces the
covariance correction.

Section~\ref{sect:fb} proves the Laplace-transform law of large numbers, upgrades it to the
weak slope-measure limit shape in the variable \(x=e^{-n\beta\kappa}\), and derives the
conditional regular frozen-boundary equations.  It also gives explicit separated all-\(L\) sufficient conditions
for the finite signed zero--pole order and proves the local
root-localization input used in the regular root classification. Section~\ref{sect:gff} identifies the fluctuation covariance
with the annular reflected-image kernel and proves the annular Laplace-test Gaussian theorem.
Section~\ref{sect:hsmp} records the one-boundary degeneration corresponding to the
half-space Macdonald-process geometry.  Appendix~\ref{sc:dmp} collects the Macdonald
polynomial identities and technical inputs used in the proof.

\section{Main Results}\label{sect:mr}

In this section, we review rail-yard graphs and dimer coverings, and state the main results proved in this paper.

\subsection{Weighted rail-yard graphs}

The rail-yard graph was introduced in \cite{bbccr} as a broad class of graphs on which random perfect matchings give rise to Schur processes. It is believed to be the most general known family of graphs for which perfect matchings naturally correspond to measures governed by Schur-process-type dynamics. We now recall the definition.

Let $l,r\in\ZZ$ with $l\le r$, and set
\[
[l..r]:=[l,r]\cap\ZZ.
\]
For a positive integer $m$, we also write
\[
[m]:=\{1,2,\ldots,m\}.
\]
Consider two sequences indexed by integers in $[l..r]$:
\begin{itemize}
    \item the $LR$-sequence $a=(a_l,a_{l+1},\ldots,a_r)\in\{L,R\}^{[l..r]}$;
    \item the sign sequence $b=(b_l,b_{l+1},\ldots,b_r)\in\{+,-\}^{[l..r]}$.
\end{itemize}
The \emph{rail-yard graph} $RYG(l,r,a,b)$ associated with $(l,r,a,b)$ is the bipartite graph with vertex set
\[
[2l-1..2r+1]\times\left(\ZZ+\frac12\right).
\]
A vertex is called \emph{even} (resp.\ \emph{odd}) if its abscissa is an even (resp.\ odd) integer. Each even vertex $(2m,y)$, $m\in[l..r]$, is incident to three edges: two horizontal edges joining it to $(2m-1,y)$ and $(2m+1,y)$, and one diagonal edge joining it to
\begin{itemize}
    \item $(2m-1,y+1)$ if $(a_m,b_m)=(L,+)$;
    \item $(2m-1,y-1)$ if $(a_m,b_m)=(L,-)$;
    \item $(2m+1,y+1)$ if $(a_m,b_m)=(R,+)$;
    \item $(2m+1,y-1)$ if $(a_m,b_m)=(R,-)$.
\end{itemize}
See Figure~\ref{fig:rye} for an example.

\begin{figure}
\includegraphics[width=.8\textwidth]{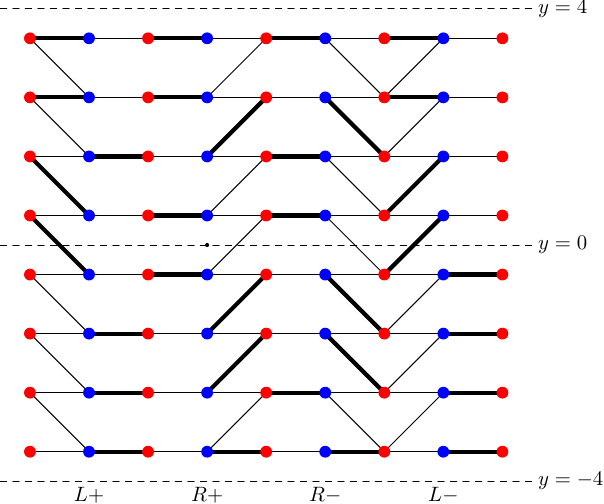}
\caption{A rail-yard graph with $LR$-sequence $a=\{L,R,R,L\}$ and sign sequence $b=\{+,+,-,-\}$. Odd vertices are represented by red points, and even vertices are represented by blue points. Dark edges represent a dimer covering. Assume that above the horizontal line $y=4$, only horizontal edges with an odd vertex on the left are present in the dimer configuration; and below the horizontal line $y=-4$, only horizontal edges with an even vertex on the left are present. The corresponding sequence of partitions (from left to right) is $\emptyset\prec(2,0,\ldots)\prec'(3,1,1,\ldots)\succ'(2,0,\ldots)\succ\emptyset$.}
\label{fig:rye}
\end{figure}

The left boundary (resp.\ right boundary) of $RYG(l,r,a,b)$ consists of all odd vertices with abscissa $2l-1$ (resp.\ $2r+1$). Vertices that do not lie on the boundary are called \emph{inner} vertices. A face of $RYG(l,r,a,b)$ is called an \emph{inner face} if all vertices on its boundary are inner vertices.

We assign edge weights to $RYG(l,r,a,b)$ as follows:
\begin{itemize}
    \item every horizontal edge has weight $1$;
    \item each diagonal edge adjacent to an even vertex with abscissa $2i$ has weight $x_i$.
\end{itemize}
These local weights are parameters entering the Macdonald-deformed measure defined in
Section~\ref{sect:pf}; except in Schur-type degenerations, the probability measure used
below is not merely the product of local occupied-edge weights.

\subsection{Dimer coverings}

A \emph{perfect matching}, or \emph{dimer covering}, of a graph is a subset of edges such that each vertex is incident to exactly one edge; see \cite{RK09,VG21}. For rail-yard graphs with free boundary, we use the following variant.

\begin{definition}[Free-boundary dimer covering]\label{df21}
A \emph{free-boundary dimer covering} of \(RYG(l,r,a,b)\) is a subset \(M\) of edges such that
\begin{enumerate}
    \item each inner vertex is incident to exactly one selected edge;
    \item each left-boundary or right-boundary vertex is incident to at most one selected edge;
    \item only finitely many selected edges are diagonal;
    \item there exists \(N>0\) such that the following two reference phases hold:
    \begin{enumerate}
        \item if \(y>N\), then for every even vertex \((2i,y)\), \(i\in[l..r]\), the selected edge incident to \((2i,y)\) is the horizontal edge joining \((2i-1,y)\) to \((2i,y)\);
        \item if \(y<-N\), then for every even vertex \((2i,y)\), \(i\in[l..r]\), the selected edge incident to \((2i,y)\) is the horizontal edge joining \((2i,y)\) to \((2i+1,y)\).
    \end{enumerate}
\end{enumerate}
Equivalently, sufficiently high in the graph every present edge is the horizontal edge lying to the right of its odd endpoint, while sufficiently low in the graph every present edge is the horizontal edge lying to the left of its odd endpoint.
\end{definition}

\begin{definition}[Partitions, conjugates, and parity]\label{def:partition}
A \emph{partition} is a non-increasing sequence
\[
        \lambda=(\lambda_i)_{i\ge1}
\]
of nonnegative integers that is eventually zero.  Let \(\mathbb Y\) denote the
set of all partitions.  The size of \(\lambda\) is
\[
        |\lambda|=\sum_{i\ge1}\lambda_i .
\]

The \emph{conjugate partition} \(\lambda'\) is the partition whose Young
diagram is obtained from that of \(\lambda\) by reflection across the main
diagonal.  Equivalently,
\[
        \lambda'_i
        :=
        \bigl|\{j\ge1:\lambda_j\ge i\}\bigr|,
        \qquad i\ge1 .
\]

A partition \(\lambda\) is called \emph{even} if all of its parts are even,
that is,
\[
        \lambda_i\in 2\mathbb Z_{\ge0}
        \qquad \text{for all } i\ge1 .
\]
It is called \emph{conjugate-even} if its conjugate partition is even, namely
\[
        \lambda' \text{ is even}.
\]
Equivalently, every column length of the Young diagram of \(\lambda\) is even.

For two partitions \(\lambda,\mu\), we write \(\lambda\succ\mu\), or
\(\mu\prec\lambda\), if
\[
        \lambda_1\ge \mu_1\ge \lambda_2\ge \mu_2\ge \lambda_3\ge\cdots .
\]
Equivalently, \(\lambda/\mu\) is a horizontal strip.  We write
\[
        \lambda\prec'\mu
\]
if
\[
        \lambda'\prec\mu' .
\]
Equivalently, \(\mu/\lambda\) is a vertical strip.
\end{definition}

\begin{definition}[Particle--hole configuration and column partitions]
\label{def:pure-dimer}
Let \(M\) be a dimer covering of \(RYG(l,r,a,b)\) in the sense of
Definition~\ref{df21}.  We associate a particle--hole configuration to the odd vertices as
follows.  If an odd vertex is incident to a selected edge on its right, we put a hole at that
vertex; if it is incident to a selected edge on its left, we put a particle there.  At the two
free boundaries we use the exterior-edge convention: an unmatched left-boundary odd vertex
is regarded as matched to a virtual exterior edge on its left, and is therefore a particle;
an unmatched right-boundary odd vertex is regarded as matched to a virtual exterior edge on
its right, and is therefore a hole.

For \(m\in[l..r+1]\), this gives a Maya diagram on the odd column \(x=2m-1\).  By the
tail condition in Definition~\ref{df21}, this Maya diagram has only holes sufficiently far
above and only particles sufficiently far below.  It therefore has finite charge and determines
a partition
\[
        \lambda^{(M,m)}
        =
        \bigl(\lambda^{(M,m)}_1,\lambda^{(M,m)}_2,\ldots\bigr)
        \in\mathbb Y
\]
as follows.  List the particles on the column \(x=2m-1\) from top to bottom.  For \(i\ge1\),
let \(\lambda^{(M,m)}_i\) be the number of holes strictly below the \(i\)-th highest particle.
The tail condition implies that this sequence is weakly decreasing and eventually zero, so it
is an ordinary Young partition.  We call \(\lambda^{(M,m)}\) the partition associated with the
odd column \(x=2m-1\).

In particular, the left and right boundary columns determine the boundary partitions
\[
        \lambda^{(M,l)},\qquad \lambda^{(M,r+1)}.
\]
When no confusion can arise, we write \(M\) also for the covering together with this induced
particle--hole configuration and its associated column partitions.
\end{definition}

See Figure~\ref{fig:rye} for an example.

For a free-boundary dimer covering $M$ of $RYG(l,r,a,b)$, we define the associated height function $h_M$ on faces of $RYG(l,r,a,b)$ as follows. Definition~\ref{df21}(4) implies that there exists $N>0$ such that when $y<-N$, only horizontal edges with an even vertex on the left are present. Fix a face $f_0$ whose midpoint lies on the horizontal line $y=-N$, and set
\[
\overline h_M(f_0)=0.
\]
For any two adjacent faces $f_1$ and $f_2$ sharing at least one edge, define the increment of $\overline h_M$ by the following rules:
\begin{itemize}
    \item If moving from $f_1$ to $f_2$ crosses a horizontal edge with an odd vertex on the left, then
    \[
    \overline h_M(f_2)-\overline h_M(f_1)=
    \begin{cases}
    1,&\text{if the edge is present in }M,\\
    -1,&\text{if the edge is absent from }M;
    \end{cases}
    \]
    \item If moving from $f_1$ to $f_2$ crosses a diagonal edge with an odd vertex on the left, then
    \[
    \overline h_M(f_2)-\overline h_M(f_1)=
    \begin{cases}
    2,&\text{if the edge is present in }M,\\
    0,&\text{if the edge is absent from }M.
    \end{cases}
    \]
\end{itemize}
These rules specify the increment for oriented dual crossings for which the odd endpoint of the crossed edge lies on the left.  If the same edge is crossed in the opposite direction, or equivalently if the odd endpoint lies on the right for the chosen orientation, the increment is defined to be the negative of the increment assigned to the reversed crossing.  With this antisymmetry convention the preliminary height function is well defined up to the choice of the initial face.  Equivalently, for the vertical lines used below, one may take the explicit formulas \eqref{hm1}--\eqref{hm2} as the definition of the normalized height.

Let $\overline h_0$ be the preliminary height function associated with the dimer configuration in which
\begin{itemize}
    \item no diagonal edge is present; and
    \item every present edge is horizontal with an even vertex on the left.
\end{itemize}
The height function associated with $M$ is then defined by
\begin{equation}\label{dhm}
    h_M=\overline h_M-\overline h_0.
\end{equation}

Let $m\in[l..r]$. The vertical line $x=2m-\frac12$ crosses only edges whose left endpoint is odd. Therefore, for each point $\left(2m-\frac12,y\right)$ lying in a face of $RYG(l,r,a,b)$, we have
\begin{equation}\label{hm1}
    h_M\!\left(2m-\frac12,y\right)=2\left[N_{h,M}^-\!\left(2m-\frac12,y\right)+N_{d,M}^-\!\left(2m-\frac12,y\right)\right],
\end{equation}
where $N_{h,M}^-\left(2m-\frac12,y\right)$ is the number of present horizontal edges in $M$ crossed by $x=2m-\frac12$ below height $y$, and $N_{d,M}^-\left(2m-\frac12,y\right)$ is the number of present diagonal edges in $M$ crossed by the same vertical line below height $y$. Both quantities are finite for each finite $y$.

Similarly, the vertical line $x=2m+\frac12$ crosses only edges whose left endpoint is even. Hence, for each point $\left(2m+\frac12,y\right)$ lying in a face of $RYG(l,r,a,b)$,
\begin{equation}\label{hm2}
    h_M\!\left(2m+\frac12,y\right)=2\left[J_{h,M}^-\!\left(2m+\frac12,y\right)-N_{d,M}^-\!\left(2m+\frac12,y\right)\right],
\end{equation}
where $J_{h,M}^-\left(2m+\frac12,y\right)$ is the number of absent horizontal edges in $M$ crossed by $x=2m+\frac12$ below height $y$, and $N_{d,M}^-\left(2m+\frac12,y\right)$ is the number of present diagonal edges crossed by the same vertical line below height $y$. Again, both quantities are finite for each finite $y$.

\begin{lemma}[Height normalization and finiteness]\label{lem:height-normalization}
For every free-boundary dimer covering $M$, the vertical-line formulas \eqref{hm1}--\eqref{hm2} define the same normalized height as the face-increment construction above.  In particular the normalized height is path-independent on faces.  Moreover, for every fixed column $m$ and every $k>0$, the height-Laplace integrals
\[
\int_{-\infty}^{\infty}
\left|h_M\!\left(2m\pm\frac12,y\right)\right|e^{-ky}\,dy
\]
are finite.  The same assertion holds for the column-wise $(q,t)$-coordinate system of Definition~\ref{def:qt-column-embedding} after replacing $y$ by that coordinate.
\end{lemma}

\begin{proof}
By Definition \ref{df21}(4), below a sufficiently low horizontal line the normalized height is zero.  Above that line, along a fixed vertical line the height changes only when the line crosses a horizontal edge, or an occupied diagonal edge.  There are finitely many occupied diagonal edges, and the horizontal contribution grows at most linearly in the vertical coordinate. It follows that $\int_{-\infty}^{\infty}
\left|h_M\!\left(2m\pm\frac12,y\right)\right|e^{-ky}\,dy$ is finite.
\end{proof}

We use the notation
\[
 i_{\equiv_n}=
 \begin{cases}
 i\bmod n, & \text{if } i\bmod n\neq 0,\\
 n, & \text{otherwise.}
 \end{cases}
\]

\subsection{The Macdonald-deformed doubly free-boundary measure}
\label{subsec:macdonald-dfb-measure}

We now define the probability measure used in the main theorems.  The factorized formula
for its normalizing constant will be proved in Section~\ref{sect:pf}.

Let \(q,t\in(0,1)\), let \(u,v\in(0,1)\), and let
\(c_l,c_r\in\{el,oa,deel,eoa\}\) be the left and right boundary types.  For a partition
\(\lambda\), and for a box \(s=(i,j)\in\lambda\), let
\[
        a_\lambda(s):=\lambda_i-j,
        \qquad
        l_\lambda(s):=\lambda'_j-i
\]
be its arm and leg lengths, respectively; equivalently, \(a_\lambda(s)\) is the number of
boxes of \(\lambda\) strictly to the right of \(s\), and \(l_\lambda(s)\) is the number of boxes
strictly below \(s\).

Set
\[
 b_\lambda(s;q,t)
 =
 \frac{1-q^{a_\lambda(s)}t^{l_\lambda(s)+1}}
      {1-q^{a_\lambda(s)+1}t^{l_\lambda(s)}} .
\]
Define
\[
 b_\lambda(q,t):=\prod_{s\in\lambda}b_\lambda(s;q,t),
\]
and
\[
 b_{\lambda}^{el}
 :=
 \prod_{\substack{s\in\lambda\\ l_\lambda(s)\ {\rm even}}}
 b_\lambda(s;q,t),
 \qquad
 b_{\lambda}^{oa}
 :=
 \prod_{\substack{s\in\lambda\\ a_\lambda(s)\ {\rm odd}}}
 b_\lambda(s;q,t).
\]
We also set
\begin{equation}\label{eq:bbar-def}
 \overline b_\lambda^{el}:=\frac{b_\lambda(q,t)}{b_\lambda^{el}},
 \qquad
 \overline b_\lambda^{oa}:=\frac{b_\lambda(q,t)}{b_\lambda^{oa}}.
\end{equation}
For the even and conjugate-even boundary types we use the same \(b\)-weights:
\[
 b_\lambda^{deel}:=b_\lambda^{el},
 \qquad
 b_\lambda^{eoa}:=b_\lambda^{oa},
\]
and
\[
 \overline b_\lambda^{deel}:=\overline b_\lambda^{el},
 \qquad
 \overline b_\lambda^{eoa}:=\overline b_\lambda^{oa}.
\]

Let
\begin{equation}\label{eq:boundary-indicator}
\mathbf 1_c(\lambda):=
\begin{cases}
1, & c\in\{el,oa\},\\
\mathbf 1_{\{\lambda\ {\rm conjugate\text{-}even}\}}, & c=deel,\\
\mathbf 1_{\{\lambda\ {\rm even}\}}, & c=eoa.
\end{cases}
\end{equation}


For a single-variable specialization $[x]$ and its dual specialization $[x]'$, the skew Macdonald symmetric functions are given by (see the remarks on p.~346 of \cite{MG95})
\begin{align}
 P_{\lambda/\mu}([x];q,t)&=\delta_{\mu\prec\lambda}\,\psi_{\lambda/\mu}(q,t)x^{|\lambda|-|\mu|},
 &
 Q_{\lambda/\mu}([x];q,t)&=\delta_{\mu\prec\lambda}\,\varphi_{\lambda/\mu}(q,t)x^{|\lambda|-|\mu|},
 \label{pqs1}
\end{align}
and
\begin{align}
 P_{\lambda/\mu}([x]';q,t)&=\delta_{\mu'\prec\lambda'}\,\varphi'_{\lambda/\mu}(q,t)x^{|\lambda|-|\mu|},
 &
 Q_{\lambda/\mu}([x]';q,t)&=\delta_{\mu'\prec\lambda'}\,\psi'_{\lambda/\mu}(q,t)x^{|\lambda|-|\mu|}.
 \label{pqs2}
\end{align}
Here $\psi$, $\varphi$, $\psi'$, and $\varphi'$ are given on p.~341 of \cite{MG95}:
\begin{align*}
\varphi_{\lambda/\mu}(q,t)
 &=\prod_{s\in C_{\lambda/\mu}}\frac{b_\lambda(s;q,t)}{b_\mu(s;q,t)},\\
\psi_{\lambda/\mu}(q,t)
 &=\prod_{s\in R_{\lambda/\mu}\setminus C_{\lambda/\mu}}\frac{b_\mu(s;q,t)}{b_\lambda(s;q,t)},\\
\varphi'_{\lambda/\mu}(q,t)
 &=\prod_{s\in R_{\lambda/\mu}}\frac{b_\mu(s;q,t)}{b_\lambda(s;q,t)}
 =\prod_{s\in C_{\lambda'/\mu'}}\frac{b_{\lambda'}(s;t,q)}{b_{\mu'}(s;t,q)}
 =\varphi_{\lambda'/\mu'}(t,q),\\
\psi'_{\lambda/\mu}(q,t)
 &=\prod_{s\in C_{\lambda/\mu}\setminus R_{\lambda/\mu}}\frac{b_\lambda(s;q,t)}{b_\mu(s;q,t)}
 =\psi_{\lambda'/\mu'}(t,q),
\end{align*}
where $C_{\lambda/\mu}$ (respectively, $R_{\lambda/\mu}$) denotes the union of columns (respectively, rows) intersecting $\lambda/\mu$.
The coefficients \(\psi,\varphi,\psi',\varphi'\) are the standard Macdonald
branching coefficients; they are products of \(b_\nu(s;q,t)\)-ratios and are
positive when \(0<q,t<1\).

Let
\[
\pmb\mu=(\mu^{(l)},\mu^{(l+1)},\ldots,\mu^{(r+1)})
\]
be a sequence of partitions satisfying the local interlacing relations determined by
\(RYG(l,r,a,b)\)(See Lemma \ref{lem:dimer-partition-correspondence}):
\begin{align}
\begin{array}{ll}
(a_i,b_i)=(L,+):& \mu^{(i)}\prec\mu^{(i+1)},\\
(a_i,b_i)=(L,-):& \mu^{(i)}\succ\mu^{(i+1)},\\
(a_i,b_i)=(R,+):& \mu^{(i)}\prec'\mu^{(i+1)},\\
(a_i,b_i)=(R,-):& \mu^{(i)}\succ'\mu^{(i+1)}.
\end{array}\label{cdd}
\end{align}
Here \(\prec\) denotes horizontal-strip interlacing and \(\prec'\) denotes the
corresponding vertical-strip interlacing after conjugation.

For such a sequence, set
\begin{align}
A(\pmb\mu)
&:=
\prod_{\substack{i\in[l..r]\\ (a_i,b_i)=(L,+)}}
 P_{\mu^{(i+1)}/\mu^{(i)}}([x_i];q,t)
\prod_{\substack{i\in[l..r]\\ (a_i,b_i)=(L,-)}}
 Q_{\mu^{(i)}/\mu^{(i+1)}}([x_i];q,t)
\notag\\
&\qquad\times
\prod_{\substack{i\in[l..r]\\ (a_i,b_i)=(R,+)}}
 Q_{[\mu^{(i+1)}]'/[\mu^{(i)}]'}([x_i];t,q)
\prod_{\substack{i\in[l..r]\\ (a_i,b_i)=(R,-)}}
 P_{[\mu^{(i)}]'/[\mu^{(i+1)}]'}([x_i];t,q).
\label{ppt}
\end{align}
Here $P,Q$ are (skew) Macdonald polynomials; see Appendix \ref{sc:dmp}, (\ref{pqs1}), (\ref{pqs2}).

The unnormalized Macdonald weight of \(\pmb\mu\) is
\[
W_{c_l,c_r}(\pmb\mu)
:=
\mathbf 1_{c_l}(\mu^{(l)})
\mathbf 1_{c_r}(\mu^{(r+1)})
\,
u^{|\mu^{(l)}|}v^{|\mu^{(r+1)}|}
\frac{b_{\mu^{(r+1)}}^{c_r}}
     {\overline b_{\mu^{(l)}}^{c_l}}
A(\pmb\mu).
\]
The normalizing constant is
\begin{equation}\label{def-Zclr}
Z_{c_l,c_r}
:=
\sum_{\pmb\mu}
W_{c_l,c_r}(\pmb\mu),
\end{equation}
where the sum is over all partition sequences satisfying the above local interlacing
relations.  Whenever
\[
        0<Z_{c_l,c_r}<\infty,
\]
we define the doubly free-boundary Macdonald rail-yard measure by
\begin{equation}\label{dpm}
\mathbb P_{c_l,c_r}(\pmb\mu)
:=
\frac{W_{c_l,c_r}(\pmb\mu)}{Z_{c_l,c_r}}.
\end{equation}
Via Lemma~\ref{lem:dimer-partition-correspondence}, this is also the probability measure
on the corresponding canonical free-boundary dimer states.

\begin{assumption}[Piecewise periodic Jack scaling]\label{ap5}
Let
\[
\{RYG(l^{(\epsilon)},r^{(\epsilon)},a^{(\epsilon)},b^{(\epsilon)})\}_{\epsilon>0}
\]
be a sequence of rail-yard graphs, and let \(x_i^{(\epsilon)}\) denote the weight of a
diagonal edge adjacent to an even vertex with abscissa \(2i\).
\begin{enumerate}
    \item \textbf{Piecewise periodicity of the graph.}
    For a positive integer \(n\) and real numbers
    \(V_0<V_1<\cdots<V_m\), we say that the sequence is \(n\)-periodic with transition
    points \(V_0,\ldots,V_m\) as \(\epsilon\to0\) if the following hold.
    \begin{enumerate}
        \item For each \(\epsilon>0\), there exist integer multiples of \(n\)
        \[
        l^{(\epsilon)}=v_0^{(\epsilon)}<v_1^{(\epsilon)}<\cdots<v_m^{(\epsilon)}
        =r^{(\epsilon)}
        \]
        such that
        \[
        \lim_{\epsilon\to0}\epsilon v_p^{(\epsilon)}=V_p,
        \qquad p\in\{0\}\cup[m].
        \]
        \item The sequence \(a^{(\epsilon)}\) is \(n\)-periodic on
        \([l^{(\epsilon)}..r^{(\epsilon)}]\) and does not depend on \(\epsilon\).
        More precisely, there exist \(a_1,\ldots,a_n\in\{L,R\}\) such that
        \[
        a_i^{(\epsilon)}=a_{i_{\equiv_n}}.
        \]
        \item For each \(p\in[m]\), the sequence \(b^{(\epsilon)}\) is \(n\)-periodic
        on \((v_{p-1}^{(\epsilon)},v_p^{(\epsilon)})\) and does not depend on
        \(\epsilon\), although it may depend on \(p\).  More precisely, there exist
        \(b_{p,1},\ldots,b_{p,n}\in\{+,-\}\) such that for
        \(i\in(v_{p-1}^{(\epsilon)},v_p^{(\epsilon)})\),
        \[
        b_i^{(\epsilon)}=b_{p,i_{\equiv_n}}.
        \]
    \end{enumerate}

    \item \textbf{Periodicity of weights.}
    The weights \(x_i^{(\epsilon)}\) are periodic \(q\)-volume weights:
    \[
    x_i^{(\epsilon)}=
    \begin{cases}
    e^{-\epsilon(i-i_{\equiv_n})}\tau_k,& b_i^{(\epsilon)}=+,\\
    e^{\epsilon(i-i_{\equiv_n})}\tau_k^{-1},& b_i^{(\epsilon)}=-,
    \end{cases}
    \]
    where \(k=i_{\equiv_n}\) and \(\tau_1,\ldots,\tau_n>0\) are independent of
    \(\epsilon\).

   \item \textbf{Exact Jack scaling and marked columns.}
We work in the exact Jack specialization
\begin{equation}\label{jw}
        q=t^\alpha,\qquad \alpha>0,
\end{equation}
and use the exact mesh parametrization
\begin{equation}\label{dtbeta}
        t=e^{-n\beta\epsilon},
\end{equation}
where \(\beta>0\) is independent of \(\epsilon\).

Let \(s\) be a positive integer.  For each \(d\in[s]\), assume that there exists a
sequence \(i_d^{(\epsilon)}\in[l^{(\epsilon)}..r^{(\epsilon)}]\) such that
\begin{equation}\label{dci}
        \lim_{\epsilon\to0}\epsilon i_d^{(\epsilon)}=\chi_d,
\end{equation}
with \(\chi_1\le\cdots\le\chi_s\), and such that \(i_d^{(\epsilon)}\bmod n\) is
independent of \(\epsilon\).  Equivalently, there exist
\(i_1^*,\ldots,i_s^*\in[n]\) for which
\[
        \bigl(i_d^{(\epsilon)}\bigr)_{\equiv_n}=i_d^*.
\]
When \(s=1\), we suppress the index and write
\[
        \lim_{\epsilon\to0}\epsilon i^{(\epsilon)}=\chi,
        \qquad
        \bigl(i^{(\epsilon)}\bigr)_{\equiv_n}=i^*.
\]
\end{enumerate}
\end{assumption}

For later use, we set
\[
l^{(0)}:=V_0,\qquad r^{(0)}:=V_m.
\]

\medskip

\noindent\textbf{Slope-measure convention.}
Let \(F:\mathbb R\to\mathbb R\) be a right-continuous nondecreasing function.
Its \emph{slope measure} in the \(\kappa\)-direction is the distributional
derivative \(dF\), equivalently the positive Borel measure characterized by
\[
        dF((a,b])=F(b)-F(a),\qquad a<b.
\]
Thus, if \(F\) is differentiable, then \(dF=F'(\kappa)\,d\kappa\); if \(F\)
has a jump of size \(J\) at \(\kappa_0\), then \(dF\) has an atom of mass
\(J\) at \(\kappa_0\).

In the theorem below,
\[
        H_\epsilon(\kappa)
        =
        \epsilon h_{M^{(\epsilon)}}^{(q,t)}
        \left(i^{(\epsilon)},\frac{\kappa}{\epsilon}\right)
\]
is the rescaled height profile along a marked \(L\)-type column.  By the
vertical-line formula \eqref{hm1}, this height is twice the number of occupied
horizontal and diagonal edges below the observation height.  Hence
\(H_\epsilon\) is nondecreasing in \(\kappa\).  The lower reference phase fixes
the additive constant by making the normalized height equal to zero for all
sufficiently negative \(\kappa\).

Assume the one-point admissibility condition of
Assumption~\ref{ass:contour-branch-admissibility} at the marked column $\chi$. set
\begin{align}
        S_\chi(w)
        :=
        \mathcal G_\chi(w)\prod_{r\ge1}\mathcal F_{u,v,r}(w),\label{dsc}
\end{align}
where \(\mathcal G_\chi\) and \(\mathcal F_{u,v,r}\) are defined in
\eqref{dgc} and \eqref{dfuvk}.  On the admissible one-point contour
\(\mathcal C\) of Proposition~\ref{p57}, define
\begin{align}
        T_\chi(w)
        :=
        \exp\!\left\{\beta\,\mathrm{Log}_{\chi,\mathcal C}S_\chi(w)\right\},\label{dtc}
\end{align}
with the logarithm fixed by
Assumption~\ref{ass:contour-branch-admissibility}.

\begin{theorem}(Laplace law of large numbers and weak slope-measure limit shape.)
\label{l61}
\label{thm:limit-shape-beta}
\label{thm:weak-limit-shape-beta}
Fix boundary types \(c_l,c_r\in\{el,oa,deel,eoa\}\) and parameters
\(u,v\in(0,1)\).  For each \(\epsilon>0\), let \(M^{(\epsilon)}\) be sampled
from the doubly free-boundary Macdonald measure \(\mathbb P_{c_l,c_r}\) in
\eqref{dpm}.  Suppose Assumption~\ref{ap5} holds, and suppose that the
one-point admissibility condition of
Assumption~\ref{ass:contour-branch-admissibility} holds at the marked column.

Let \(i^{(\epsilon)}\) be a marked \(L\)-type column satisfying \eqref{dci},
with fixed residue class modulo \(n\), and write
\[
        \epsilon i^{(\epsilon)}\to\chi
        \in (V_0,V_m)\setminus\{V_1,\ldots,V_{m-1}\}.
\]
In the charge-centered \((q,t)\)-coordinate of
Definition~\ref{def:qt-column-embedding}, set
\[
        H_\epsilon(\kappa)
        :=
        \epsilon h^{(q,t)}_{M^{(\epsilon)}}
        \left(i^{(\epsilon)},\frac{\kappa}{\epsilon}\right).
\]

Then, for every \(k\in\mathbb Z_{>0}\),
\begin{equation}\label{lph}
        \int_{\mathbb R}e^{-n\beta k\kappa}H_\epsilon(\kappa)\,d\kappa
        \xrightarrow{\mathbb P}
        \frac{1}{n^2\alpha k^2\beta^2\pi\mathbf i}
        \oint_{\mathcal C}T_\chi(w)^k\,\frac{dw}{w}.
\end{equation}

Moreover, let \(\nu_\chi^{(\epsilon)}\) be the pushed-forward slope measure
defined by
\begin{equation}\label{dnu-eps}
        \nu_\chi^{(\epsilon)}(B)
        :=
        \int_{\{\kappa:\,e^{-n\beta\kappa}\in B\}}
        n\beta e^{-n\beta\kappa}\,dH_\epsilon(\kappa),
        \qquad B\subset[0,\infty).
\end{equation}
Then \(\nu_\chi^{(\epsilon)}\) converges weakly in probability to a
deterministic compactly supported finite measure \(\nu_\chi\), whose moments are
\begin{equation}\label{nu-moments}
        \int_0^\infty x^{k-1}\nu_\chi(dx)
        =
        \frac{1}{\alpha k\pi\mathbf i}
        \oint_{\mathcal C}T_\chi(w)^k\,\frac{dw}{w},
        \qquad k\in\mathbb Z_{>0}.
\end{equation}
The associated macroscopic height profile is
\begin{equation}\label{def-H-from-nu}
        \mathcal H(\chi,\kappa)
        :=
        \frac{1}{n\beta}
        \int_{(e^{-n\beta\kappa},\infty)}
        \frac{1}{x}\,\nu_\chi(dx).
\end{equation}
\end{theorem}

\begin{definition}[Regular liquid and frozen points]\label{df29}
Let \(\mathcal H(\chi,\kappa)\) be the deterministic limit shape obtained from
Theorem~\ref{l61}.  A point \((\chi,\kappa)\) is called a \emph{regular slope point} if
\[
        \chi\in (V_0,V_m)\setminus\{V_1,\ldots,V_{m-1}\}
\]
and the vertical slope
\[
        \partial_\kappa\mathcal H(\chi,\kappa)
\]
is well defined at this point.

At a regular slope point, we call \((\chi,\kappa)\) \emph{liquid} if
\[
        \partial_\kappa\mathcal H(\chi,\kappa)
        \in \left(0,\frac{2}{\alpha}\right),
\]
and \emph{frozen} if
\[
        \partial_\kappa\mathcal H(\chi,\kappa)
        \in \left\{0,\frac{2}{\alpha}\right\}.
\]
The \emph{frozen boundary} is the interface between liquid and frozen regular
slope points.
\end{definition}

\begin{theorem}
[Regular liquid and frozen points in the separated all-\(L\) regime]
\label{p412}

Fix boundary types
\[
        c_l,c_r\in\{el,oa,deel,eoa\}
\]
and parameters \(u,v\in(0,1)\).
Suppose Assumption~\ref{ap5} holds and that the relevant one-point
contour and spectral-branch admissibility conditions of
Assumption~\ref{ass:contour-branch-admissibility} hold.
Use the limit shape \(\mathcal H\) supplied by
Theorem~\ref{l61}.

Assume further that
\[
        \beta=1
\]
and that Assumptions~\ref{ap64}--\ref{ap65} hold.

Let
\[
        S_\chi(w)
        =
        \mathcal G_\chi(w)
        \prod_{r\ge1}\mathcal F_{u,v,r}(w)
\]
be defined with the compatible logarithm fixed by
Assumption~\ref{ass:contour-branch-admissibility}.
For \(K\ge1\), set
\[
        S_{\chi,K}(w)
        :=
        \mathcal G_\chi(w)
        \prod_{r=1}^{K}\mathcal F_{u,v,r}(w),
        \qquad
        C_{\chi,K}
        :=
        \lim_{w\to\infty}S_{\chi,K}(w),
\]
where the latter limit is taken in the reduced rational expression.

Let \((\chi,\kappa)\) be a regular slope point and put
\[
        x:=e^{-n\kappa}.
\]
Assume that the following nonexceptionality conditions hold.

\begin{enumerate}[label=\textup{(\roman*)}]
\item
The Stieltjes boundary-value formula \eqref{dsm2-natural} holds at
\(x\), and the root labels and logarithms in
Lemma~\ref{lem:root-labelled-stieltjes-jack}
admit continuation to \(x+\mathbf i0\) through the upper half-plane.

\item
\[
        S_\chi(0)\neq x.
\]

\item
There exists \(K_*\ge1\) such that
\[
        S_{\chi,K}(0)\neq x,
        \qquad
        C_{\chi,K}\neq x,
        \qquad
        K\ge K_*.
\]
\end{enumerate}

Then the characteristic equation
\[
        S_\chi(w)=x
\]
has at most one nonreal conjugate pair, counted with multiplicity.

Moreover,
\[
(\chi,\kappa)\text{ is liquid}
\quad\Longleftrightarrow\quad
S_\chi(w)=x
\text{ has a unique nonreal conjugate pair}.
\]
Writing its upper-half-plane member as \(w_+\), one has
\[
        \frac{\partial\mathcal H(\chi,\kappa)}{\partial\kappa}
        =
        \frac{2}{\alpha}
        -
        \frac{2\arg w_+}{\pi\alpha},
        \qquad
        \arg w_+\in(0,\pi).
\]
Equivalently,
\[
(\chi,\kappa)\text{ is frozen}
\quad\Longleftrightarrow\quad
S_\chi(w)=x
\text{ has only real roots}.
\]

Let \(\mathscr N\) denote the set of regular slope points satisfying
conditions \textup{(i)}--\textup{(iii)} above.  At every point of
\(\operatorname{int}\mathscr N\) where the characteristic equation
has no multiple real root, the number of upper-half-plane roots is
locally constant.  Consequently, the portion of the regular frozen
boundary lying in \(\operatorname{int}\mathscr N\) is contained in
the real multiple-root locus
\begin{equation}
\label{fb}
\begin{cases}
\displaystyle
S_\chi(w)=e^{-n\kappa},\\[4pt]
\displaystyle
\frac{d}{dw}
\mathrm{Log}_{\chi,\mathcal C_\chi}S_\chi(w)=0,
\end{cases}
\qquad w\in\mathbb R.
\end{equation}
At a nondegenerate fold point the multiple root has multiplicity
two, so \eqref{fb} is the usual double-root system.
\end{theorem}

The next theorem describes finite-dimensional fluctuations of the
unrescaled height-Laplace transforms in the charge-centered
\((q,t)\)-column coordinate.  For arbitrary piecewise-periodic
\(L/R\) backgrounds, with all marked columns of \(L\)-type and
under the stated multipoint contour, branch, and normal-convergence
hypotheses, the limiting vector is Gaussian.

\begin{theorem}[Annular Laplace-test Gaussian fluctuations]\label{t77}
Fix boundary types $c_l,c_r\in\{el,oa,deel,eoa\}$ and parameters $u,v\in(0,1)$.  Let \newline $\{RYG(l^{(\epsilon)},r^{(\epsilon)},a^{(\epsilon)},b^{(\epsilon)})\}_{\epsilon>0}$ be a sequence of rail-yard graphs satisfying Assumption~\ref{ap5}. Assume that all marked columns are of \(L\)-type:
\[
        a_{i_d^{(\epsilon)}}^{(\epsilon)}=L,
        \qquad d\in[s],
\]
for all sufficiently small \(\epsilon\).
Assume further that the multipoint contour, branch, and
normal-convergence hypotheses used in Theorem~\ref{t58} hold.  For each $\epsilon>0$, let $M^{(\epsilon)}$ be the random canonical free-boundary dimer state corresponding, via Lemma~\ref{lem:dimer-partition-correspondence}, to a partition sequence sampled from \eqref{dpm}.

Fix $s\in\ZZ_{>0}$ and positive integers $k_1,\ldots,k_s$.  For each $d\in[s]$, choose a sequence $i_d^{(\epsilon)}\in[l^{(\epsilon)}..r^{(\epsilon)}]$ such that
\[
 \epsilon i_d^{(\epsilon)}\to\chi_d\in(V_0,V_m)\setminus\{V_1,\ldots,V_{m-1}\},
 \qquad (i_d^{(\epsilon)})_{\equiv_n}\ \text{is independent of }\epsilon.
\]
In the definition below the coordinate-system height is the charge-centered height
$h^{(q,t),\circ}_{M^{(\epsilon)}}(i_d^{(\epsilon)},\cdot)$ from Definition~\ref{def:charge-centered-qt-coordinate}.  Set
\[
X_{k_d}^{(\epsilon)}(i_d^{(\epsilon)})
:=\int_{-\infty}^{\infty}
\left(
 h^{(q,t),\circ}_{M^{(\epsilon)}}\left(i_d^{(\epsilon)},\frac{\eta}{\epsilon}\right)
 -\mathbb E h^{(q,t),\circ}_{M^{(\epsilon)}}\left(i_d^{(\epsilon)},\frac{\eta}{\epsilon}\right)
\right)e^{-n\beta k_d\eta}\,d\eta .
\]
Then
\[
\left(X_{k_1}^{(\epsilon)}(i_1^{(\epsilon)}),\ldots,X_{k_s}^{(\epsilon)}(i_s^{(\epsilon)})\right)
\Longrightarrow
\left(\mathcal X_{k_1}(\chi_1),\ldots,\mathcal X_{k_s}(\chi_s)\right)
\]
in distribution, where the limit is a centered Gaussian vector.  Its covariance is
\begin{align}
&\mathrm{Cov}\bigl[\mathcal X_{k_d}(\chi_d),\mathcal X_{k_h}(\chi_h)\bigr]\notag\\
&\quad=
\frac{4}{\alpha n^2\beta^2 k_dk_h(2\pi\mathbf i)^2}
\oint_{\mathcal C_d}\oint_{\mathcal C_h}
\Phi_{\chi_d,k_d}(z)\Phi_{\chi_h,k_h}(w)
\left[\frac{zw}{(z-w)^2}+\mathscr B_{u,v}(z,w)\right]
\frac{dz}{z}\frac{dw}{w}.
\label{main-corrected-fluc-cov}
\end{align}
Here
\[
\Phi_{\chi,k}(z)
:=\bigl[\mathcal G_{\chi}(z)\bigr]^{k\beta}
\prod_{r\ge1}\bigl[\mathcal F_{u,v,r}(z)\bigr]^{k\beta},
\]
$\mathscr B_{u,v}$ is defined in~\eqref{dBuv}, and the contours are as in Theorem~\ref{t58}; in particular they are chosen so that all four series defining $\mathscr B_{u,v}(z,w)$ converge normally on $\mathcal C_d\times\mathcal C_h$.  If $\chi_d=\chi_h$, the two contours are taken to be disjoint nested contours around the same pole set.  By Definition~\ref{def:annular-fb-gff} and Proposition~\ref{prop:annular-gff-kernel}, the bracketed kernel in \eqref{main-corrected-fluc-cov} is the logarithmic-derivative annular free-boundary image covariance kernel on the Laplace-test class.
\end{theorem}

\begin{remark}
The phrase ``annular free-boundary image covariance'' is used in the precise image-kernel sense of Definition~\ref{def:annular-fb-gff}.  The terms involving \(u^2zw\) and \(v^2/(zw)\) are the two free-boundary images.  Positivity of the covariance matrix for the test observables follows from the proof in Section~\ref{sect:gff}: for every finite collection of observables, the displayed matrix is the limit of covariance matrices of the prelimit centered random variables.
\end{remark}

\section{Partition Function}\label{sect:pf}

In this section, we compute the partition function of perfect matchings on doubly free-boundary rail-yard graphs with Macdonald weights. For general positive edge weights, the corresponding partition function was computed in \cite{zl23}, inspired by the partition-function computation for doubly free-boundary steep tilings in \cite{BCC17}. In the Macdonald-weighted setting, however, several additional combinatorial identities for Macdonald polynomials are needed; these will be established below.

\subsection{Dimer Coverings and Interlacing Partitions}

The free-boundary dimer-covering definition of Definition~\ref{df21}, together with the particle--hole convention of Definition~\ref{def:pure-dimer}, is formulated at the level of edge subsets and boundary Maya states.  With this convention every free-boundary dimer covering determines the boundary partitions $\lambda^{(M,l)}$ and $\lambda^{(M,r+1)}$, and conversely these boundary partitions determine the states of all unmatched boundary vertices.

\begin{lemma}[Rail-yard dimer--partition correspondence]
\label{lem:dimer-partition-correspondence}
Under the particle--hole convention of Definition~\ref{def:pure-dimer}, a
free-boundary dimer covering \(M\) of \(RYG(l,r,a,b)\) determines a sequence
of partitions
\[
        \bigl(\lambda^{(M,l)},\lambda^{(M,l+1)},\ldots,
        \lambda^{(M,r+1)}\bigr)\in\mathbb Y^{r-l+2}.
\]
This correspondence is bijective between free-boundary dimer coverings and
partition sequences
\[
        (\mu^{(l)},\mu^{(l+1)},\ldots,\mu^{(r+1)})
\]
satisfying, for every \(i\in[l..r]\), the local interlacing rule (\ref{cdd}).

If boundary labels \(c_l,c_r\in\{el,oa,deel,eoa\}\) are imposed, the same
bijection is restricted by the endpoint conditions
\[
        \mathbf 1_{c_l}(\mu^{(l)})=
        \mathbf 1_{c_r}(\mu^{(r+1)})=1,
\]
where \(\mathbf 1_c\) is the boundary indicator defined in
Section~\ref{subsec:macdonald-dfb-measure}.  Equivalently, \(deel\) imposes
that the conjugate endpoint partition is even, \(eoa\) imposes that the
endpoint partition itself is even, and \(el,oa\) impose no parity restriction.
\end{lemma}

\begin{proof}
The local rail-yard correspondence is the transfer-matrix correspondence of
\cite[Equations~(12)--(14), (19)--(20), Proposition~8]{bbccr}.  In that
statement, an admissible rail-yard covering with prescribed left and right
Maya boundary states is encoded by a matrix element
\[
        \langle l|
        \Gamma_{a_l b_l}(x_l)\Gamma_{a_{l+1}b_{l+1}}(x_{l+1})
        \cdots
        \Gamma_{a_r b_r}(x_r)
        |r\rangle .
\]
The four operators \(\Gamma_{L+},\Gamma_{L-},\Gamma_{R+},\Gamma_{R-}\)
act on partitions exactly by the four horizontal- or vertical-strip
interlacing relations displayed above.

Our free-boundary convention only changes how unmatched boundary odd vertices
are completed into Maya diagrams.  By Definition~\ref{def:pure-dimer}, unmatched
left-boundary odd vertices are treated as particles and unmatched right-boundary
odd vertices as holes; hence the two boundary columns also determine ordinary
Maya diagrams and therefore endpoint partitions.  Thus the same column-by-column
bijection applies with arbitrary endpoint partitions
\(\mu^{(l)}\) and \(\mu^{(r+1)}\).

The conditions attached to \(deel\) and \(eoa\) are not additional local matching
rules.  They are endpoint support restrictions coming from the Macdonald
free-boundary weights and are encoded by the indicators
\(\mathbf 1_{c_l}\) and \(\mathbf 1_{c_r}\).
\end{proof}

We define the $(q,t)$-charge on column $2m-1$ by
\begin{align}
 c^{(M,m,q,t)}
 &=\bigl[\text{number of particles on column }(2m-1)\text{ in the upper half-plane}\bigr] \notag\\
 &\qquad-\frac{\log q}{\log t}\bigl[\text{number of holes on column }(2m-1)\text{ in the lower half-plane}\bigr].
 \label{dcg}
\end{align}

For partitions $\lambda$ and $\mu$, we write $\lambda\supseteq\mu$ if $\lambda_i\geq\mu_i$ for all $i\in\NN$. Let $c_l,c_r\in\{el,oa,deel,eoa\}$.

Consider a sequence of partitions
\begin{align*}
 \pmb{\mu}=(\mu^{(l)},\mu^{(l+1)},\ldots,\mu^{(r+1)})\in\YY^{r-l+2}
\end{align*}
satisfying:
\begin{enumerate}[label=(\alph*)]
\item the local interlacing rule (\ref{cdd});
\item If $c_l=deel$ (respectively, $c_r=deel$), then $[\mu^{(l)}]'$ (respectively, $[\mu^{(r+1)}]'$) is even;
\item If $c_l=eoa$ (respectively, $c_r=eoa$), then $\mu^{(l)}$ (respectively, $\mu^{(r+1)}$) is even.
\end{enumerate}
Set
\begin{equation}\label{dZbdry}
\mathfrak Z_{c_l,c_r}(z):=\sum_{\lambda\in\YY}
\mathbf 1_{c_l}(\lambda)\mathbf 1_{c_r}(\lambda)
\frac{b_\lambda^{c_r}}{\overline b_\lambda^{c_l}}z^{|\lambda|}.
\end{equation}

We now derive a factorized formula for $Z_{c_l,c_r}$; defined as in (\ref{def-Zclr}).  The formula is explicit up to the terminal free-boundary normalization $\mathfrak Z_{c_l,c_r}(uv)$ defined in \eqref{dZbdry}, which is carried along as a separate boundary factor.  We begin with several standard identities for Macdonald polynomials.

\begin{lemma}
Let
\begin{align*}
 (a;q)_\infty:=\prod_{r=0}^{\infty}(1-aq^r).
\end{align*}
Let $\mathbf{x}=(x_1,x_2,\ldots)$ and $\mathbf{y}=(y_1,y_2,\ldots)$. The following identities appear on p.~349, 4(ii) and 4(iv), of \cite{MG95}:
\begin{align}
 \sum_{\lambda}b_{\lambda}^{el}P_{\lambda}(\mathbf{x})
 &=\prod_i\frac{(tx_i;q)_\infty}{(x_i;q)_\infty}
 \prod_{i<j}\frac{(tx_ix_j;q)_\infty}{(x_ix_j;q)_\infty}
 :=\Theta_{el}(\mathbf{x}),
 \label{ps1}\\
 \sum_{\lambda}b_{\lambda}^{oa}P_{\lambda}(\mathbf{x})
 &=\prod_i\frac{(qtx_i^2;q^2)_\infty}{(1-x_i)(q^2x_i^2;q^2)_\infty}
 \prod_{i<j}\frac{(tx_ix_j;q)_\infty}{(x_ix_j;q)_\infty}
 :=\Theta_{oa}(\mathbf{x}).
 \label{ps2}
\end{align}
The following identities appear on p.~349, 4(i) and 4(iii), of \cite{MG95}:
\begin{align}
 \sum_{\lambda:\,\lambda'\ \mathrm{even}}b_{\lambda}^{el}P_{\lambda}(\mathbf{x})
 &=\prod_{i<j}\frac{(tx_ix_j;q)_\infty}{(x_ix_j;q)_\infty}
 :=\Theta_{de,el}(\mathbf{x}),
 \label{ps3}\\
 \sum_{\lambda:\,\lambda\ \mathrm{even}}b_{\lambda}^{oa}P_{\lambda}(\mathbf{x})
 &=\prod_i\frac{(qtx_i^2;q^2)_\infty}{(x_i^2;q^2)_\infty}
 \prod_{i<j}\frac{(tx_ix_j;q)_\infty}{(x_ix_j;q)_\infty}
 :=\Theta_{e,oa}(\mathbf{x}).
 \label{ps4}
\end{align}
Define
\begin{align}
 \Pi(\mathbf{x},\mathbf{y};q,t):=\prod_{i,j}\frac{(tx_iy_j;q)_\infty}{(x_iy_j;q)_\infty}.
 \label{dpxy}
\end{align}
The Cauchy identity on p.~324, (4.13), of \cite{MG95} is
\begin{align}
 \sum_{\lambda}P_{\lambda}(\mathbf{x};q,t)Q_{\lambda}(\mathbf{y};q,t)=\Pi(\mathbf{x},\mathbf{y};q,t).
 \label{pqpi}
\end{align}
The identities on p.~323, (7.1$'$), and p.~324, (7.5), of \cite{MG95} are
\begin{align}
 P_{\mu}P_{\nu}=\sum_{\lambda}f_{\mu\nu}^{\lambda}P_{\lambda},
 \label{pmf}\\
 Q_{\lambda/\mu}=\sum_{\nu}f_{\mu\nu}^{\lambda}Q_{\nu}.
 \label{qlm}
\end{align}
Finally,
\begin{align}
 Q_{\lambda/\mu}=\frac{b_{\lambda}(q,t)}{b_{\mu}(q,t)}P_{\lambda/\mu}.
 \label{qrp}
\end{align}
\end{lemma}

\begin{lemma}\label{le13}
\begin{align}
 \sum_{\lambda}b_{\lambda}^{el}P_{\lambda/\eta}(\mathbf{x};q,t)
 &=\Theta_{el}(\mathbf{x})\sum_{\mu}b_{\mu}^{el}Q_{\eta/\mu}(\mathbf{x};q,t),
 \label{s1}\\
 \sum_{\lambda}b_{\lambda}^{oa}P_{\lambda/\eta}(\mathbf{x};q,t)
 &=\Theta_{oa}(\mathbf{x})\sum_{\mu}b_{\mu}^{oa}Q_{\eta/\mu}(\mathbf{x};q,t).
 \label{s2}
\end{align}
\end{lemma}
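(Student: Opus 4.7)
The plan is to prove both identities in parallel by introducing an auxiliary alphabet $\mathbf{y}$, evaluating the known Cauchy-type sums (\ref{ps1}) and (\ref{ps2}) on the merged alphabet $(\mathbf{y},\mathbf{x})$ in two different ways, and then reading off the skew identities by comparison of coefficients of $P_\eta(\mathbf{y})$.

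First, I would note that both generating functions $\Theta_{el}$ and $\Theta_{oa}$ factorize nicely when the alphabet is doubled. Indeed, inspecting the definitions on the right-hand sides of (\ref{ps1}) and (\ref{ps2}), the single-variable factors split, the within-$\mathbf{y}$ and within-$\mathbf{x}$ pair factors reassemble into the corresponding $\Theta_\bullet$, and the cross pairs $y_i x_j$ precisely produce the kernel $\Pi(\mathbf{y},\mathbf{x};q,t)$ defined in (\ref{dpxy}). Thus for $c\in\{el,oa\}$ one obtains the factorization
\begin{equation*}
\Theta_{c}(\mathbf{y},\mathbf{x}) \;=\; \Theta_{c}(\mathbf{y})\,\Theta_{c}(\mathbf{x})\,\Pi(\mathbf{y},\mathbf{x};q,t).
\end{equation*}

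Next, I would evaluate $\sum_\lambda b_\lambda^{c}P_\lambda(\mathbf{y},\mathbf{x};q,t)$ (which equals $\Theta_c(\mathbf{y},\mathbf{x})$ by (\ref{ps1})/(\ref{ps2})) in two ways. On one side, applying the branching rule $P_\lambda(\mathbf{y},\mathbf{x})=\sum_\eta P_\eta(\mathbf{y})P_{\lambda/\eta}(\mathbf{x})$ yields
\begin{equation*}
\sum_\lambda b_\lambda^{c}P_\lambda(\mathbf{y},\mathbf{x})\;=\;\sum_\eta P_\eta(\mathbf{y})\,\Bigl(\sum_\lambda b_\lambda^{c}P_{\lambda/\eta}(\mathbf{x})\Bigr).
\end{equation*}
On the other side, using the factorization above together with (\ref{pqpi}) for $\Pi$ and (\ref{ps1})/(\ref{ps2}) for $\Theta_c(\mathbf{y})$, I expand
\begin{equation*}
\Theta_c(\mathbf{y})\,\Pi(\mathbf{y},\mathbf{x};q,t)\;=\;\sum_{\mu,\nu} b_\mu^{c}\,P_\mu(\mathbf{y})P_\nu(\mathbf{y})\,Q_\nu(\mathbf{x}),
\end{equation*}
and then collapse the $P_\mu P_\nu$ product via (\ref{pmf}) and recollect using the dual skew expansion (\ref{qlm}), giving
\begin{equation*}
\Theta_c(\mathbf{y})\,\Pi(\mathbf{y},\mathbf{x};q,t)\;=\;\sum_\eta P_\eta(\mathbf{y})\,\sum_\mu b_\mu^{c}\,Q_{\eta/\mu}(\mathbf{x}).
\end{equation*}

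Combining, the identity $\Theta_c(\mathbf{y},\mathbf{x})=\Theta_c(\mathbf{x})\cdot\Theta_c(\mathbf{y})\Pi(\mathbf{y},\mathbf{x};q,t)$ becomes
\begin{equation*}
\sum_\eta P_\eta(\mathbf{y})\,\sum_\lambda b_\lambda^{c}P_{\lambda/\eta}(\mathbf{x})\;=\;\sum_\eta P_\eta(\mathbf{y})\,\Theta_c(\mathbf{x})\sum_\mu b_\mu^{c}\,Q_{\eta/\mu}(\mathbf{x}).
\end{equation*}
The Macdonald $P$-functions $\{P_\eta(\mathbf{y})\}_{\eta\in\YY}$ form a basis of the ring of symmetric functions in $\mathbf{y}$, so comparing coefficients of $P_\eta(\mathbf{y})$ yields the desired identities (\ref{s1}) and (\ref{s2}) simultaneously. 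The main (minor) obstacle is verifying rigorously the factorization $\Theta_c(\mathbf{y},\mathbf{x})=\Theta_c(\mathbf{y})\Theta_c(\mathbf{x})\Pi(\mathbf{y},\mathbf{x};q,t)$ for both $c=el$ and $c=oa$, which requires a careful bookkeeping of the single-variable and pair infinite products in the definitions of $\Theta_{el}$ and $\Theta_{oa}$; the rest is formal manipulation with the standard Macdonald structure constants. The argument is uniform in the choice $c\in\{el,oa\}$ because the cross-pair factor is the same $\Pi(\mathbf{y},\mathbf{x};q,t)$ in both cases, so no new input beyond the lemmas already quoted is needed.
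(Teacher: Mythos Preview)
Your proposal is correct and follows essentially the same argument as the paper's own proof: introduce an auxiliary alphabet, use the factorization $\Theta_c(\mathbf{y},\mathbf{x})=\Theta_c(\mathbf{y})\Theta_c(\mathbf{x})\Pi(\mathbf{y},\mathbf{x};q,t)$, expand both sides via the branching rule and the structure-constant identities (\ref{pmf})--(\ref{qlm}), and compare coefficients of $P_\eta(\mathbf{y})$. The only cosmetic difference is the ordering of the alphabets and some variable names.
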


\begin{proof}
We use $*$ to denote either $el$ or $oa$, and let $\mathbf{y}=(y_1,y_2,\ldots)$. By \eqref{ps1}, \eqref{ps2}, and \eqref{pqpi},
\begin{align*}
 \Theta_*(\mathbf{x},\mathbf{y})
 &:=\sum_{\phi}b_{\phi}^{*}P_{\phi}(\mathbf{x},\mathbf{y})
 =\Theta_*(\mathbf{x})\Theta_*(\mathbf{y})\Pi(\mathbf{x},\mathbf{y};q,t)\\
 &=\Theta_*(\mathbf{x})
 \left[\sum_{\mu}b_{\mu}^{*}P_{\mu}(\mathbf{y};q,t)\right]
 \left[\sum_{\phi}P_{\phi}(\mathbf{x};q,t)Q_{\phi}(\mathbf{y};q,t)\right].
\end{align*}
Using \eqref{pmf} and \eqref{qlm}, we obtain
\begin{align}
 \Theta_*(\mathbf{x},\mathbf{y})
 &=\Theta_*(\mathbf{x})\sum_{\mu,\phi}b_{\mu}^{*}Q_{\phi}(\mathbf{x};q,t)P_{\mu}(\mathbf{y};q,t)P_{\phi}(\mathbf{y};q,t) \notag\\
 &=\Theta_*(\mathbf{x})\sum_{\mu,\phi}b_{\mu}^{*}Q_{\phi}(\mathbf{x};q,t)
 \sum_{\lambda}f_{\mu\phi}^{\lambda}P_{\lambda}(\mathbf{y};q,t) \notag\\
 &=\Theta_*(\mathbf{x})\sum_{\mu,\lambda}b_{\mu}^{*}P_{\lambda}(\mathbf{y};q,t)
 \sum_{\phi}f_{\mu\phi}^{\lambda}Q_{\phi}(\mathbf{x};q,t) \notag\\
 &=\Theta_*(\mathbf{x})\sum_{\mu,\lambda}b_{\mu}^{*}P_{\lambda}(\mathbf{y};q,t)Q_{\lambda/\mu}(\mathbf{x};q,t).
 \label{tt1}
\end{align}
On the other hand,
\begin{align}
 \Theta_*(\mathbf{x},\mathbf{y})
 =\sum_{\phi}b_{\phi}^{*}P_{\phi}(\mathbf{x},\mathbf{y})
 =\sum_{\phi,\lambda}b_{\phi}^{*}P_{\phi/\lambda}(\mathbf{x};q,t)P_{\lambda}(\mathbf{y};q,t).
 \label{tt2}
\end{align}
Comparing the coefficients of $P_{\lambda}(\mathbf{y};q,t)$ in \eqref{tt1} and \eqref{tt2} yields
\begin{align*}
 \sum_{\phi}b_{\phi}^{*}P_{\phi/\lambda}(\mathbf{x};q,t)
 =\Theta_*(\mathbf{x})\sum_{\mu}b_{\mu}^{*}Q_{\lambda/\mu}(\mathbf{x};q,t),
\end{align*}
which is exactly the claim.
\end{proof}

The following identities were proved in Proposition~2.1 of \cite{BBCW18}.

\begin{lemma}\label{lem25}
Define
\begin{align*}
 \mathcal{E}_{\lambda}(\mathbf{x}):=\sum_{\mu'\ \mathrm{even}}b_{\mu}^{el}Q_{\lambda/\mu}(\mathbf{x}).
\end{align*}
Then
\begin{align*}
 \sum_{\nu'\ \mathrm{even}}b_{\nu}^{el}P_{\nu/\lambda}(\mathbf{x})
 =\Theta_{de,el}(\mathbf{x})\sum_{\mu'\ \mathrm{even}}b_{\mu}^{el}Q_{\lambda/\mu}(\mathbf{x})
 =\Theta_{de,el}(\mathbf{x})\mathcal{E}_{\lambda}(\mathbf{x}).
\end{align*}
\end{lemma}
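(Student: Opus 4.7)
The strategy will be a direct adaptation of the argument used for Lemma~\ref{le13}, with the Cauchy-type identity (\ref{ps3}) playing the role previously played by (\ref{ps1}) and (\ref{ps2}). The starting observation is that the product formula (\ref{ps3}) factorises cleanly under concatenation of alphabets: writing $\mathbf{z}=(\mathbf{x},\mathbf{y})$ and splitting the product $\prod_{k<l}(t z_k z_l;q)_\infty/(z_k z_l;q)_\infty$ into pairs with both indices in $\mathbf{x}$, both in $\mathbf{y}$, and mixed, one obtains the identity
\begin{align*}
\Theta_{de,el}(\mathbf{x},\mathbf{y})=\Theta_{de,el}(\mathbf{x})\,\Theta_{de,el}(\mathbf{y})\,\Pi(\mathbf{x},\mathbf{y};q,t),
\end{align*}
which has exactly the same shape as the factorisation used in the proof of Lemma~\ref{le13}.

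The plan is then to expand $\Theta_{de,el}(\mathbf{x},\mathbf{y})$ in two different ways and compare. Using (\ref{ps3}) for the concatenated alphabet and then applying the factorisation above, together with (\ref{ps3}) on $\mathbf{y}$ and the Cauchy identity (\ref{pqpi}), one writes
\begin{align*}
\sum_{\phi':\,\phi'\ \mathrm{even}}b_\phi^{el}P_\phi(\mathbf{x},\mathbf{y})
=\Theta_{de,el}(\mathbf{x})\Bigl[\sum_{\mu':\,\mu'\ \mathrm{even}}b_\mu^{el}P_\mu(\mathbf{y})\Bigr]\Bigl[\sum_{\phi}P_\phi(\mathbf{x})Q_\phi(\mathbf{y})\Bigr].
\end{align*}
Expanding the product $P_\mu(\mathbf{y})P_\phi(\mathbf{y})$ with the structure constants $f^\lambda_{\mu\phi}$ from (\ref{pmf}) and then resumming over $\phi$ using (\ref{qlm}), one collects the right-hand side as
\begin{align*}
\Theta_{de,el}(\mathbf{x})\sum_\lambda P_\lambda(\mathbf{y})\Bigl[\sum_{\mu':\,\mu'\ \mathrm{even}}b_\mu^{el}\,Q_{\lambda/\mu}(\mathbf{x})\Bigr]
=\Theta_{de,el}(\mathbf{x})\sum_\lambda P_\lambda(\mathbf{y})\,\mathcal{E}_\lambda(\mathbf{x}).
\end{align*}

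On the other hand, expanding $P_\phi(\mathbf{x},\mathbf{y})=\sum_\lambda P_{\phi/\lambda}(\mathbf{x})P_\lambda(\mathbf{y})$ directly gives
\begin{align*}
\sum_{\phi':\,\phi'\ \mathrm{even}}b_\phi^{el}P_\phi(\mathbf{x},\mathbf{y})
=\sum_\lambda P_\lambda(\mathbf{y})\sum_{\nu':\,\nu'\ \mathrm{even}}b_\nu^{el}\,P_{\nu/\lambda}(\mathbf{x}).
\end{align*}
Since the Macdonald polynomials $\{P_\lambda(\mathbf{y})\}$ are linearly independent, equating coefficients of $P_\lambda(\mathbf{y})$ in the two expansions yields exactly the desired identity.

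The only step requiring a bit of care is that the even-column condition is correctly propagated: the first expansion naturally restricts $\mu$ to partitions with even conjugate (because (\ref{ps3}) is only over such partitions), while the second expansion carries over the condition directly from $\phi$ to the outer summation index $\nu$. I do not expect a real obstacle here beyond a careful bookkeeping of indices, since everything is a formal manipulation of symmetric functions and the $f^\lambda_{\mu\phi}$ do not interact with the even-column constraint. Thus the proof should go through almost verbatim as in Lemma~\ref{le13}, and the main conceptual input is the factorisation identity for $\Theta_{de,el}$ stated at the beginning.
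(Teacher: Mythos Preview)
Your proposal is correct and follows exactly the strategy the paper itself uses for the surrounding lemmas. The paper does not give its own proof of Lemma~\ref{lem25}, instead citing Proposition~2.1 of \cite{BBCW18}; however, for the companion Lemma~\ref{lem26} the paper says explicitly ``Similar arguments as in the proof of Lemma~\ref{le13},'' which is precisely the route you take here, with the factorisation $\Theta_{de,el}(\mathbf{x},\mathbf{y})=\Theta_{de,el}(\mathbf{x})\Theta_{de,el}(\mathbf{y})\Pi(\mathbf{x},\mathbf{y};q,t)$ replacing the corresponding one for $\Theta_*$.
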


\begin{lemma}\label{lem26}
\begin{align*}
 \sum_{\nu\ \mathrm{even}}b_{\nu}^{oa}P_{\nu/\lambda}(\mathbf{x})
 =\Theta_{e,oa}(\mathbf{x})\sum_{\mu\ \mathrm{even}}b_{\mu}^{oa}Q_{\lambda/\mu}(\mathbf{x}).
\end{align*}
\end{lemma}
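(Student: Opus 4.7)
The plan is to mimic the strategy used in the proof of Lemma~\ref{le13}: evaluate $\Theta_{e,oa}(\mathbf{x},\mathbf{y})$ in two different ways and compare the coefficients of $P_\lambda(\mathbf{y})$. The crux is that $\Theta_{e,oa}$ factors multiplicatively on disjoint alphabets up to a Cauchy kernel,
\begin{align*}
\Theta_{e,oa}(\mathbf{x},\mathbf{y}) = \Theta_{e,oa}(\mathbf{x})\,\Theta_{e,oa}(\mathbf{y})\,\Pi(\mathbf{x},\mathbf{y};q,t),
\end{align*}
which I would verify directly from the infinite-product formula (\ref{ps4}): the single-variable factor $\prod_i (qtx_i^2;q^2)_\infty/(x_i^2;q^2)_\infty$ is a clean tensor product over the two alphabets, while the two-variable factor $\prod_{i<j}(tx_ix_j;q)_\infty/(x_ix_j;q)_\infty$ splits into pure-$\mathbf{x}$, pure-$\mathbf{y}$, and mixed contributions, the last of which is precisely $\Pi(\mathbf{x},\mathbf{y};q,t)$ by (\ref{dpxy}).

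Let $*$ denote the weight appearing on both sides of the lemma, read off from the non-skew identity (\ref{ps4}). On the one hand, applying (\ref{ps4}) directly to the concatenated alphabet and invoking the branching rule $P_\phi(\mathbf{x},\mathbf{y}) = \sum_\lambda P_{\phi/\lambda}(\mathbf{x})\, P_\lambda(\mathbf{y})$ gives
\begin{align*}
\Theta_{e,oa}(\mathbf{x},\mathbf{y}) = \sum_{\phi\ \mathrm{even}}\sum_\lambda b_\phi^{*}\, P_{\phi/\lambda}(\mathbf{x})\, P_\lambda(\mathbf{y}).
\end{align*}
On the other hand, combining the alphabet factorization with (\ref{ps4}) applied to $\mathbf{y}$, the Cauchy identity (\ref{pqpi}) in the form $\Pi(\mathbf{x},\mathbf{y};q,t) = \sum_\phi P_\phi(\mathbf{x})\,Q_\phi(\mathbf{y})$, the multiplication rule (\ref{pmf}) to expand $P_\mu(\mathbf{y})P_\phi(\mathbf{y})$ in terms of structure constants $f_{\mu\phi}^\lambda$, and the skew-$Q$ expansion (\ref{qlm}) to collapse $\sum_\phi f_{\mu\phi}^\lambda Q_\phi(\mathbf{x}) = Q_{\lambda/\mu}(\mathbf{x})$, one obtains
\begin{align*}
\Theta_{e,oa}(\mathbf{x},\mathbf{y}) = \Theta_{e,oa}(\mathbf{x})\sum_\lambda P_\lambda(\mathbf{y})\sum_{\mu\ \mathrm{even}} b_\mu^{*}\, Q_{\lambda/\mu}(\mathbf{x}).
\end{align*}
Comparing the coefficients of $P_\lambda(\mathbf{y})$ in the two expressions yields the claimed identity.

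The main technical point is the alphabet factorization of $\Theta_{e,oa}$, which is the Macdonald analogue of the Schur Cauchy-kernel split: once the infinite products are written out explicitly, one checks that the only $\mathbf{x}$-$\mathbf{y}$ coupling in $\prod_{k<l}(tz_kz_l;q)_\infty/(z_kz_l;q)_\infty$ comes from the rectangle-indexed $\prod_{i,j}$ which is $\Pi(\mathbf{x},\mathbf{y};q,t)$. The even-partition constraints on $\phi$ and $\mu$ propagate through all the manipulations without difficulty because the structure constants $f_{\mu\phi}^\lambda$ and the branching coefficients for $P_\phi(\mathbf{x},\mathbf{y})$ are independent of any parity restriction on the indexing partitions, so exactly the same chain of identities that proved Lemmas~\ref{le13} and~\ref{lem25} goes through verbatim here.
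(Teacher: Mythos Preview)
Your proposal is correct and follows essentially the same approach as the paper, which merely instructs the reader to repeat the proof of Lemma~\ref{le13} with sums restricted to even partitions at the appropriate places. One remark: the lemma as stated writes $b_\nu^{el}$, but the non-skew identity~(\ref{ps4}) you invoke and the downstream Corollary~\ref{cl14} both carry $b_\nu^{oa}$; your device of writing the weight abstractly as $*$ ``read off from (\ref{ps4})'' neatly sidesteps what is evidently a typo in the statement.
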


\begin{proof}
The proof is the same as that of Lemma~\ref{le13}, except that one sums only over even partitions at the relevant steps.
\end{proof}

\begin{corollary}\label{cl14}
Let $x\in\CC$, let $v\in\CC$ with $|v|<1$, and let $*\in\{el,oa\}$. Then
\begin{align}
 \sum_{\lambda}b_{\lambda}^{*}P_{\lambda/\eta}([x];q,t)v^{|\lambda|}
 &=\Theta_{*}(vx)\sum_{\mu}b_{\mu}^{*}Q_{\eta/\mu}([v^2x];q,t)v^{|\mu|},
 \label{271}\\
 \sum_{\lambda\ \mathrm{even}}b_{\lambda}^{oa}P_{\lambda/\eta}([x];q,t)v^{|\lambda|}
 &=\Theta_{e,oa}(vx)\sum_{\mu\ \mathrm{even}}b_{\mu}^{oa}Q_{\eta/\mu}([v^2x];q,t)v^{|\mu|},
 \label{272}\\
 \sum_{\lambda'\ \mathrm{even}}b_{\lambda}^{el}P_{\lambda/\eta}([x];q,t)v^{|\lambda|}
 &=\Theta_{de,el}(vx)\sum_{\mu'\ \mathrm{even}}b_{\mu}^{el}Q_{\eta/\mu}([v^2x];q,t)v^{|\mu|}.
 \label{273}
\end{align}
\end{corollary}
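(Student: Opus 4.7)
All three identities in Corollary \ref{cl14} should follow from the preceding three lemmas (Lemma \ref{le13}, Lemma \ref{lem26}, and Lemma \ref{lem25} respectively) by performing the single-variable rescaling $x\mapsto vx$ and invoking the homogeneity of skew Macdonald polynomials. The plan begins with the observation that, by (\ref{pqs1}), for the single-variable specialization $[x]$ and any partitions $\eta,\lambda,\mu$, the polynomials $P_{\lambda/\eta}([x];q,t)$ and $Q_{\eta/\mu}([x];q,t)$ are monomials in $x$ of degrees $|\lambda|-|\eta|$ and $|\eta|-|\mu|$ respectively. In particular,
\begin{align*}
P_{\lambda/\eta}(vx)=v^{|\lambda|-|\eta|}P_{\lambda/\eta}(x), \qquad Q_{\eta/\mu}(v^{2}x)=v^{2(|\eta|-|\mu|)}Q_{\eta/\mu}(x).
\end{align*}

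To prove (\ref{271}), I would apply Lemma \ref{le13} with argument $vx$ to obtain
\begin{align*}
\sum_{\lambda}b_{\lambda}^{*}P_{\lambda/\eta}(vx)=\Theta_{*}(vx)\sum_{\mu}b_{\mu}^{*}Q_{\eta/\mu}(vx),
\end{align*}
and then multiply both sides by $v^{|\eta|}$. By homogeneity, the left side rearranges to $\sum_{\lambda}b_{\lambda}^{*}v^{|\lambda|}P_{\lambda/\eta}(x)$, matching the left side of (\ref{271}). The right side becomes $\Theta_{*}(vx)\sum_{\mu}b_{\mu}^{*}v^{2|\eta|-|\mu|}Q_{\eta/\mu}(x)$; rewriting
\begin{align*}
v^{2|\eta|-|\mu|}Q_{\eta/\mu}(x)\,=\,v^{|\mu|}\cdot v^{2(|\eta|-|\mu|)}Q_{\eta/\mu}(x)\,=\,v^{|\mu|}Q_{\eta/\mu}(v^{2}x)
\end{align*}
converts it into $\Theta_{*}(vx)\sum_{\mu}b_{\mu}^{*}v^{|\mu|}Q_{\eta/\mu}(v^{2}x)$, which matches the right side of (\ref{271}) under the convention $|v|^{|\mu|}=v^{|\mu|}$ (justified by the positivity of $v$ in the measure (\ref{ppt})). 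Identities (\ref{272}) and (\ref{273}) follow by the same rescaling applied to Lemma \ref{lem26} and Lemma \ref{lem25} respectively; the evenness constraints on $\mu,\nu$ or on $\mu',\nu'$ are intrinsic properties of the partitions and are unaffected by the substitution $x\mapsto vx$.

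The main difficulty is essentially absent at the mathematical level: the argument is pure homogeneity-plus-substitution, and the only delicate step is the rebalancing of the powers of $v$ on the right-hand side so as to absorb a factor of $v^{2(|\eta|-|\mu|)}$ into the argument of $Q_{\eta/\mu}$, leaving a clean $v^{|\mu|}$ in front. One minor expositional point worth flagging is the notational subtlety between $|v|^{|\mu|}$ and $v^{|\mu|}$ in the statement, which reduces to nothing under the standing assumption that $v$ is a positive real parameter.
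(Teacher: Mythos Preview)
Your proof is correct and follows essentially the same approach as the paper: apply Lemma \ref{le13} (respectively Lemmas \ref{lem26} and \ref{lem25}) at the rescaled argument $vx$, then use the homogeneity $P_{\lambda/\eta}(vx)=v^{|\lambda|-|\eta|}P_{\lambda/\eta}(x)$ and $Q_{\eta/\mu}(vx)=v^{|\eta|-|\mu|}Q_{\eta/\mu}(x)$ to redistribute the powers of $v$. Your observation about $|v|^{|\mu|}$ versus $v^{|\mu|}$ is well taken; the paper's own proof in fact writes $v^{|\mu|}$ in the final line, confirming that the absolute value in the statement is a typographical slip rather than a substantive distinction.
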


\begin{proof}
By \eqref{s1} and \eqref{s2},
\begin{align*}
 \sum_{\lambda}b_{\lambda}^{*}P_{\lambda/\eta}([x];q,t)v^{|\lambda|}
 &=v^{|\eta|}\sum_{\lambda}b_{\lambda}^{*}P_{\lambda/\eta}([vx];q,t)\\
 &=v^{|\eta|}\Theta_*(vx)\sum_{\mu}b_{\mu}^{*}Q_{\eta/\mu}([vx];q,t)\\
 &=\Theta_*(vx)\sum_{\mu}b_{\mu}^{*}Q_{\eta/\mu}([v^2x];q,t)v^{|\mu|}.
\end{align*}
This proves \eqref{271}. The identities \eqref{272} and \eqref{273} follow similarly from Lemmas~\ref{lem26} and \ref{lem25}, respectively.
\end{proof}

\begin{lemma}\label{le15}
Let $*\in\{el,oa\}$. Then
\begin{align}
 \sum_{\phi}\frac{1}{\overline{b}_{\phi}^{*}}Q_{\phi/\lambda}(\mathbf{x};q,t)
 &=\Theta_*(\mathbf{x};q,t)\sum_{\mu}\frac{1}{\overline{b}_{\mu}^{*}}P_{\lambda/\mu}(\mathbf{x};q,t),
 \label{281}\\
 \sum_{\phi\ \mathrm{even}}\frac{1}{\overline{b}_{\phi}^{oa}}Q_{\phi/\lambda}(\mathbf{x};q,t)
 &=\Theta_{e,oa}(\mathbf{x};q,t)\sum_{\mu\ \mathrm{even}}\frac{1}{\overline{b}_{\mu}^{oa}}P_{\lambda/\mu}(\mathbf{x};q,t),
 \label{282}\\
 \sum_{\phi'\ \mathrm{even}}\frac{1}{\overline{b}_{\phi}^{el}}Q_{\phi/\lambda}(\mathbf{x};q,t)
 &=\Theta_{de,el}(\mathbf{x};q,t)\sum_{\mu'\ \mathrm{even}}\frac{1}{\overline{b}_{\mu}^{el}}P_{\lambda/\mu}(\mathbf{x};q,t).
 \label{283}
\end{align}
\end{lemma}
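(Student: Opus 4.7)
The plan is to derive all three identities of Lemma~\ref{le15} as formal algebraic consequences of Lemmas~\ref{le13}, \ref{lem25}, and \ref{lem26}, using only (\ref{qrp}) and the definitions in (\ref{beb})--(\ref{bob}). The key observation is that the ratios appearing on the two sides of Lemma~\ref{le15} are related to the ratios in the earlier lemmas by a single application of $Q_{\lambda/\mu}=(b_\lambda/b_\mu)P_{\lambda/\mu}$, which, combined with $\bar b_\lambda^{*} = b_\lambda/b_\lambda^{*}$, swaps the role of $P$ and $Q$ and simultaneously converts $b^{*}$ into $1/\bar b^{*}$.

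I would treat (\ref{281}) first. Substitute $Q_{\phi/\lambda}=\frac{b_\phi}{b_\lambda}P_{\phi/\lambda}$ into the left-hand side and use $b_\phi/\bar b_\phi^{*} = b_\phi^{*}$ to obtain
\begin{align*}
\sum_{\phi}\frac{1}{\bar b_{\phi}^{*}}Q_{\phi/\lambda}(\mathbf{x};q,t)
=\frac{1}{b_\lambda}\sum_{\phi}b_\phi^{*}P_{\phi/\lambda}(\mathbf{x};q,t).
\end{align*}
Apply Lemma~\ref{le13} with $\eta=\lambda$ to rewrite this as
$\frac{1}{b_\lambda}\,\Theta_{*}(\mathbf{x})\sum_{\mu}b_\mu^{*}Q_{\lambda/\mu}(\mathbf{x};q,t)$.
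Finally, apply (\ref{qrp}) in the form $Q_{\lambda/\mu}=\frac{b_\lambda}{b_\mu}P_{\lambda/\mu}$ and use $b_\mu^{*}/b_\mu=1/\bar b_\mu^{*}$ to cancel the prefactor $b_\lambda$ and obtain the right-hand side of (\ref{281}). Every step is a ring manipulation, so no convergence or branching subtlety intervenes.

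The same three-step rewriting proves (\ref{282}) and (\ref{283}): the only change is that in the middle step one invokes Lemma~\ref{lem26} (for the case $*=oa$ with $\phi,\mu$ restricted to even partitions) or Lemma~\ref{lem25} (for the case $*=el$ with $\phi',\mu'$ restricted to even partitions) in place of Lemma~\ref{le13}. The evenness restrictions on the summation indices are preserved by the substitution $Q\leftrightarrow P$, since that substitution merely multiplies each term by a scalar depending on $\lambda,\mu,\phi$, leaving the index set untouched.

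There is no genuine obstacle here; the statement is the ``transpose'' of Lemmas~\ref{le13}--\ref{lem26} under the $P\leftrightarrow Q$ duality implemented by (\ref{qrp}). The only points worth verifying carefully are the arithmetic identities $b_\phi/\bar b_\phi^{*} = b_\phi^{*}$ and $b_\mu^{*}/b_\mu = 1/\bar b_\mu^{*}$, which are immediate from (\ref{beb})--(\ref{bob}), and the cancellation of the factor $b_\lambda$ between the $Q_{\lambda/\mu}\to P_{\lambda/\mu}$ conversion on the right and the $Q_{\phi/\lambda}\to P_{\phi/\lambda}$ conversion on the left. Hence the proof can be written in a few lines, with the even/dual-even cases handled by essentially identical computations.
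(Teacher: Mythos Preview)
Your proposal is correct and follows essentially the same route as the paper: convert $Q_{\phi/\lambda}$ to $P_{\phi/\lambda}$ via (\ref{qrp}), use $b_\phi/\bar b_\phi^{*}=b_\phi^{*}$ from (\ref{beb})--(\ref{bob}), apply Lemma~\ref{le13} (respectively Lemmas~\ref{lem26} and~\ref{lem25} for the restricted sums), and then convert back with (\ref{qrp}) to obtain the $1/\bar b_\mu^{*}$ factor. The paper's proof is line-for-line the same computation, spelled out only for (\ref{281}) with the remark that the other two cases are analogous.
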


\begin{proof}
We prove only \eqref{281}; the proofs of \eqref{282} and \eqref{283} are analogous. By \eqref{qrp},
\begin{align*}
 \sum_{\phi}\frac{1}{\overline{b}_{\phi}^{*}}Q_{\phi/\lambda}(\mathbf{x};q,t)
 =\sum_{\phi}\frac{1}{\overline{b}_{\phi}^{*}}\frac{b_{\phi}(q,t)}{b_{\lambda}(q,t)}P_{\phi/\lambda}(\mathbf{x};q,t).
\end{align*}
Using \eqref{eq:bbar-def}, this becomes
\begin{align*}
 \sum_{\phi}\frac{1}{\overline{b}_{\phi}^{*}}Q_{\phi/\lambda}(\mathbf{x};q,t)
 =\frac{1}{b_{\lambda}(q,t)}\sum_{\phi}b_{\phi}^{*}P_{\phi/\lambda}(\mathbf{x};q,t).
\end{align*}
Applying Lemma~\ref{le13}, we obtain
\begin{align*}
 \sum_{\phi}\frac{1}{\overline{b}_{\phi}^{*}}Q_{\phi/\lambda}(\mathbf{x};q,t)
 =\frac{1}{b_{\lambda}(q,t)}\Theta_*(\mathbf{x})\sum_{\mu}b_{\mu}^{*}Q_{\lambda/\mu}(\mathbf{x};q,t),
\end{align*}
and a second use of \eqref{qrp} yields \eqref{281}.
\end{proof}

\begin{corollary}\label{c16}
Let $x\in\CC$, let $v\in\CC$ with $|v|<1$, and let $*\in\{el,oa\}$. Then
\begin{align*}
 \sum_{\lambda}\frac{1}{\overline{b}_{\lambda}^{*}}Q_{\lambda/\eta}([x];q,t)v^{|\lambda|}
 &=\Theta_{*}(vx)\sum_{\mu}\frac{1}{\overline{b}_{\mu}^{*}}P_{\eta/\mu}([v^2x];q,t)v^{|\mu|},\\
 \sum_{\lambda\ \mathrm{even}}\frac{1}{\overline{b}_{\lambda}^{oa}}Q_{\lambda/\eta}([x];q,t)v^{|\lambda|}
 &=\Theta_{e,oa}(vx)\sum_{\mu\ \mathrm{even}}\frac{1}{\overline{b}_{\mu}^{oa}}P_{\eta/\mu}([v^2x];q,t)v^{|\mu|},\\
 \sum_{\lambda'\ \mathrm{even}}\frac{1}{\overline{b}_{\lambda}^{el}}Q_{\lambda/\eta}([x];q,t)v^{|\lambda|}
 &=\Theta_{de,el}(vx)\sum_{\mu'\ \mathrm{even}}\frac{1}{\overline{b}_{\mu}^{el}}P_{\eta/\mu}([v^2x];q,t)v^{|\mu|}.
\end{align*}
\end{corollary}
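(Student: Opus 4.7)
My plan is to deduce Corollary~\ref{c16} from Lemma~\ref{le15} in exactly the same way that Corollary~\ref{cl14} was deduced from Lemma~\ref{le13}: the only tool needed beyond Lemma~\ref{le15} is the homogeneity of skew Macdonald polynomials in a single variable, which follows directly from the closed forms in (\ref{pqs1}) and (\ref{pqs2}). Namely, for any scalar $c$,
\begin{align*}
P_{\lambda/\eta}(cx;q,t) = c^{|\lambda|-|\eta|}\,P_{\lambda/\eta}(x;q,t),\qquad Q_{\lambda/\eta}(cx;q,t) = c^{|\lambda|-|\eta|}\,Q_{\lambda/\eta}(x;q,t),
\end{align*}
with both sides vanishing unless $\eta\prec\lambda$ (or $\eta\prec'\lambda$ in the primed case). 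Using this, powers of $v$ may be absorbed into the argument of $P$ or $Q$ at will.

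For the first identity in Corollary~\ref{c16}, I would pull the factor $v^{|\lambda|}$ into the argument via $Q_{\lambda/\eta}(x)\,v^{|\lambda|}=v^{|\eta|}Q_{\lambda/\eta}(vx)$, apply (\ref{281}) of Lemma~\ref{le15} with the single-variable specialization $\mathbf{x}=vx$ to rewrite the resulting sum as
\begin{align*}
v^{|\eta|}\,\Theta_*(vx)\sum_\mu \tfrac{1}{\overline{b}_\mu^*}\,P_{\eta/\mu}(vx;q,t),
\end{align*}
and then push the leftover power of $v$ back into $P$ via $v^{|\eta|}P_{\eta/\mu}(vx) = v^{|\mu|}P_{\eta/\mu}(v^2x)$, which matches the stated right-hand side exactly (up to the cosmetic $|v|^{|\mu|}$ versus $v^{|\mu|}$, to be reconciled in the proof). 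The second and third identities are proved by the identical two-step scaling trick, invoking (\ref{282}) and (\ref{283}) of Lemma~\ref{le15} in place of (\ref{281}); crucially, the constraints $\lambda\text{ even}$, $\mu\text{ even}$ and $\lambda'\text{ even}$, $\mu'\text{ even}$ in the outer sums are preserved under the substitution, since the scaling only rescales the variable and never affects the summation indices.

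I do not anticipate any serious obstacle. Lemma~\ref{le15} already packages the combinatorial content, and the present corollary is a pure change-of-variables. The only careful point is exponent bookkeeping: one verifies that the total power of $v$ produced when one successively writes $v^{|\lambda|}Q_{\lambda/\eta}(x) = v^{|\eta|}Q_{\lambda/\eta}(vx)$ and then $v^{|\eta|}P_{\eta/\mu}(vx) = v^{|\mu|}P_{\eta/\mu}(v^2x)$ agrees with the exponent appearing on the right-hand side of the claimed identities, which is $|\mu|$ in each case. Once that is checked, nothing else remains to verify.
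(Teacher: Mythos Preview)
Your proposal is correct and mirrors the paper's own argument exactly: the paper states that Corollary~\ref{c16} follows from Lemma~\ref{le15} in the same way that Corollary~\ref{cl14} follows from Lemma~\ref{le13}, and the proof of Corollary~\ref{cl14} is precisely the two-step homogeneity rescaling you describe (absorb $v^{|\lambda|}$ into the argument, apply the lemma at $vx$, then push $v^{|\eta|}$ back out as $v^{|\mu|}$ with argument $v^2x$). Your observation about $|v|^{|\mu|}$ versus $v^{|\mu|}$ is also apt; the proof of Corollary~\ref{cl14} in the paper produces $v^{|\mu|}$, so this is indeed cosmetic.
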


\begin{proof}
The proof repeats the argument of Corollary~\ref{cl14} with Lemma~\ref{le15} replacing Lemma~\ref{le13}.  The only change is that the conjugate-even boundary specialization is used at each reflection step; the Cauchy factors and terminal parity indicator are otherwise unchanged.
\end{proof}

\begin{lemma}\label{le17}
Let $\lambda,\mu\in\YY$. Then
\begin{align*}
 \sum_{\phi}P_{\phi/\lambda}(x;q,t)Q_{\phi/\mu}(y;q,t)
 &=\Pi(x,y;q,t)\sum_{\sigma}P_{\mu/\sigma}(x;q,t)Q_{\lambda/\sigma}(y;q,t),\\
 \sum_{\phi}Q_{\phi'/\lambda'}(x;t,q)Q_{\phi/\mu}(y;q,t)
 &=\Pi_0(x,y)\sum_{\sigma}Q_{\mu'/\sigma'}(x;t,q)Q_{\lambda/\sigma}(y;q,t),\\
 \sum_{\phi}P_{\phi/\lambda}(x;q,t)P_{\phi'/\mu'}(y;t,q)
 &=\Pi_0(x,y)\sum_{\sigma}P_{\mu/\sigma}(x;q,t)P_{\lambda'/\sigma'}(y;t,q),
\end{align*}
where
\begin{align}
 \Pi_0(x,y)=\prod_{i,j}(1+x_iy_j).
 \label{dp0xy}
\end{align}
\end{lemma}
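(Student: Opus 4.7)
The plan is to prove identity (1) by the classical doubling-of-variables technique applied to the Cauchy identity $\sum_\phi P_\phi(x;q,t) Q_\phi(y;q,t) = \Pi(x,y;q,t)$, and then to derive identities (2) and (3) by running the analogous doubling argument on the dual Cauchy identity $\sum_\phi P_\phi(a;q,t) Q_{\phi'}(b;t,q) = \Pi_0(a,b)$ (Macdonald VI.(5.4)), combined with the conjugation/normalization relation $b_\lambda(q,t)\, b_{\lambda'}(t,q) = 1$ to convert between the $P$-, $Q$-, and mixed-basis forms of skew Macdonald polynomials.

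For identity (1), I would start from the Cauchy identity applied to doubled alphabets $(x \cup u,\; y \cup v)$:
\begin{align*}
\sum_\phi P_\phi(x \cup u;q,t)\, Q_\phi(y \cup v;q,t) \;=\; \Pi(x,y)\,\Pi(x,v)\,\Pi(u,y)\,\Pi(u,v).
\end{align*}
On the left I apply the coproduct formulas $P_\phi(x \cup u) = \sum_\alpha P_{\phi/\alpha}(x) P_\alpha(u)$ and $Q_\phi(y \cup v) = \sum_\beta Q_{\phi/\beta}(y) Q_\beta(v)$. On the right I combine $\Pi(u,y)\Pi(u,v) = \Pi(u,\, y \cup v)$ via Cauchy and expand $Q_\gamma(y \cup v)$ by coproduct. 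The key auxiliary tool is the single-doubling identity $\Pi(x,v)\, Q_\sigma(v;q,t) = \sum_\phi P_{\phi/\sigma}(x;q,t)\, Q_\phi(v;q,t)$, itself obtained by matching the coefficient of $P_\sigma(u)$ in $\sum_\phi P_\phi(x \cup u;q,t) Q_\phi(v;q,t) = \Pi(x,v)\Pi(u,v)$. Substituting and then matching coefficients of $P_\lambda(u)Q_\mu(v)$ on both sides yields identity (1).

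For identities (2) and (3), I would carry out the analogous doubling with the dual Cauchy identity. Expanding
\begin{align*}
\sum_\phi P_\phi(x \cup u;q,t)\, Q_{\phi'}(y \cup v;t,q) \;=\; \Pi_0(x,y)\,\Pi_0(x,v)\,\Pi_0(u,y)\,\Pi_0(u,v)
\end{align*}
via the coproducts (with the $Q_{\phi'}$ coproduct taken in the $(t,q)$ convention), and using the auxiliary dual-Cauchy skew identity $\Pi_0(x,v)\, Q_{\lambda'}(v;t,q) = \sum_\phi P_{\phi/\lambda}(x;q,t)\, Q_{\phi'}(v;t,q)$, I obtain the mixed $(P,Q)$-form of the skew dual Cauchy identity. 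The $(Q,Q)$-form stated in (2) and the $(P,P)$-form stated in (3) then follow by using $Q_{\lambda/\mu} = (b_\lambda/b_\mu)\,P_{\lambda/\mu}$ to convert between bases, with the cancellation $b_\phi(q,t)\, b_{\phi'}(t,q) = 1$ removing the otherwise-surviving summation-index-dependent $b_\phi$ factors in each paired term $Q_{\phi/\mu}(y;q,t)\,Q_{\phi'/\lambda'}(x;t,q)$ (resp.\ $P_{\phi/\mu}(y;q,t)\,P_{\phi'/\lambda'}(x;t,q)$).

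The main obstacle is the bookkeeping of conjugation and parameter swaps for (2) and (3). When matching the coefficient of a basis element $Q_\beta(v;t,q)$ on the expansion of the doubled dual-Cauchy identity, the index forces the summation variable $\phi$ to satisfy $\phi' = \beta$, i.e.\ $\phi = \beta'$, selecting a conjugated partition rather than leaving a free summation; meanwhile the $u$-side matching of $P_\alpha(u;q,t)$ leaves an un-conjugated index. The resulting asymmetry in how $\phi$ and $\phi'$ enter the two sides is precisely what the identity $b_\phi(q,t)\, b_{\phi'}(t,q) = 1$ resolves in passing to the $(Q,Q)$ and $(P,P)$ forms. A secondary technical point is the careful derivation of the single-doubling auxiliary identities (and their dual-Cauchy analogues) that feed the main doubling computation; these are the places where the interplay between the coproduct and the conjugation-swap of parameters must be handled in the correct order.
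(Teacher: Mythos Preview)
Your proposal is correct. The paper's own proof is simply a citation to Macdonald's book (Page 352 of \cite{MG95}), and what you outline is precisely the doubling-of-variables argument that appears there, so your approach and the paper's reference coincide.

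One minor remark: for identities (2) and (3) there is a shorter route than repeating the doubling argument on the dual Cauchy kernel. Once (1) is established, you can apply the endomorphism $\omega_{q,t}$ in the $x$-variables (using $\omega_{q,t}P_{\lambda/\mu}(\,\cdot\,;q,t)=Q_{\lambda'/\mu'}(\,\cdot\,;t,q)$ and $\omega_{q,t}\Pi(\,\cdot\,,y;q,t)=\Pi_0(\,\cdot\,,y)$) to obtain (2) directly, and similarly apply $\omega_{q,t}$ in the $y$-variables to obtain (3). This bypasses the bookkeeping you flag as the main obstacle; your use of $b_\lambda(q,t)\,b_{\lambda'}(t,q)=1$ is correct and achieves the same end, just with more work.
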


\begin{proof}
See p.~352 of \cite{MG95}.
\end{proof}

\begin{lemma}\label{le18}
Let $\lambda,\mu\in\YY$. Then
\begin{align}
 \sum_{\phi}P_{\phi/\lambda}(x;q,t)P_{\mu/\phi}(y;q,t)
 &=\sum_{\nu}P_{\nu/\lambda}(y;q,t)P_{\mu/\nu}(x;q,t),
 \label{ppi}\\
 \sum_{\phi}P_{\phi/\lambda}(x;q,t)Q_{\mu'/\phi'}(y;t,q)
 &=\sum_{\nu}Q_{\nu'/\lambda'}(y;t,q)P_{\mu/\nu}(x;q,t).
 \label{pqi}
\end{align}
\end{lemma}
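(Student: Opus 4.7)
The plan is to prove both identities via the skew-Macdonald branching rule under alphabet union, combined with the tautological commutativity $X\cup Y=Y\cup X$. Specifically, for any two specializations $X,Y$ one has
\begin{align*}
P_{\mu/\lambda}(X\cup Y;q,t)=\sum_{\phi}P_{\phi/\lambda}(X;q,t)\,P_{\mu/\phi}(Y;q,t),
\end{align*}
which follows in standard fashion from the Cauchy identity (\ref{pqpi}) together with multiplicativity of the kernel, $\Pi(X\cup Y,Z)=\Pi(X,Z)\Pi(Y,Z)$ --- this is Macdonald~VI.(7.9'). Expanding $P_{\mu/\lambda}((x)\cup(y);q,t)$ two ways, either by branching off $(x)$ first or by branching off $(y)$ first, yields (\ref{ppi}) immediately.

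For (\ref{pqi}) the first step is to convert the skew $Q$-polynomial at swapped parameters into a skew $P$-polynomial at a dual specialization: comparing (\ref{pqs1}) with (\ref{pqs2}) and invoking the displayed equality $\varphi'_{\mu/\phi}(q,t)=\varphi_{\mu'/\phi'}(t,q)$ gives
\begin{align*}
Q_{\mu'/\phi'}(y;t,q)=P_{\mu/\phi}([y]';q,t),\qquad Q_{\nu'/\lambda'}(y;t,q)=P_{\nu/\lambda}([y]';q,t).
\end{align*}
Substituting these identifications into both sides of (\ref{pqi}) reduces the claim to the instance of the branching identity above with mixed alphabet union $(x)\cup[y]'$. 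The branching rule remains valid in this mixed ordinary--dual setting because the requisite multiplicativity of the combined Cauchy kernel follows from $\Pi(x,z;q,t)$ together with the dual Cauchy identity $\sum_\lambda P_\lambda(x;q,t)P_{\lambda'}(y;t,q)=\Pi_0(x,y)$ from (\ref{dp0xy}) --- both kernels are multiplicative under union of alphabets.

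Should a subtlety in the mixed-specialization branching rule arise, the fallback is the generating-function method: multiply each identity by $P_\lambda(z;q,t)Q_\mu(w;q,t)$ and sum over $(\lambda,\mu)$. The inner sums collapse via skew Cauchy identities of the type $\sum_\mu P_{\mu/\phi}(y)Q_\mu(w)=Q_\phi(w)\Pi(y,w)$ (together with the $Q$-variant from Lemma \ref{le17}), and both sides reduce to the same generating series --- essentially $\Pi(x,w;q,t)\Pi(y,w;q,t)\Pi(z,w;q,t)$ for (\ref{ppi}), and its mixed analogue involving $\Pi_0$ for (\ref{pqi}). Linear independence of $\{P_\lambda\otimes Q_\mu\}_{\lambda,\mu}$ then delivers both identities. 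The main obstacle I anticipate is merely bookkeeping: tracking the primed partitions and the $(q,t)\leftrightarrow(t,q)$ parameter swap in (\ref{pqi}) while correctly pairing each branching step with its appropriate Cauchy identity; no substantially new idea beyond what already appears in \cite{MG95} is required.
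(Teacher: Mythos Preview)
Your proposal is correct and takes essentially the same approach as the paper. The paper proves (\ref{ppi}) by noting that $P_{\mu/\lambda}(x,y;q,t)$ is symmetric in $x$ and $y$ (which is exactly your branching-plus-commutativity argument), and derives (\ref{pqi}) from (\ref{ppi}) by applying $\omega_{q,t}$ on the $y$-variable --- which is precisely your dual-specialization identification $Q_{\mu'/\phi'}(y;t,q)=P_{\mu/\phi}([y]';q,t)$, since $\omega_{q,t}$ in the $y$-alphabet is what implements the passage to $[y]'$ (see (\ref{omp})--(\ref{omq})).
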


\begin{proof}
Since the skew Macdonald polynomial $P_{\mu/\lambda}(x,y;q,t)$ is symmetric in $x$ and $y$,
\begin{align*}
 P_{\mu/\lambda}(x,y;q,t)
 =\sum_{\phi}P_{\phi/\lambda}(x;q,t)P_{\mu/\phi}(y;q,t)
 =\sum_{\nu}P_{\nu/\lambda}(y;q,t)P_{\mu/\nu}(x;q,t),
\end{align*}
which proves \eqref{ppi}. Then \eqref{pqi} follows by applying $\omega_{q,t}$ to the variable $y$.
\end{proof}

For a symmetric function $f\in\Lambda_X$, write $f(\rho):=\rho(f)$. A specialization $\rho$ is determined by its generating function
\begin{align*}
 H(\rho;z;q,t):=\exp\left(\sum_{n\geq 1}\frac{1-t^n}{1-q^n}\frac{p_n(\rho)z^n}{n}\right).
\end{align*}
If $\rho_1,\rho_2$ are specializations and $c\in\CC$, define new specializations $\rho_1\cup\rho_2$, $c*\rho$, and $c\rho$ by
\begin{align*}
 H(\rho_1\cup\rho_2;z)&:=H(\rho_1;z)H(\rho_2;z),\\
 H(c*\rho;z)&:=H(\rho;z)^c,\\
 H(c\rho;z)&:=H(\rho;cz).
\end{align*}
Equivalently, for all $n\geq 1$,
\begin{align}
 p_n(\rho_1\cup\rho_2)&=p_n(\rho_1)+p_n(\rho_2),
 &
 p_n(c*\rho)&=c\,p_n(\rho),
 &
 p_n(c\rho)&=c^n p_n(\rho).
 \label{rhn}
\end{align}
Define also
\begin{align}
 H(\rho_1;\rho_2;q,t)
 &:=\sum_{\lambda\in\YY}P_{\lambda}(\rho_1)Q_{\lambda}(\rho_2)
 =\exp\left(\sum_{n\geq 1}\frac{1-t^n}{1-q^n}\frac{p_n(\rho_1)p_n(\rho_2)}{n}\right).
 \label{dh1}
\end{align}
In particular, if $\rho_2$ is the specialization of a single variable $z$, then $H(\rho_1;\rho_2;q,t)=H(\rho_1;z;q,t)$. Moreover,
\begin{align*}
 H(\rho;\rho_1\cup\rho_2;q,t)=H(\rho;\rho_1;q,t)H(\rho;\rho_2;q,t).
\end{align*}

The following formulas for $\Theta_{el}$ and $\Theta_{oa}$ will be useful.

\begin{lemma}\label{le110}
Assume $q,t,x_i\in(0,1)$. Then
\begin{align}
 \Theta_{el}(\rho;q,t)
 &=\exp\left[\sum_{n=1}^{\infty}\frac{1-t^n}{1-q^n}\left(\frac{\rho(p_{2n-1})}{2n-1}+\frac{[\rho(p_n)]^2}{2n}\right)\right],
 \label{sel}\\
 \Theta_{oa}(\rho;q,t)
 &=\exp\left[\sum_{n=1}^{\infty}\frac{q^n(q^n-t^n)\rho(p_{2n})}{n(1-q^{2n})}+\frac{\rho(p_n)}{n}+\frac{1-t^n}{1-q^n}\frac{[\rho(p_n)]^2-\rho(p_{2n})}{2n}\right],
 \label{soa}\\
 \Theta_{de,el}(\rho;q,t)
 &=\exp\left[\sum_{n=1}^{\infty}\frac{1-t^n}{1-q^n}\left(-\frac{\rho(p_{2n})}{2n}+\frac{[\rho(p_n)]^2}{2n}\right)\right],
 \label{sdel}\\
 \Theta_{e,oa}(\rho;q,t)
 &=\exp\left[\sum_{n=1}^{\infty}\frac{(1-t^nq^n)\rho(p_{2n})}{n(1-q^{2n})}+\frac{1-t^n}{1-q^n}\frac{[\rho(p_n)]^2-\rho(p_{2n})}{2n}\right].
 \label{seoa}
\end{align}
\end{lemma}

\begin{proof}
Starting from \eqref{ps1}, we compute
\begin{align*}
 \Theta_{el}(\rho;q,t)
 &=\exp\Biggl[\sum_i\sum_{r\ge0}\bigl(\log(1-tx_iq^r)-\log(1-x_iq^r)\bigr)\\
 &\qquad\qquad+\sum_{i<j}\sum_{r\ge0}\bigl(\log(1-tx_ix_jq^r)-\log(1-x_ix_jq^r)\bigr)\Biggr]\\
 &=\exp\left[\sum_{n=1}^{\infty}\frac{1-t^n}{1-q^n}\left(\sum_i\frac{x_i^n}{n}+\sum_{i<j}\frac{(x_ix_j)^n}{n}\right)\right],
\end{align*}
which simplifies to \eqref{sel}. The proofs of \eqref{soa}, \eqref{sdel}, and \eqref{seoa} are obtained in the same way from \eqref{ps2}, \eqref{ps3}, and \eqref{ps4}, respectively.
\end{proof}

Lemma~\ref{le110} has the following immediate consequence.

\begin{corollary}
With the notation in \eqref{rhn}, for $*\in\{el,oa,deel,eoa\}$,
\begin{align*}
 \Theta_*(\rho_1\cup\rho_2;q,t)=\Theta_*(\rho_1;q,t)\Theta_*(\rho_2;q,t)H(\rho_1;\rho_2;q,t).
\end{align*}
\end{corollary}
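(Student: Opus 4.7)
The plan is to verify the identity by substituting $\rho = \rho_1 \cup \rho_2$ into each of the four explicit exponential formulas from Lemma~\ref{le110}, then recognizing the cross terms as the expansion of $\log H(\rho_1,\rho_2;q,t)$. The key structural observation is that every summand in the exponent defining $\Theta_*$ is either \emph{linear} in some power sum $\rho(p_k)$ or else a \emph{quadratic} of the form $\frac{1-t^n}{1-q^n}\cdot\frac{[\rho(p_n)]^2}{2n}$, and moreover the coefficient of this quadratic term is the same $\frac{1-t^n}{(1-q^n)(2n)}$ across all four choices $*\in\{el,oa,deel,eoa\}$.

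First I would invoke the additivity property $p_n(\rho_1\cup\rho_2) = p_n(\rho_1) + p_n(\rho_2)$ from (\ref{rhn}). Applied to each linear summand, this immediately produces a clean factorization of its contribution into the corresponding sums for $\rho_1$ and $\rho_2$ alone. Applied to the quadratic summands, it gives
\[
[\rho_1(p_n)+\rho_2(p_n)]^2 = [\rho_1(p_n)]^2 + [\rho_2(p_n)]^2 + 2\rho_1(p_n)\rho_2(p_n),
\]
so that the first two pieces complete the factorization $\Theta_*(\rho_1;q,t)\Theta_*(\rho_2;q,t)$, while the remaining cross term, multiplied by $\frac{1-t^n}{(1-q^n)(2n)}$, contributes exactly $\frac{1-t^n}{1-q^n}\cdot\frac{p_n(\rho_1)p_n(\rho_2)}{n}$ to the exponent.

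Finally, I would match this series of cross terms against the definition (\ref{dh1}), which reads $H(\rho_1,\rho_2;q,t) = \exp\bigl(\sum_{n\geq 1}\frac{1-t^n}{1-q^n}\frac{p_n(\rho_1)p_n(\rho_2)}{n}\bigr)$, to conclude the proof after exponentiation. The argument is essentially mechanical once Lemma~\ref{le110} is available; the only point requiring care is to verify that the four formulas of Lemma~\ref{le110} share a common quadratic coefficient $\frac{1-t^n}{(1-q^n)(2n)}$, which is immediate by inspection. No substantive obstacle arises: unlike the structural identities of Lemmas~\ref{le13}--\ref{le18}, which require manipulations of skew Macdonald polynomials, this corollary is merely a bilinear decoupling at the level of power sums.
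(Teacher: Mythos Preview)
Your proposal is correct and is precisely the argument the paper intends: it presents this corollary as an immediate consequence of Lemma~\ref{le110}, and your expansion via $p_n(\rho_1\cup\rho_2)=p_n(\rho_1)+p_n(\rho_2)$ together with the observation that all four formulas share the quadratic coefficient $\frac{1-t^n}{(1-q^n)(2n)}$ is exactly the intended mechanical verification.
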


Applying Lemma~\ref{le110} together with the identity
\begin{align*}
 \omega_{q,t}(p_n)=(-1)^{n-1}\frac{1-q^n}{1-t^n}p_n,
\end{align*}
we obtain the following corollary.

\begin{corollary}
For any specialization $\rho$,
\begin{small}
\begin{align}
 \Theta_{el}(\omega_{q,t}\rho;q,t)
 &=\exp\left[\sum_{n=1}^{\infty}\frac{1-t^n}{1-q^n}\left(\frac{1-q^{2n-1}}{1-t^{2n-1}}\frac{\rho(p_{2n-1})}{2n-1}+\left(\frac{1-q^n}{1-t^n}\right)^2\frac{[\rho(p_n)]^2}{2n}\right)\right],
 \label{selo}\\
 \Theta_{oa}(\omega_{q,t}\rho;q,t)
 &=\exp\left[-\sum_{n=1}^{\infty}\frac{q^n(q^n-t^n)\rho(p_{2n})}{n(1-t^{2n})}+\sum_{n=1}^{\infty}\frac{(-1)^{n+1}(1-q^n)}{1-t^n}\frac{\rho(p_n)}{n}\right. \notag\\
 &\qquad\left.+\sum_{n=1}^{\infty}\frac{1-q^n}{1-t^n}\frac{[\rho(p_n)]^2}{2n}+\sum_{n=1}^{\infty}\frac{1+q^n}{1+t^n}\frac{\rho(p_{2n})}{2n}\right],
 \label{soao}\\
 \Theta_{de,el}(\omega_{q,t}\rho;q,t)
 &=\exp\left[\sum_{n=1}^{\infty}\frac{1+q^n}{1+t^n}\frac{\rho(p_{2n})}{2n}+\sum_{n=1}^{\infty}\frac{1-q^n}{1-t^n}\frac{[\rho(p_n)]^2}{2n}\right],
 \label{sdeelo}\\
 \Theta_{e,oa}(\omega_{q,t}\rho;q,t)
 &=\exp\left[-\sum_{n=1}^{\infty}\frac{(1-t^nq^n)\rho(p_{2n})}{n(1-t^{2n})}+\sum_{n=1}^{\infty}\frac{1+q^n}{1+t^n}\frac{\rho(p_{2n})}{2n}+\sum_{n=1}^{\infty}\frac{1-q^n}{1-t^n}\frac{[\rho(p_n)]^2}{2n}\right].
 \label{seoao}
\end{align}
\end{small}
\end{corollary}

\begin{proposition}[Factorized partition function with terminal boundary normalization]\label{p23}
Assume $u,v\in(0,1)$. Then the partition function is given by
\begin{small}
\begin{align}
 Z_{c_l,c_r}
 &=\prod_{\substack{l\le i<j\le r\\ b_i=+,\,b_j=-}}z_{i,j}
\left[\prod_{n\ge1}\prod_{\substack{i\in[l..r]\\ (a_i,b_i)=(L,+)}}\Theta_{c_r}(u^{2n-2}v^{2n-1}x_i)\Theta_{c_l}(u^{2n-1}v^{2n}x_i)\right.
 \notag\\
 &\qquad\times\prod_{\substack{i\in[l..r]\\ (a_i,b_i)=(R,+)}}\omega_{q,t}\bigl[\Theta_{c_r}(u^{2n-2}v^{2n-1}x_i)\Theta_{c_l}(u^{2n-1}v^{2n}x_i)\bigr]
 \notag\\
 &\qquad\times\prod_{\substack{i\in[l..r]\\ (a_i,b_i)=(L,-)}}\Theta_{c_l}(u^{2n-1}v^{2n-2}x_i)\Theta_{c_r}(u^{2n}v^{2n-1}x_i)
 \notag\\
&\qquad\left.\times\prod_{\substack{i\in[l..r]\\ (a_i,b_i)=(R,-)}}\omega_{q,t}\bigl[\Theta_{c_l}(u^{2n-1}v^{2n-2}x_i)\Theta_{c_r}(u^{2n}v^{2n-1}x_i)\bigr]\right]
\mathfrak Z_{c_l,c_r}(uv)
 \notag\\
&\quad\times\left[\prod_{n\ge1}\prod_{\substack{l\le i,j\le r\\ b_i=+,\,b_j=-,\,a_i\neq a_j}}(1+u^{2n}v^{2n}x_ix_j)\right.\\
&\left. \prod_{\substack{l\le i,j\le r\\ b_i=+,\,b_j=-,\,a_i=a_j}}\Pi(u^{2n}v^{2n}x_i,u^{2n-2}v^{2n-2}x_j;q,t)\Pi(u^{2n}v^{2n}x_i,u^{2n}v^{2n}x_j;q,t)\right]
 \notag\\
&\quad\times\left[\prod_{n\ge1}\prod_{\substack{i<j\in[l..r]\\ b_i=b_j=+,\,a_i\neq a_j}}(1+u^{2n-2}v^{2n}x_ix_j)
 \prod_{\substack{i<j\in[l..r]\\ b_i=b_j=-,\,a_i\neq a_j}}(1+u^{2n}v^{2n-2}x_ix_j)\right]
 \notag\\
&\quad\times\left[\prod_{n\ge1}\prod_{\substack{i<j\in[l..r]\\ b_i=b_j=+,\,a_i=a_j}}\Pi(u^{2n-2}v^{2n}x_i,u^{2n-2}v^{2n-2}x_j;q,t)\Pi(u^{2n}v^{2n}x_i,u^{2n-2}v^{2n}x_j;q,t)
\right.
 \notag\\
&\qquad\left.\times\prod_{\substack{i<j\in[l..r]\\ b_i=b_j=-,\,a_i=a_j}}\Pi(u^{2n}v^{2n-2}x_i,u^{2n-2}v^{2n-2}x_j;q,t)\Pi(u^{2n}v^{2n}x_i,u^{2n}v^{2n-2}x_j;q,t)\right].
 \label{ffp}
\end{align}
\end{small}
where
\begin{align}
 z_{i,j}=\begin{cases}
 1+x_ix_j,& a_i\neq a_j,\\
 \Pi(x_i,x_j;q,t),& a_i=a_j.
 \end{cases}
 \label{dzij}
\end{align}
Moreover, the partition function for dimer coverings on a rail-yard graph $RYG(l,r,\underline{a},\underline{b})$ with free boundary condition on the left and empty partition on the right is
\begin{align}
 Z_{c_l,c_r,f,\emptyset}
 &=\prod_{\substack{l\le i<j\le r\\ b_i=+,\,b_j=-}}z_{i,j}
 \left[\prod_{\substack{i\in[l..r]\\ (a_i,b_i)=(L,-)}}\Theta_{c_l}(ux_i)\right]
 \left[\prod_{\substack{i\in[l..r]\\ (a_i,b_i)=(R,-)}}\omega_{q,t}\bigl[\Theta_{c_l}(ux_i)\bigr]\right]
 \notag\\
 &\qquad\times\prod_{\substack{i<j\in[l..r]\\ b_i=b_j=-,\,a_i\neq a_j}}(1+u^2x_ix_j)
 \prod_{\substack{i<j\in[l..r]\\ b_i=b_j=-,\,a_i=a_j}}\Pi(u^2x_i,x_j;q,t).
 \label{fep}
\end{align}
\end{proposition}

\begin{proof}
Consider a sequence of partitions
\begin{align*}
 (\mu^{(l)},\mu^{(l+1)},\ldots,\mu^{(r+1)})\in\YY^{r-l+2}.
\end{align*}
Recall that the interlacing relations between consecutive partitions are prescribed by conditions (a)--(d) above. These interlacing relations satisfy nontrivial commutation relations (Lemma~\ref{le17}), trivial commutation relations (Lemma~\ref{le18}), and reflection relations (Corollary~\ref{c16}).

The strategy is to commute the interlacing relations, or equivalently the $+$ and $-$ signs, until the partition function factors into an explicit product.

\textit{Step 1.} Commute the interlacing relations so that every relation in $\{\prec,\prec'\}$ lies to the right of every relation in $\{\succ,\succ'\}$. By Lemma~\ref{le17}, these commutations contribute the factor
\begin{align}
 \prod_{\substack{l\le i<j\le r\\ b_i=+,\,b_j=-}}z_{i,j},
 \qquad
 z_{i,j}=\begin{cases}
 1+x_ix_j,& a_i\neq a_j,\\
 \Pi(x_i,x_j;q,t),& a_i=a_j.
 \end{cases}
 \label{step1factor}
\end{align}

\textit{Step 2.} For each $b_i=-$ (from left to right), move it to the left boundary, reflect it there, move it to the right boundary, reflect it again, and finally return it to its original position.

By Corollary~\ref{c16}, reflection at the left boundary contributes
\begin{align*}
 \prod_{\substack{i\in[l..r]\\ (a_i,b_i)=(L,-)}}\Theta_{c_l}(ux_i)
 \prod_{\substack{i\in[l..r]\\ (a_i,b_i)=(R,-)}}\omega_{q,t}\Theta_{c_l}(ux_i).
\end{align*}
After this reflection, each parameter $x_i$ is multiplied by $u^2$. Reflection at the right boundary contributes
\begin{align*}
 \prod_{\substack{i\in[l..r]\\ (a_i,b_i)=(L,-)}}\Theta_{c_r}(u^2vx_i)
 \prod_{\substack{i\in[l..r]\\ (a_i,b_i)=(R,-)}}\omega_{q,t}\Theta_{c_r}(u^2vx_i).
\end{align*}
After reflection at the left boundary, $b_i=-$ becomes $+$; when it moves to the right, it interchanges with all the other $-$ signs. By Lemma~\ref{le17}, these interchanges contribute
\begin{align*}
 &\prod_{\substack{i<j\in[l..r]\\ b_i=b_j=-,\,a_i=a_j}}
 \Pi(u^2x_i,x_j;q,t)\Pi(u^2v^2x_i,u^2x_j;q,t)\\
 &\qquad\times
 \prod_{\substack{i<j\in[l..r]\\ b_i=b_j=-,\,a_i\neq a_j}}
 (1+u^2x_ix_j)(1+u^4v^2x_ix_j).
\end{align*}
After reflection at the right boundary, the resulting $+$ becomes $-$ and, as it moves back to the left, it interchanges with all the $+$ signs. Again by Lemma~\ref{le17}, this contributes
\begin{align*}
 &\prod_{\substack{i,j\in[l..r]\\ b_i=+,\,b_j=-,\,a_i=a_j}}
 \Pi(u^2v^2x_i,x_j;q,t)
 \prod_{\substack{i,j\in[l..r]\\ b_i=+,\,b_j=-,\,a_i\neq a_j}}
 (1+u^2v^2x_ix_j).
\end{align*}

\textit{Step 3.} For each $b_i=+$ (from right to left), reflect it at the right boundary, move it to the left boundary, reflect it there, and finally return it to its original position.

By Corollary~\ref{c16}, reflection at the right boundary contributes
\begin{align*}
 \prod_{\substack{i\in[l..r]\\ (a_i,b_i)=(L,+)}}\Theta_{c_r}(vx_i)
 \prod_{\substack{i\in[l..r]\\ (a_i,b_i)=(R,+)}}\omega_{q,t}\Theta_{c_r}(vx_i).
\end{align*}
After this reflection, each parameter $x_i$ is multiplied by $v^2$. Reflection at the left boundary contributes
\begin{align*}
 \prod_{\substack{i\in[l..r]\\ (a_i,b_i)=(L,+)}}\Theta_{c_l}(uv^2x_i)
 \prod_{\substack{i\in[l..r]\\ (a_i,b_i)=(R,+)}}\omega_{q,t}\Theta_{c_l}(uv^2x_i).
\end{align*}
After reflection at the right boundary, $b_i=+$ becomes $-$; when it moves to the left, it interchanges with all the other $+$ signs. By Lemma~\ref{le17}, this contributes
\begin{align*}
 &\prod_{\substack{i<j\in[l..r]\\ b_i=b_j=+,\,a_i=a_j}}
 \Pi(v^2x_i,x_j;q,t)\Pi(u^2v^2x_i,v^2x_j;q,t)\\
 &\qquad\times
 \prod_{\substack{i<j\in[l..r]\\ b_i=b_j=+,\,a_i\neq a_j}}
 (1+v^2x_ix_j)(1+u^2v^4x_ix_j).
\end{align*}
After reflection at the left boundary, the resulting $-$ becomes $+$ and, as it moves back to the right, it interchanges with all the $-$ signs. This contributes
\begin{align*}
 &\prod_{\substack{i,j\in[l..r]\\ b_i=+,\,b_j=-,\,a_i=a_j}}
 \Pi(u^2v^2x_i,u^2v^2x_j;q,t)
 \prod_{\substack{i,j\in[l..r]\\ b_i=+,\,b_j=-,\,a_i\neq a_j}}
 (1+u^4v^4x_ix_j).
\end{align*}

\textit{Step 4.} Repeat Steps~2 and~3. After repeating them $n$ times, the $(n+1)$st repetition contributes
\begin{align*}
 R_n
 &:=\prod_{\substack{i\in[l..r]\\ (a_i,b_i)=(L,-)}}\Theta_{c_l}(u^{2n+1}v^{2n}x_i)\Theta_{c_r}(u^{2n+2}v^{2n+1}x_i)\\
 &\quad\times\prod_{\substack{i\in[l..r]\\ (a_i,b_i)=(R,-)}}\omega_{q,t}\Theta_{c_l}(u^{2n+1}v^{2n}x_i)\,\omega_{q,t}\Theta_{c_r}(u^{2n+2}v^{2n+1}x_i)\\
 &\quad\times\prod_{\substack{i<j\in[l..r]\\ b_i=b_j=-,\,a_i=a_j}}\Pi(u^{2n+2}v^{2n}x_i,u^{2n}v^{2n}x_j;q,t)\Pi(u^{2n+2}v^{2n+2}x_i,u^{2n+2}v^{2n}x_j;q,t)\\
 &\quad\times\prod_{\substack{i<j\in[l..r]\\ b_i=b_j=-,\,a_i\neq a_j}}(1+u^{4n+2}v^{4n}x_ix_j)(1+u^{4n+4}v^{4n+2}x_ix_j)\\
 &\quad\times\prod_{\substack{i,j\in[l..r]\\ b_i=+,\,b_j=-,\,a_i=a_j}}\Pi(u^{2n+2}v^{2n+2}x_i,u^{2n}v^{2n}x_j;q,t)
 \prod_{\substack{i,j\in[l..r]\\ b_i=+,\,b_j=-,\,a_i\neq a_j}}(1+u^{4n+2}v^{4n+2}x_ix_j)\\
 &\quad\times\prod_{\substack{i\in[l..r]\\ (a_i,b_i)=(L,+)}}\Theta_{c_r}(u^{2n}v^{2n+1}x_i)\Theta_{c_l}(u^{2n+1}v^{2n+2}x_i)\\
 &\quad\times\prod_{\substack{i\in[l..r]\\ (a_i,b_i)=(R,+)}}\omega_{q,t}\Theta_{c_r}(u^{2n}v^{2n+1}x_i)\,\omega_{q,t}\Theta_{c_l}(u^{2n+1}v^{2n+2}x_i)\\
 &\quad\times\prod_{\substack{i<j\in[l..r]\\ b_i=b_j=+,\,a_i=a_j}}\Pi(u^{2n}v^{2n+2}x_i,u^{2n}v^{2n}x_j;q,t)\Pi(u^{2n+2}v^{2n+2}x_i,u^{2n}v^{2n+2}x_j;q,t)\\
 &\quad\times\prod_{\substack{i<j\in[l..r]\\ b_i=b_j=+,\,a_i\neq a_j}}(1+u^{4n}v^{4n+2}x_ix_j)(1+u^{4n+2}v^{4n+4}x_ix_j)\\
 &\quad\times\prod_{\substack{i,j\in[l..r]\\ b_i=+,\,b_j=-,\,a_i=a_j}}\Pi(u^{2n+2}v^{2n+2}x_i,u^{2n+2}v^{2n+2}x_j;q,t)
 \prod_{\substack{i,j\in[l..r]\\ b_i=+,\,b_j=-,\,a_i\neq a_j}}(1+u^{4n+4}v^{4n+4}x_ix_j).
\end{align*}
After iterating Steps~2 and~3 $n+1$ times, the diagonal-edge weights become $u^{2n+2}v^{2n+2}x_i$. Since $u,v\in(0,1)$,
\begin{align*}
 \lim_{n\to\infty}u^{2n+2}v^{2n+2}x_i=0.
\end{align*}
Therefore, in the limit only configurations with no diagonal edges contribute. Under free boundary conditions, this limiting contribution is
\begin{align*}
 \mathfrak Z_{c_l,c_r}(uv).
\end{align*}
Hence,
\begin{align*}
 Z_{c_l,c_r}=\mathfrak Z_{c_l,c_r}(uv)\prod_{n=0}^{\infty}R_n,
\end{align*}
which gives \eqref{ffp}. Formula \eqref{fep} follows from \eqref{ffp} by taking the limit $v\downarrow0$.
\end{proof}

\begin{remark}[Terminal boundary normalization]\label{rem:terminal-boundary-normalization}
The factor \(\mathfrak Z_{c_l,c_r}(uv)\) in Proposition~\ref{p23} is the terminal no-diagonal-edge free-boundary scalar product left after the reflection--commutation procedure.  We keep it as a separate normalization factor rather than claim a closed product formula for it.  This is enough for the normalized observable identities used below: the same terminal scalar product is part of the partition function and is therefore included in the normalization.
\end{remark}

\begin{lemma}[Terminal normalization cancels from normalized observables]
\label{lem:terminal-normalization-cancels}
The terminal factor \(\mathfrak Z_{c_l,c_r}(uv)\) in
Proposition~\ref{p23} does not contribute to any normalized Negu\c t
observable used in Theorems~\ref{l61}, \ref{t58}, and \ref{t77}.  More
precisely, in the unnormalized expectations obtained by inserting the
Negu\c t operators and commuting the same boundary and interlacing operators,
the final no-diagonal-edge boundary scalar product is again
\(\mathfrak Z_{c_l,c_r}(uv)\), independent of the contour variables.  Hence it
appears both in the numerator and in the partition-function denominator and
cancels identically.
\end{lemma}

\begin{proof}
The proof of Proposition~\ref{p23} commutes all interlacing operators through
the two boundary reflections until the diagonal parameters have been multiplied
by powers of \(uv\) tending to zero.  The remaining scalar product is exactly
\(\mathfrak Z_{c_l,c_r}(uv)\).  In the observable formulae, the Negu\c t
operators act before this final terminal step and contribute the contour
factors displayed in Propositions~\ref{p57} and Theorem~\ref{t58}.  After
those contour factors have been extracted, the same reflection--commutation
procedure leaves the same terminal no-diagonal-edge scalar product.  Since the
probability expectation is normalized by \(Z_{c_l,c_r}\), which contains the
identical terminal factor by Proposition~\ref{p23}, the factor cancels.
\end{proof}
\section{Laplace Transform of Height Function}\label{sect:lthf}

In this section, we derive an integral formula for the expectation of the Laplace transform of the random height function. The key idea is to relate the Laplace transform to the Negu\c t operator and then evaluate its expectation by means of an explicit contour integral. The infinite-product formula for the partition function obtained in Section~\ref{sect:pf} makes it possible to carry out this program in the doubly free-boundary setting.

We begin by recalling a convenient Macdonald-process-type formalism adapted to the present model.

\begin{definition}\label{df22}
Let
\begin{align*}
 \bA&=(A^{(l)},A^{(l+1)},\ldots,A^{(r)},A^{(r+1)}),\\
 \bB&=(B^{(l)},B^{(l+1)},\ldots,B^{(r)},B^{(r+1)})
\end{align*}
be two collections of countable sets of variables. Let $u,v$ be two parameters, and let $\mathcal P=\{\mathcal L,\mathcal R\}$ be a partition of $[l..r]$, i.e.
\begin{align*}
 \mathcal L\cup\mathcal R=[l..r],
 \qquad
 \mathcal L\cap\mathcal R=\emptyset.
\end{align*}
For $c_l,c_r\in\{el,oa,deel,eoa\}$ and $q,t\in(0,1)$, define a formal probability measure on sequences of partitions
\begin{align*}
 (\lambda^{(l)},\lambda^{(l+1)},\ldots,\lambda^{(r)},\lambda^{(r+1)})
\end{align*}
by
\begin{align}
 &\MP_{\bA,\bB,\mathcal P,c_l,c_r,q,t}(\lambda^{(l)},\ldots,\lambda^{(r+1)})
 \propto
 \mathbf 1_{c_l}(\lambda^{(l)})\mathbf 1_{c_r}(\lambda^{(r+1)})
 \frac{b_{\lambda^{(r+1)}}^{c_r}}{\overline b_{\lambda^{(l)}}^{\,c_l}}
 u^{|\lambda^{(l)}|}v^{|\lambda^{(r+1)}|}
 \label{pmd}\\
 &\qquad\times
 \prod_{i\in\mathcal L}\Psi_{\lambda^{(i)},\lambda^{(i+1)}}(A^{(i)},B^{(i+1)};q,t)
 \prod_{j\in\mathcal R}\Phi_{[\lambda^{(j)}]',[\lambda^{(j+1)}]'}(A^{(j)},B^{(j+1)};q,t),
 \notag
\end{align}
where, for partitions $\lambda,\mu\in\YY$ and countable sets of variables $A,B$,
\begin{align*}
 \Psi_{\lambda,\mu}(A,B;q,t)
 &:=\sum_{\nu\in\YY}Q_{\lambda/\nu}(A;q,t)P_{\mu/\nu}(B;q,t),\\
 \Phi_{\lambda,\mu}(A,B;q,t)
 &:=\sum_{\nu\in\YY}P_{\lambda/\nu}(A;t,q)Q_{\mu/\nu}(B;t,q).
\end{align*}
The factors $\mathbf 1_{c_l}$ and $\mathbf 1_{c_r}$ are the boundary-admissibility indicators defined in \eqref{eq:boundary-indicator}; equivalently, the measure is supported only on boundary-admissible sequences.
\end{definition}

See Section~\ref{sc:dmp} for the definitions of Macdonald polynomials $P_\lambda$, $Q_\lambda$, $P_{\lambda/\mu}$, and $Q_{\lambda/\mu}$.

\begin{remark}
In terms of the Macdonald scalar product \eqref{dsp}, one may equivalently write
\begin{align*}
 \Psi_{\lambda,\mu}(A,B;q,t)
 &=\bigl\langle Q_\lambda(A,Y;q,t),P_\mu(Y,B;q,t)\bigr\rangle_{Y;q,t},\\
 \Phi_{\lambda,\mu}(A,B;q,t)
 &=\bigl\langle Q_\mu(Y,B;t,q),P_\lambda(A,Y;t,q)\bigr\rangle_{Y;t,q},
\end{align*}
where $Y$ is a countable set of variables.
\end{remark}

\begin{lemma}[Induced generalized Macdonald process]\label{l23}
Under the correspondence of Lemma~\ref{lem:dimer-partition-correspondence}, the measure
\(\mathbb P_{c_l,c_r}\) in \eqref{dpm} induces on
\[
        \bigl(\lambda^{(M,l)},\lambda^{(M,l+1)},\ldots,
        \lambda^{(M,r)},\lambda^{(M,r+1)}\bigr)
\]
the generalized Macdonald process \eqref{pmd} with
\[
        \mathcal L=\{i\in[l..r]:a_i=L\},
        \qquad
        \mathcal R=\{i\in[l..r]:a_i=R\},
\]
and
\[
\begin{array}{c|cc}
        b_i & A^{(i)} & B^{(i+1)} \\ \hline
        + & \{0\} & \{x_i\} \\
        - & \{x_i\} & \{0\}.
\end{array}
\]
The endpoint restrictions are the boundary indicators
\[
        \mathbf 1_{c_l}(\lambda^{(M,l)})
        \mathbf 1_{c_r}(\lambda^{(M,r+1)})
\]
defined in \eqref{eq:boundary-indicator}.
\end{lemma}

\begin{proof}
By Lemma~\ref{lem:dimer-partition-correspondence}, it remains only to compare the
weights of a partition sequence
\[
        \pmb\mu=(\mu^{(l)},\ldots,\mu^{(r+1)}).
\]
With the choices of \(\mathcal L,\mathcal R,A^{(i)},B^{(i+1)}\) stated above, the product of
local \(\Psi\)- and \(\Phi\)-kernels in \eqref{pmd} is exactly the bulk factor
\(A(\pmb\mu)\) in \eqref{ppt}.  The remaining endpoint factor in \eqref{pmd} is
\[
        \mathbf 1_{c_l}(\mu^{(l)})
        \mathbf 1_{c_r}(\mu^{(r+1)})
        u^{|\mu^{(l)}|}v^{|\mu^{(r+1)}|}
        \frac{b_{\mu^{(r+1)}}^{c_r}}
             {\overline b_{\mu^{(l)}}^{c_l}},
\]
which is exactly the endpoint factor in the numerator of \eqref{dpm}.  Thus the
unnormalized weights in \eqref{pmd} and \eqref{dpm} agree term by term, and division by
the same normalizing constant gives the claimed induced process.
\end{proof}

For $k\in\ZZ_{>0}$ and $\lambda\in\YY$, define
\begin{align}
 \gamma_k(\lambda;q,t)
 :=(1-t^{-k})\sum_{i=1}^{l(\lambda)}q^{k\lambda_i}t^{k(-i+1)}+t^{-kl(\lambda)}.
 \label{dgm}
\end{align}

We shall also use the abbreviations
\begin{align}
 \Pi_{L,L}(X,Y)&=\Pi(X,Y;q,t),
 \notag\\
 \Pi_{R,R}(X,Y)&=\Pi(X,Y;t,q),
 \label{PiLR}\\
 \Pi_{L,R}(X,Y)&=\Pi_{R,L}(X,Y)=\Pi_0(X,Y),
 \notag
\end{align}
where $\Pi$ and $\Pi_0$ are defined in \eqref{dpxy} and \eqref{dp0xy}.

To study the scaling limit, for each free-boundary dimer covering $M$ we use the following column-wise coordinate system for the rail-yard graph.

\begin{definition}[Charge-centered \((q,t)\)-column coordinate]
\label{def:qt-column-embedding}
\label{def:charge-centered-qt-coordinate}
Let \(M\) be a free-boundary dimer covering, and set
\[
        \rho:=\frac{\log q}{\log t}>0 .
\]
We construct the coordinate used in the height-Laplace observables column by column.

On each odd column \(x=2m-1\), place the odd vertices on a vertical line, preserving their
original vertical order.  The auxiliary vertical coordinate \(\widetilde y\) is determined up
to an additive constant by the following rule for consecutive odd vertices:
\[
        \text{distance}=
        \begin{cases}
        1, &\text{if both vertices are particles},\\[1mm]
        \rho, &\text{if both vertices are holes},\\[1mm]
        \dfrac{1+\rho}{2}, &\text{if one vertex is a particle and the other is a hole}.
        \end{cases}
\]

Let \(\widetilde Y_i\) be the auxiliary \(\widetilde y\)-coordinate of the
\(i\)-th highest particle on the column \(x=2m-1\).  The quantity
\[
        \widetilde c^{(M,m,q,t)}
        :=
        \widetilde Y_i+i-\frac12-\rho\,\lambda_i^{(M,m)}
\]
is independent of \(i\); it is the \((q,t)\)-charge of the auxiliary coordinate on this
column.  The vertical coordinate used below is the centered coordinate
\[
        y:=\widetilde y-\widetilde c^{(M,m,q,t)}.
\]
Equivalently, the \((q,t)\)-charge of the column in the coordinate \(y\) is zero.  Changing
the auxiliary origin translates both \(\widetilde y\) and
\(\widetilde c^{(M,m,q,t)}\) by the same constant, and hence leaves \(y\) unchanged.

After the odd vertices have been placed, let \(\mathfrak v=(2i,y_0)\) be an even vertex of
the original rail-yard graph, with horizontal odd neighbors
\[
        \mathfrak v^-=(2i-1,y_0),
        \qquad
        \mathfrak v^+=(2i+1,y_0).
\]
Place \(\mathfrak v\) at the midpoint of the new positions of
\(\mathfrak v^-\) and \(\mathfrak v^+\).  The edge set is unchanged: all
adjacencies are inherited from \(RYG(l,r,a,b)\).

For a marked odd column \(m\), write
\[
        h^{(q,t)}_M(m,y)
        :=
        h_M\!\left(\widetilde x_m,
        y+\widetilde c^{(M,m,q,t)}\right),
\]
where \(\widetilde x_m\) is the coordinate vertical line associated with the original column
\(x=2m-1\).  Thus \(h^{(q,t)}_M\) always denotes height in the charge-centered
\((q,t)\)-column coordinate.  No assertion is made about the same observables in the original
deterministic geometric embedding.
\end{definition}
\begin{lemma}[Charge-centered height--Negu\c t identity]
\label{le25}
\label{cor:charge-centered-height-negut}
Let \(M\) be a free-boundary dimer covering and let \(m\in[l..r+1]\).  In the
charge-centered \((q,t)\)-column coordinate of
Definition~\ref{def:qt-column-embedding}, for every \(k\in\mathbb Z_{>0}\),
\begin{equation}\label{eq:charge-centered-height-negut}
        \int_{-\infty}^{\infty}
        h^{(q,t)}_M(m,y)t^{ky}\,dy
        =
        \frac{2}{k^2\log t\log q}\,
        \gamma_k(\lambda^{(M,m)};q,t).
\end{equation}
\end{lemma}

\begin{proof}
Write \(\lambda=\lambda^{(M,m)}\), and let \(Y_i\) be the ordinate, in the
charge-centered coordinate, of the \(i\)-th highest particle on the odd
column \(x=2m-1\).  Then
the definition of \(\lambda\) gives
\begin{equation}\label{eq:centered-particle-coordinate}
        \frac{\log q}{\log t}\lambda_i
        =
        Y_i+i-\frac12 .
\end{equation}
Let
\[
        B:=Y_{\ell(\lambda)+1}+\frac12 .
\]
Below \(B\) the normalized height is zero, while for \(y>B\),
\[
        \frac{d}{dy}h^{(q,t)}_M(m,y)
        =
        \frac{2\log t}{\log q}
        \left(
        1-\sum_{i=1}^{\ell(\lambda)}
        \mathbf 1_{[Y_i-\frac12,Y_i+\frac12]}(y)
        \right).
\]
Moreover (\ref{eq:centered-particle-coordinate}) gives
\[
        B=-\ell(\lambda).
\]
Since \(t\in(0,1)\), integration by parts yields
\[
\begin{aligned}
\int_{-\infty}^{\infty}h^{(q,t)}_M(m,y)t^{ky}\,dy
&=
\frac{2t^{kB}}{k^2\log t\log q}
+
\frac{2}{k^2\log t\log q}
\sum_{i=1}^{\ell(\lambda)}
\left(t^{k(Y_i+\frac12)}-t^{k(Y_i-\frac12)}\right).
\end{aligned}
\]
Using \eqref{eq:centered-particle-coordinate}, we have
\[
        t^{k(Y_i+\frac12)}-t^{k(Y_i-\frac12)}
        =
        (1-t^{-k})q^{k\lambda_i}t^{k(-i+1)}.
\]
Together with \(B=-\ell(\lambda)\), this gives
\[
\int_{-\infty}^{\infty}h^{(q,t)}_M(m,y)t^{ky}\,dy
=
\frac{2}{k^2\log t\log q}
\left[
t^{-k\ell(\lambda)}
+
(1-t^{-k})\sum_{i=1}^{\ell(\lambda)}
q^{k\lambda_i}t^{k(-i+1)}
\right],
\]
which is \eqref{eq:charge-centered-height-negut}.
\end{proof}

\begin{lemma}\label{l41}
For every partition $\lambda\in\YY$,
\begin{align*}
 \gamma_k(\lambda';t,q)=\gamma_k\!\left(\lambda;\frac1q,\frac1t\right).
\end{align*}
\end{lemma}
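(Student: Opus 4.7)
The plan is to expand both sides using the definition (\ref{dgm}) and reduce the identity to a Fubini-type double counting over the Young diagram of $\lambda$. Substituting the partition $\lambda'$ and the parameters $(t,q)$ into (\ref{dgm}), and using $l(\lambda') = \lambda_1$, gives
\begin{align*}
\gamma_k(\lambda'; t, q) = (1 - q^{-k}) \sum_{j=1}^{\lambda_1} t^{k\lambda'_j}\, q^{-k(j-1)} + q^{-k\lambda_1}.
\end{align*}
Substituting $\lambda$ and the parameters $(1/q, 1/t)$ into (\ref{dgm}) and simplifying the resulting negative exponents yields
\begin{align*}
\gamma_k(\lambda; 1/q, 1/t) = (1 - t^k) \sum_{i=1}^{l(\lambda)} q^{-k\lambda_i}\, t^{k(i-1)} + t^{kl(\lambda)}.
\end{align*}

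The key step is then to introduce the auxiliary double sum
\begin{align*}
S := \sum_{(i,j)\in \lambda} q^{-k(j-1)}\, t^{k(i-1)},
\end{align*}
where $(i,j)$ ranges over the boxes of the Young diagram of $\lambda$, and to evaluate $S$ in two ways. Summing row-by-row (row $i$ contains boxes $j=1,\dots,\lambda_i$) and applying the geometric-series formula produces
\begin{align*}
S = \frac{1}{1-q^{-k}}\sum_{i=1}^{l(\lambda)} t^{k(i-1)}\bigl(1-q^{-k\lambda_i}\bigr),
\end{align*}
while summing column-by-column (column $j$ contains boxes $i=1,\dots,\lambda'_j$) produces
\begin{align*}
S = \frac{1}{1-t^k}\sum_{j=1}^{\lambda_1} q^{-k(j-1)}\bigl(1-t^{k\lambda'_j}\bigr).
\end{align*}

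Equating these two expressions for $S$, clearing denominators by multiplying through by $(1-q^{-k})(1-t^k)$, and collapsing the ``constant'' pieces of each sum via the auxiliary identities
\begin{align*}
\sum_{i=1}^{l(\lambda)} t^{k(i-1)} = \frac{1-t^{kl(\lambda)}}{1-t^k}, \qquad \sum_{j=1}^{\lambda_1} q^{-k(j-1)} = \frac{1-q^{-k\lambda_1}}{1-q^{-k}},
\end{align*}
one obtains after a straightforward rearrangement precisely the equality between the two displayed formulas for $\gamma_k(\lambda';t,q)$ and $\gamma_k(\lambda;1/q,1/t)$ above. The manipulations require $q^{-k}\neq 1$ and $t^k\neq 1$, but both sides of the lemma are analytic in $q$ and $t$, so the general case follows by continuity; the degenerate case $\lambda=\emptyset$ reduces to $1=1$. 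There is no real obstacle in this lemma: the statement is essentially a bookkeeping identity obtained by writing a single double sum over the Young diagram in the two natural orders, with care taken only to match up the boundary terms $t^{kl(\lambda)}$ and $q^{-k\lambda_1}$ with the tails of the corresponding geometric series.
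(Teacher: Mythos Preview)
Your proof is correct. The paper does not give its own argument here but simply cites ``Same argument as the proof of Lemma 4.1 in \cite{LV21}''; your explicit Fubini computation over the Young diagram is the natural way to verify this bookkeeping identity and is presumably what that reference contains.
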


\begin{proof}
This is the same argument as in Lemma~4.1 of \cite{LV21}.
\end{proof}

Recall also the standard involutive symmetry of Macdonald polynomials (see p.~324 of \cite{MG95}):
\begin{align}
 P_\lambda(X;q,t)&=P_\lambda\!\left(X;\frac1q,\frac1t\right),
 \notag\\
 Q_\lambda(X;q,t)&=\left(\frac tq\right)^{|\lambda|}Q_\lambda\!\left(X;\frac1q,\frac1t\right).
 \label{pqr}
\end{align}

\begin{lemma}\label{l210}
For any countable set of variables $X$ and $c\in\{L,R\}$, define the specialization
\begin{align*}
 \rho_{X,1,c}(p_n):=
 \begin{cases}
  p_n(X), & c=L,\\
  \omega_{q,t}(p_n(X)), & c=R.
 \end{cases}
\end{align*}
Write $\rho_c(B):=\rho_{B,1,c}$ and $\rho_d(W):=\rho_{W,1,d}$. Then
\begin{small}
\begin{align*}
 \Pi(X,Y;q,t)&=H(\rho_{X,1,L};\rho_{Y,1,L};q,t),\\
 \Pi_0(X,Y)&=H(\rho_{X,1,L};\rho_{Y,1,R};q,t),\\
 L(W,A;q,t)&=\exp\left(\sum_{n=1}^{\infty}\frac{q^n(1-t^{-n})}{n}p_n(A)p_n(W^{-1})\right),\\
 \Pi_{c,d}(B,W)&=\exp\left(\sum_{n=1}^{\infty}\frac{1-t^n}{1-q^n}\frac{p_n(\rho_c(B))p_n(\rho_d(W))}{n}\right),\\
 \frac{\Pi_{c,d}(B,W)}{\Pi_{c,d}(B,\xi(q,t,c)W)}
 &=\exp\left(\sum_{n=1}^{\infty}\frac{(1-t^n)(1-\xi(q,t,c)^n)}{n(1-q^n)}p_n(\rho_c(B))p_n(\rho_d(W))\right),\\
 L\left(W,A;\frac1t,\frac1q\right)
 &=H\bigl(\rho_{(t^{-1}W^{-1}),1,L}\cup((-1)*\rho_{W^{-1},1,L});A;t,q\bigr)\\
 &=H\bigl(\omega_{q,t}(\rho_{(t^{-1}W^{-1}),1,L}\cup((-1)*\rho_{W^{-1},1,L}));\omega_{q,t}(A);q,t\bigr).
\end{align*}
\end{small}
Moreover,
\begin{align}
 \omega(q,t,a_i)&=
 \begin{cases}
  (q,t), & a_i=L,\\
  \left(\dfrac1t,\dfrac1q\right), & a_i=R,
 \end{cases}
 \notag\\
 \omega_*(q,t,a_i)&=
 \begin{cases}
  (q,t), & a_i=L,\\
  (t,q), & a_i=R,
 \end{cases}
 \notag\\
 \xi(q,t,a_i)&=
 \begin{cases}
  q^{-1}, & a_i=L,\\
  t, & a_i=R.
 \end{cases}
 \label{ia3}
\end{align}
\end{lemma}

\begin{proof}
The first identity follows by comparing the exponential form of the Macdonald Cauchy kernel with the definition of the kernel \(H\):
\[
 \Pi(X,Y;q,t)
 =\exp\left(\sum_{n\ge1}\frac{1-t^n}{n(1-q^n)}p_n(X)p_n(Y)\right)
 =H(\rho_{X,1,L};\rho_{Y,1,L};q,t).
\]
For the second identity, applying \(\omega_{q,t}\) to the second variable gives
\[
 \frac{1-t^n}{1-q^n}p_n(X)\omega_{q,t}(p_n(Y))
 =(-1)^{n-1}p_n(X)p_n(Y),
\]
and hence
\[
 H(\rho_{X,1,L};\rho_{Y,1,R};q,t)
 =\exp\left(\sum_{n\ge1}\frac{(-1)^{n-1}}{n}p_n(X)p_n(Y)\right)
 =\Pi_0(X,Y).
\]
The formula for \(L(W,A;q,t)\) is obtained by expanding the logarithm of
\[
 \prod_{w\in W}\prod_{a\in A}\frac{w-q a/t}{w-q a}
\]
in powers of \(a/w\), which gives
\[
 \log L(W,A;q,t)=
 \sum_{n\ge1}\frac{q^n(1-t^{-n})}{n}p_n(A)p_n(W^{-1}).
\]
The two displayed formulas involving \(\Pi_{c,d}\) then follow immediately from the same exponential definition of \(H\), and from replacing \(W\) by \(\xi(q,t,c)W\) in the second one.  Finally, the last two identities are obtained by writing the specialization
\(\rho_{(t^{-1}W^{-1}),1,L}\cup((-1)*\rho_{W^{-1},1,L})\) in power sums and comparing its kernel with the preceding logarithmic expansion of \(L(W,A;1/t,1/q)\).  Applying Lemma~\ref{la6} to both arguments gives the equivalent form with \(\omega_{q,t}\).
\end{proof}

Define, for $d\in\{L,R\}$ and $v\in\CC$,
\begin{align}
 \rho_{v,2,d}(X;q,t)
 :=
 \begin{cases}
  \rho_{\frac{q^2v}{t}X^{-1},1,L}\cup\bigl((-1)*\rho_{\frac{qv}{t}X^{-1},1,L}\bigr), & d=L,\\
  \omega_{q,t}\!\left(\bigl((-1)*\rho_{vX^{-1},1,L}\bigr)\cup\rho_{t^{-1}vX^{-1},1,L}\right), & d=R,
 \end{cases}
 \label{drt}
\end{align}
and
\begin{align}
 \rho_{v,3,d}(X;q,t)
 :=
 \begin{cases}
  \rho_{vX,1,L}\cup\bigl((-1)*\rho_{q^{-1}vX,1,L}\bigr), & d=L,\\
  \rho_{vX,1,R}\cup\bigl((-1)*\rho_{tvX,1,R}\bigr), & d=R.
 \end{cases}
 \label{drs}
\end{align}
For $v=1$, we abbreviate these by $\rho_{2,d}(X;q,t)$ and $\rho_{3,d}(X;q,t)$.

\begin{lemma}\label{l31}
Let $g_i\in\ZZ_{\ge0}$ for $i\in[l+1..r]$, and for each such $i$ let $W^{(i)}$ be an ordered set of variables with $|W^{(i)}|=g_i$. Define
\begin{align*}
 \rho_A&:=\bigcup_{i=l}^{r}\rho_{A^{(i)},1,a_i},
 \qquad
 \rho_B:=\bigcup_{i=l+1}^{r+1}\rho_{B^{(i)},1,a_{i-1}},\\
 \rho_W&:=\bigcup_{i=l+1}^{r}\rho_{2,a_i}(W^{(i)};q,t),
 \qquad
 \rho_W^{\circ}:=\bigcup_{i=l+1}^{r}\rho_{3,a_i}(W^{(i)};q,t).
\end{align*}
Then
\begin{small}
\begin{align*}
&\EE_{\MP_{\bA,\bB,\mathcal P,c_l,c_r,q,t}}
 \left[\prod_{i=l+1}^{r}\gamma_{g_i}(\lambda^{(i)};q,t)\right]\\
&=\oint\cdots\oint
 \prod_{i=l+1}^{r}D\bigl(W^{(i)};\omega(q,t,a_i)\bigr)
 \prod_{l+1\le i\le j\le r}H\bigl(\rho_{2,a_i}(W^{(i)};q,t);\rho_{A^{(j)},1,a_j};q,t\bigr)\\
&\qquad\times
 \prod_{l\le i<j\le r}H\bigl(\rho_{3,a_j}(W^{(j)};q,t);\rho_{B^{(i+1)},1,a_i};q,t\bigr)
 \prod_{l+1\le i<j\le r}H\bigl(\rho_{2,a_i}(W^{(i)};q,t);\rho_{3,a_j}(W^{(j)};q,t);q,t\bigr)\\
&\qquad\times
 \prod_{k\ge1}
 \Theta_{c_l}([u^{2k-2}v^{2k-1}]\rho_W;q,t)
 \Theta_{c_r}([u^{2k-1}v^{2k}]\rho_W;q,t)
 \Theta_{c_l}([u^{2k-1}v^{2k-2}]\rho_W^{\circ};q,t)
 \Theta_{c_r}([u^{2k}v^{2k-1}]\rho_W^{\circ};q,t)\\
&\qquad\times
 \prod_{k\ge1}
 H(u^{2k}\rho_B;v^{2k}\rho_W^{\circ};q,t)
 H(u^{2k-2}\rho_B;v^{2k}\rho_W;q,t)
 H(u^{2k}\rho_W;v^{2k}\rho_W^{\circ};q,t)\\
&\qquad\times
 \prod_{k\ge1}
 H(u^{2k}\rho_A;v^{2k}\rho_W;q,t)
 H(u^{2k}\rho_A;v^{2k-2}\rho_W^{\circ};q,t).
\end{align*}
\end{small}
Here the integral contours are given by $\{\mathcal{C}_{i,j}\}_{i\in [l+1..r],s\in[g_i]}$ such that
\begin{enumerate}
    \item $\mathcal{C}_{i,s}$ is the integral contour for the variable $w^{(i)}_s\in W^{(i)}$;
    \item $\mathcal{C}_{i,s}$ encloses 0 and every singular point of 
    \begin{align*}
&\prod_{i\leq j;i,j\in[l+1..r]}H(\rho_{2,a_i}(W^{(i)};q,t);\rho_{A^{(j)},1,a_j})\prod_{k\geq 1}\left[H(u^{2k}\rho_A;v^{2k}\rho_W))H(u^{2k-2}\rho_B;v^{2k}\rho_W)\right]\\
&\prod_{k\geq 1}\Theta_{cl}([u^{2k-2}v^{2k-1}]\rho_W)
\Theta_{cr}([u^{2k-1}v^{2k}]\rho_W)
\end{align*}
but no other singular points of the integrand.
\item the contour $\mathcal{C}_{i,j}$ is contained in the domain bounded by $t\mathcal{C}_{i',j'}$ whenever $(i,j)<(i',j')$ in lexicographical ordering;
\end{enumerate}
$D(W;q,t)$, $H(W,X;q,t)$, and $\Pi_{c,d}(X,Y)$ are given by (\ref{ddf}), (\ref{dh}), and (\ref{PiLR}).
\end{lemma}

\begin{proof}
Let $Z$ denote the normalization constant of \eqref{pmd}. By Lemma~\ref{l41}, whenever $a_i=R$ we may rewrite the observable $\gamma_{g_i}(\lambda^{(i)};q,t)$ in terms of the conjugate partition and the pair $(1/t,1/q)$. Using Definition~\ref{df22}, Remark~4.2, and the identity \eqref{pqr}, the expectation in the statement becomes a normalized nested Macdonald scalar product.

For $i\in[l+1..r]$, define the local kernels
\begin{align*}
 \mathbf E_i:=
 \begin{cases}
  \displaystyle\sum_{\lambda^{(i)}\in\YY}
  \gamma_{g_i}(\lambda^{(i)};q,t)
  Q_{\lambda^{(i)}}(A^{(i)},Y^{(i)};q,t)
  P_{\lambda^{(i)}}(Y^{(i-1)},B^{(i)};q,t),
  & (a_{i-1},a_i)=(L,L),\\[2mm]
  \displaystyle\sum_{\lambda^{(i)}\in\YY}
  \gamma_{g_i}\!\left([\lambda^{(i)}]';\frac1t,\frac1q\right)
  P_{[\lambda^{(i)}]'}(A^{(i)},Y^{(i)};t,q)
  P_{\lambda^{(i)}}(Y^{(i-1)},B^{(i)};q,t),
  & (a_{i-1},a_i)=(L,R),\\[2mm]
  \displaystyle\sum_{\lambda^{(i)}\in\YY}
  \gamma_{g_i}(\lambda^{(i)};q,t)
  Q_{\lambda^{(i)}}(A^{(i)},Y^{(i)};q,t)
  Q_{[\lambda^{(i)}]'}(Y^{(i-1)},B^{(i)};t,q),
  & (a_{i-1},a_i)=(R,L),\\[2mm]
  \displaystyle\sum_{\lambda^{(i)}\in\YY}
  \gamma_{g_i}\!\left([\lambda^{(i)}]';\frac1t,\frac1q\right)
  P_{[\lambda^{(i)}]'}(A^{(i)},Y^{(i)};t,q)
  Q_{[\lambda^{(i)}]'}(Y^{(i-1)},B^{(i)};t,q),
  & (a_{i-1},a_i)=(R,R).
 \end{cases}
\end{align*}
Similarly, define the boundary terms
\begin{align*}
 \mathbf E_l&:=
 \begin{cases}
  \displaystyle\sum_{\lambda^{(l)}\in\YY}\mathbf 1_{c_l}(\lambda^{(l)})u^{|\lambda^{(l)}|}
  \bigl[\overline b_{\lambda^{(l)}}^{\,c_l}\bigr]^{-1}
  Q_{\lambda^{(l)}}(A^{(l)},Y^{(l)};q,t),
  & a_l=L,\\[2mm]
  \displaystyle\sum_{\lambda^{(l)}\in\YY}\mathbf 1_{c_l}(\lambda^{(l)})u^{|\lambda^{(l)}|}
  \bigl[\overline b_{\lambda^{(l)}}^{\,c_l}\bigr]^{-1}
  P_{[\lambda^{(l)}]'}(A^{(l)},Y^{(l)};t,q),
  & a_l=R,
 \end{cases}\\
 \mathbf E_{r+1}&:=
 \begin{cases}
  \displaystyle\sum_{\lambda^{(r+1)}\in\YY}\mathbf 1_{c_r}(\lambda^{(r+1)})v^{|\lambda^{(r+1)}|}
  b_{\lambda^{(r+1)}}^{c_r}
  P_{\lambda^{(r+1)}}(Y^{(r)},B^{(r+1)};q,t),
  & a_r=L,\\[2mm]
  \displaystyle\sum_{\lambda^{(r+1)}\in\YY}\mathbf 1_{c_r}(\lambda^{(r+1)})v^{|\lambda^{(r+1)}|}
  b_{\lambda^{(r+1)}}^{c_r}
  Q_{[\lambda^{(r+1)}]'}(Y^{(r)},B^{(r+1)};t,q),
  & a_r=R.
 \end{cases}
\end{align*}
The indicators in $\mathbf E_l$ and $\mathbf E_{r+1}$ are essential when $c_l$ or $c_r$ equals $deel$ or $eoa$; they are exactly the support restrictions in Definition~\ref{df22} and are retained until the final boundary scalar product in Lemma~\ref{l218}.
Then
\begin{align*}
 \EE_{\MP_{\bA,\bB,\mathcal P,c_l,c_r,q,t}}
 \left[\prod_{i=l+1}^{r}\gamma_{g_i}(\lambda^{(i)};q,t)\right]
 =\frac1Z\Bigl\langle \mathbf E_l,
 \bigl\langle \mathbf E_{l+1},\ldots,\langle \mathbf E_r,\mathbf E_{r+1}\rangle_{Y^{(r)}}\cdots\bigr\rangle_{Y^{(l+1)}}
 \Bigr\rangle_{Y^{(l)}}.
\end{align*}

For each $i\in[l+1..r]$, the kernel $\mathbf E_i$ is obtained by applying the Negu\c t operator to the corresponding Cauchy kernel:
\begin{align*}
 \mathbf E_i=
 \begin{cases}
  D_{-g_i,(A^{(i)},Y^{(i)});q,t}
  \Pi_{a_{i-1},a_i}\bigl((A^{(i)},Y^{(i)}),(Y^{(i-1)},B^{(i)})\bigr),
  & a_i=L,\\[2mm]
  D_{-g_i,(A^{(i)},Y^{(i)});1/t,1/q}
  \Pi_{a_{i-1},a_i}\bigl((A^{(i)},Y^{(i)}),(Y^{(i-1)},B^{(i)})\bigr),
  & a_i=R.
 \end{cases}
\end{align*}

By Lemma~\ref{l23}, the dimer measure induces the generalized Macdonald
process \eqref{pmd}.  We apply Proposition~\ref{pa2} to each inserted
observable \(\mathbf E_i\).  After interchanging the resulting contour
integrals with the scalar products, Lemma~\ref{l210} rewrites all
\(\Pi\)-, \(\Pi_0\)-, and \(L\)-kernels as \(H\)-kernels.  We obtain a multiple
contour integral whose integrand contains the \(D\)-kernels and the nested
scalar product
\[
\Bigl\langle
F_l,\,
\bigl\langle F_{l+1},\ldots,
\bigl\langle F_r,F_{r+1}\bigr\rangle_{Y^{(r)}}
\cdots\bigr\rangle_{Y^{(l+1)}}
\Bigr\rangle_{Y^{(l)}} .
\]
Here
\[
F_l=
\widetilde H\bigl(\rho_{uA^{(l)},1,a_l}\cup
\rho_{uY^{(l)},1,a_l}\bigr),
\qquad
F_{r+1}=
\widetilde H\bigl(\rho_{vB^{(r+1)},1,a_r}\cup
\rho_{vY^{(r)},1,a_r}\bigr),
\]
and, for \(i\in[l+1..r]\),
\begin{align*}
F_i
&=
H\bigl(\rho_{Y^{(i-1)},1,a_{i-1}};
        \rho_{A^{(i)},1,a_i}\bigr)
H\bigl(\rho_{Y^{(i-1)},1,a_{i-1}};
        \rho_{Y^{(i)},1,a_i}\bigr)
H\bigl(\rho_{Y^{(i)},1,a_i};
        \rho_{B^{(i)},1,a_{i-1}}\bigr)\\
&\qquad\times
H\bigl(\rho_{2,a_i}(W^{(i)};q,t);
        \rho_{Y^{(i)},1,a_i}\bigr)
H\bigl(\rho_{3,a_i}(W^{(i)};q,t);
        \rho_{Y^{(i-1)},1,a_{i-1}}\bigr).
\end{align*}

We now evaluate the scalar products from right to left.  Each active alphabet
\(Y^{(j)}\) occurs only through \(H\)-kernels and the endpoint
\(\widetilde H\)-terms.  Hence Lemma~\ref{211} contracts the \(H\)-kernels,
while Lemma~\ref{l214} transports the endpoint \(\widetilde H\)-terms through
the contraction.  Repeating this for \(j=r,r-1,\ldots,l\) gives three types
of factors.

\begin{enumerate}
\item the pair-interaction factors
\[
\prod_{l+1\le i<j\le r}
H\bigl(
\rho_{2,a_i}(W^{(i)};q,t);
\rho_{3,a_j}(W^{(j)};q,t);q,t
\bigr);
\]

\item the one-body factors
\[
\prod_{l+1\le i\le j\le r}
H\bigl(
\rho_{2,a_i}(W^{(i)};q,t);
\rho_{A^{(j)},1,a_j};q,t
\bigr),
\]
and
\[
\prod_{l\le i<j\le r}
H\bigl(
\rho_{3,a_j}(W^{(j)};q,t);
\rho_{B^{(i+1)},1,a_i};q,t
\bigr);
\]

\item the final boundary contraction
\[
\left\langle
\widetilde H\bigl(\rho_{uA^{(l)},1,a_l}\cup\rho_{uY^{(l)},1,a_l}\bigr),
\widetilde H\bigl(\rho_{vB^{(r+1)},1,a_r}\cup\rho_{vY^{(r)},1,a_r}\bigr)
\right\rangle,
\]
after all intermediate \(Y\)-variables have been contracted.
\end{enumerate}
The boundary indicators remain attached to the endpoint sums throughout this
recursion.  Hence the last scalar product is precisely the boundary scalar
product evaluated in Lemma~\ref{l218}, which gives the four infinite products
of \(\Theta\)- and \(H\)-factors appearing in the statement.

Finally, the normalizing constant \(Z\) is the partition function in
Proposition~\ref{p23}.  The boundary normalization produced by
Lemma~\ref{l218} is the same one appearing in \(Z\), and therefore cancels in
the normalized expectation.  The remaining factors are precisely the
\(D\)-kernels, one-body factors, pair-interaction factors, and boundary
products displayed in the statement.
\end{proof}

\begin{lemma}\label{211}
Let $\{d_k\}_{k\ge1}$, $\{s_k\}_{k\ge1}$, and $\{u_k\}_{k\ge1}$ be sequences in graded algebras such that
\begin{align*}
\lim_{k\to\infty}\mathrm{ldeg}(d_k)=\lim_{k\to\infty}\mathrm{ldeg}(s_k)=\lim_{k\to\infty}\mathrm{ldeg}(u_k)=\infty.
\end{align*}
Then
\begin{align*}
 &\left\langle
 \exp\left(\sum_{k=1}^{\infty}\frac{d_kp_k(Y)+s_k[p_k(Y)]^2}{k}\right),
 \exp\left(\sum_{k=1}^{\infty}\frac{u_kp_k(Y)}{k}\right)
 \right\rangle_{Y;q,t}\\
 &\qquad=
 \exp\left(\sum_{k=1}^{\infty}\frac{1-q^k}{1-t^k}\frac{d_ku_k}{k}
 +\sum_{k=1}^{\infty}\left(\frac{1-q^k}{1-t^k}\right)^2\frac{s_ku_k^2}{k}\right).
\end{align*}
\end{lemma}

\begin{proof}

If $s_k=0$, the lemma follows from Proposition 2.3 of \cite{bcgs13}. For general $s_k$, let
\begin{eqnarray*}
c_{(k_1,l_1),(k_2,l_2),\ldots,(k_m,l_m)}[p_{k_1}(Y)]^{l_1}[p_{k_2}(Y)]^{l_2}\cdots [p_{k_m}(Y)]^{l_m}:=f(Y)
\end{eqnarray*}
be a monomial in the expansion of 
\begin{eqnarray*}
\exp\left(\sum_{k=1}^{\infty}\frac{d_kp_k(Y)+s_k[p_k(Y)]^2}{k}\right);
\end{eqnarray*}
such that $k_1<k_2<\ldots <k_m$. Note that 
\begin{align}
R(Y):=\exp\left(\sum_{k=1}^{\infty}\left(\frac{u_kp_k(Y)}{k}\right)\right)=\sum_{n=0}^{\infty}
\frac{1}{n!}\left(\sum_{k=1}^{\infty}\left(\frac{u_kp_k(Y)}{k}\right)\right)^n
\end{align}
Let $L:=l_1+\ldots+l_m$. The monomial
$[p_{k_1}(Y)]^{l_1}[p_{k_2}(Y)]^{l_2}\cdots [p_{k_m}(Y)]^{l_m}$ has coefficient
\begin{eqnarray*}
\frac{1}{L!}\frac{L!}{(l_1)!(l_2)!\cdot\ldots\cdot(l_m)!}
\frac{u_{k_1}^{l_1}u_{k_2}^{l_2}\cdot\ldots\cdot u_{k_m}^{l_m}}{k_1^{l_1}k_2^{l_2}\cdot\ldots\cdot k_m^{l_m}}:=
\frac{g(Y)}{[p_{k_1}(Y)]^{l_1}[p_{k_2}(Y)]^{l_2}\cdots [p_{k_m}(Y)]^{l_m}}
\end{eqnarray*}
in (\ref{ps2}).
Then by (\ref{dsp})
\begin{align*}
\langle f(Y),R(Y) \rangle_{Y;q,t}&=
\langle f(Y),g(Y) \rangle_{Y;q,t}=c_{(k_1,l_1),(k_2,l_2),\ldots,(k_m,l_m)}\prod_{i=1}^{m}
\left(\frac{(1-q^{k_i})u_{k_i}}{1-t^{k_i}}\right)^{l_i}
\end{align*}
which can be obtained from $f(Y)$ by replacing each $p_{k_i}(Y)$ by $\frac{(1-q^{k_i})u_{k_i}}{1-t^{k_i}}$ for $1\leq i\leq m$. Then the lemma follows.
\end{proof}

\begin{lemma}\label{l214}
Let $\rho_1,\rho_2,\rho_3$ be specializations such that $\rho_1$ and $\rho_3$ are independent of the variables in $Y$, and assume that $\rho_2(p_k)=p_k(Y)$ for all $k\ge1$. Let $*\in\{oa,el,deel,eoa\}$. Then
\begin{align}
 \left\langle \Theta_*(\rho_1\cup\rho_2;q,t),H(\rho_2;\rho_3;q,t)\right\rangle_{Y;q,t}
 &=\Theta_*(\rho_1\cup\rho_3;q,t),
 \label{l214a}\\
 \left\langle \Theta_*(\rho_1\cup\omega_{q,t}(\rho_2);q,t),H(\rho_2;\omega_{t,q}(\rho_3);t,q)\right\rangle_{Y;t,q}
 &=\Theta_*(\rho_1\cup\rho_3;q,t).
 \label{l2102}
\end{align}
\end{lemma}

\begin{proof}
Insert the explicit exponential expressions for $\Theta_*$ from Lemma~\ref{le110} and for $H$ from \eqref{dh1}, and then apply Lemma~\ref{211}.  For $*=deel$ and $*=eoa$ this uses $\Theta_{deel}=\Theta_{de,el}$ and $\Theta_{eoa}=\Theta_{e,oa}$; the same linear-quadratic exponential computation applies, with the missing one-body factor in the even cases corresponding to a zero linear coefficient.  The quadratic and linear terms match exactly, yielding \eqref{l214a} and \eqref{l2102}.
\end{proof}

\begin{lemma}\label{l218}
Let $\rho_1,\rho_2$ be specializations, let $c_l,c_r\in\{oa,el,deel,eoa\}$, and let $Y$ be a countable set of variables. Let $\rho$ be the specialization defined by
\begin{align}
 \rho(p_k)=p_k(Y).
 \label{rpk}
\end{align}
Let $u,v\in(0,1)$. Then
\begin{small}
\begin{align}
 &\left\langle \Theta_{c_l}(u[\rho_1\cup\rho];q,t),\Theta_{c_r}(v[\rho_2\cup\rho];q,t)\right\rangle_{Y;q,t}
 \notag\\
 &\quad=
 \left[\prod_{i\ge1}
 \Theta_{c_l}([u^{2i-1}v^{2i-2}]\rho_1;q,t)
 \Theta_{c_r}([u^{2i}v^{2i-1}]\rho_1;q,t)
 \Theta_{c_r}([u^{2i-2}v^{2i-1}]\rho_2;q,t)
 \Theta_{c_l}([u^{2i-1}v^{2i}]\rho_2;q,t)
 \right]
 \label{l218eq}\\
 &\qquad\times
 \left[\prod_{j\ge1}H([u^{2j}v^{2j}]\rho_1;\rho_2;q,t)\right]
 \mathfrak Z_{c_l,c_r}(uv).
 \notag
\end{align}
\end{small}
If instead $\rho$ is defined by
\begin{align}
 \rho(p_k)=\omega_{q,t}(p_k(Y)),
 \label{drpk}
\end{align}
then the scalar product
\begin{align*}
 \left\langle \Theta_{c_l}(u[\rho_1\cup\rho];q,t),\Theta_{c_r}(v[\rho_2\cup\rho];q,t)\right\rangle_{Y;t,q}
\end{align*}
is given by the same right-hand side.
\end{lemma}

\begin{proof}
Define
\begin{align*}
 \mathcal I(\rho_1,\rho_2)
 :=\sum_{\lambda,\mu,\nu\in\YY}
 \mathbf 1_{c_l}(\lambda)\mathbf 1_{c_r}(\mu)
 \frac{b_\mu^{c_r}}{\overline b_\lambda^{\,c_l}}
 u^{|\lambda|}v^{|\mu|}
 Q_{\lambda/\nu}(\rho_1;q,t)
 P_{\mu/\nu}(\rho_2;q,t).
\end{align*}
By Corollary~\ref{c16} with $\eta=\emptyset$,
\begin{align*}
 \Theta_{c_l}(u[\rho_1\cup\rho];q,t)
 =\sum_{\lambda\in\YY}\mathbf 1_{c_l}(\lambda)\frac{u^{|\lambda|}}{\overline b_\lambda^{\,c_l}}Q_\lambda(\rho_1\cup\rho;q,t),
\end{align*}
and by Corollary~\ref{cl14} with $\eta=\emptyset$,
\begin{align*}
 \Theta_{c_r}(v[\rho_2\cup\rho];q,t)
 =\sum_{\mu\in\YY}\mathbf 1_{c_r}(\mu)b_\mu^{c_r}v^{|\mu|}P_\mu(\rho_2\cup\rho;q,t).
\end{align*}
Therefore the left-hand side of \eqref{l218eq} is exactly $\mathcal I(\rho_1,\rho_2)$. The same conclusion holds in the case \eqref{drpk}, because
\begin{align*}
 \bigl\langle P_\nu(\omega_{q,t}),Q_\nu(\omega_{q,t})\bigr\rangle_{Y;t,q}=1
\end{align*}
for every $\nu\in\YY$.

We now derive a recursion for $\mathcal I$. First apply Corollary~\ref{c16} to the sum over $\lambda$ and Corollary~\ref{cl14} to the sum over $\mu$:
\begin{align*}
 \mathcal I(\rho_1,\rho_2)
 &=\Theta_{c_l}(u\rho_1;q,t)\Theta_{c_r}(v\rho_2;q,t)
 \sum_{\eta,\zeta\in\YY}
 \mathbf 1_{c_l}(\eta)\mathbf 1_{c_r}(\zeta)
 \frac{u^{|\eta|}}{\overline b_\eta^{\,c_l}}
 b_\zeta^{c_r}v^{|\zeta|}
 \sum_{\nu\in\YY}
 P_{\nu/\eta}(u^2\rho_1;q,t)
 Q_{\nu/\zeta}(v^2\rho_2;q,t).
\end{align*}
Next, by Lemma~\ref{le17},
\begin{align*}
 \sum_{\nu\in\YY}P_{\nu/\eta}(u^2\rho_1;q,t)Q_{\nu/\zeta}(v^2\rho_2;q,t)
 =H([u^2v^2]\rho_1;\rho_2;q,t)
 \sum_{\sigma\in\YY}P_{\zeta/\sigma}(u^2\rho_1;q,t)Q_{\eta/\sigma}(v^2\rho_2;q,t).
\end{align*}
Applying Corollary~\ref{cl14} to the sum over $\zeta$ and Corollary~\ref{c16} to the sum over $\eta$, we obtain
\begin{align*}
 \mathcal I(\rho_1,\rho_2)
 &=\Theta_{c_l}(u\rho_1;q,t)
 \Theta_{c_r}(v\rho_2;q,t)
 \Theta_{c_r}(u^2v\rho_1;q,t)
 \Theta_{c_l}(uv^2\rho_2;q,t)
 H([u^2v^2]\rho_1;\rho_2;q,t)
 \mathcal J(\rho_1,\rho_2),
\end{align*}
where
\begin{align*}
 \mathcal J(\rho_1,\rho_2)
 :=\sum_{\mu,\tau,\sigma\in\YY}
 \mathbf 1_{c_l}(\tau)\mathbf 1_{c_r}(\mu)
 \frac{b_\mu^{c_r}}{\overline b_\tau^{\,c_l}}
 u^{|\tau|}v^{|\mu|}
 P_{\sigma/\tau}([u^2v^2]\rho_2;q,t)
 Q_{\sigma/\mu}([u^2v^2]\rho_1;q,t).
\end{align*}
A second application of Lemma~\ref{le17} shows that
\begin{align*}
 \mathcal J(\rho_1,\rho_2)
 =H([u^4v^4]\rho_1;\rho_2;q,t)
 \mathcal I([u^2v^2]\rho_1,[u^2v^2]\rho_2).
\end{align*}
Hence
\begin{align*}
 \mathcal I(\rho_1,\rho_2)
 &=\Theta_{c_l}(u\rho_1;q,t)
 \Theta_{c_r}(u^2v\rho_1;q,t)
 \Theta_{c_r}(v\rho_2;q,t)
 \Theta_{c_l}(uv^2\rho_2;q,t)\\
 &\qquad\times H([u^2v^2]\rho_1;\rho_2;q,t)
 H([u^4v^4]\rho_1;\rho_2;q,t)
 \mathcal I([u^2v^2]\rho_1,[u^2v^2]\rho_2).
\end{align*}
Iterating this identity $N$ times yields
\begin{align*}
 \mathcal I(\rho_1,\rho_2)
 &=\left[\prod_{i=1}^{N}
 \Theta_{c_l}([u^{2i-1}v^{2i-2}]\rho_1;q,t)
 \Theta_{c_r}([u^{2i}v^{2i-1}]\rho_1;q,t)\right.\\
 &\left.\Theta_{c_r}([u^{2i-2}v^{2i-1}]\rho_2;q,t)
 \Theta_{c_l}([u^{2i-1}v^{2i}]\rho_2;q,t)
 \right]\\
 &\qquad\times
 \left[\prod_{j=1}^{N}H([u^{2j}v^{2j}]\rho_1;\rho_2;q,t)\right]
 \mathcal I([u^{2N}v^{2N}]\rho_1,[u^{2N}v^{2N}]\rho_2).
\end{align*}
Since $u,v\in(0,1)$, the specializations $[u^{2N}v^{2N}]\rho_1$ and $[u^{2N}v^{2N}]\rho_2$ converge to the zero specialization as $N\to\infty$, and therefore
\begin{align*}
 \lim_{N\to\infty}\mathcal I([u^{2N}v^{2N}]\rho_1,[u^{2N}v^{2N}]\rho_2)
 =\mathfrak Z_{c_l,c_r}(uv).
\end{align*}
Passing to the limit proves \eqref{l218eq}. The case \eqref{drpk} is identical, since the reduction to $\mathcal I(\rho_1,\rho_2)$ is unchanged.  The parity constraints for $deel$ and $eoa$ are preserved at each reflection step by Corollaries~\ref{cl14} and~\ref{c16}; at the terminal step they give precisely the two indicators in \eqref{dZbdry}.
\end{proof}

Combining Lemmas~\ref{l23}, \ref{le25}, \ref{l41}, and \ref{l31} yields the moment formula for the Laplace transform of the height function under the doubly free-boundary Gibbs measure.

\section{Scaling Limits}\label{sect:as}

In this section, we analyze the asymptotic behavior of the observables introduced in
Section~\ref{sect:lthf} and prove that the centered fluctuations are Gaussian in the scaling
limit. The main inputs are the moment formula from Lemma~\ref{l31}, the periodicity
assumptions from Section~\ref{sect:mr}, and the $q$-Pochhammer ratio estimate in
Lemma~\ref{al51}. A central point is that, after a careful reorganization of the integrand,
the one-point free-boundary correction factors converge uniformly to the infinite product
encoded by the functions $\mathcal F_{u,v,k}$; this is the content of Lemma~\ref{le36}.
At the two-point level the quotient by the one-point factors has a non-trivial
second-order limit, which gives the covariance correction kernel $\mathscr B_{u,v}$.

\subsection{The interaction kernel}

\begin{lemma}[Finite non-reflected pair factor between two Negu\c t blocks]
\label{lem:mixed-interaction-kernel}
Let \(Z\) and \(U\) be the ordered Negu\c t variable sets attached to two
marked columns of types \(c,d\in\{L,R\}\).  In the multi-point contour
formula of Lemma~\ref{l31}, the finite non-reflected two-block factor comes
from the ordered-column product
\[
\prod_{l+1\le i<j\le r}
H\bigl(
\rho_{2,a_i}(W^{(i)};q,t);
\rho_{3,a_j}(W^{(j)};q,t);
q,t
\bigr).
\]
For two fixed blocks \(Z=W^{(i)}\) and \(U=W^{(j)}\), with \(i<j\),
\(a_i=c\), and \(a_j=d\), this factor is
\[
 H\bigl(
\rho_{2,c}(Z;q,t);
\rho_{3,d}(U;q,t);
q,t
\bigr)
=
T_{c,d}(Z,U)
=
\prod_{z\in Z}\prod_{u\in U} t_{c,d}(z,u),
\]
where
\[
t_{c,d}(z,u):=
\begin{cases}
\displaystyle
\frac{(z-u)(qz-tu)}{(z-tu)(qz-u)},
& (c,d)=(L,L),\\[3mm]
\displaystyle
\frac{(u+z)^2}{(u+tz)\left(u+\frac{z}{t}\right)},
& (c,d)=(L,R),\\[3mm]
\displaystyle
\frac{(tu+q^2z)(tu+z)}{(tu+qz)^2},
& (c,d)=(R,L),\\[3mm]
\displaystyle
\frac{(u-qz)(tu-z)}{(u-z)(tu-qz)},
& (c,d)=(R,R).
\end{cases}
\]
This factor is the finite bulk/Negu\c t interaction between the two marked
blocks; it does not include the boundary reflection factors coming from the
two free endpoints in Lemma~\ref{l31}.
\end{lemma}

\begin{proof}
 Since \(H\) is multiplicative over unions of one-variable
specializations, the contribution of two fixed blocks factors over pairs
\((z,u)\in Z\times U\).  It is therefore enough to compute the contribution
of one pair.

For \((c,d)=(L,L)\), the definitions \eqref{drt}--\eqref{drs} and the
\(H\)-kernel identity of Lemma~\ref{l210} give the quotient
\[
        \frac{1-u/z}{1-tu/z}\cdot
        \frac{1-(t/q)u/z}{1-(1/q)u/z}
        =
        \frac{(z-u)(qz-tu)}{(z-tu)(qz-u)}.
\]
This is \(t_{L,L}(z,u)\).

For \((c,d)=(R,R)\), the same computation with conjugated specializations,
using the ordered-column convention in Lemma~\ref{l31}, gives
\[
        t_{R,R}(z,u)
        =
        \frac{(u-qz)(tu-z)}{(u-z)(tu-qz)}.
\]

For the mixed cases, Lemma~\ref{l210} converts the conjugate Cauchy factor
into the corresponding plus-kernel.  The single-pair contributions are
\[
        t_{L,R}(z,u)
        =
        \frac{(u+z)^2}{(u+tz)(u+z/t)}
\]
and
\[
        t_{R,L}(z,u)
        =
        \frac{(tu+q^2z)(tu+z)}{(tu+qz)^2}.
\]
Multiplying these single-pair contributions over all
\((z,u)\in Z\times U\) gives the claimed formula for \(T_{c,d}(Z,U)\).
\end{proof}

\subsection{Moment formulas specialized to dimer coverings}

\begin{lemma}[Moment formula after the dimer specialization]\label{l331}
Let
\[
        i_1\le i_2\le\cdots\le i_m
\]
be integers in \([l+1..r]\), and let \(g_1,\ldots,g_m\in\mathbb Z_{>0}\).
For each occurrence \(s\in[m]\), let \(W^{[s]}\) be an ordered set of variables
with \(|W^{[s]}|=g_s\), attached to the marked column \(i_s\).  Write
\[
        \mathbf i:=(i_1,\ldots,i_m).
\]
All unmarked columns are understood to have empty Negu\c t variable sets.  Thus
the active marked-block specializations are
\[
        \rho_{W,\mathbf i}
        :=
        \bigcup_{s=1}^m
        \rho_{2,a_{i_s}}\!\left(W^{[s]};q,t\right),
        \qquad
        \rho_{W,\mathbf i}^{\circ}
        :=
        \bigcup_{s=1}^m
        \rho_{3,a_{i_s}}\!\left(W^{[s]};q,t\right).
\]
We keep the graph specializations \(\rho_A,\rho_B\) from Lemma~\ref{l31}.
For \(k\ge1\), define
\(\Theta_{k,\mathbf i}=\Theta_{k,\mathbf i}(W^{[1]},\ldots,W^{[m]})\) by
\begin{align}
\Theta_{k,\mathbf i}
&:=
\Theta_{c_l}\!\left([u^{2k-2}v^{2k-1}]\rho_{W,\mathbf i};q,t\right)
\Theta_{c_r}\!\left([u^{2k-1}v^{2k}]\rho_{W,\mathbf i};q,t\right)
\notag\\
&\quad\times
\Theta_{c_l}\!\left([u^{2k-1}v^{2k-2}]\rho_{W,\mathbf i}^{\circ};q,t\right)
\Theta_{c_r}\!\left([u^{2k}v^{2k-1}]\rho_{W,\mathbf i}^{\circ};q,t\right)
\notag\\
&\quad\times
H\!\left(u^{2k}\rho_B;v^{2k}\rho_{W,\mathbf i}^{\circ};q,t\right)
H\!\left(u^{2k-2}\rho_B;v^{2k}\rho_{W,\mathbf i};q,t\right)
H\!\left(u^{2k}\rho_{W,\mathbf i};v^{2k}\rho_{W,\mathbf i}^{\circ};q,t\right)
\notag\\
&\quad\times
H\!\left(u^{2k}\rho_A;v^{2k}\rho_{W,\mathbf i};q,t\right)
H\!\left(u^{2k}\rho_A;v^{2k-2}\rho_{W,\mathbf i}^{\circ};q,t\right).
\label{eq:Theta-k-i}
\end{align}
Then
\begin{small}
\begin{align*}
&\mathbb E_{\Pr}\Biggl[
        \prod_{s=1}^{m}\gamma_{g_s}\bigl(\lambda^{(i_s)};q,t\bigr)
        \Biggr]
\\
&=
\oint\cdots\oint
\prod_{s=1}^{m}
D\bigl(W^{[s]};\omega(q,t,a_{i_s})\bigr)
\\
&\quad\times
\prod_{s=1}^{m}
\prod_{\substack{j\in[i_s..r]\\ b_j=-}}
H\bigl(
\rho_{2,a_{i_s}}(W^{[s]};q,t);
\rho_{\{x_j\},1,a_j};q,t
\bigr)
\\
&\quad\times
\prod_{s=1}^{m}
\prod_{\substack{j\in[l..i_s-1]\\ b_j=+}}
H\bigl(
\rho_{3,a_{i_s}}(W^{[s]};q,t);
\rho_{\{x_j\},1,a_j};q,t
\bigr)
\\
&\quad\times
\prod_{1\le p<q\le m}
T_{a_{i_p},a_{i_q}}\bigl(W^{[p]},W^{[q]}\bigr)
\prod_{k\ge1}\Theta_{k,\mathbf i}.
\end{align*}
\end{small}
Here \(\Pr=\mathbb P_{c_l,c_r}\) is the probability measure \eqref{dpm}, and
the contours are those inherited from Lemma~\ref{l31} under the specialization
of Lemma~\ref{l23}.
\end{lemma}

\begin{proof}
Apply Lemma~\ref{l31} and specialize the generalized Macdonald-process
parameters as in Lemma~\ref{l23}.  All unmarked columns have empty Negu\c t
variable sets, so the global specializations \(\rho_W,\rho_W^\circ\) of
Lemma~\ref{l31} reduce to the active finite unions
\(\rho_{W,\mathbf i},\rho_{W,\mathbf i}^{\circ}\) above.

Under the dimer specialization, the nonzero \(A\)-specializations occur
exactly at indices \(j\) with \(b_j=-\), while the nonzero \(B\)-specializations
occur exactly at indices \(j\) with \(b_j=+\).  This gives the two displayed
one-body products.  The finite interaction between two marked blocks is
\(T_{a_{i_p},a_{i_q}}\) by Lemma~\ref{lem:mixed-interaction-kernel}.  Finally,
the reflected boundary part of Lemma~\ref{l31} is precisely the product over
\(k\ge1\) of the factors \(\Theta_{k,\mathbf i}\) in \eqref{eq:Theta-k-i}.
The normalization is the same as in \eqref{dpm}, so the stated formula follows.
\end{proof}

For later use, we record the explicit one-body kernels coming from the specialization by a
single edge weight:
\begin{align}
G_{1,LL}(W,x;q,t)&:=\prod_{w\in W}\frac{w-\frac{qx}{t}}{w-qx},
&
G_{1,LR}(W,x;q,t)&:=\prod_{w\in W}\frac{tw+q^2x}{tw+qx},
\label{dg1}\\
G_{1,RL}(W,x;q,t)&:=\prod_{w\in W}\frac{w+t^{-1}x}{w+x},
&
G_{1,RR}(W,x;q,t)&:=\prod_{w\in W}\frac{tw-qx}{tw-x},
\label{dg3}\\
G_{0,LL}(W,x;q,t)&:=\prod_{w\in W}\frac{q-wx}{q-twx},
&
G_{0,LR}(W,x;q,t)&:=\prod_{w\in W}\frac{1+wx}{1+q^{-1}wx},
\label{dg2}\\
G_{0,RL}(W,x;q,t)&:=\prod_{w\in W}\frac{1+wx}{1+twx},
&
G_{0,RR}(W,x;q,t)&:=\prod_{w\in W}\frac{1-twx}{1-qtwx}.
\label{dg4}
\end{align}

\begin{lemma}[Left-chart moment formula]\label{lLeftMoments}
Let \(i_1\le\cdots\le i_m\) be integers in \([l+1..r]\), and assume
\[
        a_{i_1}=\cdots=a_{i_m}=L.
\]
For each occurrence \(s\in[m]\), let \(W^{[s]}\) be an ordered set of variables
with \(|W^{[s]}|=g_s\).  Let \(\mathbf i=(i_1,\ldots,i_m)\), and let
\(\Theta_{k,\mathbf i}\) be the active reflected factor defined in
\eqref{eq:Theta-k-i}.  Then
\begin{align*}
&\mathbb E_{\Pr}\Biggl[
        \prod_{s=1}^{m}\gamma_{g_s}\bigl(\lambda^{(i_s)};q,t\bigr)
        \Biggr]
\\
&=
\oint\cdots\oint
\prod_{s=1}^{m}
D\bigl(W^{[s]};q,t\bigr)\,
J_s\bigl(W^{[s]}\bigr)
\prod_{1\le p<q\le m}
T_{L,L}\bigl(W^{[p]},W^{[q]}\bigr)
\prod_{k\ge1}\Theta_{k,\mathbf i},
\end{align*}
where
\begin{align*}
J_s(W)
&:=
\prod_{\substack{j\in[i_s..r]\\ b_j=-,\ a_j=L}}
G_{1,LL}(W,x_j;q,t)
\prod_{\substack{j\in[i_s..r]\\ b_j=-,\ a_j=R}}
G_{1,LR}(W,x_j;q,t)
\\
&\quad\times
\prod_{\substack{j\in[l..i_s-1]\\ b_j=+,\ a_j=L}}
G_{0,LL}(W,x_j;q,t)
\prod_{\substack{j\in[l..i_s-1]\\ b_j=+,\ a_j=R}}
G_{0,LR}(W,x_j;q,t).
\end{align*}
\end{lemma}

\begin{proof}
This is the \(L\)-marked specialization of Lemma~\ref{l331}.  Since
\(a_{i_s}=L\) for every marked occurrence \(s\), the \(D\)-kernel becomes
\(D(W^{[s]};q,t)\), the one-body \(H\)-factors reduce to the four \(G\)-kernels
appearing in the definition of \(J_s\), and the finite interaction between two
marked blocks is always \(T_{L,L}\).  The reflected boundary factors are
unchanged and are exactly the active factors \(\Theta_{k,\mathbf i}\).  This
gives the formula.
\end{proof}

\subsection{Periodic regrouping and asymptotic one-body factors}

Let $\epsilon>0$ and suppose we are in the setting of Assumption~\ref{ap5}. For
$p\in[m]$, $j\in[n]$, and $i\in[l^{(\epsilon)}..r^{(\epsilon)}]$, define
\begin{align*}
I_{j,p,1,L}^{(\epsilon)}
&:=
\Bigl\{
u\in[v_{p-1}^{(\epsilon)}+1..v_{p}^{(\epsilon)}]\cap(n\ZZ+j)\cap[i..r^{(\epsilon)}]:
b_u^{(\epsilon)}=-,\ a_u=L
\Bigr\},\\
I_{j,p,0,L}^{(\epsilon)}
&:=
\Bigl\{
u\in[v_{p-1}^{(\epsilon)}+1..v_{p}^{(\epsilon)}]\cap(n\ZZ+j)\cap[l^{(\epsilon)}..i-1]:
b_u^{(\epsilon)}=+,\ a_u=L
\Bigr\},\\
I_{j,p,1,R}^{(\epsilon)}
&:=
\Bigl\{
u\in[v_{p-1}^{(\epsilon)}+1..v_{p}^{(\epsilon)}]\cap(n\ZZ+j)\cap[i..r^{(\epsilon)}]:
b_u^{(\epsilon)}=-,\ a_u=R
\Bigr\},\\
I_{j,p,0,R}^{(\epsilon)}
&:=
\Bigl\{
u\in[v_{p-1}^{(\epsilon)}+1..v_{p}^{(\epsilon)}]\cap(n\ZZ+j)\cap[l^{(\epsilon)}..i-1]:
b_u^{(\epsilon)}=+,\ a_u=R
\Bigr\}.
\end{align*}

\begin{lemma}\label{aps5}
Suppose Assumption~\ref{ap5} and \eqref{dci} hold. Fix $j\in[n]$ and $p\in[m]$. Then, for
each of the four families
\[
I_{j,p,1,L}^{(\epsilon)},\qquad I_{j,p,0,L}^{(\epsilon)},\qquad
I_{j,p,1,R}^{(\epsilon)},\qquad I_{j,p,0,R}^{(\epsilon)},
\]
either the set is empty for all sufficiently small $\epsilon$, or it is nonempty for all
sufficiently small $\epsilon$.
\end{lemma}

\begin{proof}
This follows immediately from the piecewise periodicity in Assumption~\ref{ap5}(1) and
the fact that the residue class $(i^{(\epsilon)})_{\equiv_n}$ is fixed in~$\epsilon$ by
\eqref{dci}.
\end{proof}

Accordingly, we define the eventual nonemptiness indicators
\begin{align*}
\mathbf 1_{\mathcal E_{j,p,1,L}}
&:=
\begin{cases}
1,& I_{j,p,1,L}^{(\epsilon)}\neq\emptyset\ \text{for all sufficiently small }\epsilon,\\
0,& \text{otherwise},
\end{cases}\\
\mathbf 1_{\mathcal E_{j,p,0,L}}
&:=
\begin{cases}
1,& I_{j,p,0,L}^{(\epsilon)}\neq\emptyset\ \text{for all sufficiently small }\epsilon,\\
0,& \text{otherwise},
\end{cases}\\
\mathbf 1_{\mathcal E_{j,p,1,R}}
&:=
\begin{cases}
1,& I_{j,p,1,R}^{(\epsilon)}\neq\emptyset\ \text{for all sufficiently small }\epsilon,\\
0,& \text{otherwise},
\end{cases}\\
\mathbf 1_{\mathcal E_{j,p,0,R}}
&:=
\begin{cases}
1,& I_{j,p,0,R}^{(\epsilon)}\neq\emptyset\ \text{for all sufficiently small }\epsilon,\\
0,& \text{otherwise}.
\end{cases}
\end{align*}

\begin{lemma}\label{l55}
Suppose Assumption~\ref{ap5} and \eqref{dci} hold, and let
$i^{(\epsilon)}\in[l^{(\epsilon)}..r^{(\epsilon)}]$ satisfy
\[
\epsilon i^{(\epsilon)}\to\chi,\qquad a_{i^{(\epsilon)}}=L.
\]
Assume
\begin{align}
 q=t^{\alpha}
 \label{jacks}
\end{align}
for some fixed $\alpha>0$. Define
\begin{align*}
G_{1,L}^{(\epsilon)}(W)
&:=
\prod_{\substack{j\in[i^{(\epsilon)}..r^{(\epsilon)}]\\ b_j^{(\epsilon)}=-,\,a_j=L}}
G_{1,LL}(W,x_j^{(\epsilon)};q,t),\\
G_{0,L}^{(\epsilon)}(W)
&:=
\prod_{\substack{j\in[l^{(\epsilon)}..i^{(\epsilon)}-1]\\ b_j^{(\epsilon)}=+,\,a_j=L}}
G_{0,LL}(W,x_j^{(\epsilon)};q,t),\\
G_{1,R}^{(\epsilon)}(W)
&:=
\prod_{\substack{j\in[i^{(\epsilon)}..r^{(\epsilon)}]\\ b_j^{(\epsilon)}=-,\,a_j=R}}
G_{1,LR}(W,x_j^{(\epsilon)};q,t),\\
G_{0,R}^{(\epsilon)}(W)
&:=
\prod_{\substack{j\in[l^{(\epsilon)}..i^{(\epsilon)}-1]\\ b_j^{(\epsilon)}=+,\,a_j=R}}
G_{0,LR}(W,x_j^{(\epsilon)};q,t).
\end{align*}
Further define
\begin{align}
\mathcal G_{>\chi,L}(w)
&:=
\prod_{\substack{p\in[m]\\ V_p>\chi}}
\prod_{j=1}^{n}
\left(
\frac{1-e^{V_p}(w\tau_j)^{-1}}
     {1-e^{\max\{V_{p-1},\chi\}}(w\tau_j)^{-1}}
\right)^{\mathbf 1_{\mathcal E_{j,p,1,L}}},
\label{dgs1}\\
\mathcal G_{<\chi,L}(w)
&:=
\prod_{\substack{p\in[m]\\ V_{p-1}<\chi}}
\prod_{j=1}^{n}
\left(
\frac{1-we^{-V_{p-1}}\tau_j}
     {1-we^{-\min\{V_p,\chi\}}\tau_j}
\right)^{\mathbf 1_{\mathcal E_{j,p,0,L}}},
\label{dgs2}\\
\mathcal G_{>\chi,R}(w)
&:=
\prod_{\substack{p\in[m]\\ V_p>\chi}}
\prod_{j=1}^{n}
\left(
\frac{1+e^{\max\{V_{p-1},\chi\}}(w\tau_j)^{-1}}
     {1+e^{V_p}(w\tau_j)^{-1}}
\right)^{\mathbf 1_{\mathcal E_{j,p,1,R}}},
\label{dgs3}\\
\mathcal G_{<\chi,R}(w)
&:=
\prod_{\substack{p\in[m]\\ V_{p-1}<\chi}}
\prod_{j=1}^{n}
\left(
\frac{1+we^{-\min\{V_p,\chi\}}\tau_j}
     {1+we^{-V_{p-1}}\tau_j}
\right)^{\mathbf 1_{\mathcal E_{j,p,0,R}}}.
\label{dgs4}
\end{align}
Then, uniformly for $W$ on a fixed compact set avoiding the singularities,
\begin{align*}
 \lim_{\epsilon\to0}G_{1,L}^{(\epsilon)}(W)
 &=
 \left[\prod_{w\in W}\mathcal G_{>\chi,L}(w)\right]^{\beta},\\
 \lim_{\epsilon\to0}G_{0,L}^{(\epsilon)}(W)
 &=
 \left[\prod_{w\in W}\mathcal G_{<\chi,L}(w)\right]^{\beta},\\
 \lim_{\epsilon\to0}G_{1,R}^{(\epsilon)}(W)
 &=
 \left[\prod_{w\in W}\mathcal G_{>\chi,R}(w)\right]^{\alpha\beta},\\
 \lim_{\epsilon\to0}G_{0,R}^{(\epsilon)}(W)
 &=
 \left[\prod_{w\in W}\mathcal G_{<\chi,R}(w)\right]^{\alpha\beta}.
\end{align*}
We choose the logarithmic branches so that $\log z$ is real on the positive real axis.
\end{lemma}

\begin{proof}
Let $N_{j,p,1,L}^{(\epsilon)}:=|I_{j,p,1,L}^{(\epsilon)}|$, and similarly for the other three
families. Also let $q_{j,p,1,L}^{(\epsilon)}:=\max I_{j,p,1,L}^{(\epsilon)}$ when the set is
nonempty, and use the same convention for the other families; when a set is empty we set
the corresponding endpoint to $\pm\infty$ and interpret the associated weight as~$0$.

Grouping the product according to the periodic blocks and residue classes modulo~$n$
gives, for example,
\begin{align*}
G_{1,L}^{(\epsilon)}(W)
=
\prod_{p=1}^{m}\prod_{j=1}^{n}\prod_{w\in W}
\frac{\left(\frac{q}{t}w^{-1}x_{q_{j,p,1,L}^{(\epsilon)}}^{(\epsilon)};e^{-n\epsilon}\right)_{N_{j,p,1,L}^{(\epsilon)}}}
     {\left(qw^{-1}x_{q_{j,p,1,L}^{(\epsilon)}}^{(\epsilon)};e^{-n\epsilon}\right)_{N_{j,p,1,L}^{(\epsilon)}}},
\end{align*}
and analogous formulas for $G_{0,L}^{(\epsilon)}$, $G_{1,R}^{(\epsilon)}$, and
$G_{0,R}^{(\epsilon)}$. Here 
\begin{align}
(a;q)_{N}=\prod_{i=0}^{N-1}(1-aq^{i}).
\end{align}

Applying Lemma~\ref{al51} and using Assumption~\ref{ap5}(2) we
obtain
\begin{align*}
n\epsilon N_{j,p,1,L}^{(\epsilon)}&\to V_p-\max\{V_{p-1},\chi\},\\
n\epsilon N_{j,p,0,L}^{(\epsilon)}&\to \min\{V_p,\chi\}-V_{p-1},\\
n\epsilon N_{j,p,1,R}^{(\epsilon)}&\to V_p-\max\{V_{p-1},\chi\},\\
n\epsilon N_{j,p,0,R}^{(\epsilon)}&\to \min\{V_p,\chi\}-V_{p-1},
\end{align*}
whenever the corresponding sets are eventually nonempty. At the same time, when the corresponding set is nonempty,
\begin{align*}
x_{q_{j,p,1,L}^{(\epsilon)}}^{(\epsilon)}
&\to \tau_j^{-1}e^{V_p},\\
x_{q_{j,p,0,L}^{(\epsilon)}}^{(\epsilon)}
&\to \tau_je^{-\min\{V_p,\chi\}},\\
x_{q_{j,p,1,R}^{(\epsilon)}}^{(\epsilon)}
&\to \tau_j^{-1}e^{V_p},\\
x_{q_{j,p,0,R}^{(\epsilon)}}^{(\epsilon)}
&\to \tau_je^{-\min\{V_p,\chi\}}.
\end{align*}
Combining these limits gives the stated formulas.
\end{proof}

For $w\in\CC$ and $\chi\in\RR$, define
\begin{align}
 \mathcal G_{\chi}(w)
 :=
 \mathcal G_{>\chi,L}(w)\,
 \mathcal G_{<\chi,L}(w)\,
 \Bigl[\mathcal G_{>\chi,R}(w)\mathcal G_{<\chi,R}(w)\Bigr]^{\alpha}.
 \label{dgc}
\end{align}

The following zero/pole sets will be used to specify the limiting contours:
\begin{align*}
\mathcal R_{\chi,1,1}
&:=
\left\{e^{\max\{V_{p-1},\chi\}}\tau_j^{-1}\right\}_{\substack{V_p>\chi,\ j\in[n]\\ \mathbf 1_{\mathcal E_{j,p,1,L}}=1}},
&
\mathcal R_{\chi,1,2}
&:=
\left\{e^{V_p}\tau_j^{-1}\right\}_{\substack{V_p>\chi,\ j\in[n]\\ \mathbf 1_{\mathcal E_{j,p,1,L}}=1}},\\
\mathcal R_{\chi,2,1}
&:=
\left\{e^{\min\{V_p,\chi\}}\tau_j^{-1}\right\}_{\substack{V_{p-1}<\chi,\ j\in[n]\\ \mathbf 1_{\mathcal E_{j,p,0,L}}=1}},
&
\mathcal R_{\chi,2,2}
&:=
\left\{e^{V_{p-1}}\tau_j^{-1}\right\}_{\substack{V_{p-1}<\chi,\ j\in[n]\\ \mathbf 1_{\mathcal E_{j,p,0,L}}=1}},\\
\mathcal R_{\chi,3,1}
&:=
\left\{-e^{V_p}\tau_j^{-1}\right\}_{\substack{V_p>\chi,\ j\in[n]\\ \mathbf 1_{\mathcal E_{j,p,1,R}}=1}},
&
\mathcal R_{\chi,3,2}
&:=
\left\{-e^{\max\{V_{p-1},\chi\}}\tau_j^{-1}\right\}_{\substack{V_p>\chi,\ j\in[n]\\ \mathbf 1_{\mathcal E_{j,p,1,R}}=1}},\\
\mathcal R_{\chi,4,1}
&:=
\left\{-e^{V_{p-1}}\tau_j^{-1}\right\}_{\substack{V_{p-1}<\chi,\ j\in[n]\\ \mathbf 1_{\mathcal E_{j,p,0,R}}=1}},
&
\mathcal R_{\chi,4,2}
&:=
\left\{-e^{\min\{V_p,\chi\}}\tau_j^{-1}\right\}_{\substack{V_{p-1}<\chi,\ j\in[n]\\ \mathbf 1_{\mathcal E_{j,p,0,R}}=1}}.
\end{align*}
For $k_1,k_2\in\ZZ_{\ge0}$, define also
\begin{align*}
\mathcal R_{5,1,k_1,k_2}
&:=
\left\{u^{2k_1}v^{2k_2}e^{V_{p-1}}\tau_j^{-1}\right\}_{\substack{p\in[m],\ j\in[n]\\ \mathbf 1_{\mathcal E_{j,p,1,L}}=1}},\\
\mathcal R_{5,2,k_1,k_2}
&:=
\left\{u^{2k_1}v^{2k_2}e^{V_p}\tau_j^{-1}\right\}_{\substack{p\in[m],\ j\in[n]\\ \mathbf 1_{\mathcal E_{j,p,1,L}}=1}},\\
\mathcal R_{6,1,k_1,k_2}
&:=
\left\{u^{-2k_1}v^{-2k_2}e^{V_p}\tau_j^{-1}\right\}_{\substack{p\in[m],\ j\in[n]\\ \mathbf 1_{\mathcal E_{j,p,0,L}}=1}},\\
\mathcal R_{6,2,k_1,k_2}
&:=
\left\{u^{-2k_1}v^{-2k_2}e^{V_{p-1}}\tau_j^{-1}\right\}_{\substack{p\in[m],\ j\in[n]\\ \mathbf 1_{\mathcal E_{j,p,0,L}}=1}},\\
\mathcal R_{7,1,k_1,k_2}
&:=
\left\{-u^{2k_1}v^{2k_2}e^{V_p}\tau_j^{-1}\right\}_{\substack{p\in[m],\ j\in[n]\\ \mathbf 1_{\mathcal E_{j,p,1,R}}=1}},\\
\mathcal R_{7,2,k_1,k_2}
&:=
\left\{-u^{2k_1}v^{2k_2}e^{V_{p-1}}\tau_j^{-1}\right\}_{\substack{p\in[m],\ j\in[n]\\ \mathbf 1_{\mathcal E_{j,p,1,R}}=1}},\\
\mathcal R_{8,1,k_1,k_2}
&:=
\left\{-u^{-2k_1}v^{-2k_2}e^{V_{p-1}}\tau_j^{-1}\right\}_{\substack{p\in[m],\ j\in[n]\\ \mathbf 1_{\mathcal E_{j,p,0,R}}=1}},\\
\mathcal R_{8,2,k_1,k_2}
&:=
\left\{-u^{-2k_1}v^{-2k_2}e^{V_p}\tau_j^{-1}\right\}_{\substack{p\in[m],\ j\in[n]\\ \mathbf 1_{\mathcal E_{j,p,0,R}}=1}}.
\end{align*}

\subsection{One-point asymptotics}

We shall obtain asymptotic formulas only for columns of type~$L$. This is enough for the
proof of the Gaussian fluctuation theorem, and the case of type~$R$ follows from the same
argument together with Lemma~\ref{l41}.
Let
\begin{small}
\begin{align}
\mathcal F_{u,v,k}(w)
&:=
\mathcal G_{>V_0,L}(u^{-2k}v^{-2k}w)\,
\mathcal G_{<V_m,L}(u^{2k}v^{2k}w)\,
\Bigl[
\mathcal G_{>V_0,R}(u^{-2k}v^{-2k}w)\,
\mathcal G_{<V_m,R}(u^{2k}v^{2k}w)
\Bigr]^{\alpha}
\notag\\
&\qquad\times
\mathcal G_{>V_0,L}(u^{2-2k}v^{-2k}w)\,
\mathcal G_{<V_m,L}(u^{2k}v^{2k-2}w)\,
\Bigl[
\mathcal G_{>V_0,R}(u^{2-2k}v^{-2k}w)\,
\mathcal G_{<V_m,R}(u^{2k}v^{2k-2}w)
\Bigr]^{\alpha}.
\label{dfuvk}
\end{align}
\end{small}

\begin{lemma}\label{le36}
Fix \(g\in\mathbb Z_{>0}\), and let
\[
        W=(w_1,\ldots,w_g)
\]
range over the product of an admissible one-point Negu\c t contour family.  We assume that
the contour paths are contained in a fixed compact annulus
\[
        \{R_-<|w|<R_+\}.
\]

Fix the marked column \(i=i^{(\epsilon)}\).  In the one-point specialization of
Lemma~\ref{l31}, we take
\[
        W^{(i)}=W,
        \qquad
        W^{(j)}=\varnothing \quad \text{for } j\ne i.
\]
Set
\[
        \rho_{W,i}:=\rho_{2,a_i}(W;q,t),
        \qquad
        \rho_{W,i}^{\circ}:=\rho_{3,a_i}(W;q,t).
\]
The graph specializations \(\rho_A,\rho_B\) are the full graph specializations from
Lemma~\ref{l31}, after the dimer specialization of Lemma~\ref{l23}.

For \(k\ge1\), define the \(k\)-th reflected one-point correction factor by
\begin{align}
\Theta_k^{(\epsilon)}(W)
&:=
\Theta_{c_l}\!\left([u^{2k-2}v^{2k-1}]\rho_{W,i};q,t\right)
\Theta_{c_r}\!\left([u^{2k-1}v^{2k}]\rho_{W,i};q,t\right)
\notag\\
&\quad\times
\Theta_{c_l}\!\left([u^{2k-1}v^{2k-2}]\rho_{W,i}^{\circ};q,t\right)
\Theta_{c_r}\!\left([u^{2k}v^{2k-1}]\rho_{W,i}^{\circ};q,t\right)
\notag\\
&\quad\times
H\!\left(u^{2k}\rho_B;v^{2k}\rho_{W,i}^{\circ};q,t\right)
H\!\left(u^{2k-2}\rho_B;v^{2k}\rho_{W,i};q,t\right)
H\!\left(u^{2k}\rho_{W,i};v^{2k}\rho_{W,i}^{\circ};q,t\right)
\notag\\
&\quad\times
H\!\left(u^{2k}\rho_A;v^{2k}\rho_{W,i};q,t\right)
H\!\left(u^{2k}\rho_A;v^{2k-2}\rho_{W,i}^{\circ};q,t\right).
\label{eq:Theta-epsilon-k}
\end{align}
Here \(q,t,\rho_A,\rho_B,\rho_{W,i},\rho_{W,i}^{\circ}\) depend on
\(\epsilon\), but this dependence is suppressed from the notation.

If \(u,v\in(0,1)\) are sufficiently small, then:
\begin{enumerate}
\item for each fixed \(k\ge1\),
\[
        \Theta_k^{(\epsilon)}(W)
        \longrightarrow
        \prod_{s=1}^{g}\bigl[\mathcal F_{u,v,k}(w_s)\bigr]^\beta
\]
uniformly on the product contour;

\item there exist constants \(C>0\) and \(\mathfrak r\in(0,1)\), independent of
\(k\) and \(\epsilon\), such that for all sufficiently small \(\epsilon>0\),
\[
        \sup_W\bigl|\log \Theta_k^{(\epsilon)}(W)\bigr|
        \le C\mathfrak r^k;
\]

\item consequently,
\[
        \prod_{k\ge1}\Theta_k^{(\epsilon)}(W)
        \longrightarrow
        \prod_{s=1}^{g}\prod_{k\ge1}
        \bigl[\mathcal F_{u,v,k}(w_s)\bigr]^\beta
\]
uniformly on the product contour.
\end{enumerate}
\end{lemma}

\begin{proof}
Throughout the proof we omit the repeated parameters \((q,t)\) from
\(H\), \(\Theta_\ast\), and the specializations; all of them are evaluated at
the finite-\(\epsilon\) parameters
\[
        q=t^\alpha,\qquad t=e^{-n\beta\epsilon}.
\]

We first record the two power-sum estimates used below.  Let \(d\ge1\).  Since
the \(W\)-contours lie in a fixed compact annulus, there is a constant
\(A>1\) such that
\[
        |w_s^d|+|w_s^{-d}|\le A^d,
        \qquad s=1,\ldots,g .
\]
In this one-point specialization only the marked block \(W\) is active, so
\(\rho_{W,i}\) and \(\rho_{W,i}^{\circ}\) contain only the fixed set
\(W=(w_1,\ldots,w_g)\), not a union over \(O(\epsilon^{-1})\) columns.
Using \eqref{drt}--\eqref{drs} and the power-sum rules for scaled and
\((-1)\)-specializations, we have
\[
\begin{aligned}
p_d(\rho_{2,L}(W;q,t))
&=
\left(\frac qt\right)^d(q^d-1)\,p_d(W^{-1}),\\
p_d(\rho_{3,L}(W;q,t))
&=
(1-q^{-d})\,p_d(W),\\
p_d(\rho_{2,R}(W;q,t))
&=
(-1)^{d-1}\frac{1-q^d}{1-t^d}(t^{-d}-1)\,p_d(W^{-1}),\\
p_d(\rho_{3,R}(W;q,t))
&=
(-1)^{d-1}(1-q^d)\,p_d(W).
\end{aligned}
\]
Under the exact Jack scaling, the difference factors
\[
        q^d-1,\qquad 1-q^{-d},\qquad t^{-d}-1,\qquad 1-q^d
\]
are bounded by \(C A^d\epsilon\), after increasing \(A\).  Also
\[
        \left|\frac{1-q^d}{1-t^d}\right|\le C A^d .
\]
Hence there exist constants \(C_0,A_0>0\), independent of \(d,k,\epsilon\), and
\(W\), such that
\begin{equation}\label{W-small-bound}
        |p_d(\rho_{W,i})|+|p_d(\rho_{W,i}^{\circ})|
        \le C_0A_0^d\epsilon .
\end{equation}

For the graph specializations there is no difference factor.  Under the dimer
specialization of Lemma~\ref{l23}, the nonzero parts of \(\rho_A\) and
\(\rho_B\) are the graph one-variable specializations.  For a single graph
variable \(x\),
\[
        p_d(\rho_{\{x\},1,L})=x^d,
        \qquad
        p_d(\rho_{\{x\},1,R})
        =
        (-1)^{d-1}\frac{1-q^d}{1-t^d}x^d .
\]
The periodic weight assumption on bounded macroscopic intervals gives
\[
        |x_j|^d\le C A^d
\]
uniformly in \(j\) and \(\epsilon\).  Since the number of graph variables is
\(O(\epsilon^{-1})\), after increasing \(C_0,A_0\) we get
\begin{equation}\label{AB-bound}
        \epsilon\bigl(|p_d(\rho_A)|+|p_d(\rho_B)|\bigr)
        \le C_0A_0^d .
\end{equation}

We now prove the summable bound in \(k\).  By Lemma~\ref{le110}, each
boundary factor \(\Theta_\ast\) in \eqref{eq:Theta-epsilon-k} is the
exponential of a linear-quadratic expression in the power sums of its
argument.  If
\[
        \eta=\sigma_k\rho_{W,i}
        \quad\text{or}\quad
        \eta=\sigma_k\rho_{W,i}^{\circ},
\]
where \(\sigma_k\) is one of
\[
        u^{2k-2}v^{2k-1},\quad
        u^{2k-1}v^{2k},\quad
        u^{2k-1}v^{2k-2},\quad
        u^{2k}v^{2k-1},
\]
then \eqref{W-small-bound} gives
\[
        |p_d(\eta)|
        \le C_0(A_0\sigma_k)^d\epsilon .
\]
Choosing \(u,v\) sufficiently small, the corresponding linear and quadratic
power-sum series are uniformly summable.  Thus the logarithm of the product
of the four boundary \(\Theta\)-factors is bounded by
\[
        C\epsilon\mathfrak r^k
\]
for some \(\mathfrak r\in(0,1)\).  In particular, these boundary factors tend
to \(1\) for fixed \(k\), and satisfy the weaker bound \(C\mathfrak r^k\)
uniformly in \(\epsilon\).

For the Cauchy factors we use
\[
        \log H(\rho_1;\rho_2)
        =
        \sum_{d\ge1}
        \frac{1-t^d}{1-q^d}
        \frac{p_d(\rho_1)p_d(\rho_2)}{d}.
\]
The factor
\[
        H(u^{2k}\rho_{W,i};v^{2k}\rho_{W,i}^{\circ})
\]
has two \(W\)-type specializations.  By \eqref{W-small-bound}, its logarithm
is \(O(\epsilon^2\mathfrak r^k)\), after decreasing \(\mathfrak r\) if necessary.

The remaining four \(H\)-factors are different.  In each of them, one
specialization is of graph type and the other is of \(W\)-type.  Thus the
graph side contributes \(O(\epsilon^{-1})\) by \eqref{AB-bound}, while the
\(W\)-side contributes \(O(\epsilon)\) by \eqref{W-small-bound}.  These powers
of \(\epsilon\) cancel.  For instance,
\[
p_d(u^{2k}\rho_A)\,p_d(v^{2k}\rho_{W,i})
=
u^{2kd}v^{2kd}\,
p_d(\rho_A)p_d(\rho_{W,i})
\]
is bounded by a geometrically summable majorant in \(d\), uniformly in
\(\epsilon\).  The same argument applies to the other three graph--\(W\)
factors, with scale factors
\[
        u^{2k}v^{2k},\qquad
        u^{2k-2}v^{2k},\qquad
        u^{2k}v^{2k-2}.
\]
For \(u,v\) sufficiently small, these scale factors give geometric decay in
\(k\).  Combining the four boundary terms, the \(W\)--\(W\) Cauchy factor, and
the four graph--\(W\) Cauchy factors, we obtain
\[
        \sup_W\bigl|\log\Theta_k^{(\epsilon)}(W)\bigr|
        \le C\mathfrak r^k ,
\]
for all sufficiently small \(\epsilon\).  This proves \textup{(2)}.

It remains to identify the fixed-\(k\) limit.  The four boundary factors have
logarithms \(O(\epsilon)\), and the \(W\)--\(W\) Cauchy factor has logarithm
\(O(\epsilon^2)\).  Hence these five factors converge to \(1\).  The only
nontrivial limits come from the four graph--\(W\) factors
\[
H(u^{2k}\rho_B;v^{2k}\rho_{W,i}^{\circ}),\qquad
H(u^{2k-2}\rho_B;v^{2k}\rho_{W,i}),
\]
\[
H(u^{2k}\rho_A;v^{2k}\rho_{W,i}),\qquad
H(u^{2k}\rho_A;v^{2k-2}\rho_{W,i}^{\circ}).
\]
By Lemma~\ref{l55}, the logarithms of these four factors have uniform
periodic Riemann-sum limits on the admissible contour neighborhoods.  Splitting
the graph columns into \(L\)- and \(R\)-types gives precisely the eight
reflected products appearing in the definition \eqref{dfuvk} of
\(\mathcal F_{u,v,k}\).  Therefore, for each fixed \(k\),
\[
        \Theta_k^{(\epsilon)}(W)
        \longrightarrow
        \prod_{s=1}^{g}
        \bigl[\mathcal F_{u,v,k}(w_s)\bigr]^\beta
\]
uniformly on the product contour.  This proves \textup{(1)}.

Finally, the estimate in \textup{(2)} gives a summable majorant in \(k\),
uniformly for all sufficiently small \(\epsilon\) and uniformly on the product
contour.  Hence, for any \(N\),
\[
        \prod_{k=1}^{N}\Theta_k^{(\epsilon)}(W)
        \longrightarrow
        \prod_{s=1}^{g}\prod_{k=1}^{N}
        \bigl[\mathcal F_{u,v,k}(w_s)\bigr]^\beta
\]
uniformly, and the tails are controlled by
\[
        \sum_{k>N} C\mathfrak r^k .
\]
Letting first \(\epsilon\to0\) and then \(N\to\infty\) proves \textup{(3)}.
\end{proof}

\noindent\textbf{One-point pole sets.}
For the one-point asymptotics below, we use the limiting pole families
\(\mathcal R_{\chi,a,b}\) and \(\mathcal R_{j,a,k_1,k_2}\) defined above.
The \emph{required} limiting pole set is
\begin{align}
\mathcal D_{\chi}
&:=
(\mathcal R_{\chi,1,1}\setminus\mathcal R_{\chi,1,2})
\cup
(\mathcal R_{\chi,3,1}\setminus\mathcal R_{\chi,3,2})
\notag\\
&\quad\cup
\bigcup_{k\ge1}
\Bigl[
(\mathcal R_{5,1,k,k}\setminus\mathcal R_{5,2,k,k})
\cup
(\mathcal R_{7,1,k,k}\setminus\mathcal R_{7,2,k,k})
\notag\\
&\hspace{30mm}\cup
(\mathcal R_{5,1,k-1,k}\setminus\mathcal R_{5,2,k-1,k})
\cup
(\mathcal R_{7,1,k-1,k}\setminus\mathcal R_{7,2,k-1,k})
\Bigr].
\label{eq:required-poles}
\end{align}

The set \(\mathcal D_\chi\) consists of the limiting poles which must be
enclosed by the one-point Negu\c t contour.  

\noindent\textbf{Excluded one-point family.}
Besides the required pole set \(\mathcal D_\chi\), we use the following
excluded point family:
\begin{align}
\mathcal E_\chi
&:=
\mathcal R_{\chi,2,1}\cup\mathcal R_{\chi,2,2}
\cup
\mathcal R_{\chi,4,1}\cup\mathcal R_{\chi,4,2}
\notag\\
&\quad\cup
\bigcup_{k\ge1}
\Bigl[
\mathcal R_{6,1,k,k}\cup\mathcal R_{6,2,k,k}
\cup
\mathcal R_{8,1,k,k}\cup\mathcal R_{8,2,k,k}
\notag\\
&\hspace{32mm}\cup
\mathcal R_{6,1,k,k-1}\cup\mathcal R_{6,2,k,k-1}
\cup
\mathcal R_{8,1,k,k-1}\cup\mathcal R_{8,2,k,k-1}
\Bigr].
\label{rp}
\end{align}
The elements of \(\mathcal E_\chi\) are the points which must remain outside
the one-point contour.  They are not assumed to be all poles.

\medskip

\noindent\textbf{Contour and branch convention for the one-point asymptotics.}
In Proposition~\ref{p57} and in the subsequent one-point limit-shape and
Stieltjes-transform formulae, we work under the one-point contour and branch
admissibility hypotheses of
Assumption~\ref{ass:contour-branch-admissibility}.  Thus the admissible
one-point contour \(\mathcal C_\chi\) separates the required pole set
\(\mathcal D_\chi\) from the excluded point family \(\mathcal E_\chi\).  The
integration path lies in a pole-free annular neighborhood.  It may enclose
infinitely many reflected required poles accumulating at \(0\), while excluded
reflected points may escape to \(\infty\).

On the annular neighborhood of \(\mathcal C_\chi\), the
function
\[
        S_\chi(w)
        =
        \mathcal G_\chi(w)\prod_{k\ge1}\mathcal F_{u,v,k}(w)
\]
is required to be holomorphic and non-vanishing, and all powers of \(S_\chi\)
used below are taken with the admissible logarithm specified in
Assumption~\ref{ass:contour-branch-admissibility}.  

\subsection{Contour and branch admissibility}
\label{subsec:contour-branch-admissibility}

The one-point and multi-point contour formulae below are stated under the
following analytic admissibility hypothesis.  

\begin{assumption}[Contour and zero-winding branch admissibility]
\label{ass:contour-branch-admissibility}
Fix boundary types \(c_l,c_r\in\{el,oa,deel,eoa\}\) and
\(u,v\in(0,1)\).

\begin{enumerate}[label=\textup{(\roman*)}]
\item \textup{One-point contours.}
For every macroscopic \(L\)-type marked column \(\chi\) used below, there is a
positively oriented contour \(\mathcal C_\chi\) enclosing \(0\) and separating
the one-point families
\[
        \mathcal D_\chi\subset \operatorname{Int}(\mathcal C_\chi),
        \qquad
        \mathcal E_\chi\cap \overline{\operatorname{Int}(\mathcal C_\chi)}
        =\varnothing .
\]
Moreover, there is an open annular neighborhood \(U_\chi\) of
\(\mathcal C_\chi\), independent of \(\epsilon\) for all sufficiently small
\(\epsilon\), which contains no finite-\(\epsilon\) pole of the one-point
integrand and no pole of the limiting one-point integrand.

\item \textup{Branches.}
On \(U_\chi\), the function
\[
        S_\chi(w):=\mathcal G_\chi(w)\prod_{k\ge1}\mathcal F_{u,v,k}(w)
\]
is holomorphic and non-vanishing.  Moreover,
\begin{align}
        \frac{1}{2\pi\mathbf i}
        \oint_{\mathcal C_\chi}\frac{S_\chi'(w)}{S_\chi(w)}\,dw=0.\label{hmbc}
\end{align}
Thus \(S_\chi\) admits a single-valued logarithm on \(U_\chi\), denoted
\(\mathrm{Log}_{\chi,\mathcal C_\chi}S_\chi\), and all non-integer powers
below are defined through this logarithm.

\item \textup{Multi-point contours.}
For every finite family of \(L\)-type marked columns used in the Gaussian
theorem, the admissible one-point contours may be chosen simultaneously,
nested when necessary, so that each one-point pole-separation and branch
condition is preserved and every product contour avoids the two-variable
singularities of the \(LL\)-covariance kernel.  The branch choices are kept
fixed along the contour deformations used in the proof.
\end{enumerate}
\end{assumption}

\begin{proposition}[Automatic one-point branch from separated zero--pole data]
\label{prop:automatic-branch-jack}
Fix a macroscopic \(L\)-type marked location
\[
   \chi\in (V_0,V_m)\setminus\{V_1,\ldots,V_{m-1}\}.
\]
For \(K\ge1\), let
\[
   S_{\chi,K}(w)
   :=
\mathcal{G}_\chi(w)\prod_{k=1}^{K}\mathcal{F}_{u,v,k}(w)
\]
 and let
\[
   \omega_{\chi,K}
   :=
   d\log S_{\chi,K}
\]
be its logarithmic-derivative one-form, obtained by differentiating the
logarithms of the elementary factors.

Assume that there exist a positively oriented simple contour \(C_\chi\), a
connected annular neighbourhood \(U_\chi\Subset\mathbb C^\ast\) of \(C_\chi\),
and \(K_0\ge1\), such that the following hold.

\begin{enumerate}[label=\textup{(\alph*)}]
\item The contour separates the required and excluded one-point families:
\[
   \mathcal D_\chi\subset \operatorname{Int}(C_\chi),
   \qquad
   \mathcal E_\chi\cap \operatorname{Int}(C_\chi)=\varnothing .
\]

\item The closure of \(U_\chi\) contains no limiting zero or pole location of
the one-point factors.  For every \(K\ge K_0\), no zero or pole of the finite
truncation lies on \(U_\chi\).  Moreover,
\[
   \omega_{\chi,K}\longrightarrow \omega_\chi
\]
locally uniformly on \(U_\chi\), where
\[
   \omega_\chi
   :=
   d\log G_\chi+\sum_{k\ge1}d\log F_{u,v,k}.
\]
In the contour neighbourhoods used below, this convergence follows from
Lemma~\ref{le36}.

\item For every \(K\ge K_0\), the finite logarithmic derivative has a
factorization by exponent classes:
\[
   \omega_{\chi,K}
   =
   \alpha\left(
      \frac{A_K'(w)}{A_K(w)}
      -
      \frac{B_K'(w)}{B_K(w)}
   \right)dw
   +
   \left(
      \frac{C_K'(w)}{C_K(w)}
      -
      \frac{D_K'(w)}{D_K(w)}
   \right)dw,
\]
where \(A_K,B_K,C_K,D_K\) are polynomials with no zero on \(U_\chi\).  Their
zeros inside \(C_\chi\), counted with multiplicity, cancel separately in the
two exponent classes:
\[
   N_A(C_\chi)=N_B(C_\chi),
   \qquad
   N_C(C_\chi)=N_D(C_\chi).
\]
\end{enumerate}

Then
\[
   \frac{1}{2\pi i}\oint_{C_\chi}\omega_\chi=0.
\]
Consequently \(\omega_\chi\) has a single-valued holomorphic primitive on
\(U_\chi\).  After fixing its value at one point of \(U_\chi\), this primitive
defines a holomorphic non-vanishing branch
\[
   S_\chi^{U}(w)
   :=
   \exp\!\left(\int^w\omega_\chi\right)
\]
of the formal infinite product
\[
   G_\chi(w)\prod_{k\ge1}F_{u,v,k}(w)
\]
on \(U_\chi\).  Hence the one-point logarithmic-branch condition (\ref{hmbc}) in
Assumption~\ref{ass:contour-branch-admissibility}\textup{(ii)} holds at
\(\chi\).
\end{proposition}

\begin{proof}
For finite \(K\), the logarithmic derivative on \(U_\chi\) is
\[
   \frac{S_{\chi,K}'(w)}{S_{\chi,K}(w)}
   =
   \alpha\left(
      \frac{A_K'(w)}{A_K(w)}
      -
      \frac{B_K'(w)}{B_K(w)}
   \right)
   +
   \left(
      \frac{C_K'(w)}{C_K(w)}
      -
      \frac{D_K'(w)}{D_K(w)}
   \right).
\]
Therefore, by the argument principle,
\[
\begin{aligned}
   \frac{1}{2\pi i}
   \oint_{C_\chi}
   \frac{S_{\chi,K}'(w)}{S_{\chi,K}(w)}\,dw
   &=
   \alpha\bigl(N_A(C_\chi)-N_B(C_\chi)\bigr)
   +
   \bigl(N_C(C_\chi)-N_D(C_\chi)\bigr)  \\
   &=0 .
\end{aligned}
\]
This calculation is made at the level of logarithmic derivatives and therefore
does not require \(\alpha\) to be an integer.

By the normal convergence of the reflected products on \(U_\chi\),
\[
   S_{\chi,K}\longrightarrow S_\chi
\]
locally uniformly on \(U_\chi\).  Since \(\overline U_\chi\) is disjoint from
the limiting zero and pole locations, the limit \(S_\chi\) is holomorphic and
non-vanishing on \(U_\chi\).  The logarithmic derivatives converge uniformly
on \(C_\chi\), for instance by Cauchy's estimates applied on a slightly
smaller annular neighbourhood.  Passing to the limit in the finite-\(K\)
zero-winding identity gives (\ref{hmbc}).
Thus \(S_\chi\) has zero winding on the annular neighbourhood \(U_\chi\), and
therefore admits a single-valued holomorphic logarithm there.
\end{proof}

\begin{proposition}[Multi-point admissibility from annular one-point contours]
\label{prp59}
Fix finitely many \(L\)-type marked locations
\[
   \chi_1,\ldots,\chi_s .
\]
Assume that for each \(\chi_d\) the one-point contour and branch conditions
of Assumption~\ref{ass:contour-branch-admissibility}\textup{(i)--(ii)} hold.
Assume moreover that the one-point contours may be chosen as pairwise disjoint
simple contours
\[
   C_1,\ldots,C_s
\]
contained in the annulus
\begin{align}
   A_{u,v}:=\{w\in\mathbb C:\ v<|w|<u^{-1}\},\label{dauv}
\end{align}
with sufficiently small annular neighbourhoods preserving the one-point
separation and branch conditions.  Repeated marked locations are represented by
sufficiently close nested parallel copies.

Then the contours satisfy Assumption~\ref{ass:contour-branch-admissibility}
\textup{(iii)}.  In particular, for every \(d\ne e\), the product contour
\(C_d\times C_e\) avoids all two-variable singularity loci of the
\(LL\)-covariance kernel.
\end{proposition}

\begin{proof}
Let \(z\in C_d\), \(w\in C_e\), with \(d\ne e\).  Since the contours are disjoint,
\(z\ne w\).  Put
\[
   \mathfrak q=(uv)^2,\qquad
   c_r=\mathfrak q^r,\qquad
   a_r=v^2\mathfrak q^{r-1},\qquad
   b_r=u^2\mathfrak q^{r-1},
   \qquad r\ge1 .
\]
Because \(C_d,C_e\subset A_{u,v}\), we have
\[
   v<|z|,|w|<u^{-1}.
\]
Therefore
\[
   |c_r w|
   <
   c_r u^{-1}
   =
   v(uv)^{2r-1}
   <
   v
   <
   |z|,
\]
so \(z\ne c_r w\).  Interchanging \(z\) and \(w\) gives \(w\ne c_r z\).
Moreover,
\[
   |zw|>v^2\ge v^2\mathfrak q^{r-1}=a_r,
\]
so \(zw\ne a_r\).  Finally,
\[
   |b_rzw|
   <
   b_r u^{-2}
   =
   \mathfrak q^{r-1}
   \le 1,
\]
with strict inequality in the first step; hence \(b_rzw\ne1\).  Thus none of
the \(LL\)-interaction singularities
\[
   z=w,\qquad z=c_r w,\qquad w=c_r z,\qquad zw=a_r,\qquad b_rzw=1
\]
occurs on \(C_d\times C_e\).  

The one-point pole-separation and branch conditions are preserved by the
choice of annular neighbourhoods.  Since all product contours are compact and
the limiting \(LL\)-singular loci are avoided by a positive distance, the
finite-\(\epsilon\) two-variable singularities in the Macdonald contour formula
are also avoided for all sufficiently small \(\epsilon\).  Hence the simultaneous
multi-point contour condition in Assumption~\ref{ass:contour-branch-admissibility}
\textup{(iii)} holds.
\end{proof}

\begin{remark}[Concrete separated zero--pole criteria]
\label{rem:separated-zero-pole-criteria}
Assumption~\ref{ass:contour-branch-admissibility} is stated in analytic form
because the one-point and multi-point asymptotic theorems only need the existence
of suitable contours, logarithmic branches, and product contours.  Propositions
\ref{prop:automatic-branch-jack} and
\ref{prp59} give convenient sufficient criteria for
the two nontrivial parts of this assumption.

A concrete source of these criteria is the separated zero--pole setup of the
Schur two-free-boundary model.  More precisely, suppose that the finite
truncations have the separated inward/outward zero--pole family structure of
\cite[Assumptions~5.7--5.8 and Lemma~5.9]{zl23}.  Then the family order
there gives, inside each separating one-point contour \(C_\chi\), equality of
the numerator and denominator counts separately in the two exponent classes
appearing in the Jack logarithmic derivative.  In the notation of
Proposition~\ref{prop:automatic-branch-jack},
\[
   d\log S_{\chi,K}
   =
   \alpha\,d\log\frac{A_K}{B_K}
   +
   d\log\frac{C_K}{D_K},
\]
and the separated family order gives
\[
   N_A(C_\chi)=N_B(C_\chi),
   \qquad
   N_C(C_\chi)=N_D(C_\chi).
\]
The proof of Proposition \ref{prop:automatic-branch-jack} shows that the one-point
branch condition (\ref{hmbc}) in
Assumption~\ref{ass:contour-branch-admissibility}\textup{(ii)} follows.

For a finite collection of marked \(L\)-type columns, the same separated
zero--pole setup in \cite[Assumptions~5.7--5.8 and Lemma~5.9] {zl23} allows the one-point contours, including repeated marked
locations, to be chosen as pairwise disjoint nested parallel copies inside (\ref{dauv})
Proposition~\ref{prp59} then excludes all
two-variable singularity loci of the \(LL\)-covariance kernel on the product
contours.  Thus the multi-point contour condition in
Assumption~\ref{ass:contour-branch-admissibility}\textup{(iii)} also follows.

The point of keeping Assumption~\ref{ass:contour-branch-admissibility} in its
analytic form is that the asymptotic formulas apply whenever such contours and
branches exist, not only in this particular separated family regime.  The
separated zero--pole assumptions above are only a concrete sufficient criterion.
\end{remark}

\begin{proposition}[One-point Negu\c t asymptotics]\label{p57}
Fix boundary types \(c_l,c_r\in\{el,oa,deel,eoa\}\) and parameters
\(u,v\in(0,1)\).  Suppose Assumption~\ref{ap5} holds.  Let
\(i^{(\epsilon)}\) be a marked column satisfying \eqref{dci}, and assume that
\[
        a_{i^{(\epsilon)}}=L
\]
for all sufficiently small \(\epsilon\).  Write
\[
        \chi=\lim_{\epsilon\to0}\epsilon i^{(\epsilon)}
        \in (V_0,V_m)\setminus\{V_1,\ldots,V_{m-1}\}.
\]
Let
$
        \Pr^{(\epsilon)}
        =
        \Pr^{(\epsilon)}_{c_l,c_r,u,v}
$
be the Macdonald-deformed probability measure \eqref{dpm}.  Assume that the
one-point contour and branch admissibility hypotheses of
Assumption~\ref{ass:contour-branch-admissibility} hold at \(\chi\).

Let \(\mathcal C_\chi\) be an admissible positively oriented contour enclosing
\(0\) and separating the one-point families
\[
        \mathcal D_\chi\subset \operatorname{Int}(\mathcal C_\chi),
        \qquad
        \mathcal E_\chi\cap
        \overline{\operatorname{Int}(\mathcal C_\chi)}
        =
        \varnothing ,
\]
where \(\mathcal D_\chi\) is the required pole set defined in
\eqref{eq:required-poles}, and \(\mathcal E_\chi\) is the excluded point
family defined in \eqref{rp}.

Set
\[
        S_\chi(w)
        :=
        \mathcal G_\chi(w)
        \prod_{k\ge1}\mathcal F_{u,v,k}(w),
\]
where \(\mathcal F_{u,v,k}\) is defined in \eqref{dfuvk}.  Let
\[
        T_\chi(w)
        :=
        \exp\!\left\{
        \beta\,\mathrm{Log}_{\chi,\mathcal C_\chi} S_\chi(w)
        \right\}
\]
be the admissible spectral branch supplied by
Assumption~\ref{ass:contour-branch-admissibility}.  Then, for every
\(g\in\mathbb Z_{>0}\),
\begin{equation}\label{skm}
 \lim_{\epsilon\to0}
 \mathbb E_{\Pr^{(\epsilon)}}
 \bigl[
        \gamma_g(\lambda^{(M,i^{(\epsilon)})};q,t)
 \bigr]
 =
 \frac{1}{2\pi\mathbf i}
 \oint_{\mathcal C_\chi}
        T_\chi(w)^g
        \frac{dw}{w}.
\end{equation}
\end{proposition}

\begin{proof}
Apply Lemma~\ref{lLeftMoments} with $m=1$ and $g_1=g$. Let
$W=(w_1,\dots,w_g)$ and let $\mathcal C_1,\dots,\mathcal C_g$ be nested contours satisfying
the contour conditions in Lemma~\ref{l31}; choose them once and for all inside the region
bounded by~$\mathcal C$.

By the description of the limiting pole sets above, the poles of the prelimit integrand
converge to the limiting pole sets. Hence for all sufficiently small $\epsilon>0$, the same
family of contours remains admissible for the prelimit integrand. Let
$\Theta_k^{(\epsilon)}(W)$ denote the $k$-th correction factor from the one-point
specialization of Lemma~\ref{lLeftMoments}. By Lemma~\ref{l55},
\[
G_{1,L}^{(\epsilon)}(W)\,
G_{0,L}^{(\epsilon)}(W)\,
G_{1,R}^{(\epsilon)}(W)\,
G_{0,R}^{(\epsilon)}(W)
\longrightarrow
\prod_{s=1}^{g}\bigl[\mathcal G_{\chi}(w_s)\bigr]^{\beta}
\]
uniformly on $\mathcal C_1\times\cdots\times\mathcal C_g$. By Lemma~\ref{le36},
\[
\prod_{k\ge1}\Theta_k^{(\epsilon)}(W)
\longrightarrow
\prod_{s=1}^{g}\prod_{k\ge1}\bigl[\mathcal F_{u,v,k}(w_s)\bigr]^{\beta}
\]
uniformly on the same product contour. Therefore dominated convergence applies to the
$g$-fold contour integral.

Finally, Lemma~\ref{lb2} collapses the limiting $g$-fold contour integral to a single contour
integral and yields \eqref{skm}.
\end{proof}

\begin{remark}[Boundary degenerations]
In the empty-boundary degeneration \(u=v=0\), the reflected factors in
\eqref{skm} disappear and the formula reduces to the one-point asymptotic
formula for pure rail-yard dimer coverings in \cite[Proposition~5.7]{LV21}.
If \(v=0\) and \(u\in(0,1)\), the right boundary becomes empty while the left
boundary remains free.  This is the one-free-boundary, or half-space,
degeneration of the doubly free-boundary formula; under the exact Jack scaling
\(q=t^\alpha\), \(t=e^{-n\beta\epsilon}\), it gives the Jack specialization of
the half-space Macdonald-process framework of
Barraquand--Borodin--Corwin~\cite{BBC20}.
\end{remark}

\subsection{Gaussian fluctuations and the annular kernel}

For later use define, for $\chi\in\RR$ and $g\in\ZZ_{>0}$,
\begin{equation}\label{dPhiFluc}
\Phi_{\chi,g}(z)
:=\bigl[\mathcal G_{\chi}(z)\bigr]^{g\beta}
\prod_{k\ge1}\bigl[\mathcal F_{u,v,k}(z)\bigr]^{g\beta}.
\end{equation}
We shall also use the following free-boundary covariance kernel.  For $r\ge1$, set
\begin{align}
\mathfrak a_r
&:=\sum_{k\ge1}\left(u^{4k-4}v^{4k-2}\right)^r
  +\sum_{k\ge1}\left(u^{4k-2}v^{4k}\right)^r
 =\frac{v^{2r}+u^{2r}v^{4r}}{1-u^{4r}v^{4r}},
\label{dabcorra}\\
\mathfrak b_r
&:=\sum_{k\ge1}\left(u^{4k-2}v^{4k-4}\right)^r
  +\sum_{k\ge1}\left(u^{4k}v^{4k-2}\right)^r
 =\frac{u^{2r}+u^{4r}v^{2r}}{1-u^{4r}v^{4r}},
\label{dabcorrb}\\
\mathfrak c_r
&:=\sum_{k\ge1}\left(u^{2k}v^{2k}\right)^r
 =\frac{u^{2r}v^{2r}}{1-u^{2r}v^{2r}}.
\label{dabcorrc}
\end{align}
For $z,w$ on the contours used below, define
\begin{equation}\label{dBuv}
\mathscr B_{u,v}(z,w)
:=\sum_{r\ge1}r\left[
\mathfrak a_r z^{-r}w^{-r}
+\mathfrak b_r z^rw^r
+\mathfrak c_r\left(z^{-r}w^r+z^rw^{-r}\right)
\right].
\end{equation}
The contours are contained in a fixed compact annulus.  We assume $u,v$ are sufficiently
small so that the series in \eqref{dBuv} is absolutely and uniformly convergent on every
product of contours appearing below.

\begin{definition}[Annular free-boundary image covariance for Laplace tests]
\label{def:annular-fb-gff}
Let \(u,v\in(0,1)\) and set
\[
        \mathfrak q:=u^2v^2 .
\]
We use the \(q\)-Pochhammer notation of \eqref{daq}, with \(q\) replaced by
\(\mathfrak q\).  On contour neighborhoods where the products below converge
normally and are non-vanishing, define the annular image potential
\begin{equation}\label{annular-image-potential}
\mathscr L_{u,v}(z,w)
:=
\log
\frac{(w/z;\mathfrak q)_\infty(\mathfrak q z/w;\mathfrak q)_\infty}
     {(u^2zw;\mathfrak q)_\infty(v^2/(zw);\mathfrak q)_\infty},
\end{equation}
with the logarithmic branch prescribed by the contour system.  We use the
term \emph{annular free-boundary image covariance} for the Laplace-test
Gaussian covariance whose potential on the tested contour algebra is
\(\mathscr L_{u,v}\).  
\end{definition}

\begin{proposition}[Annular free-boundary image kernel]
\label{prop:annular-gff-kernel}
Set \(\mathfrak q:=u^2v^2\) and
\[
\mathscr A_{\mathfrak q}(x)
:=
\sum_{r\ge1}\frac{r x^r}{1-\mathfrak q^r}
=
(x\partial_x)^2\!\left[-\log (x;\mathfrak q)_\infty\right].
\]
On every product of contours on which the following series converge normally, define
\begin{equation}\label{annular-gff-kernel}
\mathscr K^{\mathrm{ann}}_{u,v}(z,w)
:=
\mathscr A_{\mathfrak q}\!\left(\frac{w}{z}\right)
+\mathscr A_{\mathfrak q}\!\left(\mathfrak q\frac{z}{w}\right)
+\mathscr A_{\mathfrak q}(u^2zw)
+\mathscr A_{\mathfrak q}\!\left(\frac{v^2}{zw}\right).
\end{equation}
Then $\mathscr B_{u,v}(z,w)$ as defined in (\ref{dBuv}), satisfies
\begin{equation}\label{annular-Buv}
\mathscr B_{u,v}(z,w)
=
\mathscr A_{\mathfrak q}\!\left(\frac{v^2}{zw}\right)
+\mathscr A_{\mathfrak q}(u^2zw)
+\mathscr A_{\mathfrak q}\!\left(\mathfrak q\frac{w}{z}\right)
+\mathscr A_{\mathfrak q}\!\left(\mathfrak q\frac{z}{w}\right).
\end{equation}
Moreover, on nested contours with \(|w|<|z|\),
\begin{equation}\label{annular-full-kernel}
        \frac{zw}{(z-w)^2}
        +
        \mathscr B_{u,v}(z,w)
        =
        \mathscr K^{\mathrm{ann}}_{u,v}(z,w).
\end{equation}
Equivalently,
\begin{equation}\label{annular-green-derivative}
        (z\partial_z)(w\partial_w)\mathscr L_{u,v}(z,w)
        =
        \mathscr K^{\mathrm{ann}}_{u,v}(z,w),
\end{equation}
where \(\mathscr L_{u,v}\) is the potential in
\eqref{annular-image-potential}.

Thus the full bracketed covariance kernel in Theorems~\ref{t58} and
\ref{t77} is the logarithmic-derivative image kernel associated with the
annular free-boundary potential in Definition~\ref{def:annular-fb-gff}.
\end{proposition}

\begin{proof}
The coefficient identities
\[
        \mathfrak a_r=\frac{v^{2r}}{1-\mathfrak q^r},
        \qquad
        \mathfrak b_r=\frac{u^{2r}}{1-\mathfrak q^r},
        \qquad
        \mathfrak c_r=\frac{\mathfrak q^r}{1-\mathfrak q^r}
\]
follow from \eqref{dabcorra}--\eqref{dabcorrc}.  Substituting these identities
into \eqref{dBuv} gives \eqref{annular-Buv}.  If \(|w|<|z|\), then
\[
        \frac{zw}{(z-w)^2}
        =
        \sum_{r\ge1} r\left(\frac wz\right)^r .
\]
Adding this to the
\(\mathscr A_{\mathfrak q}(\mathfrak q w/z)\)-term in
\eqref{annular-Buv} changes the coefficient of \((w/z)^r\) to
\(r/(1-\mathfrak q^r)\).  This gives the
\(\mathscr A_{\mathfrak q}(w/z)\)-term in \eqref{annular-gff-kernel}, while
the other three terms are already the remaining image terms.  Hence
\eqref{annular-full-kernel} follows.

It remains to prove \eqref{annular-green-derivative}.  If
\(x=c z^a w^b\), then
\[
        (z\partial_z)(w\partial_w)\log(x;\mathfrak q)_\infty
        =
        -ab\,\mathscr A_{\mathfrak q}(x),
\]
and the sign is reversed for \(-\log(x;\mathfrak q)_\infty\).  Applying this
identity to the four factors
\[
        x=\frac wz,\qquad
        x=\mathfrak q\frac zw,\qquad
        x=u^2zw,\qquad
        x=\frac{v^2}{zw}
\]
in \eqref{annular-image-potential} gives \eqref{annular-green-derivative}.
Normal convergence justifies the termwise differentiations and summations.
\end{proof}

\begin{theorem}
\label{t58}
Fix boundary types \(c_l,c_r\in\{el,oa,deel,eoa\}\) and parameters
\(u,v\in(0,1)\).  Suppose Assumption~\ref{ap5} holds.  Let
\(s\in\mathbb Z_{>0}\), and let
\[
        i_1^{(\epsilon)},\ldots,i_s^{(\epsilon)}
        \in [l^{(\epsilon)}..r^{(\epsilon)}]
\]
satisfy
\[
        \epsilon i_d^{(\epsilon)}\to \chi_d,
        \qquad
        \chi_1\le\chi_2\le\cdots\le\chi_s,
        \qquad d\in[s],
\]
with each residue class \((i_d^{(\epsilon)})_{\equiv n}\) independent of
\(\epsilon\).  Assume that all marked columns are of \(L\)-type:
\begin{equation}\label{ael}
        a_{i_1^{(\epsilon)}}=
        a_{i_2^{(\epsilon)}}=
        \cdots=
        a_{i_s^{(\epsilon)}}=L
\end{equation}
for all sufficiently small \(\epsilon\).

Assume further that the multi-point contour and branch admissibility
hypotheses of Assumption~\ref{ass:contour-branch-admissibility} hold for the
finite family of marked columns
\[
        \chi_1,\ldots,\chi_s .
\]
Let
$
        \Pr^{(\epsilon)}
        =
        \Pr^{(\epsilon)}_{c_l,c_r,u,v}
$
be the Macdonald-deformed probability measure \eqref{dpm}.  For
\(g_d\in\mathbb Z_{>0}\), define
\[
Q_{g_d}^{(\epsilon)}(\epsilon i_d^{(\epsilon)})
:=
\frac{1}{\epsilon}
\left(
\gamma_{g_d}(\lambda^{(M,i_d^{(\epsilon)})};q,t)
-
\mathbb E_{\Pr^{(\epsilon)}}
\bigl[\gamma_{g_d}(\lambda^{(M,i_d^{(\epsilon)})};q,t)\bigr]
\right),
\qquad d\in[s].
\]

For each \(d\), set
\[
        S_{\chi_d}(z)
        :=
        \mathcal G_{\chi_d}(z)
        \prod_{r\ge1}\mathcal F_{u,v,r}(z),
\]
and let
\[
        T_{\chi_d}(z)
        :=
        \exp\!\left\{
        \beta\,\mathrm{Log}_{\chi_d,\mathcal C_d}S_{\chi_d}(z)
        \right\}
\]
be the admissible spectral branch supplied by
Assumption~\ref{ass:contour-branch-admissibility}.  We write
\[
        \Phi_{\chi_d,g_d}(z):=T_{\chi_d}(z)^{g_d}.
\]

Then, as \(\epsilon\to0\),
\[
\bigl(
Q_{g_1}^{(\epsilon)}(\epsilon i_1^{(\epsilon)}),
\ldots,
Q_{g_s}^{(\epsilon)}(\epsilon i_s^{(\epsilon)})
\bigr)
\]
converges in distribution to a centered Gaussian vector
\[
        \bigl(Q_{g_1}(\chi_1),\ldots,Q_{g_s}(\chi_s)\bigr).
\]
Its covariances are
\begin{align}
&\operatorname{Cov}
\bigl[
Q_{g_d}(\chi_d),
Q_{g_h}(\chi_h)
\bigr]
\notag\\
&\quad=
\frac{n^2\alpha\beta^2 g_dg_h}{(2\pi\mathbf i)^2}
\oint_{\mathcal C_d}\oint_{\mathcal C_h}
\Phi_{\chi_d,g_d}(z)\,
\Phi_{\chi_h,g_h}(w)
\left[
        \frac{zw}{(z-w)^2}
        +
        \mathscr B_{u,v}(z,w)
\right]
\frac{dz}{z}\frac{dw}{w}.
\label{corr-cov-t58}
\end{align}
Here \(\mathcal C_d\) and \(\mathcal C_h\) are admissible pairwise contours
from Assumption~\ref{ass:contour-branch-admissibility}: each
\(\mathcal C_d\) is positively oriented, encloses \(0\) and the required pole
set \(\mathcal D_{\chi_d}\), and separates it from the excluded point family
\(\mathcal E_{\chi_d}\).  The pair of contours is chosen disjoint and
pairwise admissible for the kernel in \eqref{corr-cov-t58}.  If
\(\chi_d=\chi_h\), one may take two disjoint nested copies of the same
one-point contour; the value of the integral is independent of this choice
under admissible contour deformations.
\end{theorem}

\begin{lemma}[Two-point free-boundary correction]\label{le59corr}
Under the assumptions of Theorem~\ref{t58}, let
\[
        Z=(z_1,\ldots,z_{g_d}),
        \qquad
        W=(w_1,\ldots,w_{g_h})
\]
lie on the product contours used in the two-point formula.  Set
\[
        \rho_2^Z:=\rho_{2,L}(Z;q,t),
        \qquad
        \rho_3^Z:=\rho_{3,L}(Z;q,t),
\]
and define \(\rho_2^W,\rho_3^W\) analogously.

Let
\[
        \Theta_k^{Z,W}
        :=
        \Theta_{k,(i_d,i_h)}(Z,W),
        \qquad
        \Theta_k^Z
        :=
        \Theta_{k,(i_d)}(Z),
        \qquad
        \Theta_k^W
        :=
        \Theta_{k,(i_h)}(W)
\]
denote the \(k\)-th reflected factors from Lemma~\ref{lLeftMoments} in the
two-point and one-point specializations.  Define
\begin{equation}\label{eq:Xi-k-as-Theta-quotient}
        \Xi_k^{(\epsilon)}(Z,W)
        :=
        \frac{\Theta_k^{Z,W}}
             {\Theta_k^Z\Theta_k^W},
        \qquad
        \Xi^{(\epsilon)}(Z,W)
        :=
        \prod_{k\ge1}\Xi_k^{(\epsilon)}(Z,W).
\end{equation}
Equivalently, by multiplicativity of \(H\), the graph--marked Cauchy factors
cancel in this quotient, and
\begin{align}
\Xi_k^{(\epsilon)}(Z,W)
&=
\frac{
\Theta_{c_l}\!\left([u^{2k-2}v^{2k-1}]
(\rho_2^Z\cup\rho_2^W);q,t\right)}
{
\Theta_{c_l}\!\left([u^{2k-2}v^{2k-1}]\rho_2^Z;q,t\right)
\Theta_{c_l}\!\left([u^{2k-2}v^{2k-1}]\rho_2^W;q,t\right)
}
\notag\\
&\quad\times
\frac{
\Theta_{c_r}\!\left([u^{2k-1}v^{2k}]
(\rho_2^Z\cup\rho_2^W);q,t\right)}
{
\Theta_{c_r}\!\left([u^{2k-1}v^{2k}]\rho_2^Z;q,t\right)
\Theta_{c_r}\!\left([u^{2k-1}v^{2k}]\rho_2^W;q,t\right)
}
\notag\\
&\quad\times
\frac{
\Theta_{c_l}\!\left([u^{2k-1}v^{2k-2}]
(\rho_3^Z\cup\rho_3^W);q,t\right)}
{
\Theta_{c_l}\!\left([u^{2k-1}v^{2k-2}]\rho_3^Z;q,t\right)
\Theta_{c_l}\!\left([u^{2k-1}v^{2k-2}]\rho_3^W;q,t\right)
}
\notag\\
&\quad\times
\frac{
\Theta_{c_r}\!\left([u^{2k}v^{2k-1}]
(\rho_3^Z\cup\rho_3^W);q,t\right)}
{
\Theta_{c_r}\!\left([u^{2k}v^{2k-1}]\rho_3^Z;q,t\right)
\Theta_{c_r}\!\left([u^{2k}v^{2k-1}]\rho_3^W;q,t\right)
}
\notag\\
&\quad\times
\frac{
H\!\left(
u^{2k}(\rho_2^Z\cup\rho_2^W);
v^{2k}(\rho_3^Z\cup\rho_3^W);q,t
\right)}
{
H\!\left(u^{2k}\rho_2^Z;v^{2k}\rho_3^Z;q,t\right)
H\!\left(u^{2k}\rho_2^W;v^{2k}\rho_3^W;q,t\right)
}.
\label{eq:Xi-k-eps}
\end{align}
Then, uniformly on the product contours,
\begin{equation}\label{xi-correct-expansion}
        \frac{\Xi^{(\epsilon)}(Z,W)-1}{\epsilon^2}
        \longrightarrow
        \mathfrak K_{u,v}(Z,W),
\end{equation}
where
\begin{align}\label{dKuvZW}
\mathfrak K_{u,v}(Z,W)
:=
\alpha n^2\beta^2
\sum_{r\ge1} r\Big[
&\mathfrak a_r\,p_r(Z^{-1})p_r(W^{-1})
+\mathfrak b_r\,p_r(Z)p_r(W)
\notag\\
&+\mathfrak c_r
\bigl(p_r(Z^{-1})p_r(W)+p_r(Z)p_r(W^{-1})\bigr)
\Big].
\end{align}
Here
\[
        p_r(Z)=\sum_{a=1}^{g_d}z_a^r,
        \qquad
        p_r(Z^{-1})=\sum_{a=1}^{g_d}z_a^{-r},
\]
and similarly for \(W\).
\end{lemma}

\begin{proof}
The definition \eqref{eq:Xi-k-as-Theta-quotient} takes the \(k\)-th reflected
factor in the two-point formula of Lemma~\ref{lLeftMoments} and divides it by
the two corresponding one-point reflected factors.  We first verify the
explicit expression \eqref{eq:Xi-k-eps}.

In the two-point formula the active marked specializations are
\[
        \rho_2^Z\cup\rho_2^W,
        \qquad
        \rho_3^Z\cup\rho_3^W .
\]
The boundary \(\Theta\)-terms give the four \(\Theta\)-quotients in
\eqref{eq:Xi-k-eps}.  The marked--marked Cauchy term gives the last quotient
in \eqref{eq:Xi-k-eps}.  The remaining graph--marked Cauchy terms cancel.
For example,
\[
\frac{
H\!\left(u^{2k}\rho_B;
v^{2k}(\rho_3^Z\cup\rho_3^W);q,t\right)}
{
H\!\left(u^{2k}\rho_B;v^{2k}\rho_3^Z;q,t\right)
H\!\left(u^{2k}\rho_B;v^{2k}\rho_3^W;q,t\right)}
=1,
\]
by multiplicativity of \(H\) in the second specialization.  The other three
graph--marked Cauchy factors cancel in the same way.  This proves
\eqref{eq:Xi-k-eps}.

We now compute the second-order limit.  From \eqref{drt}--\eqref{drs}, for
every \(r\ge1\),
\[
p_r(\rho_2^Z)
=
\left(\frac qt\right)^r(q^r-1)p_r(Z^{-1}),
\qquad
p_r(\rho_3^Z)
=
(1-q^{-r})p_r(Z),
\]
and the same formula holds with \(Z\) replaced by \(W\).  Since
\(q=t^\alpha\) and \(t=e^{-n\beta\epsilon}\),
\begin{equation}\label{jack-small-corrections}
\frac{1-t^r}{1-q^r}\to\frac1\alpha,
\qquad
\frac{p_r(\rho_2^Z)}{\epsilon}
\to
-\alpha n\beta r\,p_r(Z^{-1}),
\qquad
\frac{p_r(\rho_3^Z)}{\epsilon}
\to
-\alpha n\beta r\,p_r(Z),
\end{equation}
uniformly on the product contours, and similarly for \(W\).

We now take logarithms in \eqref{eq:Xi-k-eps}.  By the exponential formula for
the boundary factors in Lemma~\ref{le110}, for each boundary type \(c\) there
are coefficients \(A_{c,r}(q,t)\) and \(B_{c,r}(q,t)\) such that, for every
specialization \(\eta\),
\[
        \log \Theta_c(\eta;q,t)
        =
        \sum_{r\ge1}\frac{A_{c,r}(q,t)}{r}p_r(\eta)
        +
        \sum_{r\ge1}\frac{B_{c,r}(q,t)}{r}p_r(\eta)^2 .
\]
Thus, for any scale \(\sigma\) and any two specializations
\(\eta_Z,\eta_W\),
\begin{align*}
&\log
\frac{
\Theta_c(\sigma(\eta_Z\cup\eta_W);q,t)}
{
\Theta_c(\sigma\eta_Z;q,t)\Theta_c(\sigma\eta_W;q,t)}
\\
&\qquad =
\sum_{r\ge1}
\frac{B_{c,r}(q,t)}{r}\,
\sigma^{2r}
\left[
(p_r(\eta_Z)+p_r(\eta_W))^2
-p_r(\eta_Z)^2-p_r(\eta_W)^2
\right]
\\
&\qquad =
2\sum_{r\ge1}
\frac{B_{c,r}(q,t)}{r}\,
\sigma^{2r}p_r(\eta_Z)p_r(\eta_W).
\end{align*}
In particular, all linear terms cancel in the boundary quotients, and only
\(Z\)-\(W\) cross terms remain.

The marked--marked Cauchy quotient is handled similarly.  By multiplicativity
of \(H\) and by \eqref{dh1},
\begin{align*}
&\log
\frac{
H\!\left(
u^{2k}(\rho_2^Z\cup\rho_2^W);
v^{2k}(\rho_3^Z\cup\rho_3^W);q,t
\right)}
{
H\!\left(u^{2k}\rho_2^Z;v^{2k}\rho_3^Z;q,t\right)
H\!\left(u^{2k}\rho_2^W;v^{2k}\rho_3^W;q,t\right)}
\\
&\qquad =
\log H\!\left(u^{2k}\rho_2^Z;v^{2k}\rho_3^W;q,t\right)
+
\log H\!\left(u^{2k}\rho_2^W;v^{2k}\rho_3^Z;q,t\right)
\\
&\qquad =
\sum_{r\ge1}
\frac{1-t^r}{1-q^r}
\frac{u^{2kr}v^{2kr}}{r}
\left[
p_r(\rho_2^Z)p_r(\rho_3^W)
+
p_r(\rho_2^W)p_r(\rho_3^Z)
\right].
\end{align*}
By \eqref{jack-small-corrections} and the analogous estimates with \(Z\)
replaced by \(W\), each \(Z\)-block power sum and each \(W\)-block power sum
appearing above is \(O(\epsilon)\), uniformly on the product contours for each
fixed \(r\).  Therefore every surviving cross term in
\(\log\Xi_k^{(\epsilon)}(Z,W)\) is \(O(\epsilon^2)\).  Using
\eqref{jack-small-corrections}, summing over \(k\ge1\), and applying
\eqref{dabcorra}--\eqref{dabcorrc}, we obtain uniformly on the product contours
\[
        \frac{\log \Xi^{(\epsilon)}(Z,W)}{\epsilon^2}
        \longrightarrow
        \mathfrak K_{u,v}(Z,W),
\]
with \(\mathfrak K_{u,v}\) given by \eqref{dKuvZW}.  The uniformity follows
from the compact-annulus bounds on the contours and the normal convergence of
the reflected products in the admissibility assumptions.

Therefore
\[
        \frac{\Xi^{(\epsilon)}(Z,W)-1}{\epsilon^2}
        =
        \frac{\log \Xi^{(\epsilon)}(Z,W)}{\epsilon^2}
        +o(1)
\]
uniformly on the product contours.  This proves \eqref{xi-correct-expansion}.
\end{proof}

\begin{lemma}\label{le59}
Let $d,h\in[s]$. Under the assumptions of Theorem~\ref{t58},
\begin{align*}
&\lim_{\epsilon\to0}
\mathrm{Cov}\Bigl[
Q_{g_d}^{(\epsilon)}(\epsilon i_d^{(\epsilon)}),
Q_{g_h}^{(\epsilon)}(\epsilon i_h^{(\epsilon)})
\Bigr]\\
&\quad=
\frac{n^2\alpha\beta^2 g_dg_h}{(2\pi\mathbf i)^2}
\oint_{\mathcal C_d}\oint_{\mathcal C_h}
\Phi_{\chi_d,g_d}(z)\Phi_{\chi_h,g_h}(w)
\left[\frac{zw}{(z-w)^2}+\mathscr B_{u,v}(z,w)\right]
\frac{dz}{z}\frac{dw}{w}.
\end{align*}
\end{lemma}

\begin{proof}
By definition,
\[
\operatorname{Cov}\Bigl[
Q_{g_d}^{(\epsilon)}(\epsilon i_d^{(\epsilon)}),
Q_{g_h}^{(\epsilon)}(\epsilon i_h^{(\epsilon)})
\Bigr]
=
\frac{1}{\epsilon^2}
\Bigl(
\mathbb E_{\Pr^{(\epsilon)}}[\Gamma_{d,h}^{(\epsilon)}]
-
\mathbb E_{\Pr^{(\epsilon)}}[\Gamma_d^{(\epsilon)}]\,
\mathbb E_{\Pr^{(\epsilon)}}[\Gamma_h^{(\epsilon)}]
\Bigr),
\]
where
\[
\Gamma_d^{(\epsilon)}
:=
\gamma_{g_d}\bigl(\lambda^{(M,i_d^{(\epsilon)})};q,t\bigr),
\qquad
\Gamma_h^{(\epsilon)}
:=
\gamma_{g_h}\bigl(\lambda^{(M,i_h^{(\epsilon)})};q,t\bigr),
\qquad
\Gamma_{d,h}^{(\epsilon)}
:=
\Gamma_d^{(\epsilon)}\Gamma_h^{(\epsilon)}.
\]

Let
\[
        Z=(z_1,\ldots,z_{g_d}),
        \qquad
        W=(w_1,\ldots,w_{g_h}).
\]
Applying Lemma~\ref{lLeftMoments} with two marked \(L\)-type blocks gives
\[
\mathbb E_{\Pr^{(\epsilon)}}[\Gamma_{d,h}^{(\epsilon)}]
=
\frac{1}{(2\pi\mathbf i)^{g_d+g_h}}
\oint\cdots\oint
\mathcal I_d^{(\epsilon)}(Z)\,
\mathcal I_h^{(\epsilon)}(W)\,
\Lambda^{(\epsilon)}(Z,W),
\]
where \(\mathcal I_d^{(\epsilon)}\) and \(\mathcal I_h^{(\epsilon)}\) are the
one-block integrands and
\[
        \Lambda^{(\epsilon)}(Z,W)
        =
        T_{L,L}(Z,W)\,\Xi^{(\epsilon)}(Z,W).
\]
The product
\[
\mathbb E_{\Pr^{(\epsilon)}}[\Gamma_d^{(\epsilon)}]\,
\mathbb E_{\Pr^{(\epsilon)}}[\Gamma_h^{(\epsilon)}]
\]
is represented by the same product of one-block integrals with
\(\Lambda^{(\epsilon)}(Z,W)\) replaced by \(1\).  Hence the covariance is
obtained by inserting
\[
        \frac{T_{L,L}(Z,W)\Xi^{(\epsilon)}(Z,W)-1}{\epsilon^2}
\]
into the product of the two one-block integrands.

Since the \(Z\)- and \(W\)-contours are disjoint, the variables \(z_a\) and
\(w_b\) stay uniformly separated.  Using
\[
        t=e^{-n\beta\epsilon},
        \qquad
        q=t^\alpha=e^{-\alpha n\beta\epsilon},
\]
we have, uniformly on the product contours,
\begin{align}\label{eq:TLL-single-pair-expansion}
\frac{1}{\epsilon^2}
\left[
\frac{(z-w)(qz-tw)}{(z-tw)(qz-w)}-1
\right]
\longrightarrow
n^2\alpha\beta^2\frac{zw}{(z-w)^2}.
\end{align}
Therefore
\begin{align}\label{eq:TLL-block-expansion}
\frac{T_{L,L}(Z,W)-1}{\epsilon^2}
\longrightarrow
n^2\alpha\beta^2
\sum_{a=1}^{g_d}\sum_{b=1}^{g_h}
\frac{z_aw_b}{(z_a-w_b)^2}
\end{align}
uniformly.  By Lemma~\ref{le59corr},
\[
\frac{\Xi^{(\epsilon)}(Z,W)-1}{\epsilon^2}
\longrightarrow
\mathfrak K_{u,v}(Z,W)
\]
uniformly.  Since \(T_{L,L}(Z,W)\to1\), it follows that
\begin{align}
\frac{T_{L,L}(Z,W)\Xi^{(\epsilon)}(Z,W)-1}{\epsilon^2}
\longrightarrow&
n^2\alpha\beta^2
\sum_{a=1}^{g_d}\sum_{b=1}^{g_h}
\frac{z_aw_b}{(z_a-w_b)^2}
+
\mathfrak K_{u,v}(Z,W).
\label{eq:two-point-insertion-limit}
\end{align}

We now identify the limit of the block integrals.  Set
\[
        T_\chi(z)
        :=
        \exp\!\left\{
        \beta\,\mathrm{Log}_{\chi,\mathcal C_\chi}S_\chi(z)
        \right\},
        \qquad
        \Phi_{\chi,g}(z):=T_\chi(z)^g .
\]
For a single \(L\)-type block \(V=(v_1,\ldots,v_g)\), the Jack limit of the
Negu\c t \(D\)-kernel gives the chain kernel required by Lemma~\ref{lb2}.  More
precisely, the \(L\)-chart \(D\)-kernel has the form
\[
D(V;q,t)
=
\frac{(-1)^{g-1}}{(2\pi\mathbf i)^g}
\frac{\displaystyle\sum_{a=1}^{g}
        \frac{v_g t^{g-a}}{v_a q^{g-a}}}
     {\displaystyle\prod_{a=1}^{g-1}
        \left(1-\frac{t v_{a+1}}{q v_a}\right)}
\prod_{1\le a<b\le g}
\frac{
\left(1-\frac{v_a}{v_b}\right)
\left(1-\frac{qv_a}{tv_b}\right)}
{
\left(1-\frac{v_a}{tv_b}\right)
\left(1-\frac{qv_a}{v_b}\right)}
\prod_{a=1}^g\frac{dv_a}{v_a}.
\]
On the nested admissible contours, the pair product tends uniformly to \(1\).
Moreover,
\[
        \sum_{a=1}^{g}
        \frac{v_g t^{g-a}}{v_a q^{g-a}}
        =
        v_g\sum_{a=1}^{g}\frac1{v_a}+O(\epsilon),
\]
and
\[
        1-\frac{t v_{a+1}}{q v_a}
        =
        1-\frac{v_{a+1}}{v_a}+O(\epsilon).
\]
Hence
\[
(-1)^{g-1}
\frac{v_g}{\prod_{a=1}^{g-1}\left(1-\frac{v_{a+1}}{v_a}\right)}
\prod_{a=1}^g\frac{dv_a}{v_a}
=
\frac{\prod_{a=1}^g dv_a}
     {(v_2-v_1)\cdots(v_g-v_{g-1})}.
\]
Together with the one-block asymptotics from Proposition~\ref{p57}, the
limiting \(g\)-fold block kernel is therefore
\begin{equation}\label{eq:limiting-block-kernel}
\frac{1}{(2\pi\mathbf i)^g}
\frac{\displaystyle\prod_{a=1}^g T_\chi(v_a)}
     {(v_2-v_1)\cdots(v_g-v_{g-1})}
\left(\sum_{a=1}^g\frac1{v_a}\right)
\prod_{a=1}^g dv_a .
\end{equation}

We next record the resulting collapse identity.  Let \(R\) be holomorphic on
the relevant contour neighborhood.  Applying Lemma~\ref{lb2} to
\eqref{eq:limiting-block-kernel}, with
\[
        k=g,
        \qquad
        f(v)=T_\chi(v),
        \qquad
        g_1(v)=\frac1v,
        \qquad
        g_2(v)=R(v),
\]
gives
\begin{align}
&\frac{1}{(2\pi\mathbf i)^g}
\oint\cdots\oint
\frac{\displaystyle\prod_{a=1}^g T_\chi(v_a)}
     {(v_2-v_1)\cdots(v_g-v_{g-1})}
\left(\sum_{a=1}^g\frac1{v_a}\right)
\left(\sum_{a=1}^g R(v_a)\right)
\prod_{a=1}^g dv_a
\notag\\
&\qquad =
\frac{g}{2\pi\mathbf i}
\oint_{\mathcal C_\chi}
T_\chi(v)^g R(v)\frac{dv}{v}.
\label{eq:one-block-collapse}
\end{align}
The admissible branch makes \(T_\chi\) holomorphic on the required contour
neighborhood, so Lemma~\ref{lb2} applies.

Applying \eqref{eq:one-block-collapse} first to the \(Z\)-block and then to
the \(W\)-block yields, for any admissible two-variable kernel \(R(z,w)\),
\begin{align}
&\oint\cdots\oint
\mathcal N_{\chi_d,g_d}(Z)\,
\mathcal N_{\chi_h,g_h}(W)
\sum_{a=1}^{g_d}\sum_{b=1}^{g_h}R(z_a,w_b)
\notag\\
&\qquad =
\frac{g_dg_h}{(2\pi\mathbf i)^2}
\oint_{\mathcal C_d}\oint_{\mathcal C_h}
\Phi_{\chi_d,g_d}(z)\Phi_{\chi_h,g_h}(w)
R(z,w)\frac{dz}{z}\frac{dw}{w},
\label{eq:two-block-collapse}
\end{align}
where \(\mathcal N_{\chi,g}\) denotes the limiting block kernel in
\eqref{eq:limiting-block-kernel}.

We apply \eqref{eq:two-block-collapse} to the two terms in
\eqref{eq:two-point-insertion-limit}.  For the bulk \(T_{L,L}\)-contribution,
take
\[
        R(z,w)=\frac{zw}{(z-w)^2}.
\]
This gives
\[
\frac{n^2\alpha\beta^2 g_dg_h}{(2\pi\mathbf i)^2}
\oint_{\mathcal C_d}\oint_{\mathcal C_h}
\Phi_{\chi_d,g_d}(z)\Phi_{\chi_h,g_h}(w)
\frac{zw}{(z-w)^2}
\frac{dz}{z}\frac{dw}{w}.
\]

For the free-boundary correction, expand \(\mathfrak K_{u,v}(Z,W)\) using
\eqref{dKuvZW}.  Each power-sum product is a sum over pairs:
\[
        p_r(Z^{-1})p_r(W^{-1})
        =
        \sum_{a=1}^{g_d}\sum_{b=1}^{g_h}z_a^{-r}w_b^{-r},
\]
and similarly for
\[
        p_r(Z)p_r(W),\qquad
        p_r(Z^{-1})p_r(W),\qquad
        p_r(Z)p_r(W^{-1}).
\]
Applying \eqref{eq:two-block-collapse} term by term gives
\[
\frac{n^2\alpha\beta^2 g_dg_h}{(2\pi\mathbf i)^2}
\oint_{\mathcal C_d}\oint_{\mathcal C_h}
\Phi_{\chi_d,g_d}(z)\Phi_{\chi_h,g_h}(w)
\mathscr B_{u,v}(z,w)
\frac{dz}{z}\frac{dw}{w},
\]
where \(\mathscr B_{u,v}\) is the series in \eqref{dBuv}.  Combining this
with the bulk contribution proves \eqref{corr-cov-t58}.
\end{proof}

\begin{proposition}[Pairwise expansion and Wick moments]
\label{prop:master-pairwise-wick}
Assume the hypotheses of Theorem~\ref{t58}.  Fix finitely many \(L\)-type
marked columns and positive integers \(g_1,\ldots,g_s\).  Group the contour
variables in the \(L\)-marked moment formula of Lemma~\ref{lLeftMoments} into
blocks
\[
        B_1,\ldots,B_s,
\]
where \(B_a\) is the block attached to the observable at column
\(i_a^{(\epsilon)}\).  Let
\[
        \mathcal I_\epsilon(B_1,\ldots,B_s)
\]
be the full multi-point integrand, and let
\[
        \mathcal I_{a,\epsilon}(B_a)
\]
be the corresponding one-block integrand.  Then, uniformly on the product of
the chosen contours,
\begin{equation}\label{master-pair-factorization}
        \frac{\mathcal I_\epsilon(B_1,\ldots,B_s)}
             {\prod_{a=1}^s\mathcal I_{a,\epsilon}(B_a)}
        =
        \prod_{1\le a<b\le s}
        \left(1+\epsilon^2A_{ab}^{(\epsilon)}(B_a,B_b)+o(\epsilon^2)\right),
\end{equation}
where the functions \(A_{ab}^{(\epsilon)}\) are uniformly bounded and converge
uniformly on the corresponding product contours.  

Consequently, for the centered observables
\[
        Q_{g_a}^{(\epsilon)}(\epsilon i_a^{(\epsilon)}),
        \qquad a=1,\ldots,s,
\]
the limiting centered mixed moments satisfy Wick's formula.  Namely, if \(s\)
is odd, then
\[
        \lim_{\epsilon\to0}
        \mathbb E_{\Pr^{(\epsilon)}}
        \left[
        \prod_{a=1}^s
        Q_{g_a}^{(\epsilon)}(\epsilon i_a^{(\epsilon)})
        \right]
        =0.
\]
If \(s\) is even, then
\begin{equation}\label{eq:wick-moment-limit}
        \lim_{\epsilon\to0}
        \mathbb E_{\Pr^{(\epsilon)}}
        \left[
        \prod_{a=1}^s
        Q_{g_a}^{(\epsilon)}(\epsilon i_a^{(\epsilon)})
        \right]
        =
        \sum_{\pi\in\mathcal P_2([s])}
        \prod_{\{a,b\}\in\pi}
        \operatorname{Cov}\!\left[
        Q_{g_a}(\chi_a),Q_{g_b}(\chi_b)
        \right],
\end{equation}
where \(\mathcal P_2([s])\) is the set of pairings of \([s]\), and the
covariance is the one in \eqref{corr-cov-t58}.
\end{proposition}

\begin{proof}
We first prove the pairwise factorization.  By the \(L\)-marked moment formula
of Lemma~\ref{lLeftMoments}, the full multi-point integrand is a product of
one-block factors and interaction factors between marked blocks.  More
precisely, after the one-block factors
\[
        \mathcal I_{1,\epsilon}(B_1),\ldots,
        \mathcal I_{s,\epsilon}(B_s)
\]
are removed, the remaining factors are of two types.

The factorization follows from the structure of the \(L\)-marked moment
formula in Lemma~\ref{lLeftMoments}.  The finite Negu\c t interaction is
already a product over pairs of marked blocks; since all marked columns are
of type \(L\), the interaction between \(B_a\) and \(B_b\) is
\(T_{L,L}(B_a,B_b)\).

It remains to check the reflected free-boundary part.  For a block \(B_a\),
write
\[
        \rho_{2}^{(a)}:=\rho_{2,L}(B_a;q,t),
        \qquad
        \rho_{3}^{(a)}:=\rho_{3,L}(B_a;q,t).
\]
By the exponential formula for the boundary factors in
Lemma~\ref{le110}, for each boundary type \(c\) there are coefficients
\(L_{c,r}(q,t)\) and \(Q_{c,r}(q,t)\) such that
\[
        \log\Theta_c(\eta;q,t)
        =
        \sum_{r\ge1}\frac{L_{c,r}(q,t)}{r}p_r(\eta)
        +
        \sum_{r\ge1}\frac{Q_{c,r}(q,t)}{r}p_r(\eta)^2 .
\]
Thus, for any scale \(\sigma\),
\begin{align*}
&\log
\frac{
\Theta_c\!\left(\sigma\bigcup_{a=1}^s \rho_\ast^{(a)};q,t\right)}
{\prod_{a=1}^s
\Theta_c\!\left(\sigma\rho_\ast^{(a)};q,t\right)}
\\
&\qquad =
\sum_{r\ge1}\frac{Q_{c,r}(q,t)}{r}\sigma^{2r}
\left[
\left(\sum_{a=1}^s p_r(\rho_\ast^{(a)})\right)^2
-
\sum_{a=1}^s p_r(\rho_\ast^{(a)})^2
\right]
\\
&\qquad =
2\sum_{1\le a<b\le s}
\sum_{r\ge1}
\frac{Q_{c,r}(q,t)}{r}\sigma^{2r}
p_r(\rho_\ast^{(a)})p_r(\rho_\ast^{(b)}),
\end{align*}
where \(\ast\in\{2,3\}\).  Hence the boundary contribution, after division by
the one-block boundary factors, is a product of two-block factors only.

The Cauchy factors have the same pairwise structure by multiplicativity of
\(H\).  The graph--marked factors are one-block factors and cancel after
division by \(\prod_a\mathcal I_{a,\epsilon}(B_a)\).  The marked--marked
Cauchy factors split as products over pairs of marked blocks.  Therefore the
quotient of the full integrand by the product of one-block integrands has no
genuine three-block or higher-block term, and can be written as
\[
        \frac{\mathcal I_\epsilon(B_1,\ldots,B_s)}
             {\prod_{a=1}^s\mathcal I_{a,\epsilon}(B_a)}
        =
        \prod_{1\le a<b\le s}
        \Lambda_{ab}^{(\epsilon)}(B_a,B_b).
\]

For a fixed pair \(a<b\), the pair interaction has the form
\[
        \Lambda_{ab}^{(\epsilon)}(B_a,B_b)
        =
        T_{L,L}(B_a,B_b)\,
        \Xi_{ab}^{(\epsilon)}(B_a,B_b),
\]
where \(\Xi_{ab}^{(\epsilon)}\) is the two-block free-boundary quotient of
Lemma~\ref{le59corr}.  The finite \(T_{L,L}\)-part satisfies the uniform
second-order expansion \eqref{eq:TLL-block-expansion}, and the reflected
free-boundary quotient satisfies
\[
        \frac{\Xi_{ab}^{(\epsilon)}(B_a,B_b)-1}{\epsilon^2}
        \longrightarrow
        \mathfrak K_{u,v}(B_a,B_b)
\]
by \eqref{xi-correct-expansion}.  Since
\[
        T_{L,L}(B_a,B_b)\to1,
        \qquad
        \Xi_{ab}^{(\epsilon)}(B_a,B_b)\to1
\]
uniformly, we get
\begin{equation}\label{eq:pair-factor-second-order}
\frac{\Lambda_{ab}^{(\epsilon)}(B_a,B_b)-1}{\epsilon^2}
\longrightarrow
A_{ab}(B_a,B_b)
\end{equation}
uniformly on the product contours, where
\[
A_{ab}(B_a,B_b)
:=
n^2\alpha\beta^2
\sum_{z\in B_a}\sum_{w\in B_b}
\frac{zw}{(z-w)^2}
+
\mathfrak K_{u,v}(B_a,B_b).
\]
Equivalently,
\[
        \Lambda_{ab}^{(\epsilon)}(B_a,B_b)
        =
        1+\epsilon^2 A_{ab}(B_a,B_b)+o(\epsilon^2),
\]
uniformly on the product contours.

It remains to show that the centered moments have Wick limits.  For
brevity write
\[
        \Gamma_a^{(\epsilon)}
        :=
        \gamma_{g_a}\bigl(\lambda^{(M,i_a^{(\epsilon)})};q,t\bigr),
        \qquad
        Q_a^{(\epsilon)}
        :=
        \frac{\Gamma_a^{(\epsilon)}
        -\mathbb E[\Gamma_a^{(\epsilon)}]}{\epsilon}.
\]
Then
\[
\mathbb E\!\left[\prod_{a=1}^s Q_a^{(\epsilon)}\right]
=
\epsilon^{-s}
\sum_{J\subseteq[s]}
(-1)^{s-|J|}
\mathbb E\!\left[\prod_{a\in J}\Gamma_a^{(\epsilon)}\right]
\prod_{b\notin J}\mathbb E[\Gamma_b^{(\epsilon)}].
\]
We insert the pairwise expansion of the moment formula into each term of this
inclusion--exclusion sum.

Expanding
\[
        \prod_{\{a,b\}\subset J}
        \left(1+\epsilon^2A_{ab}^{(\epsilon)}+o(\epsilon^2)\right)
\]
gives a finite sum over graphs \(G\) whose vertices are contained in \(J\).
Each edge of \(G\) contributes one pair factor and hence one factor
\(\epsilon^2\).  Thus a graph with \(e(G)\) edges contributes order
\[
        \epsilon^{2e(G)}
\]
before the external normalization \(\epsilon^{-s}\).

Now fix an edge set \(G\) on the full vertex set \([s]\), and let
\[
        V(G)\subseteq[s]
\]
be the set of vertices incident to at least one edge of \(G\).  The same edge
set appears in the inclusion--exclusion sum for every \(J\) satisfying
\[
        V(G)\subseteq J\subseteq[s].
\]
Its total coefficient from centering is therefore
\[
        \sum_{J:\,V(G)\subseteq J\subseteq[s]}(-1)^{s-|J|}.
\]
This coefficient is \(0\) unless \(V(G)=[s]\), and it is \(1\) when
\(V(G)=[s]\).  Hence all graphs with an isolated vertex are killed by
centering.  Only graphs with no isolated vertices can contribute.

After multiplying by the normalization \(\epsilon^{-s}\), a graph with
\(e(G)\) edges contributes order
\[
        \epsilon^{2e(G)-s}.
\]
If \(s\) is odd, every graph on \([s]\) with no isolated vertices has at least
\((s+1)/2\) edges.  Hence \(2e(G)-s\ge1\), and every contribution vanishes as
\(\epsilon\to0\).  Therefore the limiting centered \(s\)-th moment is \(0\)
for odd \(s\).

Assume now that \(s\) is even.  A graph on \([s]\) with no isolated vertices
has at least \(s/2\) edges.  The only graphs with exactly \(s/2\) edges and no
isolated vertices are pairings of \([s]\).  Graphs with more than \(s/2\)
edges have \(2e(G)-s>0\) and vanish in the limit.  Thus only pairings survive.

Let \(\pi\in\mathcal P_2([s])\) be a pairing.  The corresponding contribution
factors over its pairs:
\[
        \prod_{\{a,b\}\in\pi}
        \left[
        \lim_{\epsilon\to0}
        \epsilon^{-2}
        \left(
        \mathbb E[\Gamma_a^{(\epsilon)}\Gamma_b^{(\epsilon)}]
        -
        \mathbb E[\Gamma_a^{(\epsilon)}]
        \mathbb E[\Gamma_b^{(\epsilon)}]
        \right)
        \right].
\]
By Lemma~\ref{le59}, the bracketed limit is exactly
\[
        \operatorname{Cov}
        \bigl[
        Q_{g_a}(\chi_a),Q_{g_b}(\chi_b)
        \bigr],
\]
with the covariance given by \eqref{corr-cov-t58}.  Therefore, if \(s\) is
even,
\[
        \lim_{\epsilon\to0}
        \mathbb E\!\left[
        \prod_{a=1}^s Q_a^{(\epsilon)}
        \right]
        =
        \sum_{\pi\in\mathcal P_2([s])}
        \prod_{\{a,b\}\in\pi}
        \operatorname{Cov}
        \bigl[
        Q_{g_a}(\chi_a),Q_{g_b}(\chi_b)
        \bigr].
\]
This is Wick's formula.  The proposition follows.
\end{proof}

\begin{proof}[Proof of Theorem~\ref{t58}]
Lemma~\ref{le59} gives the convergence of all second moments and identifies
the limiting covariance as \eqref{corr-cov-t58}.  Proposition~\ref{prop:master-pairwise-wick}
gives convergence of all centered mixed moments to the Wick polynomials
determined by these covariances.  These are exactly the moments of the
centered Gaussian vector with covariance matrix \eqref{corr-cov-t58}.  Since
Gaussian laws are moment-determinate, the method of moments gives the claimed
finite-dimensional convergence.
\end{proof}

\section{Limit Shape and Frozen Boundary}\label{sect:fb}

In this section, we first prove the Laplace-transform law of large numbers stated in
Theorem~\ref{l61}.  We then convert the transform convergence into a limit-shape
statement by using the natural moment variable
\[
 x=e^{-n\beta\kappa}.
\]
This is the same scale as the exponents supplied by the Negu\c t-operator asymptotics, so no
Laplace extension to exponents outside \(\beta\mathbb Z_{>0}\) is needed.  Finally we identify
the Stieltjes transform of the limiting slope measure and derive the
frozen-boundary equation in this natural scale.

\subsection{Limit shape and the Stieltjes transform in the natural Laplace scale}

\begin{proposition}[Height-Laplace law of large numbers]\label{prop:height-laplace-lln}
Assume the hypotheses of Theorem~\ref{l61}.  For every \(k\in\mathbb Z_{>0}\),
set
\[
        H_\epsilon(\kappa)
        :=
        \epsilon h^{(q,t)}_{M^{(\epsilon)}}\!
        \left(i^{(\epsilon)},\frac{\kappa}{\epsilon}\right).
\]
Then
\begin{equation}\label{height-laplace-lln-prop}
\int_{\mathbb R}e^{-n\beta k\kappa}H_\epsilon(\kappa)\,d\kappa
\xrightarrow[\epsilon\to0]{\mathbb P}
\frac{1}{n^2\alpha k^2\beta^2\pi\mathbf i}
\oint_{\mathcal C_\chi}
T_\chi(w)^k\frac{dw}{w}.
\end{equation}
\end{proposition}

\begin{proof}
After the change of variables \(\kappa=\epsilon y\), the exact Jack
parametrization gives
\[
        e^{-n\beta k\kappa}
        =
        e^{-n\beta k\epsilon y}
        =
        t^{ky}.
\]
Hence
\[
\begin{aligned}
&\mathbb E_{\Pr^{(\epsilon)}}
\int_{\mathbb R}e^{-n\beta k\kappa}H_\epsilon(\kappa)\,d\kappa  \\
&\qquad =
\epsilon^2
\mathbb E_{\Pr^{(\epsilon)}}
\int_{\mathbb R}
h^{(q,t)}_{M^{(\epsilon)}}(i^{(\epsilon)},y)t^{ky}\,dy .
\end{aligned}
\]
By the charge-centered height--Negu\c t identity,
\[
        \int_{\mathbb R}
        h^{(q,t)}_{M}(m,y)t^{ky}\,dy
        =
        \frac{2}{k^2\log t\log q}\,
        \gamma_k(\lambda^{(M,m)};q,t),
\]
so
\[
\mathbb E_{\Pr^{(\epsilon)}}
\int_{\mathbb R}e^{-n\beta k\kappa}H_\epsilon(\kappa)\,d\kappa
=
\frac{2\epsilon^2}{k^2\log t\log q}
\mathbb E_{\Pr^{(\epsilon)}}
\left[
\gamma_k(\lambda^{(M^{(\epsilon)},i^{(\epsilon)})};q,t)
\right].
\]
Since
\[
        \log t=-n\beta\epsilon,
        \qquad
        \log q=\alpha\log t=-\alpha n\beta\epsilon,
\]
we have
\[
        \frac{2\epsilon^2}{k^2\log t\log q}
        =
        \frac{2}{k^2n^2\alpha\beta^2}.
\]
Proposition~\ref{p57}, applied with \(g=k\), gives the convergence of the
expectation to the right-hand side of \eqref{height-laplace-lln-prop}.

It remains to upgrade convergence of expectations to convergence in
probability.  Set
\[
X_\epsilon
:=
\int_{\mathbb R}e^{-n\beta k\kappa}H_\epsilon(\kappa)\,d\kappa .
\]
Again by the charge-centered height--Negu\c t identity,
\[
X_\epsilon-\mathbb E_{\Pr^{(\epsilon)}}[X_\epsilon]
=
\frac{2\epsilon^2}{k^2\log t\log q}
\left(
\gamma_k(\lambda^{(M^{(\epsilon)},i^{(\epsilon)})};q,t)
-
\mathbb E_{\Pr^{(\epsilon)}}
\bigl[
\gamma_k(\lambda^{(M^{(\epsilon)},i^{(\epsilon)})};q,t)
\bigr]
\right).
\]
Let
\[
Q_{k,\epsilon}
:=
\frac{1}{\epsilon}
\left(
\gamma_k(\lambda^{(M^{(\epsilon)},i^{(\epsilon)})};q,t)
-
\mathbb E_{\Pr^{(\epsilon)}}
\bigl[
\gamma_k(\lambda^{(M^{(\epsilon)},i^{(\epsilon)})};q,t)
\bigr]
\right).
\]
Then
\[
X_\epsilon-\mathbb E_{\Pr^{(\epsilon)}}[X_\epsilon]
=
\frac{2\epsilon^3}{k^2\log t\log q}\,Q_{k,\epsilon}.
\]
Therefore
\[
\operatorname{Var}_{\Pr^{(\epsilon)}}(X_\epsilon)
=
\left(
\frac{2\epsilon^3}{k^2\log t\log q}
\right)^2
\operatorname{Var}_{\Pr^{(\epsilon)}}(Q_{k,\epsilon}).
\]
By Lemma~\ref{le59}, the variances
\(\operatorname{Var}_{\Pr^{(\epsilon)}}(Q_{k,\epsilon})\) are bounded, while
\[
        \left(
        \frac{2\epsilon^3}{k^2\log t\log q}
        \right)^2
        =
        O(\epsilon^2).
\]
Thus \(\operatorname{Var}(X_\epsilon)\to0\).  This proves
\eqref{height-laplace-lln-prop}.
\end{proof}

\begin{lemma}[Prelimit positive slope measure and density bound]
\label{lem:positive-slope-density-bound}
Fix the assumptions and notation of Theorem~\ref{l61}, and set
\[
H_\epsilon(\kappa)
=
\epsilon h^{(q,t)}_{M^{(\epsilon)}}
\left(i^{(\epsilon)},\frac{\kappa}{\epsilon}\right).
\]
In the column-wise \((q,t)\)-coordinate system, with the canonical piecewise-affine
interpolation used in Lemma~\ref{le25}, the function \(H_\epsilon\) is locally absolutely
continuous and its slope density
\[
\rho_\epsilon(\kappa):=\frac{d}{d\kappa}H_\epsilon(\kappa)
\]
satisfies
\begin{equation}\label{rho-eps-bound}
0\le \rho_\epsilon(\kappa)\le \frac{2}{\alpha}
\qquad\text{for a.e. }\kappa\in\mathbb R.
\end{equation}
Consequently \(dH_\epsilon=\rho_\epsilon(\kappa)d\kappa\) is a positive finite-on-compact
Borel measure, and the push-forward measure \(\nu_\chi^{(\epsilon)}\) in \eqref{dnu-eps}
is absolutely continuous with density
\begin{equation}\label{prelimit-nu-density}
\frac{d\nu_\chi^{(\epsilon)}}{dx}(x)
=
\rho_\epsilon\!\left(-\frac{1}{n\beta}\log x\right),
\qquad x>0.
\end{equation}
In particular,
\begin{equation}\label{nu-eps-density-bound}
0\le \frac{d\nu_\chi^{(\epsilon)}}{dx}(x)\le \frac{2}{\alpha}
\qquad\text{for a.e. }x>0.
\end{equation}
\end{lemma}

\begin{proof}
Lemma~\ref{le25} gives, in the \((q,t)\)-coordinate \(\tilde y\),
\[
 \frac{d}{d\tilde y}h_M(2m-\tfrac12,\tilde y)
 =\frac{2\log t}{\log q}
 \left(1-
 \sum_{i=1}^{l(\lambda^{(M,m)})}
 \mathbf 1_{[Y^{(i,M,m)}-\frac12,\,Y^{(i,M,m)}+\frac12]}(\tilde y)
 \right)
\]
above the lower reference level, and the derivative is zero below it.  In the exact Jack
specialization \(q=t^\alpha\), we have \(\log t/\log q=1/\alpha\).  The intervals appearing in
this formula are disjoint up to endpoints in the canonical column coordinate, so the factor in
parentheses is either \(0\) or \(1\) a.e.  Hence
\[
0\le \frac{d}{d\tilde y}h_M(2m-\tfrac12,\tilde y)\le \frac{2}{\alpha}
\]
a.e.  Since \(\tilde y=\kappa/\epsilon\) and
\(H_\epsilon(\kappa)=\epsilon h_M(2m-\tfrac12,\kappa/\epsilon)\), differentiation gives
\[
\frac{d}{d\kappa}H_\epsilon(\kappa)
=
\frac{d}{d\tilde y}h_M(2m-\tfrac12,\tilde y)\bigg|_{\tilde y=\kappa/\epsilon},
\]
which proves \eqref{rho-eps-bound}.  The lower-tail normalization and
Lemma~\ref{lem:height-normalization} give local finiteness and the exponential integrability
needed below.  Finally, using \(x=e^{-n\beta\kappa}\), so that
\(dx=-n\beta x\,d\kappa\), the definition \eqref{dnu-eps} becomes
\[
\nu_\chi^{(\epsilon)}(B)
=
\int_B
\rho_\epsilon\!\left(-\frac{1}{n\beta}\log x\right)dx,
\]
which proves \eqref{prelimit-nu-density} and \eqref{nu-eps-density-bound}.
\end{proof}

\begin{lemma}[Moments of the pushed-forward slope measure]
\label{lem:slope-laplace-moments}
For every \(k\in\mathbb Z_{>0}\), define
\[
        M_k^{(\epsilon)}
        :=
        \int_0^\infty x^{k-1}\nu_\chi^{(\epsilon)}(dx).
\]
Then
\begin{equation}\label{eps-moment-by-laplace}
        M_k^{(\epsilon)}
        =
        n^2\beta^2 k
        \int_{\mathbb R}
        e^{-n\beta k\kappa}H_\epsilon(\kappa)\,d\kappa .
\end{equation}
Consequently,
\begin{equation}\label{eps-moment-convergence}
        M_k^{(\epsilon)}
        \xrightarrow[\epsilon\to0]{\mathbb P}
        \frac{1}{\alpha k\pi\mathbf i}
        \oint_{\mathcal C_\chi}
        T_\chi(w)^k\frac{dw}{w}.
\end{equation}
\end{lemma}

\begin{proof}
By the definition of \(\nu_\chi^{(\epsilon)}\) and the change of variables
\(x=e^{-n\beta\kappa}\),
\[
\begin{aligned}
M_k^{(\epsilon)}
&=
\int_0^\infty x^{k-1}\nu_\chi^{(\epsilon)}(dx)  \\
&=
n\beta
\int_{\mathbb R}
e^{-n\beta k\kappa}\,dH_\epsilon(\kappa).
\end{aligned}
\]
The lower-tail normalization and the height-Laplace finiteness from
Lemma~\ref{lem:height-normalization} make the integration-by-parts boundary
term vanish.  Hence
\[
        n\beta
        \int_{\mathbb R}
        e^{-n\beta k\kappa}\,dH_\epsilon(\kappa)
        =
        n^2\beta^2 k
        \int_{\mathbb R}
        e^{-n\beta k\kappa}H_\epsilon(\kappa)\,d\kappa,
\]
which proves \eqref{eps-moment-by-laplace}.

Now apply Proposition~\ref{prop:height-laplace-lln}.  Multiplying
\eqref{height-laplace-lln-prop} by \(n^2\beta^2k\) gives
\[
        M_k^{(\epsilon)}
        \xrightarrow[\epsilon\to0]{\mathbb P}
        n^2\beta^2k\cdot
        \frac{1}{n^2\alpha k^2\beta^2\pi\mathbf i}
        \oint_{\mathcal C_\chi}
        T_\chi(w)^k\frac{dw}{w},
\]
which is exactly \eqref{eps-moment-convergence}.
\end{proof}

\begin{lemma}[Compact moment criterion]\label{lem:compact-moment-criterion}
Let \(\mu_N\) be random finite positive Borel measures on \([0,\infty)\).
Assume that, for every \(j\ge0\),
\[
        M_j^{(N)}
        :=
        \int_0^\infty x^j\,\mu_N(dx)
        \xrightarrow[N\to\infty]{\mathbb P}
        m_j,
\]
and that there are constants \(C_0,C<\infty\) such that
\[
        0\le m_j\le C_0C^j,
        \qquad j\ge0.
\]
Then there exists a unique deterministic finite Borel measure \(\mu\) on
\([0,\infty)\) such that
\[
        \int_0^\infty x^j\,\mu(dx)=m_j,
        \qquad j\ge0,
\]
and
\[
        \mu_N\Rightarrow\mu
\]
in probability.  Moreover,
\[
        \operatorname{supp}\mu\subset[0,C].
\]
\end{lemma}

\begin{proof}
We first prove tightness in probability.  The total masses
\(M_0^{(N)}=\mu_N([0,\infty))\) converge in probability to \(m_0\), hence are
tight.  For the spatial tail, let \(\eta,\delta>0\).  Choose \(R>C\) and
\(j\ge1\) so that
\[
        2C_0\left(\frac{C}{R}\right)^j<\eta .
\]
By Markov's inequality,
\[
        \mu_N([R,\infty))
        \le
        R^{-j}M_j^{(N)} .
\]
Since \(M_j^{(N)}\to m_j\le C_0C^j\) in probability, for all large \(N\),
\[
        \mathbb P\left(M_j^{(N)}>2C_0C^j\right)<\delta .
\]
On the complementary event,
\[
        \mu_N([R,\infty))
        \le
        2C_0\left(\frac{C}{R}\right)^j
        <\eta .
\]
Thus \(\{\mu_N\}\) is tight in probability.

We now identify all subsequential limits.  By the subsequence criterion for
convergence in probability, it suffices to show that every subsequence has a
further subsequence converging weakly in probability to the same deterministic
measure.  Take an arbitrary subsequence.  By tightness and Prokhorov's
theorem, it has a further subsequence, still denoted by \(\mu_N\), which
converges in distribution, as a random finite measure, to some random finite
measure \(\mu_*\).  Passing to a further subsequence and using the Skorokhod
representation theorem and a diagonal extraction, we may assume that
\[
        \mu_N\Rightarrow\mu_*
        \quad\text{a.s.},
        \qquad
        M_j^{(N)}\to m_j
        \quad\text{a.s. for every }j\ge0.
\]

First, \(\mu_*\) is supported on \([0,C]\).  Indeed, by the Portmanteau theorem
for nonnegative lower semicontinuous functions,
\[
        \int_0^\infty x^j\,\mu_*(dx)
        \le
        \liminf_{N\to\infty} M_j^{(N)}
        =
        m_j
        \qquad\text{a.s.}
\]
Hence, for every \(R>C\) and \(j\ge1\),
\[
        \mu_*([R,\infty))
        \le
        R^{-j}\int_0^\infty x^j\,\mu_*(dx)
        \le
        C_0\left(\frac{C}{R}\right)^j .
\]
Letting \(j\to\infty\) gives \(\mu_*([R,\infty))=0\).  Since \(R>C\) was
arbitrary,
\[
        \operatorname{supp}\mu_*\subset[0,C]
        \quad\text{a.s.}
\]

Next we prove that \(\mu_*\) has moments \(m_j\).  Fix \(j\ge0\) and choose
\(R>C\).  Let \(\theta_R\) be a continuous cutoff such that
\[
        0\le\theta_R\le1,\qquad
        \theta_R(x)=1\ \text{for }0\le x\le R,\qquad
        \theta_R(x)=0\ \text{for }x\ge2R .
\]
Since \(x^j\theta_R(x)\) is bounded and continuous, the almost sure weak
convergence gives
\[
        \int_0^\infty x^j\theta_R(x)\,\mu_N(dx)
        \longrightarrow
        \int_0^\infty x^j\theta_R(x)\,\mu_*(dx).
\]
For every \(\ell\ge1\),
\[
\begin{aligned}
0
&\le
M_j^{(N)}
-
\int_0^\infty x^j\theta_R(x)\,\mu_N(dx)  \\
&\le
\int_R^\infty x^j\,\mu_N(dx)
\le
R^{-\ell}M_{j+\ell}^{(N)} .
\end{aligned}
\]
Taking \(N\to\infty\) and using \(M_{j+\ell}^{(N)}\to m_{j+\ell}\le
C_0C^{j+\ell}\), we obtain
\[
\left|
m_j-
\int_0^\infty x^j\theta_R(x)\,\mu_*(dx)
\right|
\le
C_0C^j\left(\frac{C}{R}\right)^\ell .
\]
Letting \(\ell\to\infty\) gives
\[
        \int_0^\infty x^j\theta_R(x)\,\mu_*(dx)=m_j .
\]
Since \(\operatorname{supp}\mu_*\subset[0,C]\subset[0,R]\), the cutoff is
identically \(1\) on the support of \(\mu_*\).  Therefore
\[
        \int_0^\infty x^j\,\mu_*(dx)=m_j,
        \qquad j\ge0 .
\]

It remains to use compact moment determinacy.  If two finite measures on
\([0,C]\) have the same moments, then they have the same integrals against all
polynomials.  By the Weierstrass approximation theorem, polynomials are
uniformly dense in \(C([0,C])\), so the two measures agree on all continuous
test functions and hence are equal.  Thus there is at most one finite measure
with moments \((m_j)_{j\ge0}\) and support in \([0,C]\).

Every subsequential weak limit is therefore equal almost surely to the same
deterministic measure; denote it by \(\mu\).  The subsequence criterion yields
\[
        \mu_N\Rightarrow\mu
\]
in probability.  This completes the proof.
\end{proof}

\begin{proof}[Proof of Theorem~\ref{l61}]
The Laplace-transform convergence in \eqref{lph} is exactly
Proposition~\ref{prop:height-laplace-lln}.  Indeed, for
\(\gamma=k\beta\),
\[
        T_\chi(w)^k
        =
        \left[
        \mathcal G_\chi(w)
        \prod_{r\ge1}\mathcal F_{u,v,r}(w)
        \right]^{k\beta},
\]
where the power is taken with the admissible logarithmic branch fixed by
Assumption~\ref{ass:contour-branch-admissibility}.  Thus
Proposition~\ref{prop:height-laplace-lln} gives precisely \eqref{lph}.

We now prove the weak convergence of the pushed-forward slope measures.
For \(j\ge0\), set
\[
        m_j(\chi)
        :=
        \frac{1}{\alpha(j+1)\pi\mathbf i}
        \oint_{\mathcal C_\chi}
        T_\chi(w)^{j+1}\frac{dw}{w}.
\]
By Lemma~\ref{lem:slope-laplace-moments}, for every \(j\ge0\),
\begin{equation}\label{eq:nu-eps-moment-limit-in-proof}
        \int_0^\infty x^j\,\nu_\chi^{(\epsilon)}(dx)
        \xrightarrow[\epsilon\to0]{\mathbb P}
        m_j(\chi).
\end{equation}
Since \(\mathcal C_\chi\) is fixed and \(T_\chi\) is holomorphic on a
neighborhood of \(\mathcal C_\chi\), there are constants
\(C_0,C_\chi<\infty\) such that
\[
        |m_j(\chi)|\le C_0C_\chi^j,
        \qquad j\ge0.
\]
Moreover,
\[
        m_j(\chi)\ge0,
\]
because \(m_j(\chi)\) is the limit in probability of the nonnegative random
variables on the left-hand side of
\eqref{eq:nu-eps-moment-limit-in-proof}.  Therefore
\[
        0\le m_j(\chi)\le C_0C_\chi^j,
        \qquad j\ge0.
\]

Applying Lemma~\ref{lem:compact-moment-criterion} to
\[
        \mu_N=\nu_\chi^{(\epsilon_N)}
\]
along an arbitrary sequence \(\epsilon_N\downarrow0\), and then using the
subsequence criterion for convergence in probability, we obtain a
deterministic finite Borel measure \(\nu_\chi\) such that
\[
        \nu_\chi^{(\epsilon)}\Rightarrow\nu_\chi
\]
in probability.  The measure \(\nu_\chi\) is uniquely characterized by the
moment formula
\[
        \int_0^\infty x^j\,\nu_\chi(dx)
        =
        m_j(\chi),
        \qquad j\ge0,
\]
which is \eqref{nu-moments}.  In addition, after increasing \(C_\chi\), if
necessary,
\[
        \operatorname{supp}\nu_\chi\subset[0,C_\chi].
\]

It remains to pass the prelimit slope-density bound to the limit.  By
Lemma~\ref{lem:positive-slope-density-bound},
\[
        \nu_\chi^{(\epsilon)}(A)
        \le
        \frac{2}{\alpha}|A|
\]
for every Borel set \(A\subset[0,\infty)\). Here $|A|$ denotes the Lebesgue measure of a Borel set $A$.  Since
\(\nu_\chi^{(\epsilon)}\Rightarrow\nu_\chi\) in probability, every sequence
\(\epsilon_N\downarrow0\) has a subsequence along which the weak convergence
holds almost surely.  Along such a subsequence, the open-set part of the
Portmanteau theorem gives, for every open set \(O\subset[0,\infty)\),
\[
        \nu_\chi(O)
        \le
        \liminf_{\epsilon\to0}\nu_\chi^{(\epsilon)}(O)
        \le
        \frac{2}{\alpha}|O|.
\]
Now let \(A\subset[0,\infty)\) be Borel.  If \(|A|<\infty\), choose open sets
\(O\supset A\) with
\[
        |O|\le |A|+\eta .
\]
Then
\[
        \nu_\chi(A)
        \le
        \nu_\chi(O)
        \le
        \frac{2}{\alpha}|O|
        \le
        \frac{2}{\alpha}(|A|+\eta).
\]
Letting \(\eta\downarrow0\), we get
\[
        \nu_\chi(A)\le \frac{2}{\alpha}|A|.
\]
If \(|A|=\infty\), the same inequality is trivial.  Hence
\[
        \nu_\chi\ll dx.
\]
Writing
\[
        d\nu_\chi(x)=f_\chi(x)\,dx,
\]
we obtain
\[
        0\le f_\chi(x)\le\frac{2}{\alpha}
        \qquad\text{for a.e. }x.
\]

Define the associated macroscopic height profile by
\eqref{def-H-from-nu}:
\[
        \mathcal H(\chi,\kappa)
        :=
        \frac{1}{n\beta}
        \int_{(e^{-n\beta\kappa},\infty)}
        \frac{1}{x}\,\nu_\chi(dx).
\]
Equivalently,
\[
        \mathcal H(\chi,\kappa)
        =
        \frac{1}{n\beta}
        \int_{e^{-n\beta\kappa}}^\infty
        \frac{f_\chi(x)}{x}\,dx .
\]
On every compact interval in the \(\kappa\)-variable, the lower limit
\(e^{-n\beta\kappa}\) stays in a compact subinterval of \((0,\infty)\).
Since \(f_\chi\in L^\infty\), this formula defines a locally absolutely
continuous function of \(\kappa\).

At every point for which
\[
        x=e^{-n\beta\kappa}
\]
is a Lebesgue point of \(f_\chi\), differentiating the last display gives
\begin{equation}\label{slope-from-nu-density}
\begin{aligned}
        \partial_\kappa\mathcal H(\chi,\kappa)
        &=
        -\frac{1}{n\beta}
        \frac{f_\chi(x)}{x}
        \frac{dx}{d\kappa}
        \bigg|_{x=e^{-n\beta\kappa}}  \\
        &=
        f_\chi(e^{-n\beta\kappa}).
\end{aligned}
\end{equation}
Therefore
\[
        0\le
        \partial_\kappa\mathcal H(\chi,\kappa)
        \le
        \frac{2}{\alpha}
        \qquad\text{for a.e. }\kappa .
\]

Finally, the moment formula \eqref{nu-moments} depends on the marked
sequence only through the macroscopic coordinate \(\chi\).  Hence changing
the fixed residue class of the marked \(L\)-type column, while keeping
\(\epsilon i^{(\epsilon)}\to\chi\), gives the same deterministic moment
sequence.  By the uniqueness in
Lemma~\ref{lem:compact-moment-criterion}, the limiting pushed-forward slope
measure is independent of the chosen fixed residue class.

This proves all assertions of Theorem~\ref{l61}.
\end{proof}

Recall that $S_\chi(w)$ and $T_\chi(w)$ are defined in (\ref{dsc}) and (\ref{dtc}), respectively,
with the compatible branch fixed in Assumption~\ref{ass:contour-branch-admissibility}.  By Theorem~\ref{thm:limit-shape-beta},
the Stieltjes transform of $\nu_\chi$ is
\begin{align}
\mathrm{St}_{\nu_\chi}(z)
:=
\int_0^\infty\frac{\nu_\chi(dx)}{z-x},
\qquad z\in\CC\setminus\operatorname{supp}(\nu_\chi).\label{dst}
\end{align}
For $|z|$ sufficiently large, expanding the Cauchy kernel and using \eqref{nu-moments} gives
\begin{align}
\mathrm{St}_{\nu_\chi}(z)
&=
\sum_{k=1}^{\infty}\frac{1}{z^k}
\int_0^\infty x^{k-1}\nu_\chi(dx)\notag\\
&=
\frac{1}{\alpha\pi\mathbf i}
\oint_{\mathcal C}
\sum_{k=1}^{\infty}\frac{T_\chi(w)^k}{kz^k}\frac{dw}{w}\notag\\
&=
-\frac{1}{\alpha\pi\mathbf i}
\oint_{\mathcal C}
\log\left(1-\frac{T_\chi(w)}{z}\right)\frac{dw}{w}.
\label{stc-natural}
\end{align}
The identity then extends by analytic continuation to each connected component of
$\CC\setminus\operatorname{supp}(\nu_\chi)$.

At points where $\nu_\chi$ has a density, the Stieltjes inversion formula and
\eqref{slope-from-nu-density} give
\begin{equation}\label{dsm2-natural}
\partial_\kappa\mathcal H(\chi,\kappa)
=
-\frac{1}{\pi}\lim_{\eta\downarrow0}
\Im\mathrm{St}_{\nu_\chi}(e^{-n\beta\kappa}+\mathbf i\eta).
\end{equation}

\subsection{Truncated characteristic equations and interlacing assumptions}

For $K\in\ZZ_{>0}$, define the truncated pole set
\begin{align}
\label{ddc}\mathcal D_{\chi,K}
&:=
(\mathcal R_{\chi,1,1}\setminus\mathcal R_{\chi,1,2})
\cup
(\mathcal R_{\chi,3,1}\setminus\mathcal R_{\chi,3,2})\\
&\quad\cup
\bigcup_{k=1}^{K}
\Bigl[
(\mathcal R_{5,1,k,k}\setminus\mathcal R_{5,2,k,k})
\cup
(\mathcal R_{7,1,k,k}\setminus\mathcal R_{7,2,k,k})\\
&\hspace{25mm}\cup
(\mathcal R_{5,1,k-1,k}\setminus\mathcal R_{5,2,k-1,k})
\cup
(\mathcal R_{7,1,k-1,k}\setminus\mathcal R_{7,2,k-1,k})
\Bigr].
\end{align}
Set
\[
S_{\chi,K}(w):=\mathcal G_\chi(w)\prod_{k=1}^{K}\mathcal F_{u,v,k}(w),
\qquad
T_{\chi,K}(w):=S_{\chi,K}(w)^\beta.
\]
The finite-$K$ characteristic equation in the natural Stieltjes variable is
\begin{equation}\label{ceqnK}
T_{\chi,K}(w)=x.
\end{equation}
The corresponding infinite equation is
\begin{equation}\label{ceqn}
T_\chi(w)=x,
\qquad
T_\chi(w):=\left[\mathcal G_\chi(w)\prod_{k\ge1}\mathcal F_{u,v,k}(w)\right]^\beta.
\end{equation}
At the point $(\chi,\kappa)$ we take $x=e^{-n\beta\kappa}$.  If $s=e^{-\kappa}$, then
$x=s^{n\beta}$, and on the compatible real branch the equation $T_{\chi,K}(w)=x$ is
equivalent to
\begin{equation}\label{ceqnK-S-equiv}
S_{\chi,K}(w)=s^n.
\end{equation}
This equivalent rational form is the one used for the signed interlacing and root-count
assumptions below.

\begin{assumption}[Finite signed zero--pole order]
\label{ap62}

Assume that
\[
        a_i=L,
        \qquad i\in[l..r].
\]
For every
\[
        \chi\in
        (V_0,V_m)\setminus\{V_1,\ldots,V_{m-1}\}
\]
and every \(K\in\mathbb Z_{>0}\), write
\[
        S_{\chi,K}(w)
        =
        \mathcal G_\chi(w)
        \prod_{k=1}^{K}\mathcal F_{u,v,k}(w)
        =
        \frac{P_K(w)}{Q_K(w)}
\]
in reduced form.

Assume that \(P_K\) and \(Q_K\) have real coefficients, have no
common zero, and have no zero at the origin.  Assume moreover that all
zeros of \(P_K\) and \(Q_K\) are positive and simple, and that
\[
        \deg P_K=\deg Q_K=:M_K .
\]

If \(M_K\geq1\), enumerate the zeros of \(P_K\) and \(Q_K\) as
\[
        0<z_1<\cdots<z_{M_K},
        \qquad
        0<\xi_1<\cdots<\xi_{M_K},
\]
respectively.  We assume that there exists a unique index
\[
        r_0=r_0(\chi,K)\in\{0,1,\ldots,M_K\}
\]
such that one of the following three signed orders holds.

\begin{enumerate}[label=\textup{(\roman*)}]

\item If \(r_0=0\), then
\[
        0
        <
        z_1<\xi_1<z_2<\xi_2<\cdots
        <z_{M_K}<\xi_{M_K}.
\]

\item If \(1\leq r_0\leq M_K-1\), then
\[
\begin{aligned}
0
&<
\xi_1<z_1<\xi_2<z_2<\cdots
<\xi_{r_0}<z_{r_0} \\
&<
z_{r_0+1}<\xi_{r_0+1}
<z_{r_0+2}<\xi_{r_0+2}
<\cdots
<z_{M_K}<\xi_{M_K}.
\end{aligned}
\]

\item If \(r_0=M_K\), then
\[
        0
        <
        \xi_1<z_1<\xi_2<z_2<\cdots
        <\xi_{M_K}<z_{M_K}.
\]

\end{enumerate}

For \(s>0\), set
\[
        F_{K,s}(w)
        :=
        P_K(w)-s^nQ_K(w).
\]
If
\[
        1\leq r_0\leq M_K-1,
\]
then \(F_{K,s}\) has at least \(M_K-2\) distinct zeros in
\((0,\infty)\).  If
\[
        r_0\in\{0,M_K\}
        \qquad\text{and}\qquad
        M_K\geq1,
\]
then \(F_{K,s}\) has at least \(M_K-1\) distinct zeros in
\((0,\infty)\), and consequently has no nonreal zeros.

When \(M_K=0\), set \(r_0=0\) and regard the ordering and root-count
conditions above as void.

In particular, whenever \(M_K\geq2\), the polynomial \(F_{K,s}\)
has at least \(M_K-2\) distinct positive zeros and therefore has at
most one nonreal conjugate pair.
\end{assumption}

We also record a convenient explicit sufficient condition for the zero--pole ordering component of Assumption~\ref{ap62}.  The next two assumptions give an explicit all-$L$ parameter regime in which the basic zero and pole sets, including their scaled free-boundary copies, remain simple and strictly ordered. 

In Assumptions~\ref{ap64}--\ref{ap65}, the constants
\(\tau_1,\ldots,\tau_n\) are the residue weights from
Assumption~\ref{ap5}\textup{(2)}.  Thus, for \(k=i_{\equiv_n}\),
\[
x_i^{(\epsilon)}=
\begin{cases}
e^{-\epsilon(i-i_{\equiv_n})}\tau_k,& b_i^{(\epsilon)}=+,\\
e^{\epsilon(i-i_{\equiv_n})}\tau_k^{-1},& b_i^{(\epsilon)}=-.
\end{cases}
\]

\begin{assumption}\label{ap64}
Assume that $a_i^{(\epsilon)}=L$ for all $i\in[l^{(\epsilon)}..r^{(\epsilon)}]$. Let
$i,j\in\ZZ$ satisfy
\[
\epsilon i\in(V_{p_1-1},V_{p_1}),
\qquad
\epsilon j\in(V_{p_2-1},V_{p_2}),
\qquad
(i)_{\equiv_n}=i_*,
\qquad
(j)_{\equiv_n}=j_*.
\]
Assume the following.
\begin{enumerate}
    \item If $b_i^{(\epsilon)}=b_j^{(\epsilon)}$, $i_*\ne j_*$, and $p_1=p_2$, then
    $\tau_{i_*}\ne\tau_{j_*}$.
    \item If $b_i^{(\epsilon)}=-$, $b_j^{(\epsilon)}=+$, and $p_1\ge p_2$, then
    \[
    \tau_{i_*}^{-1}\tau_{j_*}<e^{V_{p_2-1}-V_{p_1}}.
    \]
    \item If $b_i^{(\epsilon)}=b_j^{(\epsilon)}$ and $\tau_{i_*}>\tau_{j_*}$, then
    \[
    \tau_{i_*}^{-1}\tau_{j_*}<e^{V_{p_2-1}-V_{p_1}}.
    \]
\end{enumerate}
\end{assumption}

Recall from Lemma~\ref{aps5} that
\(\mathbf 1_{\mathcal E_{j,p,\theta,L}}\), \(\theta\in\{0,1\}\), denotes the
eventual nonemptiness indicator of the family
\(I_{j,p,\theta,L}^{(\epsilon)}\).  Thus it is a deterministic \(0\)-\(1\)
constant depending only on the residue \(j\) and the macroscopic block \(p\).

\begin{assumption}\label{ap65}
Assume that $a_i^{(\epsilon)}=L$ for all $i\in[l^{(\epsilon)}..r^{(\epsilon)}]$, and let
$u,v\in(0,1)$. Suppose that
\begin{enumerate}
    \item for any $(j_1,p_1)$ and $(j_2,p_2)$ with
    $\mathbf 1_{\mathcal E_{j_1,p_1,1,L}}=
     \mathbf 1_{\mathcal E_{j_2,p_2,1,L}}=1$,
    \[
    \max\{u,v\}<e^{V_{p_2-1}-V_{p_1}}\tau_{j_2}^{-1}\tau_{j_1};
    \]
    \item for any $(j_1,p_1)$ and $(j_2,p_2)$ with
    $\mathbf 1_{\mathcal E_{j_1,p_1,0,L}}=
     \mathbf 1_{\mathcal E_{j_2,p_2,0,L}}=1$,
    \[
    \max\{u,v\}<e^{V_{p_2-1}-V_{p_1}}\tau_{j_2}^{-1}\tau_{j_1}.
    \]
\end{enumerate}
\end{assumption}

\begin{lemma}[Finite signed root count from the separated \(L\)-lists]
\label{lem:separation-implies-interlacing}

Assume Assumptions~\ref{ap64}--\ref{ap65}.  Fix
\[
        \chi\in
        (V_0,V_m)\setminus\{V_1,\ldots,V_{m-1}\}
\]
and \(K\in\mathbb Z_{>0}\).
Then Assumption~\ref{ap62} holds.

Moreover, if
\[
        0<\xi_1<\cdots<\xi_{M_K}
\]
are the positive poles of the reduced rational function
\(S_{\chi,K}\), and \(r_0=r_0(\chi,K)\) is the index in
Assumption~\ref{ap62}, then
\[
        \mathcal D_{\chi,K}
        =
        \{\xi_1,\ldots,\xi_{r_0}\},
\]
with the convention that the right-hand side is empty when
\(r_0=0\).
\end{lemma}

\begin{proof}
All zero and pole sets below are understood as multisets.  We first work with
the unreduced finite product
\[
        \mathcal G_\chi(w)\prod_{k=1}^{K}\mathcal F_{u,v,k}(w),
\]
and then pass to the reduced form \(P_K/Q_K\).  A pole below means a zero of
the denominator.

By the explicit \(L\)-type product formulae \eqref{dgs1}--\eqref{dgs2},
together with \eqref{dgc} and the reflected product formula \eqref{dfuvk},
every positive numerator zero or pole of the finite truncation is either an
unreflected \(L\)-type zero--pole location or one of its positive
\(u,v\)-scaled reflected copies.  Since all columns are of type \(L\), the
\(R\)-type factors \eqref{dgs3}--\eqref{dgs4} are absent.

We divide the positive zero--pole families of the \(K\)-truncated
product into two blocks.  The \emph{inward}, or
\emph{pre-defect}, block consists of the unreflected
\(\mathcal R_{\chi,1,\bullet}\)-families and the reflected
\(\mathcal R_5\)-families.  In each such family, a pole precedes
the corresponding numerator zero:
\[
        \text{pole}<\text{zero}.
\]
The \emph{outward}, or \emph{post-defect}, block consists of the
unreflected \(\mathcal R_{\chi,2,\bullet}\)-families and the
reflected \(\mathcal R_6\)-families.  In each such family, a
numerator zero precedes the corresponding pole:
\[
        \text{zero}<\text{pole}.
\]
The \(\mathcal R_5\)-locations are obtained from the corresponding
unreflected locations by multiplication by nonnegative powers of
\(u\) and \(v\), and therefore move toward \(0\).  The
\(\mathcal R_6\)-locations involve reciprocal powers of \(u\) or
\(v\), and therefore move toward \(+\infty\). 

Let
\[
        \mathcal Z_K^{\rm raw}
\]
be the multiset of positive numerator zeros in the unreduced product, and let
\[
        \mathcal P_K^{\rm raw}
\]
be the corresponding multiset of positive poles.  Every active one-body
factor and every reflected copy contributes one point to
\(\mathcal Z_K^{\rm raw}\) and one point to \(\mathcal P_K^{\rm raw}\).  Hence
\[
        |\mathcal Z_K^{\rm raw}|=|\mathcal P_K^{\rm raw}|.
\]

We next prove simplicity of these raw lists.  Assumption~\ref{ap64} gives
strict separation among the unreflected \(L\)-type locations.  Assumption
\ref{ap65} gives strict separation after all \(u,v\)-scalings appearing in
\eqref{dfuvk}.  Therefore no two distinct points of
\(\mathcal Z_K^{\rm raw}\) coincide, and no two distinct points of
\(\mathcal P_K^{\rm raw}\) coincide.  The only possible coincidences are
common numerator--denominator points.  Passing to the reduced form
\[
        S_{\chi,K}(w)=\frac{P_K(w)}{Q_K(w)}
\]
removes precisely these common points, with the same multiplicity from the
two raw lists.  Thus the positive zeros of \(P_K\) and \(Q_K\) are simple and
distinct.  Moreover, cancellation removes the same number of points from the
two raw lists, so the two cardinalities remain equal after reduction.  Hence
\[
        \deg P_K=\deg Q_K=:M_K,
        \qquad
        d_K=M_K .
\]

We now identify the signed order.  For \(i=5,6\) and
\(k_1,k_2\ge1\), choose
\[
        y_{i,k_1,k_2}
        \in
        \mathcal R_{i,1,k_1,k_2}\setminus
        \mathcal R_{i,2,k_1,k_2},
\]
and for \(j=1,2\), choose
\begin{equation}\label{dyj-new}
        y_j\in
        \mathcal R_{\chi,j,1}\setminus
        \mathcal R_{\chi,j,2}.
\end{equation}
These are pole-side locations.  Assumptions~\ref{ap64} and~\ref{ap65} give
\begin{equation}\label{eq:y-basic-order}
        0<
        y_{5,k_1,k_2}
        <
        y_1<y_2
        <
        y_{6,\ell_1,\ell_2}
\end{equation}
for all reflection indices which occur in the \(K\)-truncation.  More
precisely, \(y_1<y_2\) is the unreflected order from
Assumption~\ref{ap64}(2), while the two inequalities involving the reflected
families \(5\) and \(6\) are exactly the smallness inequalities in
Assumption~\ref{ap65}.  Moreover, for \(k\ge1\), Assumption~\ref{ap65} gives
the monotone reflected chains
\begin{equation}\label{eq:y-reflected-chains}
        y_{5,k,k+1}<y_{5,k,k}<y_{5,k-1,k},
        \qquad
        y_{6,k,k+1}>y_{6,k,k}>y_{6,k-1,k}.
\end{equation}
Thus the \(\mathcal R_5\)-strings are ordered in the inward block, and the
\(\mathcal R_6\)-strings are ordered in the outward block.

Let
\[
        z_j\in
        \mathcal R_{\chi,j,2}\setminus
        \mathcal R_{\chi,j,1},
        \qquad j=1,2,
\]
be the corresponding unreflected numerator-zero locations.  For instance,
a point \(y_1\) and a point \(z_1\) have the form
\[
        y_1=e^{\max\{V_{p_2-1},\chi\}}\tau_{j_*}^{-1},
        \qquad
        z_1=e^{V_{p_1}}\tau_{i_*}^{-1}
\]
for suitable \(p_1,p_2\in[m]\) and \(i_*,j_*\in[n]\).  Assumption
\ref{ap64}(3) says that whenever
\[
        \tau_{i_*}^{-1}<\tau_{j_*}^{-1},
\]
one has
\[
        z_1<y_1.
\]
Together with Assumption~\ref{ap64}(1)--(2), this gives the signed
interlacing of the unreflected pole-side and zero-side lists inside each
fixed residue class.  The same comparison applies to the second unreflected
family, using the quotient in \eqref{dgs2}.

We now write this order in a form usable for the global list.  Fix a residue
\(j_*\in[n]\).  For each active signed family \(\mathfrak f\) in the inward
block, let
\[
        T^{\rm pole}_{\mathfrak f,j_*}
\]
be the set of remaining pole-side points with residue \(j_*\), and let
\[
        T^{\rm zero}_{\mathfrak f,j_*}
\]
be the corresponding numerator-zero-side set.  After cancellation, these two
sets have the same cardinality: before cancellation they have the same
cardinality because every factor contributes one zero and one pole, and
cancellation removes common zero--pole points from both lists.

Write
\[
        \alpha^{(\mathfrak f,j_*)}_1<\cdots<
        \alpha^{(\mathfrak f,j_*)}_{q(\mathfrak f,j_*)}
\]
for the pole-side list and
\[
        \beta^{(\mathfrak f,j_*)}_1<\cdots<
        \beta^{(\mathfrak f,j_*)}_{q(\mathfrak f,j_*)}
\]
for the numerator-zero-side list.  The inequalities in
Assumption~\ref{ap64}, together with the scaled inequalities in
Assumption~\ref{ap65}, give
\begin{equation}\label{eq:family-signed-order-inward}
        \alpha^{(\mathfrak f,j_*)}_1
        <
        \beta^{(\mathfrak f,j_*)}_1
        <
        \alpha^{(\mathfrak f,j_*)}_2
        <
        \beta^{(\mathfrak f,j_*)}_2
        <
        \cdots
        <
        \alpha^{(\mathfrak f,j_*)}_{q(\mathfrak f,j_*)}
        <
        \beta^{(\mathfrak f,j_*)}_{q(\mathfrak f,j_*)}.
\end{equation}
Thus each active family in the inward block starts with a pole and ends with
a numerator zero.

For each active signed family \(\mathfrak g\) in the outward block, define
the pole-side and numerator-zero-side lists analogously.  The quotient
orientation in the outward reflected factors in \eqref{dfuvk} gives the
opposite endpoint convention:
\begin{equation}\label{eq:family-signed-order-outward}
        \beta^{(\mathfrak g,j_*)}_1
        <
        \alpha^{(\mathfrak g,j_*)}_1
        <
        \beta^{(\mathfrak g,j_*)}_2
        <
        \alpha^{(\mathfrak g,j_*)}_2
        <
        \cdots
        <
        \beta^{(\mathfrak g,j_*)}_{q(\mathfrak g,j_*)}
        <
        \alpha^{(\mathfrak g,j_*)}_{q(\mathfrak g,j_*)}.
\end{equation}
Thus each active family in the outward block starts with a numerator zero
and ends with a pole.

The family intervals themselves are disjoint and ordered from left to right.
Indeed, \eqref{eq:y-basic-order} separates the inward reflected
\(\mathcal R_5\)-strings, the unreflected strings, and the outward reflected
\(\mathcal R_6\)-strings.  The chain relations
\eqref{eq:y-reflected-chains} order the copies inside the two reflected
blocks, while Assumption~\ref{ap64}(3) orders distinct residue classes inside
each unreflected block.  Hence the family-wise signed orders glue without
crossing.

Consequently, after cancellation, the inward and outward families
remain internally interlaced and the inward families precede the
outward families in the global positive order.

Let
\[
        N_K^{\mathrm{in}}
\]
be the number of poles in the inward block, and set
\[
        N_K^{\mathrm{out}}
        :=
        M_K-N_K^{\mathrm{in}}.
\]
Define
\[
        r_0:=N_K^{\mathrm{in}}
        \in\{0,\ldots,M_K\}.
\]

If \(r_0=0\), there is no inward family, and the global order is
\[
        0<
        z_1<\xi_1<z_2<\xi_2<\cdots<z_{M_K}<\xi_{M_K}.
\]
If \(r_0=M_K\), there is no outward family, and the global order is
\[
        0<
        \xi_1<z_1<\xi_2<z_2<\cdots<\xi_{M_K}<z_{M_K}.
\]
Finally, if \(1\le r_0\le M_K-1\), the unique junction between the
two blocks gives
\[
\begin{aligned}
0
&<
\xi_1<z_1<\xi_2<z_2<\cdots
<\xi_{r_0}<z_{r_0}\\
&<
z_{r_0+1}<\xi_{r_0+1}
<z_{r_0+2}<\xi_{r_0+2}
<\cdots<z_{M_K}<\xi_{M_K}.
\end{aligned}
\]
Thus the three ordering alternatives in
Assumption~\ref{ap62} hold.

In the all-\(L\) specialization, the definition \eqref{ddc} shows
that \(\mathcal D_{\chi,K}\) consists of the poles arising from the
unreflected \(\mathcal R_{\chi,1}\)-families and the reflected
\(\mathcal R_5\)-families.  These are exactly the inward poles.
Since the inward block occupies the first \(r_0\) pole--zero pairs
in the global order,
\[
        \mathcal D_{\chi,K}
        =
        \{\xi_1,\ldots,\xi_{r_0}\},
\]
with the empty-set convention when \(r_0=0\).

It remains to verify the root-count statements.  Set
\[
        F_{K,s}(w):=P_K(w)-s^nQ_K(w),
        \qquad s>0.
\]
For every
\[
        i\in\{1,\ldots,M_K-1\},
\]
except for \(i=r_0\) when
\(1\le r_0\le M_K-1\), the interval
\[
        (z_i,z_{i+1})
\]
contains exactly one simple zero of \(Q_K\).  Hence
\[
        Q_K(z_i)Q_K(z_{i+1})<0.
\]
Since
\[
        F_{K,s}(z_i)=-s^nQ_K(z_i),
        \qquad
        F_{K,s}(z_{i+1})=-s^nQ_K(z_{i+1}),
\]
the intermediate value theorem gives a zero of \(F_{K,s}\) in each
such interval.

If \(1\le r_0\le M_K-1\), this gives
\[
        M_K-2
\]
distinct positive roots.  If
\(r_0\in\{0,M_K\}\), it gives
\[
        M_K-1
\]
distinct positive roots.  In the latter case at most one root of
\(F_{K,s}\) remains unaccounted for.  Since \(F_{K,s}\) has real
coefficients, nonreal roots occur in conjugate pairs, and therefore
no nonreal root is possible.

This proves all parts of Assumption~\ref{ap62}.
\end{proof}

\begin{lemma}[Local radial no-escape for truncated spectral roots]
\label{lem:local-radial-noescape}
Assume Assumptions~\ref{ap64}--\ref{ap65}. Fix
\[
\chi_0\in (V_0,V_m)\setminus\{V_1,\ldots,V_{m-1}\},
\qquad
x_0>0.
\]
For \(K\geq1\), write
\[
C_{\chi,K}:=\lim_{w\to\infty}S_{\chi,K}(w),
\]
where the limit is taken in the reduced rational expression. Suppose that
there exists \(K_*\geq1\) such that
\begin{equation}
S_{\chi_0,K}(0)\neq x_0,
\qquad
C_{\chi_0,K}\neq x_0,
\qquad
K\geq K_* .
\label{eq:radial-endpoint-nonexceptional}
\end{equation}
Then there exist open intervals
\[
I\ni\chi_0,
\qquad
X\ni x_0,
\qquad
X\Subset(0,\infty),
\]
constants
\[
0<r<R<\infty,
\]
and \(K_0\geq K_*\), such that for every
\[
\chi\in I,\qquad x\in X,\qquad K\geq K_0,
\]
every solution \(w\in\mathbb C_+\) of
\[
S_{\chi,K}(w)=x
\]
satisfies
\begin{equation}
r\leq |w|\leq R.
\label{eq:radial-uniform-bound}
\end{equation}
Here
\[
\mathbb C_+:=\{w\in\mathbb C:\operatorname{Im}w>0\}.
\]
\end{lemma}

\begin{proof}
Set
\[
\mathfrak q:=(uv)^2\in(0,1).
\]
Choose compact intervals \(I_0\) and \(X_0\) such that, for some
\(p\in[m]\),
\[
\chi_0\in\operatorname{int}(I_0)\Subset(V_{p-1},V_p),
\qquad
x_0\in\operatorname{int}(X_0)\Subset(0,\infty).
\]
We shall shrink \(I_0\) and \(X_0\) finitely many times, and at the end set
\[
I:=\operatorname{int}(I_0),
\qquad
X:=\operatorname{int}(X_0).
\]

By Lemma~\ref{lem:separation-implies-interlacing}, Assumptions~\ref{ap64}--\ref{ap65}
give the reduced one-defect zero--pole order for every finite truncation.
Moreover, the proof of Lemma~\ref{lem:separation-implies-interlacing}
gives the corresponding decomposition into inward and outward reflected
zero--pole families. Since the inequalities in Assumptions~\ref{ap64}--\ref{ap65}
are strict, after shrinking \(I_0\), the active zero--pole labels, the
cancellation pattern in the reduced rational function \(S_{\chi,K}\), and the
relative order of the reduced zeros and poles are independent of
\(\chi\in I_0\). The unreflected zero--pole locations depend continuously on
\(\chi\), while the reflected zero--pole locations are obtained from finitely
many base locations by the fixed dilations
\[
\mathfrak q^k,\qquad v^2\mathfrak q^{k-1},
\qquad
\mathfrak q^{-k},\qquad u^{-2}\mathfrak q^{-(k-1)}.
\]
All constants below may therefore be chosen uniformly for \(\chi\in I_0\).

\medskip
\noindent
\emph{Endpoint separation.}
Evaluation at \(0\) and at infinity is unchanged under positive dilation of
the argument. Hence there exist constants \(a_0,a_\infty>0\), independent of
\(K\), and positive continuous functions \(c_0,c_\infty\) on \(I_0\), such that
\begin{equation}
S_{\chi,K}(0)=c_0(\chi)a_0^K,
\qquad
C_{\chi,K}=c_\infty(\chi)a_\infty^K.
\label{eq:radial-endpoint-geometric-form}
\end{equation}
Condition~\eqref{eq:radial-endpoint-nonexceptional}, continuity, and the
elementary alternatives for a positive geometric sequence imply, after
shrinking \(I_0\) and \(X_0\), that there exist \(\delta>0\) and
\(K_0\geq K_*\) such that
\begin{equation}
\left|
\frac{x}{S_{\chi,K}(0)}-1
\right|
\geq\delta,
\qquad
\left|
\frac{x}{C_{\chi,K}}-1
\right|
\geq\delta
\label{eq:radial-endpoint-uniform-separation}
\end{equation}
for every
\[
\chi\in I_0,\qquad x\in X_0,\qquad K\geq K_0.
\]
Indeed, if \(a_0\neq1\), then \(c_0(\chi)a_0^K\) converges uniformly on
\(I_0\) either to \(0\) or to infinity; if \(a_0=1\), the first inequality
follows from \(c_0(\chi_0)\neq x_0\) and continuity. The argument at
infinity is identical.

\medskip
\noindent
\emph{Interval families and endpoint scales.}
For fixed \(\chi\in I_0\) and \(K\geq K_0\), cancel all common numerator and
denominator factors of \(S_{\chi,K}\). The inward families are those reduced
zero--pole intervals on which a pole precedes a numerator zero, and the
outward families are those on which a numerator zero precedes a pole.
Thus we may write
\[
\mathcal I_{\chi,K}
:=
\bigl\{[p_{j,K},z_{j,K}]\bigr\}_{j},
\qquad
\mathcal O_{\chi,K}
:=
\bigl\{[z_{j,K},p_{j,K}]\bigr\}_{j},
\]
and the reduced factorization has the form
\begin{equation}
S_{\chi,K}(w)
=
C_{\chi,K}
\prod_{[a,b]\in\mathcal I_{\chi,K}}
\frac{w-b}{w-a}
\prod_{[a,b]\in\mathcal O_{\chi,K}}
\frac{w-a}{w-b}.
\label{eq:radial-interval-factorization}
\end{equation}
If both interval families are empty, then
\(S_{\chi,K}\equiv C_{\chi,K}\), and
\eqref{eq:radial-endpoint-uniform-separation} excludes a solution. We
therefore consider the nonconstant case.

The explicit reflected zero--pole locations imply that, after shrinking
\(I_0\) if necessary, there exist finite interval families
\[
\mathcal I_{\chi}^{(0)},
\qquad
\mathcal O_{\chi}^{(0)},
\]
depending continuously on \(\chi\in I_0\), and finite interval families
\[
\mathcal I^{(1)},\quad
\mathcal I^{(2)},
\qquad
\mathcal O^{(1)},\quad
\mathcal O^{(2)},
\]
independent of \(\chi\), such that
\begin{equation}
\mathcal I_{\chi,K}
=
\mathcal I_{\chi}^{(0)}
\sqcup
\bigcup_{k=1}^{K}
\left(
\mathfrak q^k\mathcal I^{(1)}
\sqcup
v^2\mathfrak q^{k-1}\mathcal I^{(2)}
\right),
\label{eq:radial-inward-decomposition}
\end{equation}
and
\begin{equation}
\mathcal O_{\chi,K}
=
\mathcal O_{\chi}^{(0)}
\sqcup
\bigcup_{k=1}^{K}
\left(
\mathfrak q^{-k}\mathcal O^{(1)}
\sqcup
u^{-2}\mathfrak q^{-(k-1)}\mathcal O^{(2)}
\right).
\label{eq:radial-outward-decomposition}
\end{equation}
Here \(a\mathcal A:=\{aL:L\in\mathcal A\}\). Each of the six families above
is finite and consists of nondegenerate real intervals; empty families simply
make no contribution.

Define
\[
\widehat{\mathcal I}^{\rm ref}
:=
\mathcal I^{(1)}
\cup
\frac{v^2}{\mathfrak q}\mathcal I^{(2)},
\qquad
\widehat{\mathcal O}^{\rm ref}
:=
\mathcal O^{(1)}
\cup
u^{-2}\mathfrak q\,\mathcal O^{(2)}.
\]
Then the reflected inward intervals are precisely
\[
\bigcup_{k=1}^{K}\mathfrak q^k\widehat{\mathcal I}^{\rm ref},
\]
and the reflected outward intervals are precisely
\[
\bigcup_{k=1}^{K}\mathfrak q^{-k}\widehat{\mathcal O}^{\rm ref}.
\]

Let \(\mathcal E_{\chi,K}\) be the set of reduced zeros and poles of
\(S_{\chi,K}\), and put
\[
\underline\rho_{\chi,K}
:=
\min_{\lambda\in\mathcal E_{\chi,K}}|\lambda|,
\qquad
\overline\rho_{\chi,K}
:=
\max_{\lambda\in\mathcal E_{\chi,K}}|\lambda|.
\]

If \(\widehat{\mathcal I}^{\rm ref}\neq\varnothing\), then, after increasing
\(K_0\),
\[
\underline\rho_{\chi,K}
=
\mathfrak q^K
\min_{\lambda\in\partial\widehat{\mathcal I}^{\rm ref}}|\lambda|,
\qquad
\chi\in I_0,\quad K\geq K_0.
\]
If \(\widehat{\mathcal I}^{\rm ref}=\varnothing\), then there exists
\(\underline\rho_0>0\) such that
\[
\underline\rho_{\chi,K}\geq \underline\rho_0,
\qquad
\chi\in I_0,\quad K\geq K_0.
\]
Similarly, if \(\widehat{\mathcal O}^{\rm ref}\neq\varnothing\), then, after
increasing \(K_0\),
\[
\overline\rho_{\chi,K}
=
\mathfrak q^{-K}
\max_{\lambda\in\partial\widehat{\mathcal O}^{\rm ref}}|\lambda|,
\qquad
\chi\in I_0,\quad K\geq K_0.
\]
If \(\widehat{\mathcal O}^{\rm ref}=\varnothing\), then there exists
\(\overline\rho_0<\infty\) such that
\[
\overline\rho_{\chi,K}\leq \overline\rho_0,
\qquad
\chi\in I_0,\quad K\geq K_0.
\]

The decompositions
\eqref{eq:radial-inward-decomposition}--\eqref{eq:radial-outward-decomposition}
give constants \(A,B<\infty\), independent of \(\chi\in I_0\) and \(K\), such
that
\begin{equation}
\sum_{L\in\mathcal I_{\chi,K}}|L|\leq A,
\qquad
\sum_{L\in\mathcal O_{\chi,K}}
\int_L\frac{|dt|}{t^2}\leq A,
\label{eq:radial-geometric-tail-bounds}
\end{equation}
and
\begin{equation}
\sum_{\lambda\in\mathcal E_{\chi,K}}
\frac1{|\lambda|}
\leq
\frac{B}{\underline\rho_{\chi,K}},
\qquad
\sum_{\lambda\in\mathcal E_{\chi,K}}
|\lambda|
\leq
B\overline\rho_{\chi,K}.
\label{eq:radial-endpoint-sums}
\end{equation}
Indeed, under a dilation \(L\mapsto aL\),
\[
|aL|=a|L|,
\qquad
\int_{aL}\frac{|dt|}{t^2}
=
a^{-1}\int_L\frac{|dt|}{t^2},
\]
and the reflected tails are geometric.

For all sufficiently small \(\varepsilon>0\),
\eqref{eq:radial-endpoint-sums} gives
\begin{align}
\sup_{\substack{\chi\in I_0\\
|w|\leq\varepsilon\underline\rho_{\chi,K}}}
\left|
\log
\frac{S_{\chi,K}(w)}{S_{\chi,K}(0)}
\right|
&\leq C\varepsilon,
\label{eq:radial-origin-approximation}\\
\sup_{\substack{\chi\in I_0\\
|w|\geq\varepsilon^{-1}\overline\rho_{\chi,K}}}
\left|
\log
\frac{S_{\chi,K}(w)}{C_{\chi,K}}
\right|
&\leq C\varepsilon.
\label{eq:radial-infinity-approximation}
\end{align}
Near \(0\), this follows by summing
\[
\log
\left[
\frac{(w-z)/(w-p)}{z/p}
\right]
=
\log(1-w/z)-\log(1-w/p),
\]
and near infinity it follows by summing
\[
\log\frac{w-z}{w-p}
=
\log(1-z/w)-\log(1-p/w).
\]

Choose \(\varepsilon>0\) so small that
\[
e^{C\varepsilon}-1<\delta.
\]
Then
\eqref{eq:radial-endpoint-uniform-separation}--\eqref{eq:radial-infinity-approximation}
imply
\begin{equation}
S_{\chi,K}(w)\neq x
\quad\text{if}\quad
|w|\leq
\varepsilon\underline\rho_{\chi,K}
\quad\text{or}\quad
|w|\geq
\varepsilon^{-1}\overline\rho_{\chi,K},
\label{eq:radial-extreme-regions}
\end{equation}
for every
\[
\chi\in I_0,\qquad x\in X_0,\qquad K\geq K_0.
\]

\medskip
\noindent
\emph{Phase balance.}
Let \(w\in\mathbb C_+\), and write
\[
\rho:=|w|,
\qquad
y:=\operatorname{Im}w>0.
\]
For a real interval \(L=[a,b]\), \(a<b\), set
\[
\omega_L(w)
:=
\int_L\frac{y}{|w-t|^2}\,dt.
\]
Equivalently,
\[
\omega_L(w)
=
\arg\frac{w-b}{w-a},
\]
with \(\arg\in(-\pi,\pi)\). Define
\[
\Phi_{\chi,K}(w)
:=
\sum_{L\in\mathcal I_{\chi,K}}\omega_L(w),
\qquad
\Psi_{\chi,K}(w)
:=
\sum_{L\in\mathcal O_{\chi,K}}\omega_L(w).
\]
Since the intervals in each family are pairwise disjoint,
\[
0\leq \Phi_{\chi,K}(w)<\pi,
\qquad
0\leq \Psi_{\chi,K}(w)<\pi.
\]
The factorization \eqref{eq:radial-interval-factorization} gives
\[
\arg S_{\chi,K}(w)
\equiv
\Phi_{\chi,K}(w)-\Psi_{\chi,K}(w)
\pmod{2\pi}.
\]
Therefore every upper-half-plane solution of \(S_{\chi,K}(w)=x>0\)
satisfies the exact identity
\begin{equation}
\Phi_{\chi,K}(w)=\Psi_{\chi,K}(w).
\label{eq:radial-phase-balance}
\end{equation}

\medskip
\noindent
\emph{Lower radial bound.}
The outward intervals are uniformly separated from \(0\). Hence, for
\(\rho\) sufficiently small, \eqref{eq:radial-geometric-tail-bounds} gives
\begin{equation}
\Psi_{\chi,K}(w)
\leq
4y
\sum_{L\in\mathcal O_{\chi,K}}
\int_L\frac{|dt|}{t^2}
\leq A_0y,
\label{eq:radial-small-outward-bound}
\end{equation}
where \(A_0\) is independent of \(\chi\) and \(K\).

Suppose first that
\[
\widehat{\mathcal I}^{\rm ref}\neq\varnothing.
\]
Since the reflected inward intervals are the dilates
\[
\mathfrak q^kL,
\qquad
L\in\widehat{\mathcal I}^{\rm ref},
\qquad
1\leq k\leq K,
\]
there exist constants \(\rho_0>0\) and \(c_1,c_2,c_3>0\), independent of
\(\chi\in I_0\) and \(K\), such that whenever
\[
\varepsilon\underline\rho_{\chi,K}\leq \rho\leq \rho_0,
\]
one can find \(L\in\mathcal I_{\chi,K}\) satisfying
\begin{equation}
c_1\rho
\leq
\min_{t\in L}|t|
\leq
\max_{t\in L}|t|
\leq
c_2\rho,
\qquad
|L|\geq c_3\rho.
\label{eq:radial-comparable-inward-interval}
\end{equation}
Indeed, choose the reflected level whose scale is immediately below
\(\rho\); finiteness, nondegeneracy, and separation from \(0\) of
\(\widehat{\mathcal I}^{\rm ref}\) give the constants uniformly.

It follows that
\begin{equation}
\Phi_{\chi,K}(w)
\geq
\int_L\frac{y}{|w-t|^2}\,dt
\geq
\frac{c_3}{(1+c_2)^2}\frac{y}{\rho}
=:b_0\frac{y}{\rho}.
\label{eq:radial-small-inward-lower-bound}
\end{equation}
For a root, combining
\eqref{eq:radial-phase-balance},
\eqref{eq:radial-small-outward-bound}, and
\eqref{eq:radial-small-inward-lower-bound} yields
\[
b_0\frac{y}{\rho}\leq A_0y.
\]
Since \(y>0\),
\[
\rho\geq \frac{b_0}{A_0}.
\]
Together with \eqref{eq:radial-extreme-regions}, this excludes a fixed
neighborhood of \(0\).

If instead
\[
\widehat{\mathcal I}^{\rm ref}=\varnothing,
\]
then the lower bound on \(\underline\rho_{\chi,K}\) and
\eqref{eq:radial-extreme-regions} directly exclude a fixed neighborhood of
\(0\). Thus there exists \(r_*>0\) such that every upper-half-plane solution
satisfies
\begin{equation}
|w|\geq r_*.
\label{eq:radial-root-lower-bound}
\end{equation}

\medskip
\noindent
\emph{Upper radial bound.}
The inward intervals are uniformly bounded, and their total length is
uniformly bounded by \eqref{eq:radial-geometric-tail-bounds}. Hence, for
\(\rho\) sufficiently large,
\begin{equation}
\Phi_{\chi,K}(w)
\leq
A_\infty\frac{y}{\rho^2},
\label{eq:radial-large-inward-bound}
\end{equation}
where \(A_\infty\) is independent of \(\chi\) and \(K\).

Suppose next that
\[
\widehat{\mathcal O}^{\rm ref}\neq\varnothing.
\]
Since the reflected outward intervals are the dilates
\[
\mathfrak q^{-k}L,
\qquad
L\in\widehat{\mathcal O}^{\rm ref},
\qquad
1\leq k\leq K,
\]
there exist constants \(R_0>0\) and \(c_1',c_2',c_3'>0\), independent of
\(\chi\in I_0\) and \(K\), such that whenever
\[
R_0\leq \rho\leq \varepsilon^{-1}\overline\rho_{\chi,K},
\]
one can find \(L\in\mathcal O_{\chi,K}\) satisfying
\[
c_1'\rho
\leq
\min_{t\in L}|t|
\leq
\max_{t\in L}|t|
\leq
c_2'\rho,
\qquad
|L|\geq c_3'\rho.
\]
Choosing the reflected level whose scale is immediately above \(\rho\) gives
the constants uniformly. Consequently,
\begin{equation}
\Psi_{\chi,K}(w)
\geq
\frac{c_3'}{(1+c_2')^2}\frac{y}{\rho}
=:b_\infty\frac{y}{\rho}.
\label{eq:radial-large-outward-lower-bound}
\end{equation}
For a root,
\eqref{eq:radial-phase-balance},
\eqref{eq:radial-large-inward-bound}, and
\eqref{eq:radial-large-outward-lower-bound} imply
\[
b_\infty\frac{y}{\rho}
\leq
A_\infty\frac{y}{\rho^2}.
\]
Cancelling \(y>0\), we get
\[
\rho\leq \frac{A_\infty}{b_\infty}.
\]
Together with \eqref{eq:radial-extreme-regions}, this excludes the
complement of a fixed disk.

If instead
\[
\widehat{\mathcal O}^{\rm ref}=\varnothing,
\]
then the upper bound on \(\overline\rho_{\chi,K}\) and
\eqref{eq:radial-extreme-regions} directly exclude the complement of a fixed
disk. Thus there exists \(R_*<\infty\) such that every upper-half-plane
solution satisfies
\begin{equation}
|w|\leq R_*.
\label{eq:radial-root-upper-bound}
\end{equation}

Taking
\[
r:=r_*,
\qquad
R:=R_*,
\qquad
I:=\operatorname{int}(I_0),
\qquad
X:=\operatorname{int}(X_0),
\]
proves \eqref{eq:radial-uniform-bound}.
\end{proof}

\begin{lemma}[Local pole-free no-escape for truncated and limiting roots]
\label{lem:local-polefree-noescape}
Assume Assumptions~\ref{ap64}--\ref{ap65}. Fix
\[
\chi_0\in (V_0,V_m)\setminus\{V_1,\ldots,V_{m-1}\},
\qquad
x_0>0.
\]
For \(K\geq1\), write
\[
C_{\chi,K}:=\lim_{w\to\infty}S_{\chi,K}(w),
\]
whenever this limit is taken in the reduced rational expression. Suppose that
\(x_0\) is not an endpoint-exceptional level for the finite truncations, namely
that there exists \(K_*\geq1\) such that
\begin{equation}
S_{\chi_0,K}(0)\neq x_0,
\qquad
C_{\chi_0,K}\neq x_0,
\qquad K\geq K_* .
\label{eq:local-noescape-endpoint-nonexceptional}
\end{equation}
Then there exist open intervals
\[
I\ni \chi_0,\qquad X\ni x_0,\qquad X\Subset(0,\infty),
\]
a number \(K_0\geq K_*\), an open set \(U\subset\mathbb C^*=\mathbb{C}\setminus\{0\}\), and a
compact set
\[
\Omega\Subset U
\]
such that the following hold.

\begin{enumerate}
\item For every \(\chi\in I\) and every \(K\geq K_0\), the reduced rational
function \(S_{\chi,K}\) has no pole in \(U\).

\item For every \(\chi\in I\), the infinite product
\[
S_\chi(w)=G_\chi(w)\prod_{k\geq1}F_{u,v,k}(w)
\]
is meromorphic on \(U\), has no pole in \(U\), and
\[
S_{\chi,K}\longrightarrow S_\chi
\]
locally uniformly on \(U\).

\item For every
\[
\chi\in I,\qquad x\in X,\qquad K\geq K_0,
\]
every nonreal solution of
\[
S_{\chi,K}(w)=x
\]
lies in \(\Omega\).

\item For every
\[
\chi\in I,\qquad x\in X,
\]
every nonreal solution of the limiting equation
\[
S_\chi(w)=x
\]
also lies in \(\Omega\).
\end{enumerate}

Consequently, on any compact \(s\)-interval \(J\Subset(0,\infty)\) for which
\(J^n\) is covered by finitely many neighborhoods \(X\) satisfying
\eqref{eq:local-noescape-endpoint-nonexceptional}, the finite truncations and
the infinite product satisfy the pole-free localization/no-escape conclusion
needed in the Hurwitz and Rouché arguments below.
\end{lemma}

\begin{proof}
It is enough to work in the upper half-plane, since the reduced functions
have real coefficients and nonreal roots occur in conjugate pairs.
By Lemma~\ref{lem:local-radial-noescape}, after shrinking \(I\) and \(X\),
and after increasing \(K_0\), there exist constants \(0<r<R<\infty\) such
that every upper-half-plane solution of
\[
S_{\chi,K}(w)=x,\qquad \chi\in I,\quad x\in X,\quad K\geq K_0,
\]
satisfies
\[
r\leq |w|\leq R.
\]
Set
\[
A:=\{w\in\mathbb C:r/2\leq |w|\leq 2R\}.
\]

We next separate the roots from poles in this fixed annulus. By the reflected
zero--pole decomposition used in the proof of Lemma~\ref{lem:separation-implies-interlacing},
all positive poles of \(S_{\chi,K}\) are obtained from finitely many unreflected
pole locations by the dilations
\[
q^k,\qquad v^2q^{k-1},\qquad q^{-k},\qquad u^{-2}q^{-(k-1)},
\qquad q=(uv)^2\in(0,1).
\]
Since \(A\) is a fixed compact annulus, only finitely many reflected levels can
meet \(A\). Increasing \(K_0\), if necessary, we may assume that 
\begin{align*}
\operatorname{Pole}(S_{\chi,K})\cap A
   =
   \operatorname{Pole}(S_{\chi,K_0})\cap A.
\end{align*}

Thus there is a finite collection of real pole branches
\[
\mathscr P=\{\pi_\nu(\chi):\nu\in\mathcal N\},
\qquad \chi\in I,
\]
such that every pole of every reduced \(S_{\chi,K}\) lying in \(A\) belongs to
\(\{\pi_\nu(\chi):\nu\in\mathcal N\}\).After shrinking \(I\), every surviving pole in \(A\) is simple.  Moreover, there is
\(\delta_{\rm sep}>0\) such that, for all \(\chi\in I\) and all \(K\ge K_0\),
any two distinct reduced zero or pole locations of \(S_{\chi,K}\) lying in \(A\)
are at distance at least \(\delta_{\rm sep}\).

We claim that there exist \(\eta_0>0\) and \(M<\infty\) such that
\begin{equation}
|S_{\chi,K}(w)|>M
\label{eq:pole-neighborhood-large}
\end{equation}
whenever
\[
\chi\in I,\qquad K\geq K_0,\qquad
0<\operatorname{dist}\bigl(w,\{\pi_\nu(\chi):\nu\in\mathcal N\}\bigr)<\eta_0,
\qquad w\in A .
\]
Indeed, near a reduced simple pole \(\pi_\nu(\chi)\), the function has the
form
\[
S_{\chi,K}(w)=\frac{H_{\nu,\chi,K}(w)}{w-\pi_\nu(\chi)} ,
\]
where \(H_{\nu,\chi,K}\) is holomorphic and nonvanishing. The factors whose
zeros and poles lie in \(A\) form a finite, uniformly separated collection.
The factors outside \(A\) contribute uniformly bounded and uniformly
nonzero products, because the reflected tails are geometric. Hence there are
constants \(0<c<C<\infty\), independent of \(\chi\), \(K\), and \(\nu\), such
that
\[
c\leq |H_{\nu,\chi,K}(w)|\leq C
\]
on a fixed neighborhood of the pole branch. Taking \(\eta_0\) small gives
\eqref{eq:pole-neighborhood-large}. Choosing
\[
M>\sup X
\]
then implies that
\[
S_{\chi,K}(w)\neq x,\qquad x\in X,
\]
throughout these pole neighborhoods.

Shrink \(I\) once more so that every pole branch has image diameter less
than \(\eta_0/4\) on \(I\). Define
\[
\Gamma_\nu:=\{\pi_\nu(\chi):\chi\in I\},
\qquad
\Gamma:=\bigcup_{\nu\in\mathcal N}\Gamma_\nu,
\]
and put
\[
\eta:=\eta_0/4.
\]
Then a point within distance \(\eta\) of \(\Gamma\) is within distance
\(\eta_0/2\) of the actual pole set for the corresponding value of \(\chi\).
Therefore no solution of \(S_{\chi,K}(w)=x\), with
\[
\chi\in I,\qquad x\in X,\qquad K\geq K_0,
\]
can lie in
\[
\{w\in A:\operatorname{dist}(w,\Gamma)<\eta\}.
\]

Now define
\[
U:=
\left\{
w\in\mathbb C^*:
\frac r2<|w|<2R,\ 
\operatorname{dist}(w,\Gamma)>\frac{\eta}{2}
\right\}
\]
and
\[
\Omega:=
\left\{
w\in\mathbb C:
r\leq |w|\leq R,\ 
\operatorname{dist}(w,\Gamma)\geq \eta
\right\}.
\]
Then \(\Omega\Subset U\). By construction, \(U\) contains no pole of
\(S_{\chi,K}\) for any \(\chi\in I\) and \(K\geq K_0\). Combining the radial
bound \eqref{eq:radial-uniform-bound} with the pole-neighborhood
exclusion proves that every nonreal solution of the truncated equation lies
in \(\Omega\).

It remains to pass to the infinite product. On \(U\), only finitely many
reflected pole branches can meet the annulus \(A\), and all of them have
been removed. The remaining reflected factors converge normally on compact
subsets of \(U\), by the same geometric estimates used in Lemma~\ref{le36}.
Hence
\[
S_{\chi,K}\to S_\chi
\]
locally uniformly on \(U\), and \(S_\chi\) has no pole in \(U\).

Suppose, for contradiction, that for some \(\chi\in I\) and \(x\in X\) the
limiting equation
\[
S_\chi(w)=x
\]
has a nonreal solution \(w_0\notin\Omega\). If \(w_0\) lies outside the annulus
\(\{r\leq |w|\leq R\}\), choose a small disk around \(w_0\) avoiding poles and
still outside that annulus. By local uniform convergence and Hurwitz's
theorem, \(S_{\chi,K}(w)-x\) has a zero in this disk for all large \(K\), which
contradicts the radial bound for the truncated roots.

If instead \(w_0\) lies in the annulus but
\[
\operatorname{dist}(w_0,\Gamma)<\eta,
\]
then \(w_0\) belongs to a pole neighborhood on which
\(|S_{\chi,K}|\) is uniformly larger than \(\sup X\). Passing to the locally
uniform limit gives \(|S_\chi(w_0)|\geq \sup X\), contradicting
\(S_\chi(w_0)=x\in X\). Therefore every nonreal solution of the limiting
equation lies in \(\Omega\).

Finally, the compact \(s\)-interval statement follows by applying the local
result to finitely many points of \(J^n\) and taking the union of the resulting
compact sets and pole-free neighborhoods. This proves the lemma.
\end{proof}

\begin{lemma}[Finite signed root count for the truncated equation]
\label{lem:signed-root-count}
\label{l65}
Assume
Assumption~\ref{ap62} holds.  Then, for every
\(K\in\mathbb Z_{>0}\) and every \(x>0\), the equation
\[
        S_{\chi,K}(w)=x
\]
has at most one nonreal conjugate pair of roots.
\end{lemma}

\begin{proof}
Write
\[
        S_{\chi,K}(w)=\frac{P_K(w)}{Q_K(w)}
\]
in reduced form and set \(d_K=\max\{\deg P_K,\deg Q_K\}\).  The equation is
therefore
\[
        F_{K,x}(w):=P_K(w)-xQ_K(w)=0 .
\]
By Assumption~\ref{ap62}, \(F_{K,x}\) has at least
\(d_K-2\) distinct zeros in \((0,\infty)\).  Since
\(\deg F_{K,x}\le d_K\), at most two zeros, counted with multiplicity, can
lie outside \((0,\infty)\).  As \(F_{K,x}\in\mathbb R[w]\), its nonreal zeros
come in conjugate pairs.  Hence there is at most one nonreal conjugate pair.
\end{proof}

\begin{lemma}[Origin sign from the one-defect zero--pole order]
\label{l66}
Let \(K\in\mathbb Z_{>0}\), set
\[
        x=e^{-n\beta\kappa},
        \qquad
        s=e^{-\kappa},
\]
so that \(x=s^{n\beta}\). Suppose Assumption \ref{ap62} holds.
If the truncated equation
\[
        S_{\chi,K}(w)=e^{-n\kappa}
\]
has exactly one nonreal conjugate pair, counted with multiplicity two, then
\begin{equation}\label{gnk}
        T_{\chi,K}(0)>e^{-n\beta\kappa}.
\end{equation}
Moreover, all remaining real roots are positive.
\end{lemma}

\begin{proof}
Set
\[
y:=s^n=e^{-n\kappa}.
\]
Since the equation has a nonreal conjugate pair, the endpoint cases
\(r_0=0\) and \(r_0=M_K\) in Assumption~\ref{ap62} are impossible.
Hence
\[
        1\leq r_0\leq M_K-1.
\]
The equation
\[
S_{\chi,K}(w)=s^n
\]
is therefore equivalent to
\[
F_{K,y}(w):=P_K(w)-yQ_K(w)=0.
\]

We first count the positive roots of \(F_{K,y}\). For every
\[
i\in\{1,\ldots,M_K-1\}\setminus\{r_0\},
\]
the interval \((z_i,z_{i+1})\) contains exactly one simple zero of
\(Q_K\). Hence \(Q_K(z_i)\) and \(Q_K(z_{i+1})\) have opposite signs.
Since
\[
F_{K,y}(z_i)=-yQ_K(z_i),
\qquad
F_{K,y}(z_{i+1})=-yQ_K(z_{i+1}),
\]
the values \(F_{K,y}(z_i)\) and \(F_{K,y}(z_{i+1})\) also have
opposite signs. Thus \(F_{K,y}\) has one zero in each such interval.
These intervals are disjoint, so we obtain \(M_K-2\) distinct positive
real roots.

Since
\[
\deg F_{K,y}\leq M_K,
\]
and since by assumption the equation has one nonreal conjugate pair
counted with multiplicity two, these \(M_K-2\) positive roots together
with the two nonreal roots exhaust all roots of \(F_{K,y}\).
Consequently, \(F_{K,y}\) has no other real roots. In particular, all
of its real roots are positive, and it has no zero in \((0,z_1)\).

It remains to determine the sign at the origin. At \(z_1\),
\[
F_{K,y}(z_1)=-yQ_K(z_1).
\]
Because \(Q_K\) has the simple zero
\[
\xi_1\in(0,z_1),
\]
the signs of \(Q_K(0)\) and \(Q_K(z_1)\) are opposite. Since
\(F_{K,y}\) has no zero in \((0,z_1)\), the signs of
\(F_{K,y}(0)\) and \(F_{K,y}(z_1)\) are the same. Consequently,
\[
\frac{F_{K,y}(0)}{Q_K(0)}>0.
\]
But
\[
\frac{F_{K,y}(0)}{Q_K(0)}
=
\frac{P_K(0)-yQ_K(0)}{Q_K(0)}
=
S_{\chi,K}(0)-y.
\]
Hence
\[
S_{\chi,K}(0)>y=s^n.
\]
Since \(T_{\chi,K}=S_{\chi,K}^{\beta}\) on the chosen positive real
branch and \(\beta>0\), it follows that
\[
T_{\chi,K}(0)
>
y^\beta
=
s^{n\beta}
=
x
=
e^{-n\beta\kappa}.
\]
\end{proof}

\subsection{Slope formula and frozen boundary}

\begin{lemma}[Root-labelled Stieltjes representation]
\label{lem:root-labelled-stieltjes-jack}
Assume Assumptions~\ref{ap5}, \ref{ass:contour-branch-admissibility}, and
\ref{ap62}.  Assume also that \(\beta=1\).  Let \(C_\chi\) be the one-point
contour fixed in Assumption~\ref{ass:contour-branch-admissibility}, and let
\(\mathcal{D}_{\chi,K}\) be the truncated pole set defined in (\ref{ddc}).  For
\(|\zeta|\) sufficiently large and each \(\xi\in \mathcal{D}_{\chi,K}\), let
\(w_{\xi,\chi,K}(\zeta)\) be the unique root of
\[
   S_{\chi,K}(w)=\zeta
\]
near \(\xi\), labelled by
\[
   w_{\xi,\chi,K}(\zeta)\longrightarrow \xi,
   \qquad |\zeta|\to\infty .
\]
Choose logarithms so that
\[
   \log\left(1-\frac{S_{\chi,K}(0)}{\zeta}\right)\longrightarrow0,
   \qquad
   \log w_{\xi,\chi,K}(\zeta)-\log\xi\longrightarrow0,
   \qquad |\zeta|\to\infty .
\]
Then
\begin{align}
-\frac{1}{\alpha\pi i}
\oint_{C_\chi}
\log\left(1-\frac{S_{\chi,K}(w)}{\zeta}\right)\frac{dw}{w}
&=
-\frac{2}{\alpha}
\log\left(1-\frac{S_{\chi,K}(0)}{\zeta}\right)
\notag\\
&\quad
+\frac{2}{\alpha}
\sum_{\xi\in \mathcal{D}_{\chi,K}}
\left(
   \log w_{\xi,\chi,K}(\zeta)-\log\xi
\right).
\label{eq:finite-root-labelled-stieltjes-jack}
\end{align}
Consequently, with \(\operatorname{St}_{\nu_\chi}\) defined by
\eqref{dst}
\begin{align}
\operatorname{St}_{\nu_\chi}(\zeta)
&=
-\frac{2}{\alpha}
\log\left(1-\frac{S_\chi(0)}{\zeta}\right)
\notag\\
&\quad
+\frac{2}{\alpha}
\lim_{K\to\infty}
\sum_{\xi\in \mathcal{D}_{\chi,K}}
\left(
   \log w_{\xi,\chi,K}(\zeta)-\log\xi
\right),
\label{eq:st-root-formula}
\end{align}
first for \(|\zeta|\) sufficiently large and then by simultaneous analytic
continuation of the root labels and logarithms.

In particular, let \(x>0\) be a nonexceptional level at which the
Stieltjes boundary value formula \eqref{dsm2-natural} holds, and suppose the
above analytic continuation is taken to \(\zeta=x+i0\).  Then
\begin{equation}
\partial_\kappa H(\chi,\kappa)
=
\frac{2}{\alpha}\mathbf 1_{\{T_\chi(0)>x\}}
-
\frac{2}{\pi\alpha}
\lim_{K\to\infty}
\sum_{\xi\in \mathcal{D}_{\chi,K}}
\left(
   \arg w_{\xi,\chi,K}(x)-\arg\xi
\right).
\label{phk-natural}
\end{equation}
\end{lemma}

\begin{proof}
Since \(\beta=1\), $T_{\chi}=S_{\chi}$, and \(S_{\chi,K}\) is a reduced rational function in
the all-\(L\) setting of Assumption~\ref{ap62}.  Fix \(K\), and put
\[
   F_{K,\zeta}(w):=
   1-\frac{S_{\chi,K}(w)}{\zeta}.
\]
For \(|\zeta|\) large, \(F_{K,\zeta}\) is uniformly close to \(1\) on
\(C_\chi\).  Hence its winding number around \(0\) along \(C_\chi\) is zero.
The poles of \(F_{K,\zeta}\) inside \(C_\chi\) are precisely the points
\(\xi\in \mathcal{D}_{\chi,K}\).  The argument principle therefore implies that
\(F_{K,\zeta}\) has exactly \(|\mathcal{D}_{\chi,K}|\) zeros inside \(C_\chi\), counted
with multiplicity.

Each \(\xi\in \mathcal{D}_{\chi,K}\) is a simple pole of \(S_{\chi,K}\).  Thus, for
\(|\zeta|\) large, the equation \(S_{\chi,K}(w)=\zeta\) has a unique simple
root \(w_{\xi,\chi,K}(\zeta)\) near \(\xi\), and
\[
   w_{\xi,\chi,K}(\zeta)\to\xi .
\]
These roots account for all zeros of \(F_{K,\zeta}\) inside \(C_\chi\).

Choose a small positively oriented circle \(C_0\) around \(0\).  For each
\(\xi\in \mathcal{D}_{\chi,K}\), choose a positively oriented contour \(C_\xi\) enclosing
precisely the pole \(\xi\) and the corresponding root
\(w_{\xi,\chi,K}(\zeta)\), but not \(0\) or any other zero or pole.  Joining
\(w_{\xi,\chi,K}(\zeta)\) to \(\xi\) by a cut inside \(C_\xi\), deform
\(C_\chi\) to
\[
   C_0\cup \bigcup_{\xi\in \mathcal{D}_{\chi,K}}C_\xi
\]
without crossing any zero, pole, or logarithmic cut of \(F_{K,\zeta}\).
Let
\[
   G_{K,\zeta}(w):=\log F_{K,\zeta}(w)
\]
with the branch chosen by continuation from \(\zeta=\infty\).

On \(C_0\), \(G_{K,\zeta}\) is holomorphic near \(0\), and Cauchy's formula
gives
\[
-\frac{1}{\alpha\pi i}
\oint_{C_0}G_{K,\zeta}(w)\frac{dw}{w}
=
-\frac{2}{\alpha}G_{K,\zeta}(0)
=
-\frac{2}{\alpha}
\log\left(1-\frac{S_{\chi,K}(0)}{\zeta}\right).
\]

Now fix \(\xi\in \mathcal{D}_{\chi,K}\).  Since \(0\) lies outside \(C_\xi\), choose a
single-valued holomorphic branch of \(\log w\) near \(C_\xi\).  Also,
\(F_{K,\zeta}\) has one zero and one pole inside \(C_\xi\), so its winding
number along \(C_\xi\) is zero and \(G_{K,\zeta}\) is single-valued near
\(C_\xi\).  Integration by parts gives
\[
-\frac{1}{\alpha\pi i}
\oint_{C_\xi}G_{K,\zeta}(w)\frac{dw}{w}
=
\frac{1}{\alpha\pi i}
\oint_{C_\xi}
\log w\, G_{K,\zeta}'(w)\,dw .
\]
Moreover,
\[
   G_{K,\zeta}'(w)
   =
   \frac{S_{\chi,K}'(w)}{S_{\chi,K}(w)-\zeta}.
\]
This derivative has simple poles at
\(w_{\xi,\chi,K}(\zeta)\) and at \(\xi\), with residues \(1\) and \(-1\),
respectively.  The residue theorem therefore yields
\[
-\frac{1}{\alpha\pi i}
\oint_{C_\xi}G_{K,\zeta}(w)\frac{dw}{w}
=
\frac{2}{\alpha}
\left(
   \log w_{\xi,\chi,K}(\zeta)-\log\xi
\right).
\]
Summing over \(\xi\in \mathcal{D}_{\chi,K}\) proves
\eqref{eq:finite-root-labelled-stieltjes-jack}.

It remains to pass \(K\to\infty\).  By the normal convergence of the reflected
products on the admissible contour neighbourhoods, supplied by Lemma~\ref{le36},
we have
\[
   S_{\chi,K}\longrightarrow S_\chi
\]
uniformly on \(C_\chi\), and the same estimates give
\[
   S_{\chi,K}(0)\longrightarrow S_\chi(0).
\]
Therefore, for \(|\zeta|\) sufficiently large,
\[
-\frac{1}{\alpha\pi i}
\oint_{C_\chi}
\log\left(1-\frac{S_{\chi,K}(w)}{\zeta}\right)\frac{dw}{w}
\longrightarrow
-\frac{1}{\alpha\pi i}
\oint_{C_\chi}
\log\left(1-\frac{S_\chi(w)}{\zeta}\right)\frac{dw}{w}.
\]
By the Stieltjes representation \eqref{stc-natural}, the limiting integral is
\(\operatorname{St}_{\nu_\chi}(\zeta)\).  This proves
\eqref{eq:st-root-formula}.  The continuation statement follows
from uniqueness of analytic continuation, with the same root labels and
logarithm branches continued from \(\zeta=\infty\).

Finally take the upper boundary value \(\zeta=x+i0\).  By
\eqref{dsm2-natural},
\[
   \partial_\kappa H(\chi,\kappa)
   =
   -\frac1\pi
   \Im \operatorname{St}_{\nu_\chi}(x+i0).
\]
The first logarithm in \eqref{eq:st-root-formula} contributes
\[
   -\frac1\pi
   \Im\left[
      -\frac{2}{\alpha}
      \log\left(1-\frac{S_\chi(0)}{x+i0}\right)
   \right]
   =
   \frac{2}{\alpha}\mathbf 1_{\{T_\chi(0)>x\}},
\]
because the boundary value of the logarithm has imaginary part \(\pi\) when
\(T_\chi(0)>x\), and \(0\) when \(T_\chi(0)<x\).  The second term gives
\[
   -\frac{2}{\pi\alpha}
   \lim_{K\to\infty}
   \sum_{\xi\in D_{\chi,K}}
   \left(
      \arg w_{\xi,\chi,K}(x)-\arg\xi
   \right).
\]
This proves \eqref{phk-natural}.
\end{proof}

\begin{lemma}[Upper-boundary roots of the finite truncations]
\label{lem:finite-upper-boundary-roots}

Assume Assumptions~\ref{ap64}--\ref{ap65} and let \(\beta=1\).
Fix
\[
        \chi\in
        (V_0,V_m)\setminus\{V_1,\ldots,V_{m-1}\},
        \qquad
        K\geq1,
        \qquad
        x>0.
\]
Assume that \(\mathcal D_{\chi,K}\neq\varnothing\), and enumerate
its elements increasingly as
\[
        \mathcal D_{\chi,K}
        =
        \{p_{1,K}<\cdots<p_{N_K,K}\}.
\]
Assume further that
\[
        C_{\chi,K}
        :=
        \lim_{w\to\infty}S_{\chi,K}(w)
        \neq x.
\]

Suppose that
\[
        S_{\chi,K}(w)=x
\]
has a unique nonreal conjugate pair
\[
        w_{+,K},\overline{w_{+,K}},
        \qquad
        \operatorname{Im}w_{+,K}>0,
\]
of total multiplicity two.

For \(\delta>0\), let
\[
        W_K(\delta)
        :=
        \left\{
        w\in\mathbb C_+:
        S_{\chi,K}(w)=x+\mathbf i\delta
        \right\},
\]
counted as an unordered multiset with multiplicity.

Then, as \(\delta\downarrow0\),
\[
        W_K(\delta)
        \longrightarrow
        \{w_{+,K},u_{1,K},\ldots,u_{N_K-1,K}\}
\]
as multisets, where \(u_{j,K}\) is the unique simple root of
\(S_{\chi,K}(w)=x\) in
\[
        (p_{j,K},p_{j+1,K}).
\]
In particular, all \(p_{j,K}\) and \(u_{j,K}\) are positive, and
\[
\operatorname{Im}
\sum_{\xi\in\mathcal D_{\chi,K}}
\left(
\log w_{\xi,\chi,K}(x+\mathbf i0)-\log\xi
\right)
=
\arg w_{+,K},
\qquad
\arg w_{+,K}\in(0,\pi).
\]
Here the root and logarithmic boundary values on the left side in the last display
are obtained by continuation of the spectral parameter \(\zeta\)
from large imaginary values to \(x+\mathbf i0\) through
\(\mathbb C_+\), as in
Lemma~\ref{lem:root-labelled-stieltjes-jack}.  Possible
permutations of the individual root labels do not affect the
symmetric sum.
\end{lemma}

\begin{proof}
All roots and poles below are counted with multiplicity.

By Lemma~\ref{lem:separation-implies-interlacing},
Assumption~\ref{ap62} holds.  If
\[
        0<\xi_1<\cdots<\xi_{M_K}
\]
are the positive poles of the reduced rational function
\(S_{\chi,K}\), the same lemma gives
\[
        \mathcal D_{\chi,K}
        =
        \{\xi_1,\ldots,\xi_{r_0}\},
\]
with the empty-set convention when \(r_0=0\).  Consequently,
\[
        N_K=r_0.
\]

Since \(\mathcal D_{\chi,K}\neq\varnothing\), we have \(r_0\geq1\).
The endpoint case \(r_0=M_K\) is also impossible.  Indeed, the
endpoint clause of Assumption~\ref{ap62}, applied with
\[
        s=x^{1/n}>0,
\]
states that
\[
        P_K(w)-xQ_K(w)
\]
has no nonreal zero when \(r_0=M_K\), contrary to the hypothesis of
the present lemma.  Hence
\[
        1\leq N_K=r_0\leq M_K-1.
\]
In particular, the outward block contains
\[
        M_K-N_K\geq1
\]
poles.  Enumerate them increasingly as
\[
        q_{1,K}<\cdots<q_{M_K-N_K,K}.
\]

By the reduced interval factorization
\eqref{eq:radial-interval-factorization}, we may write
\begin{equation}
\label{eq:upper-boundary-factorization}
S_{\chi,K}(w)
=
C_{\chi,K}
\prod_{j=1}^{N_K}
\frac{w-a_{j,K}}{w-p_{j,K}}
\prod_{\ell=1}^{M_K-N_K}
\frac{w-b_{\ell,K}}{w-q_{\ell,K}},
\end{equation}
where
\[
        p_{j,K}<a_{j,K},
        \qquad
        b_{\ell,K}<q_{\ell,K},
\]
the intervals
\[
        [p_{j,K},a_{j,K}],
        \qquad
        [b_{\ell,K},q_{\ell,K}]
\]
are pairwise disjoint and ordered, and
\[
        C_{\chi,K}>0.
\]
Indeed, in the all-\(L\) specialization every elementary factor in the
unreduced product defining \(S_{\chi,K}\) has a positive limit as
\(w\to+\infty\).  Cancellation of common
numerator--denominator factors does not change this limit.
The first product in
\eqref{eq:upper-boundary-factorization} consists of the inward
intervals, while the second consists of the outward intervals.

We first determine the residue signs.  At an inward pole
\(p_{j,K}\), every factor in
\eqref{eq:upper-boundary-factorization} other than the factor
associated with \([p_{j,K},a_{j,K}]\) is positive, because
\(p_{j,K}\) lies outside every other interval.  Therefore
\[
\begin{aligned}
\operatorname*{Res}_{w=p_{j,K}}S_{\chi,K}(w)
&=
C_{\chi,K}(p_{j,K}-a_{j,K})\\
&\quad\times
\prod_{\substack{m=1\\m\neq j}}^{N_K}
\frac{p_{j,K}-a_{m,K}}{p_{j,K}-p_{m,K}}
\prod_{\ell=1}^{M_K-N_K}
\frac{p_{j,K}-b_{\ell,K}}{p_{j,K}-q_{\ell,K}}
<0.
\end{aligned}
\]
Similarly,
\[
        \operatorname*{Res}_{w=q_{\ell,K}}
        S_{\chi,K}(w)>0
\]
at every outward pole.  For any pole \(r\), write
\[
        \rho(r)
        :=
        \operatorname*{Res}_{w=r}S_{\chi,K}(w).
\]

Near a simple pole \(r\),
\[
        S_{\chi,K}(w)
        =
        \frac{\rho(r)}{w-r}+O(1).
\]
Hence the solution of
\[
        S_{\chi,K}(w)=\zeta
\]
which converges to \(r\) as \(\lvert\zeta\rvert\to\infty\) satisfies
\[
        w_r(\zeta)
        =
        r+\frac{\rho(r)}{\zeta}
        +O\!\left(\frac{1}{\lvert\zeta\rvert^2}\right).
\]
Taking \(\zeta=\mathbf iR\), \(R\to\infty\), gives
\[
        w_r(\mathbf iR)
        =
        r-\mathbf i\frac{\rho(r)}{R}
        +O(R^{-2}).
\]
It follows that, for all sufficiently large \(R\),
\[
\begin{cases}
w_{p_{j,K}}(\mathbf iR)\in\mathbb C_+,
& \rho(p_{j,K})<0,\\[2mm]
w_{q_{\ell,K}}(\mathbf iR)\in\mathbb C_-,
& \rho(q_{\ell,K})>0.
\end{cases}
\]

Write the reduced rational function as
\[
        S_{\chi,K}(w)=\frac{P_K(w)}{Q_K(w)}
\]
and set
\[
        F_{K,\zeta}(w)
        :=
        P_K(w)-\zeta Q_K(w).
\]
Since
\[
        \deg P_K=\deg Q_K=M_K
\]
and
\[
        C_{\chi,K}
        =
        \lim_{w\to\infty}S_{\chi,K}(w)\in\mathbb R,
\]
the leading coefficient of \(F_{K,\zeta}\) is nonzero for every
\(\zeta\in\mathbb C_+\).  Thus \(F_{K,\zeta}\) has exactly \(M_K\)
finite roots, counted with multiplicity.

Moreover, \(F_{K,\zeta}\) has no real root for
\(\zeta\in\mathbb C_+\).  Indeed, away from its poles,
\(S_{\chi,K}\) is real-valued on the real axis, while at a pole one
has
\[
        Q_K(w)=0,
        \qquad
        P_K(w)\neq0,
\]
because \(P_K/Q_K\) is reduced.  Consequently, as \(\zeta\) varies
in the connected domain \(\mathbb C_+\), the number of roots of
\(F_{K,\zeta}\) in \(\mathbb C_+\), counted with multiplicity, is
constant: this number can change only if a root reaches the real
axis or escapes to infinity, neither of which is possible.

For \(\zeta=\mathbf iR\) with \(R\) sufficiently large, the pole
expansions above and the degree count show that exactly the \(N_K\)
roots issuing from
\[
        \mathcal D_{\chi,K}
        =
        \{p_{1,K},\ldots,p_{N_K,K}\}
\]
lie in \(\mathbb C_+\).  Hence, for every \(\zeta\in\mathbb C_+\),
the equation
\[
        S_{\chi,K}(w)=\zeta
\]
has exactly \(N_K\) upper-half-plane roots, counted with
multiplicity.

Suppose now that \(\zeta\in\mathbb C_+\) is such that the equation
\(S_{\chi,K}(w)=\zeta\) has no multiple root.  Continue the
pole-labelled branches
\[
        w_{\xi,\chi,K}(\zeta),
        \qquad \xi\in\mathcal D_{\chi,K},
\]
from \(\zeta=\mathbf iR\), \(R\gg1\), along a path in \(\mathbb C_+\) which avoids the finitely many
values of \(\zeta\) for which
\[
        S_{\chi,K}(w)=\zeta
\]
has a multiple root. Every such branch starts in
\(\mathbb C_+\) and cannot cross the real axis while the spectral
parameter remains in \(\mathbb C_+\).  Since there are exactly
\(N_K\) upper-half-plane roots, we obtain
\begin{equation}
\label{eq:labelled-roots-are-upper-roots}
\left\{
w_{\xi,\chi,K}(\zeta):
\xi\in\mathcal D_{\chi,K}
\right\}
=
\left\{
w\in\mathbb C_+:
S_{\chi,K}(w)=\zeta
\right\}
\end{equation}
as unordered multisets.  

We next take the upper boundary value
\[
        \zeta=x+\mathbf i\delta,
        \qquad
        \delta\downarrow0.
\]
Since \(C_{\chi,K}\neq x\), the polynomial
\[
        F_{K,x}(w)=P_K(w)-xQ_K(w)
\]
has degree \(M_K\).

At two consecutive inward poles, the negative residue signs give
\[
\lim_{w\to p_{j,K}^{+}}
\bigl(S_{\chi,K}(w)-x\bigr)
=
-\infty,
\qquad
\lim_{w\to p_{j+1,K}^{-}}
\bigl(S_{\chi,K}(w)-x\bigr)
=
+\infty.
\]
Therefore, for every \(j=1,\ldots,N_K-1\), the equation
\[
        S_{\chi,K}(w)=x
\]
has at least one root in
\[
        (p_{j,K},p_{j+1,K}).
\]

Similarly, the positive residue signs at the outward poles give
\[
\lim_{w\to q_{\ell,K}^{+}}
\bigl(S_{\chi,K}(w)-x\bigr)
=
+\infty,
\qquad
\lim_{w\to q_{\ell+1,K}^{-}}
\bigl(S_{\chi,K}(w)-x\bigr)
=
-\infty.
\]
Hence there is at least one root in every interval
\[
        (q_{\ell,K},q_{\ell+1,K}),
        \qquad
        \ell=1,\ldots,M_K-N_K-1.
\]

The number of these pairwise disjoint intervals is
\[
        (N_K-1)+(M_K-N_K-1)=M_K-2.
\]
By hypothesis, the equation \(S_{\chi,K}(w)=x\) also has one
nonreal conjugate pair of total multiplicity two.  Since
\(F_{K,x}\) has degree \(M_K\), these \(M_K-2\) real roots and the
two nonreal roots exhaust all roots, counted with multiplicity.
It follows that every interval above contains exactly one root and
that each such root is simple.

Write
\[
        u_{j,K}\in(p_{j,K},p_{j+1,K}),
        \qquad
        j=1,\ldots,N_K-1,
\]
for the inward roots, and
\[
        v_{\ell,K}\in(q_{\ell,K},q_{\ell+1,K}),
        \qquad
        \ell=1,\ldots,M_K-N_K-1,
\]
for the outward roots.  The endpoint signs and uniqueness imply
\[
        S_{\chi,K}'(u_{j,K})>0,
        \qquad
        S_{\chi,K}'(v_{\ell,K})<0.
\]

All roots of \(F_{K,x}\) are simple.  Therefore, for all sufficiently
small \(\delta>0\), the equation
\[
        S_{\chi,K}(w)=x+\mathbf i\delta
\]
also has only simple roots, and
\eqref{eq:labelled-roots-are-upper-roots} applies.

For a simple real root \(r\) of \(S_{\chi,K}(w)=x\), the implicit
function theorem gives a local solution of
\[
        S_{\chi,K}(w)=x+\mathbf i\delta
\]
of the form
\[
        w_r(\delta)
        =
        r+\frac{\mathbf i\delta}{S_{\chi,K}'(r)}
        +O(\delta^2).
\]
Consequently, for all sufficiently small \(\delta>0\),
\[
        w_{u_{j,K}}(\delta)\in\mathbb C_+,
        \qquad
        w_{v_{\ell,K}}(\delta)\in\mathbb C_-.
\]
The simple nonreal root \(w_{+,K}\) also has a unique nearby
continuation which remains in \(\mathbb C_+\), while its conjugate
remains in \(\mathbb C_-\).

Thus, for sufficiently small \(\delta>0\), the complete multiset
\(W_K(\delta)\) consists of one root converging to \(w_{+,K}\) and
the \(N_K-1\) roots converging respectively to
\[
        u_{1,K},\ldots,u_{N_K-1,K}.
\]
Therefore
\[
        W_K(\delta)
        \longrightarrow
        \{w_{+,K},u_{1,K},\ldots,u_{N_K-1,K}\}
\]
as multisets.

Finally, by
\eqref{eq:labelled-roots-are-upper-roots}, for all sufficiently
small \(\delta>0\),
\[
\left\{
w_{\xi,\chi,K}(x+\mathbf i\delta):
\xi\in\mathcal D_{\chi,K}
\right\}
=
W_K(\delta)
\]
as unordered multisets.  All these roots lie in \(\mathbb C_+\).
Hence their logarithms, obtained by continuation from large
imaginary \(\zeta\), agree with the principal logarithm on
\(\mathbb C_+\).  Since the poles \(p_{j,K}\) and roots
\(u_{j,K}\) are positive,
\[
        \arg p_{j,K}=0,
        \qquad
        \lim_{\delta\downarrow0}
        \arg w_{u_{j,K}}(\delta)=0.
\]
The remaining upper-half-plane root converges to \(w_{+,K}\), with
\[
        \arg w_{+,K}\in(0,\pi).
\]
Possible permutations of the pole labels do not affect the sum.
Therefore
\[
\operatorname{Im}
\sum_{\xi\in\mathcal D_{\chi,K}}
\left(
\log w_{\xi,\chi,K}(x+\mathbf i0)-\log\xi
\right)
=
\arg w_{+,K}.
\]
\end{proof}

\begin{proof}[Proof of Theorem~\ref{p412}]

Fix a regular slope point satisfying the stated conditions and put
\[
        x=e^{-n\kappa}.
\]

By Lemma~\ref{lem:separation-implies-interlacing},
Assumptions~\ref{ap64}--\ref{ap65} imply
Assumption~\ref{ap62}.  In addition,
if
\[
        0<\xi_1<\cdots<\xi_{M_K}
\]
are the positive poles of \(S_{\chi,K}\), then
\[
        \mathcal D_{\chi,K}
        =
        \{\xi_1,\ldots,\xi_{r_0}\}.
\]

By condition~\textup{(iii)} and
Lemma~\ref{lem:local-polefree-noescape}, applied with
\[
        (\chi_0,x_0)=(\chi,x),
\]
there exist a
pole-free open set \(U\subset\mathbb C^*\) and a compact set
\[
        \Omega\Subset U
\]
such that
\[
        S_{\chi,K}\longrightarrow S_\chi
\]
locally uniformly on \(U\), and all nonreal roots of the finite and
limiting equations in a neighborhood of \((\chi,x)\) lie in
\(\Omega\).

By Lemma~\ref{lem:signed-root-count}, each finite equation
\[
        S_{\chi,K}(w)=x
\]
has at most one nonreal conjugate pair.  If the limiting equation had
at least two upper-half-plane roots, counted with multiplicity,
Rouché's theorem on disjoint disks around these roots would imply
that the finite equations have at least two upper-half-plane roots
for all sufficiently large \(K\), a contradiction.  Thus the
limiting equation has at most one nonreal conjugate pair.

Suppose first that \((\chi,\kappa)\) is liquid.  If along an infinite
subsequence the finite equation had only real roots, every argument
in \eqref{phk-natural} would belong to \(\pi\mathbb Z\).  Since the
argument sums converge, their limit would also belong to
\(\pi\mathbb Z\), and \eqref{phk-natural}, together with
\[
        0\le
        \partial_\kappa\mathcal H
        \le\frac{2}{\alpha},
\]
would force
\[
        \partial_\kappa\mathcal H
        \in
        \left\{0,\frac{2}{\alpha}\right\},
\]
contradicting liquidness.

Therefore, for all sufficiently large \(K\), the finite equation has
a unique nonreal conjugate pair
\[
        w_{+,K},\overline{w_{+,K}},
        \qquad
        \operatorname{Im}w_{+,K}>0.
\]
The endpoint cases \(r_0=0\) and \(r_0=M_K\) in
Assumption~\ref{ap62} are then impossible, because in those cases
the finite equation has no nonreal root.

Lemma~\ref{l66} gives
\[
        S_{\chi,K}(0)>x,
\]
and Lemma~\ref{lem:finite-upper-boundary-roots} gives
\[
\operatorname{Im}
\sum_{\xi\in\mathcal D_{\chi,K}}
\left(
\log w_{\xi,\chi,K}(x+\mathbf i0)-\log\xi
\right)
=
\arg w_{+,K}.
\]
Since
\[
        S_{\chi,K}(0)\longrightarrow S_\chi(0)
\]
and \(S_\chi(0)\neq x\), it follows that
\[
        S_\chi(0)>x.
\]
Consequently \eqref{phk-natural} becomes
\[
        \partial_\kappa\mathcal H(\chi,\kappa)
        =
        \frac{2}{\alpha}
        -
        \lim_{K\to\infty}
        \frac{2\arg w_{+,K}}{\pi\alpha}.
\]
Because the point is liquid, the limiting argument belongs to
\((0,\pi)\).

By Lemma~\ref{lem:local-polefree-noescape}, there is a fixed compact
set
\[
        \Omega\Subset U,
\]
independent of \(K\), such that
\[
        w_{+,K}\in\Omega
\]
for all sufficiently large \(K\).  Hence every subsequence of
\(\{w_{+,K}\}\) has a further subsequence converging to some
\[
        w_*\in\Omega.
\]
Since
\[
        \arg w_{+,K}\longrightarrow\theta
        \qquad\text{for some }\theta\in(0,\pi),
\]
and since \(\Omega\Subset\mathbb C^*\), every such limit satisfies
\[
        w_*\in\mathbb C_+.
\]

Along the convergent further subsequence, the local uniform
convergence
\[
        S_{\chi,K}\longrightarrow S_\chi
        \qquad\text{on }U
\]
and the identities
\[
        S_{\chi,K}(w_{+,K})=x
\]
give
\[
        S_\chi(w_*)=x.
\]
Thus every subsequential limit is an upper-half-plane root of the
limiting equation.  Since that equation has at most one
upper-half-plane root, every subsequential limit is equal to the
same point \(w_+\).  Consequently,
\[
        w_{+,K}\longrightarrow w_+.
\]
This proves
\[
        \partial_\kappa\mathcal H(\chi,\kappa)
        =
        \frac{2}{\alpha}
        -
        \frac{2\arg w_+}{\pi\alpha},
        \qquad
        \arg w_+\in(0,\pi).
\]

Conversely, suppose that
\[
        S_\chi(w)=x
\]
has a nonreal conjugate pair
\[
        w_+,\overline{w_+},
        \qquad
        \operatorname{Im}w_+>0.
\]
Since
\[
        w_+,\overline{w_+}\in\Omega\Subset U,
\]
choose disjoint open disks \(D_+\) and \(D_-\), centered at
\(w_+\) and \(\overline{w_+}\), respectively, such that
\[
        \overline{D_+}\subset U\cap\mathbb C_+,
        \qquad
        \overline{D_-}\subset U\cap\mathbb C_-.
\]  Rouché's theorem implies that, for all
sufficiently large \(K\), the finite equation has a nonreal conjugate
pair
\[
        w_{+,K},\overline{w_{+,K}},
        \qquad
        w_{+,K}\longrightarrow w_+.
\]
By Lemma~\ref{lem:signed-root-count}, this pair is unique.
Lemmas~\ref{l66} and
\ref{lem:finite-upper-boundary-roots} therefore give
\[
        S_{\chi,K}(0)>x
\]
and
\[
\operatorname{Im}
\sum_{\xi\in\mathcal D_{\chi,K}}
\left(
\log w_{\xi,\chi,K}(x+\mathbf i0)-\log\xi
\right)
=
\arg w_{+,K}.
\]
Passing to the limit in \eqref{phk-natural} yields
\[
        \partial_\kappa\mathcal H(\chi,\kappa)
        =
        \frac{2}{\alpha}
        -
        \frac{2\arg w_+}{\pi\alpha}.
\]
Since \(0<\arg w_+<\pi\), this slope belongs to
\[
        \left(0,\frac{2}{\alpha}\right).
\]
Hence \((\chi,\kappa)\) is liquid.

We have proved the liquid equivalence.  At a regular slope point the
slope lies in \([0,2/\alpha]\).  Therefore a point is frozen exactly
when it is not liquid, which, by the preceding equivalence and the
upper bound of one nonreal pair, is equivalent to the all-real-root
condition.

Finally, fix a point
\[
        (\chi_0,\kappa_0)
\]
in the above nonexceptional region, set
\[
        x_0:=e^{-n\kappa_0},
        \qquad
        F_0(w):=S_{\chi_0}(w)-x_0,
\]
and suppose that \(F_0\) has no multiple real zero.

Apply Lemma~\ref{lem:local-polefree-noescape} at
\((\chi_0,x_0)\).  Then there exist open intervals
\[
        I\ni\chi_0,
        \qquad
        X\ni x_0,
\]
a pole-free open set
\[
        U\subset\mathbb C^*,
\]
and a compact set
\[
        \Omega\Subset U
\]
such that, whenever
\[
        \chi\in I,
        \qquad
        e^{-n\kappa}\in X,
\]
every nonreal zero of
\[
        F_{\chi,\kappa}(w)
        :=
        S_\chi(w)-e^{-n\kappa}
\]
lies in \(\Omega\).

Since
\[
        \chi_0\in(V_{p-1},V_p)
\]
for some \(p\in[m]\), shrink \(I\), if necessary, so that
\[
        \overline I\Subset(V_{p-1},V_p).
\]
Then, for every \(\chi\in I\), the same factors occur in the
finite products defining \(\mathcal G_\chi\); only their
\(\chi\)-dependent zero and pole locations vary continuously with
\(\chi\). After
shrinking \(I\) once more, the same numerator--denominator factors
are cancelled in the reduced functions for every \(\chi\in I\).
Each remaining unreflected zero or pole is therefore either
independent of \(\chi\) or varies continuously with \(\chi\).  Since the reflected
factors are independent of \(\chi\) and their product converges
normally on compact subsets of \(U\), it follows that
\[
        S_\chi\longrightarrow S_{\chi_0}
\]
locally uniformly on \(U\) as \(\chi\to\chi_0\).
Consequently,
\[
        S_\chi(w)-e^{-n\kappa}
        \longrightarrow
        S_{\chi_0}(w)-e^{-n\kappa_0}
\]
locally uniformly on \(U\) as
\[
        (\chi,\kappa)\longrightarrow(\chi_0,\kappa_0).
\]
Together with the normal convergence of the reflected product on
compact subsets of \(U\), this gives
\[
        F_{\chi,\kappa}
        \longrightarrow
        F_0
\]
locally uniformly on \(U\) as
\[
        (\chi,\kappa)
        \longrightarrow
        (\chi_0,\kappa_0).
\]

Choose an open set \(V\) such that
\[
        \Omega\subset V\Subset U
\]
and
\[
        F_0(w)\neq0,
        \qquad
        w\in\partial V.
\]
The function \(F_0\) has only finitely many zeros in
\(\overline V\).  Around each of its distinct nonreal zeros choose
a small open disk whose closure is contained in \(V\), lies entirely
in the corresponding half-plane, and contains no other zero of
\(F_0\).  Around each real zero choose a small open disk centered on
the real axis, symmetric under complex conjugation, whose closure is
contained in \(V\) and which contains no other zero of \(F_0\).
Choose all these disks with pairwise disjoint closures.

By hypothesis, every real zero of \(F_0\) is simple.  Hence each
real-centered disk contains exactly one zero of \(F_0\), counted with
multiplicity.  Let \(D_1,\ldots,D_N\) be the chosen open disks and set
\[
        K
        :=
        \overline V
        \setminus
        \bigcup_{j=1}^N D_j.
\]
By construction, \(F_0\) has no zero on \(K\).  Since \(K\) is
compact,
\[
        m
        :=
        \min_{w\in K}|F_0(w)|
        >0.
\]
After shrinking the parameter neighborhood, the locally uniform
convergence gives
\[
        \sup_{w\in K}
        |F_{\chi,\kappa}(w)-F_0(w)|
        <
        \frac m2.
\]
Hence
\[
        |F_{\chi,\kappa}(w)|
        \geq\frac m2,
        \qquad w\in K,
\]
so \(F_{\chi,\kappa}\) has no zero outside the chosen disks in
\(\overline V\).  The locally uniform convergence above therefore implies
that, for all \((\chi,\kappa)\) sufficiently close to
\((\chi_0,\kappa_0)\),

\begin{enumerate}
\item \(F_{\chi,\kappa}\) has no zero on that compact complement;

\item by Rouch\'e's theorem, every chosen disk contains the same
number of zeros of \(F_{\chi,\kappa}\), counted with multiplicity,
as it contains zeros of \(F_0\).
\end{enumerate}

A disk centered at a nonreal zero is contained in one of the two
open half-planes, so the number of upper-half-plane zeros contributed
by such a disk is unchanged.  A disk centered at a real zero contains
exactly one nearby zero.  Since 
\[
        F_{\chi,\kappa}(\overline w)
        =
        \overline{F_{\chi,\kappa}(w)},
\]
this unique zero must remain real: otherwise its conjugate would be
a second distinct zero in the same conjugation-invariant disk.

Thus the number of zeros of \(F_{\chi,\kappa}\) in
\(\mathbb C_+\), counted with multiplicity, is unchanged inside
\(V\).  Since Lemma~\ref{lem:local-polefree-noescape} places every
nearby nonreal zero in
\[
        \Omega\subset V,
\]
there are no additional upper-half-plane zeros outside \(V\).
Consequently, the number of upper-half-plane roots of the
characteristic equation is locally constant at
\((\chi_0,\kappa_0)\).

At every point of \(\operatorname{int}\mathscr N\), the number of
upper-half-plane roots is locally constant whenever the
characteristic equation has no multiple real root.  Consequently,
the portion of the regular frozen boundary lying in
\(\operatorname{int}\mathscr N\) is contained in the real
multiple-root locus \eqref{fb}.  At such a boundary point there
exists \(w\in\mathbb R\) such that
\[
        S_\chi(w)=e^{-n\kappa},
        \qquad
        S_\chi'(w)=0.
\]
Since
\[
        S_\chi(w)=e^{-n\kappa}>0,
\]
the second equation is equivalent to
\[
        \frac{d}{dw}
        \mathrm{Log}_{\chi,\mathcal C_\chi}S_\chi(w)=0.
\]
At a nondegenerate fold point this multiple root has multiplicity
two, so \eqref{fb} is the usual double-root system.  This proves the
multiple-root assertion.
\end{proof}

\section{Height Laplace-Test Fluctuations and the Annular Image Kernel}\label{sect:gff}

In this section we prove Theorem~\ref{t77}.  The two-point asymptotics yield a finite-dimensional Gaussian theorem for height Laplace-test observables.  The charge-centered coordinate of Definition~\ref{def:charge-centered-qt-coordinate} removes the random charge factor in Lemma~\ref{le25} exactly, so the covariance is obtained directly from the Negu\c t-observable covariance and is the annular image kernel of Proposition~\ref{prop:annular-gff-kernel}.  Thus the limiting Laplace-test covariance is not the pullback of the standard upper-half-plane covariance; it is the annular free-boundary image covariance generated by the two reflected boundary images.

\begin{proof}[Proof of Theorem~\ref{t77}]
Fix points
\[
\chi_1,\dots,\chi_s\in(V_0,V_m)\setminus\{V_1,\ldots,V_{m-1}\}
\]
and positive integers $k_1,\dots,k_s$.  For each $d\in[s]$, choose a sequence
$i_d^{(\epsilon)}\in[l^{(\epsilon)}..r^{(\epsilon)}]$ such that
\[
\epsilon i_d^{(\epsilon)}\to \chi_d
\]
and such that the residue class of $i_d^{(\epsilon)}$ modulo $n$ is independent of $\epsilon$.  

Define
\[
X_{k_d}^{(\epsilon)}(\chi_d)
:=
\int_{-\infty}^{\infty}
\left(
 h^{(q,t),\circ}_{M^{(\epsilon)}}\!\left(i_d^{(\epsilon)},\frac{\eta}{\epsilon}\right)
 -
 \EE\left[h^{(q,t),\circ}_{M^{(\epsilon)}}\!\left(i_d^{(\epsilon)},\frac{\eta}{\epsilon}\right)\right]
\right)e^{-n\beta k_d\eta}\,d\eta.
\]
By Corollary~\ref{cor:charge-centered-height-negut}, with the change of variables
$\eta=\epsilon y$, we have the exact centered identity
\[
X_{k_d}^{(\epsilon)}(\chi_d)
=
\frac{2\epsilon^2}{k_d^2\log t\log q}
Q_{k_d}^{(\epsilon)}(\epsilon i_d^{(\epsilon)}).
\]
Indeed, the factor $t^{k_d\tilde c^{(M^{(\epsilon)},i_d^{(\epsilon)},q,t)}}$ in Lemma~\ref{le25} is cancelled by the charge-centered vertical translation.  Moreover,
\[
\log t= -n\beta\epsilon,
\qquad
\log q= -\alpha n\beta\epsilon,
\]
so
\[
\frac{2\epsilon^2}{k_d^2\log t\log q}
\longrightarrow
\frac{2}{\alpha n^2\beta^2 k_d^2}.
\]
Theorem~\ref{t58} therefore implies that the vector
\[
\bigl(X_{k_1}^{(\epsilon)}(\chi_1),\dots,X_{k_s}^{(\epsilon)}(\chi_s)\bigr)
\]
converges to a centered Gaussian vector.  Multiplying the covariance in \eqref{corr-cov-t58} with $g_d=k_d$ and $g_h=k_h$ by the two normalization factors
\[
\frac{2}{\alpha n^2\beta^2 k_d^2}
\qquad\text{and}\qquad
\frac{2}{\alpha n^2\beta^2 k_h^2}
\]
gives exactly \eqref{main-corrected-fluc-cov}.  This proves Theorem~\ref{t77}.
\end{proof}

\section{Example: Half-space Macdonald Process}\label{sect:hsmp}

The degeneration corresponding to the rail-yard realization of the half-space Macdonald-process geometry is obtained as the $v\downarrow0$ limit of the present model with free left boundary and empty right boundary, namely
\begin{align*}
 u>0,
 \qquad
 v\downarrow0,
 \qquad
 c_l=deel.
\end{align*}
All formulas in this section are understood as limits of the doubly free-boundary formulas; this avoids substituting $v=0$ directly into expressions that contain powers $v^{-1}$.  We do not use a separate identification theorem with the notation of~\cite{BBC20}; the statements below should be read as asymptotic consequences for this rail-yard specialization.

\begin{lemma}[Uniform half-space degeneration]\label{lem:uniform-halfspace}
For the contour families fixed in the preceding sections, the limits below are taken in domains where the one-point and two-point integrands are normally dominated, uniformly for all sufficiently small \(v\).  In particular,
\[
\prod_{k\ge1}\mathcal F_{u,v,k}(w)\longrightarrow \mathcal F_{u,0,1}(w)
\]
normally on the one-point contour neighborhoods, and
\[
\mathscr B_{u,v}(z,w)\longrightarrow \mathscr B_{u,0}(z,w)
\]
normally on the two-point product contour neighborhoods.
\end{lemma}

\begin{proof}
This is a direct consequence of the geometric estimates in Lemma~\ref{le36} and the annular expansion \eqref{annular-Buv}.  The factors with \(k\ge2\) contain positive powers of \(v\) after the cancellations already used in Lemma~\ref{le36}, hence converge normally to one in the one-point product.  For the covariance correction, \eqref{annular-Buv} gives uniform domination by a convergent majorant for all small \(v\), and termwise convergence yields the stated limit.
\end{proof}

In this regime the partition function is given by \eqref{fep}. Moreover, the infinite product in Theorem~\ref{l61} collapses substantially: for every $k\ge 2$ one has
\begin{align*}
 \mathcal F_{u,0,k}(w)=1,
\end{align*}
and for $k=1$,
\begin{align*}
 \mathcal F_{u,0,1}(w)
 =
 \mathcal G_{<V_m,L}(u^2w)
 \bigl[\mathcal G_{<V_m,R}(u^2w)\bigr]^{\alpha}.
\end{align*}
Consequently Theorem~\ref{l61} gives the following half-space degeneration of the Laplace-transform observable.  For $\gamma=k\beta$, $k\in\ZZ_{>0}$, define
\[
L_{u,0}^{(\epsilon)}(i^{(\epsilon)},\gamma)
:=\int_{-\infty}^{\infty} e^{-n\gamma\kappa}
\epsilon h^{(q,t)}_{M^{(\epsilon)}}\!\left(i^{(\epsilon)},\frac{\kappa}{\epsilon}\right)d\kappa .
\]
Then, under the same contour and branch-admissibility hypotheses as Theorem~\ref{l61},
\begin{align}
 L_{u,0}^{(\epsilon)}(i^{(\epsilon)},\gamma)
 \xrightarrow[\epsilon\to0]{\mathbb P}
 \frac{1}{n^2\alpha\gamma^2\pi\mathbf i}
 \oint_{\mathcal C}
 \bigl[\mathcal G_{\chi}(w)\mathcal F_{u,0,1}(w)\bigr]^{\gamma}
 \frac{dw}{w}.
 \label{hs-laplace}
\end{align}
Applying the moment-determinacy argument of Theorem~\ref{thm:limit-shape-beta} in this degeneration gives the corresponding limit shape in the natural variable $x=e^{-n\beta\kappa}$.

When \(\beta=1\), under the \(v\downarrow0\) analogues of
Assumptions~\ref{ap64}--\ref{ap65}, the relevant one-point contour
and branch conditions, and the nonexceptionality conditions of
Theorem~\ref{p412}, the regular multiple-root condition becomes
\begin{equation}
\begin{cases}
\displaystyle
\mathcal G_\chi(w)\mathcal F_{u,0,1}(w)
=
e^{-n\kappa},\\[3mm]
\displaystyle
\frac{d}{dw}
\log\!\left(
\mathcal G_\chi(w)\mathcal F_{u,0,1}(w)
\right)
=
0,
\end{cases}
\qquad
w\in\mathbb R.
\end{equation}
This statement is conditional in the same sense as
Theorem~\ref{p412}: the limit-shape realization follows from the
preceding moment argument, whereas the \(v\downarrow0\) analogues
of the separated zero--pole, boundary-value, and endpoint
nonexceptionality conditions are not verified separately here.
Finally, under the same \(L\)-type marked-column, contour, branch,
and normal-convergence hypotheses as Theorem~\ref{t77}, with height
observables written in the charge-centered \((q,t)\)-column
coordinate, the annular Laplace-test Gaussian fluctuation theorem
specializes to this half-space setting with
\[
\mathscr B_{u,0}(z,w)=\sum_{r\ge1}r u^{2r}z^rw^r.
\]
Thus the limiting Laplace-test covariance is the one-boundary degeneration of the annular free-boundary image covariance.  It is not the standard upper-half-plane kernel in general; the displayed series is the surviving reflected-image contribution from the remaining free boundary.

\appendix

\section{Technical Results}\label{sc:dmp}

In this appendix we collect the background on Macdonald polynomials and the technical lemmas used in Sections~\ref{sect:lthf} and~\ref{sect:as}.

\subsection{Macdonald polynomials and scalar products}

Let $\mathcal A=\bigoplus_{k\ge 0}\mathcal A_k$ be a $\ZZ_{\ge 0}$-graded algebra. Its topological completion is the algebra of formal series
\begin{align*}
 a=\sum_{k=0}^{\infty}a_k,
 \qquad a_k\in\mathcal A_k.
\end{align*}
For $a\in\overline{\mathcal A}$, define the lower degree by
\begin{align}
 \operatorname{ldeg}(a):=\inf\{k\ge 0:a_k\neq 0\},
 \label{dldeg}
\end{align}
with the convention $\operatorname{ldeg}(0)=+\infty$.

\begin{definition}\label{dsp1}
Let $\Lambda_X$ be the algebra of symmetric functions in a countable set of variables $X=(x_1,x_2,\ldots)$. A \emph{specialization} is an algebra homomorphism
\begin{align*}
 \rho:\Lambda_X\to\mathcal B,
\end{align*}
where $\mathcal B$ is a commutative $\CC$-algebra. For $f\in\Lambda_X$, we write $f(\rho):=\rho(f)$.
\end{definition}

Define the power-sum symmetric functions by
\begin{align}
 p_n(X):=
 \begin{cases}
 1,& n=0,\\
 \displaystyle\sum_{x\in X}x^n,& n\ge 1.
 \end{cases}
 \label{dpn}
\end{align}
Then $\Lambda_X$ is generated by $\{p_n(X)\}_{n\ge 1}$, so a specialization is determined by the values $\{p_n(\rho)\}_{n\ge 1}$.

For a partition $\lambda=(\lambda_1,\lambda_2,\ldots)$, let
\begin{align*}
 p_{\lambda}:=\prod_{i\ge 1}p_{\lambda_i},
 \qquad
 z_{\lambda}:=\prod_{r\ge 1}r^{m_r(\lambda)}m_r(\lambda)!,
\end{align*}
where $m_r(\lambda)$ is the multiplicity of the part $r$ in $\lambda$. For fixed $q,t\in(0,1)$, define the Macdonald scalar product on $\Lambda_X$ by
\begin{align}
 \langle p_{\lambda},p_{\mu}\rangle_{q,t}
 :=
 \delta_{\lambda\mu}
 \left[\prod_{i=1}^{l(\lambda)}\frac{1-q^{\lambda_i}}{1-t^{\lambda_i}}\right]
 z_{\lambda}.
 \label{dsp}
\end{align}

For each $\lambda\in\YY$, let $P_{\lambda}(X;q,t)$ denote the normalized Macdonald polynomial characterized by the triangular expansion
\begin{align*}
 P_{\lambda}(X;q,t)=m_{\lambda}(X)+\sum_{\mu<\lambda}u_{\lambda\mu}(q,t)m_{\mu}(X),
\end{align*}
where $m_{\mu}$ is the monomial symmetric function. Define $Q_{\lambda}(X;q,t)$ by the normalization
\begin{align*}
 \langle P_{\lambda}(X;q,t),Q_{\mu}(X;q,t)\rangle_{q,t}=\delta_{\lambda\mu}.
\end{align*}
The skew Macdonald polynomials are defined by
\begin{align}
 P_{\lambda}(X,Y;q,t)
 &=\sum_{\mu\in\YY}P_{\lambda/\mu}(X;q,t)P_{\mu}(Y;q,t),
 \label{pls}\\
 Q_{\lambda}(X,Y;q,t)
 &=\sum_{\mu\in\YY}Q_{\lambda/\mu}(X;q,t)Q_{\mu}(Y;q,t).
 \label{qls}
\end{align}

We also use the $q$-Pochhammer symbols
\begin{align}
 (a;q)_N:=\prod_{r=0}^{N-1}(1-aq^r),
 \qquad
 (a;q)_{\infty}:=\prod_{r=0}^{\infty}(1-aq^r).
 \label{daq}
\end{align}

\begin{lemma}\label{la1}
Let $X$ be a countable set of variables. Then $P_{\lambda/\mu}(X;q,t)=Q_{\lambda/\mu}(X;q,t)=0$ unless $\mu\subseteq\lambda$. If $\mu\subseteq\lambda$, then both $P_{\lambda/\mu}(X;q,t)$ and $Q_{\lambda/\mu}(X;q,t)$ are homogeneous of degree $|\lambda|-|\mu|$.
\end{lemma}

\begin{proof}
See \cite[(7.7), p.~344]{MG95}.
\end{proof}

In particular, when $q=t$,
\begin{align}
 P_{\lambda}(X;t,t)=Q_{\lambda}(X;t,t)=s_{\lambda}(X),
 \qquad
 P_{\lambda/\mu}(X;t,t)=Q_{\lambda/\mu}(X;t,t)=s_{\lambda/\mu}(X).
 \label{csm}
\end{align}
See \cite[(4.14), p.~324]{MG95}.

\begin{lemma}\label{la3}
One has
\begin{align}
 \sum_{\lambda\in\YY}P_{\lambda}(X;q,t)Q_{\lambda}(Y;q,t)
 &=
 \prod_{i,j}\frac{(tx_iy_j;q)_{\infty}}{(x_iy_j;q)_{\infty}}
 =: \Pi(X,Y;q,t),
 \label{ip1}\\
 \sum_{\lambda\in\YY}P_{\lambda}(X;q,t)P_{\lambda'}(Y;t,q)
 &=
 \sum_{\lambda\in\YY}Q_{\lambda}(X;q,t)Q_{\lambda'}(Y;t,q)
 =
 \prod_{i,j}(1+x_iy_j).
 \label{ip2}
\end{align}
\end{lemma}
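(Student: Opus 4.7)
Both identities are classical Cauchy-type identities for Macdonald polynomials (\cite{MG95}, Ch.~VI, (4.13) and (5.4)); indeed (\ref{ip1}) already appears as (\ref{pqpi}) earlier in the excerpt. The plan is to deduce (\ref{ip1}) from the power-sum expansion of $\Pi(X,Y;q,t)$ together with the duality of the scalar product (\ref{dsp}), and then to obtain (\ref{ip2}) by transporting (\ref{ip1}) through the $(q,t)$-involution $\omega_{q,t}$.

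First I would expand $\log\Pi(X,Y;q,t)$ by writing $\log[(1-tx_iy_jq^r)/(1-x_iy_jq^r)] = \sum_{n\geq 1}(1-t^n)(x_iy_jq^r)^n/n$, summing the geometric series in $q^r$ over $r\geq 0$, and summing over $i,j$. This yields
\begin{align*}
\Pi(X,Y;q,t) \;=\; \exp\left(\sum_{n\geq 1}\frac{1-t^n}{n(1-q^n)}\,p_n(X)p_n(Y)\right),
\end{align*}
which matches (\ref{dh1}). Expanding the exponential via the multinomial formula and re-indexing by the partition $\lambda$ whose part $j$ has multiplicity $m_j$ gives
\begin{align*}
\Pi(X,Y;q,t)\;=\;\sum_{\lambda\in\YY}\frac{1}{z_\lambda}\prod_{i\geq 1}\frac{1-t^{\lambda_i}}{1-q^{\lambda_i}}\,p_\lambda(X)p_\lambda(Y),\qquad z_\lambda=\prod_{j\geq 1}j^{m_j(\lambda)}m_j(\lambda)!.
\end{align*}
By (\ref{dsp}) this is exactly $\sum_\lambda p_\lambda(X)p_\lambda(Y)/\langle p_\lambda,p_\lambda\rangle_{q,t}$. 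A standard abstract argument (expanding both sides in the $p_\lambda$-basis) then shows that for any pair of bases $\{u_\lambda\},\{v_\lambda\}$ of $\Lambda_X$ that are dual under $\langle\cdot,\cdot\rangle_{q,t}$ one has $\Pi(X,Y;q,t)=\sum_\lambda u_\lambda(X)v_\lambda(Y)$. Applying this with $u_\lambda=P_\lambda(\cdot;q,t)$ and $v_\lambda=Q_\lambda(\cdot;q,t)$, which are dual by the very definition of $Q_\lambda$, proves (\ref{ip1}).

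For (\ref{ip2}) I would invoke the involution $\omega_{q,t}:\Lambda\to\Lambda$ defined on power sums by $\omega_{q,t}(p_n)=(-1)^{n-1}\frac{1-q^n}{1-t^n}p_n$, together with the standard identities $\omega_{q,t}(P_\lambda(\cdot;q,t))=Q_{\lambda'}(\cdot;t,q)$ and $\omega_{q,t}(Q_\lambda(\cdot;q,t))=P_{\lambda'}(\cdot;t,q)$ (\cite{MG95}, Ch.~VI, (5.1)). Applying $\omega_{q,t}$ to the $Y$-variable in (\ref{ip1}) turns the left side into $\sum_\lambda P_\lambda(X;q,t)P_{\lambda'}(Y;t,q)$, while the right side becomes
\begin{align*}
\exp\left(\sum_{n\geq 1}\frac{1-t^n}{n(1-q^n)}\cdot(-1)^{n-1}\frac{1-q^n}{1-t^n}p_n(X)p_n(Y)\right)=\exp\left(\sum_{n\geq 1}\frac{(-1)^{n-1}}{n}p_n(X)p_n(Y)\right),
\end{align*}
which factors as $\prod_{i,j}(1+x_iy_j)$. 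Applying $\omega_{q,t}$ instead to the $X$-variable and renaming $\lambda\mapsto\lambda'$ gives the companion identity $\sum_\lambda Q_\lambda(X;q,t)Q_{\lambda'}(Y;t,q)=\prod_{i,j}(1+x_iy_j)$, completing (\ref{ip2}).

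There is no genuine obstacle: every step is a formal manipulation of the $(q,t)$-scalar product and the involution $\omega_{q,t}$, and the result is essentially a restatement of two classical lemmas from \cite{MG95} that are already used in equivalent forms earlier in the excerpt (e.g.\ (\ref{pqpi}), (\ref{ps1})--(\ref{ps4})). The mild care required is simply bookkeeping of the factors $(1-t^n)/(1-q^n)$ and $z_\lambda$ in the power-sum expansion, and correct application of the duality $\omega_{q,t}(Q_\lambda(\cdot;q,t))=P_{\lambda'}(\cdot;t,q)$ in the right variable.
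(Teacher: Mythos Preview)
Your argument is correct and is precisely the standard derivation from \cite{MG95}, Ch.~VI; the paper itself does not give a proof but simply cites Sections VI (2.4), (2.5), (4.13) for (\ref{ip1}) and VI (5.4) for (\ref{ip2}). So your sketch in fact supplies the details that the paper defers to the reference, and there is nothing to correct.
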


\begin{proof}
Equation~\eqref{ip1} is the Cauchy identity; see \cite[(4.13), p.~324]{MG95}. Equation~\eqref{ip2} is the dual Cauchy identity; see \cite[(5.4), p.~330]{MG95}.
\end{proof}

\begin{lemma}\label{la5}
Let $\{a_k\}_{k\ge 1}$ and $\{b_k\}_{k\ge 1}$ be sequences in completed graded algebras such that
\begin{align*}
 \operatorname{ldeg}(a_k)\to\infty,
 \qquad
 \operatorname{ldeg}(b_k)\to\infty
 \qquad (k\to\infty).
\end{align*}
Then
\begin{align*}
 \Biggl\langle
 \exp\left(\sum_{k=1}^{\infty}\frac{a_kp_k(Y)}{k}\right),
 \exp\left(\sum_{k=1}^{\infty}\frac{b_kp_k(Y)}{k}\right)
 \Biggr\rangle_{Y;q,t}
 =
 \exp\left(\sum_{k=1}^{\infty}\frac{1-q^k}{1-t^k}\frac{a_kb_k}{k}\right),
\end{align*}
where the coefficients $a_k,b_k$ are independent of the variables in $Y$.
\end{lemma}

\begin{proof}
See \cite[Proposition~2.3]{bcgs13}.
\end{proof}

\begin{lemma}\label{la6}
The kernels in Lemma~\ref{la3} admit the exponential representations
\begin{align}
 \Pi(X,Y;q,t)
 &=
 \exp\left(\sum_{n=1}^{\infty}\frac{1-t^n}{1-q^n}\frac{p_n(X)p_n(Y)}{n}\right),
 \label{ia1}\\
 \prod_{i,j}(1+x_iy_j)
 &=
 \exp\left(\sum_{n=1}^{\infty}\frac{(-1)^{n+1}}{n}p_n(X)p_n(Y)\right).
 \label{ia2}
\end{align}
\end{lemma}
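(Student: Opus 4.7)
\textbf{Proof proposal for Lemma \ref{la6}.} Both identities follow from expanding the logarithm of the product side as a double series in the monomials $x_i y_j$ and then summing the geometric series over the $q$-shift, without invoking the deeper Macdonald theory. The plan is to verify the identities as equalities in the formal power series completion of $\Lambda_X \otimes \Lambda_Y$ (or equivalently, pointwise for $|x_iy_j|$ small enough so that the infinite products converge absolutely, which then extends by formal identification of symmetric-function coefficients).

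For (\ref{ia1}), I would take the logarithm of $\Pi(X,Y;q,t)=\prod_{i,j}\frac{(tx_iy_j;q)_\infty}{(x_iy_j;q)_\infty}$ and use the definition $(a;q)_\infty=\prod_{r\geq 0}(1-aq^r)$ to write
\begin{align*}
\log \Pi(X,Y;q,t)=\sum_{i,j}\sum_{r=0}^{\infty}\bigl[\log(1-x_iy_j q^r)-\log(1-tx_iy_j q^r)\bigr].
\end{align*}
Applying $-\log(1-z)=\sum_{n\geq 1}z^n/n$ to each summand yields
\begin{align*}
\log \Pi(X,Y;q,t)=\sum_{n=1}^{\infty}\frac{1}{n}\sum_{i,j}(x_iy_j)^n\sum_{r=0}^{\infty}q^{rn}(1-t^n)=\sum_{n=1}^{\infty}\frac{1-t^n}{n(1-q^n)}\,p_n(X)p_n(Y),
\end{align*}
using $\sum_{r\geq 0}q^{rn}=1/(1-q^n)$ and $\sum_{i,j}(x_iy_j)^n=p_n(X)p_n(Y)$. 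Exponentiating gives (\ref{ia1}).

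For (\ref{ia2}), the same strategy (with the simpler product that has no $q$-Pochhammer) gives
\begin{align*}
\log\prod_{i,j}(1+x_iy_j)=\sum_{i,j}\log(1+x_iy_j)=\sum_{n=1}^{\infty}\frac{(-1)^{n+1}}{n}\sum_{i,j}(x_iy_j)^n=\sum_{n=1}^{\infty}\frac{(-1)^{n+1}}{n}\,p_n(X)p_n(Y),
\end{align*}
and exponentiation gives (\ref{ia2}).

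The only subtlety, and hence the main (but minor) obstacle, is justifying the interchange of the sums over $(i,j)$, $r$, and $n$. In the formal setting this is automatic because each fixed homogeneous degree in $(X,Y)$ receives only finitely many contributions; in the analytic setting (say $|x_iy_j|<1$ and $|q|,|t|<1$) absolute convergence makes Fubini's theorem applicable. Either justification suffices for the uses of the lemma in the body of the paper, so I would state the identities as equalities in $\overline{\Lambda_X\otimes\Lambda_Y}$ and briefly remark that absolute convergence holds whenever the formal variables are specialized appropriately.
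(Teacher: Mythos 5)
Your proof is correct, and it takes a different route from the paper: the paper disposes of (\ref{ia1}) by citing page 310 of Macdonald's book and then obtains (\ref{ia2}) from (\ref{ia1}) via the observation that $\prod_{i,j}(1+x_iy_j)=\left[\Pi(-X,Y;0,0)\right]^{-1}$, whereas you give a self-contained logarithmic expansion for both identities. Your computation is exactly the kind of manipulation the paper itself performs repeatedly (e.g.\ in the proof of Lemma \ref{le110}, where $\Theta_{el}(\rho;q,t)$ etc.\ are evaluated by expanding $\log(1-tx_iq^r)-\log(1-x_iq^r)$ and summing the geometric series in $r$), so nothing new is needed beyond what the paper already uses; what your approach buys is independence from the external reference and a uniform treatment of both identities, at the cost of a slightly longer write-up. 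Your remarks on the interchange of sums are adequate: in the formal completion each homogeneous bidegree receives finitely many contributions, and under the analytic specializations used in the paper ($q,t\in(0,1)$, variables summable) absolute convergence justifies Fubini.

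One minor slip to fix before this goes in: since $\Pi(X,Y;q,t)=\prod_{i,j}\frac{(tx_iy_j;q)_{\infty}}{(x_iy_j;q)_{\infty}}$ has the $t$-shifted factor in the \emph{numerator}, your first display should read
\begin{align*}
\log \Pi(X,Y;q,t)=\sum_{i,j}\sum_{r=0}^{\infty}\bigl[\log(1-tx_iy_j q^r)-\log(1-x_iy_j q^r)\bigr],
\end{align*}
i.e.\ the two logarithms are swapped relative to what you wrote. As written, that line would produce $\frac{t^n-1}{1-q^n}$ rather than $\frac{1-t^n}{1-q^n}$; your subsequent display silently restores the correct sign, so the final result is right, but the intermediate step should be made consistent.
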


\begin{proof}
The first identity is standard; see \cite[p.~310]{MG95}. The second follows from the expansion of $\log(1+z)$.
\end{proof}

\subsection{Duality}

Let $q,t$ be algebraically independent, and write $F:=\QQ(q,t)$. For $u,v\in F$ with $v\notin\{\pm 1\}$, define the $F$-algebra endomorphism $\omega_{u,v}:\Lambda_X\otimes F\to\Lambda_X\otimes F$ by
\begin{align}
 \omega_{u,v}(p_r)=(-1)^{r+1}\frac{1-u^r}{1-v^r}p_r,
 \qquad r\ge 1.
 \label{dda}
\end{align}
Equivalently,
\begin{align*}
 \omega_{u,v}(p_{\lambda})
 =
 \epsilon_{\lambda}
 p_{\lambda}
 \prod_{i=1}^{l(\lambda)}\frac{1-u^{\lambda_i}}{1-v^{\lambda_i}},
 \qquad
 \epsilon_{\lambda}=(-1)^{|\lambda|-l(\lambda)}.
\end{align*}
In particular, $\omega_{v,u}=\omega_{u,v}^{-1}$.

\begin{lemma}
One has
\begin{align*}
 \omega_{q,t}\Pi(X,Y;q,t)=\Pi_0(X,Y):=\prod_{i,j}(1+x_iy_j),
\end{align*}
and
\begin{align}
 \omega_{q,t}P_{\lambda/\mu}(X;q,t)
 &=Q_{\lambda'/\mu'}(X;t,q),
 \label{omp}\\
 \omega_{q,t}Q_{\lambda/\mu}(X;q,t)
 &=P_{\lambda'/\mu'}(X;t,q).
 \label{omq}
\end{align}
\end{lemma}

\begin{proof}
See \cite[(2.18), p.~313]{MG95} for the first identity and \cite[(7.16)--(7.17), p.~347]{MG95} for \eqref{omp}--\eqref{omq}.
\end{proof}

\subsection{The Negu\c t operator and auxiliary contour lemmas}

\begin{definition}
Let $k\in\ZZ_{>0}$ and $q,t>0$. For an analytic symmetric function
\begin{align*}
 F(X)=\sum_{\lambda\in\YY}c_{\lambda}P_{\lambda}(X;q,t),
\end{align*}
define the Macdonald difference operator $D_{-k,X;q,t}$ by
\begin{align}
 D_{-k,X;q,t}F(X)
 :=
 \sum_{\lambda\in\YY}
 c_{\lambda}
 \left[
 (1-t^{-k})\sum_{i=1}^{l(\lambda)}(q^{\lambda_i}t^{-i+1})^k+t^{-kl(\lambda)}
 \right]P_{\lambda}(X;q,t).
 \label{ngt}
\end{align}
\end{definition}

Let $W=(w_1,\ldots,w_k)$ be an ordered set of variables. Define
\begin{align}
 D(W;q,t)
 :=
 \frac{(-1)^{k-1}}{(2\pi\mathbf i)^k}
 \frac{\displaystyle\sum_{i=1}^{k}\frac{w_kt^{k-i}}{w_iq^{k-i}}}
      {\displaystyle\prod_{i=1}^{k-1}\left(1-\frac{tw_{i+1}}{qw_i}\right)}
 \prod_{1\le i<j\le k}
 \frac{\left(1-\frac{w_i}{w_j}\right)\left(1-\frac{qw_i}{tw_j}\right)}
      {\left(1-\frac{w_i}{tw_j}\right)\left(1-\frac{qw_i}{w_j}\right)}
 \prod_{i=1}^{k}\frac{dw_i}{w_i},
 \label{ddf}
\end{align}
and
\begin{align}
 L(W,X;q,t)
 :=
 \prod_{i=1}^{k}\prod_{x\in X}\frac{w_i-\frac{q}{t}x}{w_i-qx}.
 \label{dh}
\end{align}

\begin{proposition}\label{pa2}
Assume either $q,t\in(0,1)$ or $q,t\in(1,\infty)$. Let $f:\CC\to\CC$ be analytic in a neighborhood of $0$ with $f(0)\neq 0$, and let $g:\CC\to\CC$ be analytic in a neighborhood of $0$ such that
\begin{align*}
 g(z)=\frac{f(z)}{f(q^{-1}z)}
\end{align*}
for $z$ sufficiently close to $0$. Then
\begin{align}
 D_{-k,X;q,t}\left(\prod_{x\in X}f(x)\right)
 =
 \left(\prod_{x\in X}f(x)\right)
 \oint\cdots\oint D(W;q,t)L(W,X;q,t)\prod_{i=1}^{k}g(w_i).
 \label{ss}
\end{align}
Here the integration contours are positively oriented simple closed curves such that:
\begin{enumerate}
 \item all contours lie in a common neighborhood of $0$ where both $f$ and $g$ are analytic;
 \item each contour encloses $0$ and all points $qx$ with $x\in X$, but no other singularities of the integrand;
 \item if $q,t\in(0,1)$, then the contour for $w_i$ lies inside the contour for $tw_{i+1}$ for every $i\in[k-1]$;
 \item if $q,t\in(1,\infty)$, then the contour for $w_i$ lies inside the contour for $q^{-1}w_{i+1}$ for every $i\in[k-1]$.
\end{enumerate}
\end{proposition}

\begin{proof}
This is a slightly more general form of \cite[Proposition~4.10]{GZ16}; see also \cite[Proposition~A.2]{LV21}.
\end{proof}

\begin{lemma}\label{al51}
Let $\theta\in(0,\pi)$ and $\xi>0$. Define
\begin{align*}
 R_{\epsilon,\theta,\xi}
 :=
 \bigl\{w\in\CC:\operatorname{dist}(w,[1,\infty))\le \xi\bigr\}
 \cap
 \bigl\{w\in\CC:|\arg(w-(1-\epsilon))|\le \theta\bigr\}.
\end{align*}
Let $\alpha>0$ and suppose $N(\epsilon)\in\ZZ_{>0}$ satisfies
\begin{align*}
 \limsup_{\epsilon\to 0}\epsilon N(\epsilon)>0.
\end{align*}
Then for every fixed $\theta\in(0,\pi)$ and $\xi>0$ one has
\begin{align*}
 \frac{(z;e^{-\epsilon})_{N(\epsilon)}}{(e^{-\epsilon\alpha}z;e^{-\epsilon})_{N(\epsilon)}}
 =
 \left(\frac{1-z}{1-e^{-\epsilon N(\epsilon)}z}\right)^{\alpha}
 \exp\left(O\left(\frac{\epsilon\min\{|z|,|z|^2\}}{|1-z|}\right)\right),
\end{align*}
uniformly for $z\in\CC\setminus R_{\epsilon,\theta,\xi}$ as $\epsilon\to 0$.
\end{lemma}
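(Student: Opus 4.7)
\bigskip

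\noindent\emph{Proof proposal for Lemma \ref{al51}.}
The plan is to take logarithms of the ratio and carry out a careful Taylor/Riemann-sum analysis, separating the explicit leading term $[(1-z)/(1-e^{-\epsilon N}z)]^\alpha$ from an error that is uniformly controlled on $\mathbb{C}\setminus R_{\epsilon,\theta,\xi}$. Writing $q=e^{-\epsilon}$ and using the definition in \eqref{daq}, we have
\begin{align*}
\log\frac{(z;q)_{N}}{(q^\alpha z;q)_{N}}
=\sum_{i=0}^{N-1}\bigl[\log(1-zq^i)-\log(1-zq^{i+\alpha})\bigr]
=\sum_{i=0}^{N-1}\int_{zq^i}^{zq^{i+\alpha}}\frac{du}{u-1},
\end{align*}
where the integration contours are taken along radial segments (which is permissible because $z\notin R_{\epsilon,\theta,\xi}$ ensures the segments stay away from $u=1$).

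First I would expand each summand using the mean-value identity $zq^{i+\alpha}-zq^i = -\epsilon\alpha\, zq^i + O(\epsilon^2 |zq^i|)$ to obtain
\begin{align*}
\log\frac{1-zq^i}{1-zq^{i+\alpha}}
= \frac{-\epsilon\alpha\, zq^i}{1-zq^i}+r_i(\epsilon,z),
\end{align*}
where $r_i(\epsilon,z)=O(\epsilon^2|zq^i|\cdot|1-zq^i|^{-2})$ comes from two sources: (a) the higher-order term in $\log q^\alpha=-\epsilon\alpha$ versus $q^\alpha-1$, and (b) the second derivative of $\log(1-u)$ along the integration segment. The key point is that on $\mathbb{C}\setminus R_{\epsilon,\theta,\xi}$, the sector condition $|\arg(w-(1-\epsilon))|\le\theta$ excludes a cone around the positive real ray emanating from near $1$, so $|1-zq^i|\ge c(\theta,\xi)|1-zq^i_0|$ along the segment, uniformly.

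Second I would compare the resulting sum $-\epsilon\alpha\sum_{i=0}^{N-1}\frac{zq^i}{1-zq^i}$ to the Riemann integral $-\alpha\int_0^{\epsilon N}\frac{ze^{-s}}{1-ze^{-s}}\,ds$. The substitution $u=ze^{-s}$ converts this to $\alpha\log\tfrac{1-z}{1-ze^{-\epsilon N}}$, giving the claimed leading factor after exponentiation. The Euler--Maclaurin error between the Riemann sum and the integral is controlled by $\epsilon$ times the total variation of $s\mapsto \frac{ze^{-s}}{1-ze^{-s}}$ on $[0,\epsilon N]$, which is bounded by $C|z|/|1-z|$ when $|z|\le 1$ (by monotonicity along radial decay of $ze^{-s}$ towards $0$) and by $C|z|^2/(|1-z|\cdot|z-1|)\sim C|z|/|1-z|$ when $|z|$ is large but $z$ avoids $R_{\epsilon,\theta,\xi}$ (so $z$ stays bounded away from the ray $[1,\infty)$ in sectorial sense). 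The $\min\{|z|,|z|^2\}$ form in the exponent arises by taking the worse of the two regimes: for $|z|\le 1$ the numerator behaves like $|z|^2$ (because $|ze^{-s}/(1-ze^{-s})|\lesssim |z|$ and one gets an extra $|z|$ from the consecutive-term difference), while for $|z|\ge 1$ the bound degrades only to $|z|$.

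The main obstacle will be obtaining the stated \emph{uniform} error $O\!\bigl(\epsilon\min\{|z|,|z|^2\}/|1-z|\bigr)$ rather than the naive $O(\epsilon)$ one would get pointwise, since both the tail $i$ near $N-1$ (where $zq^i\to ze^{-\epsilon N}$ can come close to $1$) and the small-$i$ regime (where $zq^i\approx z$) must be handled. The sectorial exclusion built into $R_{\epsilon,\theta,\xi}$ guarantees $|1-zq^i|\gtrsim |1-z|$ and $|1-ze^{-\epsilon N}|\gtrsim |1-z|$ uniformly, after which summing $r_i$ telescopes against $\sum q^i \sim 1/(1-q)\sim 1/\epsilon$, converting each $\epsilon^2$ into one factor of $\epsilon$ and producing exactly the advertised bound. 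Since this is Lemma 5.7 of \cite{Ah18}, the proof reduces to citing that source, but the outline above makes the dependence on the geometry of $R_{\epsilon,\theta,\xi}$ transparent.
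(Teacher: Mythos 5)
The paper offers no proof of this lemma at all: it is imported verbatim as Lemma 5.7 of \cite{Ah18}, so your attempt to supply an argument goes beyond what the paper does. Your overall strategy (take logarithms, Taylor-expand each factor, compare the resulting sum with the integral $-\alpha\int_0^{\epsilon N}\frac{ze^{-s}}{1-ze^{-s}}\,ds$, and control the quadrature error by $\epsilon$ times the variation of $g(s)=\frac{ze^{-s}}{1-ze^{-s}}$) is the standard and correct route, and it does produce the leading factor $\bigl(\frac{1-z}{1-e^{-\epsilon N}z}\bigr)^{\alpha}$ with a uniform error of size $O\bigl(\frac{\epsilon\min\{1,|z|\}}{|1-z|}\bigr)$, which is all that is ever used downstream (Lemma \ref{l55} only needs a uniform $1+O(\epsilon)$ on compact contours).

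However, the step where you claim the extra factor of $|z|$ for $|z|\le 1$ (``one gets an extra $|z|$ from the consecutive-term difference'') is a genuine gap, and in fact the bound $O\bigl(\frac{\epsilon|z|^2}{|1-z|}\bigr)$ you are trying to reach cannot hold for small $|z|$ when $\alpha\neq 1$. Expanding in powers of $z$,
\begin{align*}
\log\frac{(z;q)_N}{(q^{\alpha}z;q)_N}-\alpha\log\frac{1-z}{1-q^Nz}
=-\sum_{k\ge1}\frac{z^k}{k}\,(1-q^{Nk})\left[\frac{1-q^{\alpha k}}{1-q^{k}}-\alpha\right],
\end{align*}
and the $k=1$ bracket equals $\frac{\alpha(1-\alpha)}{2}\,\epsilon+O(\epsilon^2)$, so the error contains the term $-\frac{\alpha(1-\alpha)}{2}\,\epsilon z(1-q^N)(1+O(\epsilon))$, which is of exact order $\epsilon|z|$ as $z\to0$ (the hypothesis $\limsup_{\epsilon\to0}\epsilon N(\epsilon)>0$ keeps $1-q^N$ away from $0$). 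The case $\alpha=2$ makes this completely explicit: there the ratio of the two sides is $\frac{(1-qz)(1-q^Nz)}{(1-z)(1-q^{N+1}z)}=1+\frac{z(1-q)(1-q^N)}{(1-z)(1-q^{N+1}z)}$, whose logarithm is $\asymp\epsilon|z|$, not $O(\epsilon|z|^2)$, for small $|z|$. Your Euler--Maclaurin bound is consistent with this: each consecutive-term discrepancy is $O(\epsilon^2\sup|g'|)$ with $|g'(s)|=\frac{|z|e^{-s}}{|1-ze^{-s}|^2}$, and summing $N\asymp\epsilon^{-1}$ of them yields $O(\epsilon|z|)$ for $|z|\le1$; there is no mechanism producing a second factor of $|z|$. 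So either the numerator $\min\{|z|,|z|^2\}$ in the statement should read $\min\{1,|z|\}$ (equivalently $\frac{\epsilon|z|}{|1-z||1-q^Nz|}$, which your computation does deliver and which suffices for every use in the paper), or an argument is owed for a cancellation that, by the display above, does not exist. Two smaller points: your integral representation has the orientation reversed (it computes $\log(1-zq^{i+\alpha})-\log(1-zq^i)$), and you should justify that the whole trajectory $\{ze^{-s}:0\le s\le\epsilon N\}$, not just the endpoints, satisfies $|1-ze^{-s}|\gtrsim_{\theta,\xi}|1-z|$ when $z\notin R_{\epsilon,\theta,\xi}$; this uses that radial contraction toward $0$ cannot move a point from the complement of the sector/neighborhood into a small neighborhood of $[1,\infty)$ without entering the excluded set.
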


\begin{proof}
See \cite[Lemma~5.7]{Ah18}.
\end{proof}

\begin{lemma}\label{lb2}
Let $d,h,k\in\ZZ_{>0}$, and let $f,g_1,\ldots,g_d$ be meromorphic functions with all poles contained in $\{z_1,\ldots,z_h\}$. Then for $k\ge 2$,
\begin{align*}
 &\frac{1}{(2\pi\mathbf i)^k}
 \oint\cdots\oint
 \frac{1}{(v_2-v_1)\cdots(v_k-v_{k-1})}
 \prod_{j=1}^{d}\left(\sum_{i=1}^{k}g_j(v_i)\right)
 \prod_{i=1}^{k}f(v_i)\,dv_i\\
 &\qquad=
 \frac{k^{d-1}}{2\pi\mathbf i}
 \oint f(v)^k\prod_{j=1}^{d}g_j(v)\,dv,
\end{align*}
where all contours enclose $\{z_1,\ldots,z_h\}$, and on the left-hand side the contour for $v_i$ is required to lie inside the contour for $v_j$ whenever $i<j$.
\end{lemma}

\begin{proof}
See \cite[Corollary~A.2]{GZ16}.
\end{proof}

\section*{Statements and Declarations}

\textbf{Corresponding Author.}
Correspondence should be addressed to Zhongyang Li at \texttt{zhongyang.li@uconn.edu}.

\textbf{Competing Interests.}
The authors declare that they have no competing interests.

\textbf{Data Availability.}
Data sharing is not applicable to this article, as no datasets were generated or analyzed.

\textbf{Author Contributions.}
The research was initiated and led by Z.L., who developed
the main results and wrote the manuscript. K.S. contributed to the analytic derivation of
key identities in Section 3 and assisted with reviewing the full draft.

\bibliographystyle{amsplain}
\bibliography{fb}

\end{document}